\def\RR{{\mathbb R}}
\def\d{{\mathrm{d}}}
\DeclareMathOperator{\tr}{tr}
\DeclareMathOperator{\Id}{I}
\newcommand{\hide}[1]{}
\newtheorem{thm}{Theorem}
\numberwithin{thm}{section}
\newtheorem*{thm*}{Theorem}
\newtheorem{lem}[thm]{Lemma}
\newtheorem{cor}[thm]{Corollary}
\newtheorem{prop}[thm]{Proposition}
\newtheorem*{prob*}{Problem}
\newtheorem{defin}[thm]{Definition}
\newtheorem{rem}[thm]{Remark}
\theoremstyle{definition}
\newtheorem{assum}{Assumption}
\newtheorem{ex}[thm]{Example}
\theoremstyle{remark}
\newtheorem*{ex*}{Example}
\newtheorem*{rem*}{Remark}
\newtheorem{question}{Question}
\title{Quantum Harmonic Analysis on locally compact abelian groups}
\author{Robert Fulsche and Niklas Galke}
\date{\emph{Dedicated to Karlheinz Gr\"{o}chenig on the occasion of his 65th birthday.}}
\begin{document}

\maketitle

\begin{abstract}
    We extend the notions of \emph{quantum harmonic analysis}, as introduced in R.~Werner's paper from 1984 (\emph{J.~Math.~Phys.~25(5)}), to abelian phase spaces, by which we mean a locally compact abelian group endowed with a Heisenberg multiplier. In this way, we obtain a joint harmonic analysis of functions and operators for each such phase space. Throughout, we spend significant extra effort to include also phase spaces which are not second countable. We obtain most results from Werner's paper for these general phase spaces, up to Wiener's approximation theorem for operators. In addition, we extend certain of those results (most notably Wiener's approximation theorem) to operators acting on certain coorbit spaces affiliated with the phase space.

    \textbf{Keywords:} Quantum harmonic analysis, locally compact abelian group, operator convolution

    \textbf{MSC Classification:} Primary: 47B90; Secondary: 43A65, 43A25
\end{abstract}

{\setstretch{.75}
\tableofcontents}

\section{Introduction}
The term \emph{quantum harmonic analysis} (QHA), without any further explanation, is too extensive to refer to any particular mathematical theory. In principle, this could refer to any piece of mathematics relating ideas from harmonic analysis with the description of quantum mechanics or quantum field theory. There has been an enormous amount of work on these subjects, some of which may seem very much unrelated. So in this perspective the title of the present work appears quite generic. To be more precise, we will deal with quantum harmonic analysis in the sense of the paper \cite{Werner1984} by R.\ F.\ Werner. While we see this particular paper as the beginning of quantum harmonic analysis in the strict sense we consider here, this theory is of course connected to many of the contributions to the mathematics of quantum mechanics that were worked out before. The interested reader will notice that many well-known ideas can also be found in the present work. For example, aspects of the works of Stone, von Neumann \cite{vonNeumann1931} and Mackey \cite{Mackey1958} on the CCR relations, the work of Pool on the Weyl transform \cite{Pool1966}, papers of Kastler, Loupias and Miracle-Sole \cite{Kastler1965, Loupias_MiracleSole1966, Loupias_MiracleSole1967} as well as Daubechies \cite{Daubechies1980} have obvious influence on the contents of the present work. 

Let us - very briefly - summarize the ideas from \cite{Werner1984}. Based on the class of unitary operators $W_z$, acting on $L^2(\mathbb R^n)$ and satisfying the canonical commutation relations $W_z W_w = e^{i\sigma(z, w)/2}W_{z+w}$, where $z, w$ are points from the phase space $\Xi = \mathbb R^{2n}$ and $\sigma$ is the standard symplectic form on $\Xi$, Werner introduced a notion of \emph{convolution of functions and operators}, which extends the usual convolution of functions, and turns $L^1(\mathbb R^{2n}) \oplus \mathcal T^1(L^2(\mathbb R^n))$ into a commutative Banach algebra, which properly contains $L^1(\mathbb R^{2n})$ as a subalgebra. Here, $\mathcal T^1(L^2(\mathbb R^n))$ is the ideal of trace class operators in the underlying Hilbert space. As a tool to study the harmonic analysis on this algebra, he used the Fourier-Weyl transform (also called Fourier-Wigner transform), which plays the same role for operators as the symplectic Fourier transform for functions. Werner then obtained results for operators analogous to the classical Riemann-Lebesgue lemma, the convolution theorem, Plancherel's theorem, and the Hausdorff-Young inequality. Besides this, he also obtained a version of Wiener's approximation theorem for operators, resulting in his \emph{correspondence theory}. While these ideas, having been published in 1984, were ignored by mathematicians for many years, the recent years showed that these ideas are indeed very useful from a mathematical point of view, leading to many important applications in time-frequency analysis and operator theory \cite{Bauer_etAl2023, Berge_Berge_Fulsche,fulsche2020correspondence, Fulsche_Rodriguez2023, Fulsche_Hagger2023, luef_skrettingland2018, Luef_Skrettingland2018a, Luef_Skrettingland2019a, Luef_Skrettingland2021}. All of the previously cited works have in common that they solely work with the phase space $\Xi = \mathbb R^{2n}$.
The operators $W_z$ satisfying the canonical commutation relations associated to this phase space act as the position and momentum shifts of $n$-mode continuous variable quantum systems and as such play a role in several areas within quantum physics.
A much used tool for studying these systems is a quantum state's \emph{Wigner function} -- that is the symplectic Fourier transformation applied to the Fourier-Weyl transform of a density operator.
The inverse, provided it is well-defined, then takes a Wigner function to a density operator.
Correspondence theory deals with a similar but different question, closely related to the \emph{Weyl quantization}:
Even though density operators are positive semi-definite, the Wigner function is not necessarily positive and thus not a probability distribution (despite being normalized).
\emph{Correspondence rules} on the other hand are exactly the normal quantum channels -- hence, by definition, positivity-preserving -- between $L^\infty(\Xi)$ and $\mathcal L(L^2(\RR^{n}))$ that are covariant with respect to the action of $W_z$.
They are given by convolutions with a density operator while convolutions with a probability density give rise to covariant classical and quantum channels.
The product formula for the Fourier(-Weyl) transform then establishes a certain connection between correspondence rules and Wigner functions.
Replacing the phase space by a more general group gives rise to systems with different physical meaning that still let us apply above ideas. Obviously, considering such ideas for periodic systems, i.e., with respect to the circle group, have practical relevance. On the other hand, from the point of view of quantum information theory the relevant cases of this are those of qudits with the \emph{generalized Pauli matrices} satisfying the commutation relations of two copies of a finite cyclic group.
Works from quantum information theory and quantum computation with connection to above concepts -- in particular Wigner functions -- also for $\Xi\neq \RR^{2n}$ are, e.g., \cite{Kiukas_Lahti_Schultz_Werner2012, Gibbons_Hoffman_Wootters_2004, Gross_2008, Raussendorf_Browne_Delfosse_Okay_BermejoVega_2017, Fawzi_Oufkir_Salzmann2024}.

While there have been some abstract works on other phase spaces (see \cite{Berge_Berge_Luef2022, Berge_Berge_Luef_Skrettingland2022} for some results on the affine group and the paper by Halvdansson \cite{Halvdansson2022} for some recent results on the operator convolutions on arbitrary locally compact groups) and some other works which clearly are related to the phase space concepts of QHA on more general lca groups (see, e.g., \cite{Feichtinger_Kozek1998}), there is still a lot of work to be done in the direction of more general phase spaces. The present work is a contribution in this direction: We will present the theory for the case where $\Xi$ is a locally compact abelian group (which we will usually abbreviate as \emph{lca group}). We already now want to mention that we do not need to assume second countability of the phase space, an assumption which is otherwise frequently encountered in related works.

In contrast to classical harmonic analysis, where only an (abelian) locally compact group is given as the structure to be studied, quantum harmonic analysis needs the structure of a \emph{phase space}. In our setting, this will mean that the lca group $\Xi$ is given together with a \emph{Heisenberg multiplier}, which is a cocycle on $\Xi$ with certain properties, most importantly that it induces a self-duality, i.e.\ a group isomorphism of $\Xi$ with $\widehat{\Xi}$, the Pontryagin dual. Affiliated with each such phase space is a unique projective unitary representation, having the Heisenberg multiplier as its cocycle. Starting from this data, we will start our investigations of quantum harmonic analysis. Certainly, many more results can be studied within this rather general framework, but in the present work we restricted ourselves towards discussing the theory up to Wiener's approximation theorem for operators.  

The prototypical example of such a phase space is certainly $\Xi = G \times \widehat{G}$, where $G$ is any locally compact abelian group and $\widehat{G}$ the dual group. The canonical Heisenberg multiplier is $m((x, \xi), (y, \eta)) = \langle x, \eta\rangle$. There are many multipliers similar to this one, and we will give a list of examples. While this class is certainly our leading example and the main motivation for the authors to study the present structures, the class of phase spaces to which our theory applies is strictly larger than this (this was shown in \cite{Prasad_Shapiro_Vemuri2010}).  

Let us give a rough description of the contents of the present work. In Section \ref{sec:repr}, we will collect all the results needed on projective unitary representations for our work. No new results are proven here, and no significant proofs are demonstrated, but we give references to proofs of all important results we need. 

Section \ref{sec:Gelfand_Pettis} is a short reminder on some facts from integration theory. The purpose of this section is two-fold: On the one hand, we want to describe the essentials on vector-valued integration that we will need during the rest of the paper. On the other hand, we will address certain issues that arise for Haar measures on groups that are not second countable. 

In Section \ref{sec:conv}, we really start with the heart of the matter. We introduce the concepts of function-operator and operator-operator convolutions and discuss their main properties. This part of the theory is rather similar to the special case $\Xi = \mathbb R^{2n}$ discussed by Werner in \cite{Werner1984}, so we are sometimes rather brief about proofs whenever nothing strange happens. All the results turn out as expected. 

Section \ref{sec:positivity} contains a rather detailed discussion on positivity results for the convolutions, as well as the theorem on positive correspondence rules, which was one of the main motivations for Werner's initial work on the matter. 

It is Section \ref{sec:fouriertrafo}, where we bring the Fourier transform of operators into play, where some interesting effects connected to the group structure of the phase space occur. Indeed, some of the desired results, most prominently Bochner's theorem for operators, is not obtained in full generality, but only for a certain class of phase spaces. Nevertheless, Wiener's approximation theorem for operators, which is one of the author's main motivation to study quantum harmonic analysis, is obtained in full generality.

In Section \ref{sec:coorbit}, we will describe how to extend certain parts of the theory to operators acting on coorbit spaces. Most prominently, we will give a version of Wiener's approximation theorem in this generalized setting. This approach has already been proven useful in the first-named author's investigation of Toeplitz operators on the Fock space \cite{fulsche2020correspondence, Fulsche2022}, and we expect more interesting applications in operator theory of this approach.

Finally, in Section \ref{sec:discussion} we end with mentioning some open problems related to the present work.

\section{Projective representations}\label{sec:repr}

It turns out convenient to allow for more general functions than the $\sigma$ given above. For this, we will assume in the following that $\Xi$ is any locally compact abelian group. We will deal with certain projective unitary representations of $\Xi$. Recall that a projective unitary representation of $\Xi$ is a map $\rho: \Xi \mapsto \mathcal U(\mathcal H)$, where $\mathcal H$ is some (complex) Hilbert space and $\mathcal U(\mathcal H)$ the unitary group over $\mathcal H$, satisfying $\rho(x)\rho(y) = m(x,y)\rho(x+y)$. Further, the coefficients $\langle \rho(x)\varphi, \psi\rangle$, $\varphi, \psi \in \mathcal H$, are assumed to be measurable functions of $x$. The function $m$ takes values $m(x,y) \in S^1 = \{ z \in \mathbb C: ~|z| = 1\}$. In our convention, the inner product $\langle \cdot, \cdot\rangle$ on $\mathcal H$ is linear in the first and anti-linear in the second argument.  We will usually write $U_x = \rho(x)$. Therefore, it is:
\begin{align*}
    U_x U_y = m(x,y) U_{x+y}, \quad x,y \in \Xi.
\end{align*}
The function $m: \Xi \times  \Xi \to \mathbb T$ is called the \emph{multiplier} (or \emph{cocycle}) of the projective representation. It satisfies
\begin{align*}
m(x+y,z) m(x,y) = m(x,y+z)m(y,z), \quad x,y,z \in \Xi.
\end{align*}
We will always assume that $m(x,0) = m(0,x) = 1$ for every $x \in \Xi$, or equivalently $U_0 = I$. Projective representations are usually considered under certain measurability conditions on the coefficients $x \mapsto \langle U_x \varphi, \psi\rangle$ for $\varphi, \psi \in \mathcal H$. We will need to work with a projective representation which is continuous in strong (or equivalently: weak) operator topology, which imply the above usual measurabilty conditions. It is not hard to verify that, under this condition, the multiplier $m$ is separately continuous, which we will always assume from now on. Note that it is $U_x U_{-x} = m(x,-x) \Id$. Since $U_x$ is unitary, this yields $U_x^\ast = \overline{m(x,-x)}U_{-x}$.

There will be one more critical assumption on $m$, namely that it is a \emph{Heisenberg multiplier} \cite{Digernes_Varadarajan2004}, a notion that we are going to recall now. The map
\begin{align}
    \sigma(x,y) = \frac{m(x,y)}{m(y,x)}, \quad x, y \in \Xi
\end{align}
satisfies
\begin{align*}
    \sigma(x,y+z) = \sigma(x,y)\sigma(x,z), \quad \sigma(x,y) = \frac{1}{\sigma(y,x)}, \quad \sigma(x,x) = 1.
\end{align*}
This can be summarized as saying that $\sigma$ is an \emph{alternating bicharacter}. Using the continuity properties of $m$, the map
\begin{align*}
    \Xi \ni x \mapsto \sigma(x, (\cdot)) \in \widehat{\Xi}
\end{align*}
is a continuous morphism from $\Xi$ to $\widehat{\Xi}$, the Pontryagin dual of $\Xi$. We say that $m$ is a \emph{Heisenberg multiplier} if this map is a topological isomorphism, and in this case $\sigma$ is called the \emph{symplectic form}. Hence, $\Xi$ is self-dual and the self-duality is induced from $\sigma$. Note that $\sigma$ arises naturally from the representation $\rho$, as
\begin{align}
U_x U_y = \sigma(x,y)U_y U_x.
\end{align}
The importance of Heisenberg multipliers stems from the fact that they are immediately linked to a rather general form of the Mackey-Stone-von Neumann theorem:
\begin{thm}\label{Mackey-Stone-vNeumann}
Let $\Xi$ be a locally compact abelian group and $m$ a separately continuous Heisenberg multiplier of $\Xi$. Then, there exists an irreducible projective representation with multiplier $m$ which is unique up to unitary equivalence.
\end{thm}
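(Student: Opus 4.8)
The plan is to pass from projective unitary representations to honest $\ast$-representations of a $C^\ast$-algebra, where the assertion becomes a structural one. Fix a Haar measure on $\Xi$ and form the \emph{twisted group algebra} $L^1(\Xi,m)$: the Banach space $L^1(\Xi)$ equipped with the $m$-twisted convolution $(f\ast_m g)(x)=\int_\Xi f(y)\,g(x-y)\,m(y,x-y)\,\d y$ and the involution $f^\ast(x)=\overline{m(x,-x)}\,\overline{f(-x)}$ (the latter forced by $U_x^\ast=\overline{m(x,-x)}U_{-x}$), and let $C^\ast(\Xi,m)$ be its enveloping $C^\ast$-algebra. Integrating a strongly continuous projective unitary representation $U$ with multiplier $m$ against $f\in L^1(\Xi,m)$, and conversely, sets up a bijection between such representations and the non-degenerate $\ast$-representations of $C^\ast(\Xi,m)$ that preserves intertwiners, hence irreducibility and unitary equivalence. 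Thus the theorem reduces to the single claim that $C^\ast(\Xi,m)\cong\mathcal{K}(\mathcal{H})$ for some Hilbert space $\mathcal{H}$: the algebra of compact operators is nonzero and has, up to unitary equivalence, exactly one irreducible representation, namely the identity representation on $\mathcal{H}$ — which gives both existence and uniqueness at once.

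For the existence half I would exhibit one irreducible projective representation $\pi$ with multiplier $m$ and show its integrated form is onto $\mathcal{K}(\mathcal{H})$. When $\Xi=G\times\widehat G$ with the canonical multiplier one takes the Schr\"odinger representation on $L^2(G)$; for a general Heisenberg $m$ no such splitting need exist (as noted above, following \cite{Prasad_Shapiro_Vemuri2010}), and one instead builds $\pi$ from a maximal $\sigma$-isotropic closed subgroup together with a Mackey-type induction, or by analysing the $m$-twisted left regular representation on $L^2(\Xi)$ — in each case a classical construction, see \cite{Mackey1958, Digernes_Varadarajan2004}. Irreducibility of $\pi$ together with the fact that $\pi(f_0)$ is a nonzero finite-rank operator for a suitable $f_0\in L^1(\Xi,m)$ (a rank-one projection, realised as a coherent-state type operator) then forces $\pi\bigl(C^\ast(\Xi,m)\bigr)\supseteq\mathcal{K}(\mathcal{H})$, by the standard fact that a $C^\ast$-subalgebra of $\mathcal{B}(\mathcal{H})$ acting irreducibly and meeting $\mathcal{K}(\mathcal{H})$ nontrivially must contain all of it; this compactness input is what distinguishes the present situation from that of a general simple $C^\ast$-algebra.

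It remains to upgrade this to an isomorphism, i.e.\ to exclude a kernel and to see that $\mathcal{K}(\mathcal{H})$ is not proper in $\pi(C^\ast(\Xi,m))$; both follow the moment one knows that $C^\ast(\Xi,m)$ is \emph{simple}, and this is exactly where the Heisenberg hypothesis enters. Indeed, $m$ being Heisenberg says $\sigma$ is nondegenerate, equivalently that the symmetrizer subgroup $\{x\in\Xi:\sigma(x,y)=1\text{ for all }y\in\Xi\}$ is trivial, and this is precisely the condition under which the twisted group $C^\ast$-algebra of a locally compact abelian group is simple (going back to Kleppner and to Baggett--Kleppner; see the $C^\ast$-algebraic account in \cite{Digernes_Varadarajan2004}). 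Granting simplicity, $\pi$ is faithful, so $C^\ast(\Xi,m)\cong\pi(C^\ast(\Xi,m))$, and $\mathcal{K}(\mathcal{H})$, being a nonzero closed two-sided ideal of a simple $C^\ast$-algebra, must equal it; hence $C^\ast(\Xi,m)\cong\mathcal{K}(\mathcal{H})$ and the theorem follows. The main obstacle is the simplicity statement — and with it the Type-I/contains-a-nonzero-compact step — without assuming $\Xi$ second countable, since the Mackey machine is not directly available there; this is the technical heart of \cite{Digernes_Varadarajan2004}, which is why in the body we simply cite that reference. (A more elementary route to uniqueness bypasses the algebra: for irreducible $\pi_1,\pi_2$ with the same multiplier one averages a rank-one operator to build an intertwiner $T=\int_\Xi\overline{\langle\pi_1(x)\varphi_1,\varphi_1\rangle}\,\pi_2(x)\,\d x$ for well-chosen unit vectors $\varphi_i$, checks $T\pi_1(y)=\pi_2(y)T$ and $T\neq 0$, and invokes Schur's lemma; but controlling convergence and non-vanishing of this integral for a general phase space is itself delicate, so we prefer the structural argument.)
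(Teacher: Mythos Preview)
The paper does not actually prove this theorem: it cites \cite[Theorem 3.3]{Baggett_Kleppner1973} and remarks that a Heisenberg multiplier is, in Baggett--Kleppner's terminology, both \emph{totally skew} and \emph{type I} (the latter via \cite[Theorem 3.2]{Baggett_Kleppner1973}). Your sketch therefore already goes further than the paper does.

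Your $C^\ast$-algebraic route --- reduce everything to $C^\ast(\Xi,m)\cong\mathcal K(\mathcal H)$ --- is sound in outline and is a standard alternative to Baggett--Kleppner's more representation-theoretic argument. One point deserves sharpening, though. You write that the Heisenberg hypothesis ``says $\sigma$ is nondegenerate, equivalently that the symmetrizer subgroup \ldots\ is trivial''. That is only the \emph{totally skew} condition; the Heisenberg condition is strictly stronger, requiring $x\mapsto\sigma(x,\cdot)$ to be a \emph{topological isomorphism} onto $\widehat\Xi$, not merely injective. The distinction matters: totally skew multipliers on LCA groups need not be type I --- the irrational-rotation cocycle on $\mathbb Z^2$ is totally skew, its twisted $C^\ast$-algebra is simple, yet it has uncountably many inequivalent irreducibles. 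So simplicity alone cannot give uniqueness; what forces $C^\ast(\Xi,m)\cong\mathcal K(\mathcal H)$ is simple \emph{and} type I, and it is precisely the type I property --- equivalently, the existence of a nonzero compact $\pi(f_0)$, or of a square-integrable vector --- that consumes the full strength of the Heisenberg hypothesis. Your ``coherent-state type'' rank-one $\pi(f_0)$ is exactly this step, and you are right to flag it as the crux; this is what \cite[Theorem 3.2]{Baggett_Kleppner1973} supplies and what the paper outsources to them. In short: your strategy is correct, but the Heisenberg hypothesis enters not in the simplicity step where you place it, but in the compactness/type I step you already identify as the main obstacle.
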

A proof can be found in \cite[Theorem 3.3]{Baggett_Kleppner1973}. Note that a Heisenberg multiplier is totally skew and type I in the notion of Baggett and Kleppner, cf.~\cite[Theorem 3.2]{Baggett_Kleppner1973}.

Another property of the representation we will need is that of \emph{square integrability}. Recall that an irreducible projective representation $(\mathcal H, \rho)$ is called square integrable if there are $0 \neq \varphi, \psi \in \mathcal H$ such that $x \mapsto \langle \rho(x)\varphi, \psi\rangle \in L^2(\Xi)$. On a more general level, the irreducible $m$-representations which are square integrable are precisely those which appear as subrepresentations of the $m$-regular representation, i.e., the representation on $L^2(\Xi)$ defined as as $\lambda(x)f(y) = m(x,y)f(y+x)$ (a proof for ordinary unitary representations can be found in \cite[Theorem VII.1.5]{Gaal1973}, which can easily be adapted to projective representations). Square integrability is indeed a crucial property for everything that follows, so we will always assume it. Indeed, it is not entirely clear if the unique $m$-representation is automatically square integrable, so we impose this as an assumption. Given the assumptions of Theorem \ref{Mackey-Stone-vNeumann}, it is not hard to see that square integrability of the unique irreducible $m$-representation is equivalent to the following statement: The $m$-regular representation contains an irreducible subrepresentation. It seems to be a non-trivial problem to decide if this statement is true, which on this level seemingly has not been answered in the literature, and we leave this as an open problem. But note that, if this property is satisfied, another important property of the representation is satisfied: From separate continuity of the multiplier $m$ one can easily deduce that the $m$-regular representation acts strongly continuous (for $\varphi \in \mathcal L^2(\Xi)$ the function $\Xi \ni x \mapsto \lambda(x)\varphi$ is continuous). Hence, the same is true for its irreducible subrepresentation, i.e., the representation we obtain from Theorem \ref{Mackey-Stone-vNeumann} is continuous in strong operator topology.

The following is a classical result by Godement \cite{Godement1947}\footnote{Initially, this result has been formulated only for unitary representations, i.e., $m(x,y) = 1$. The simple proof by D.~Shucker \cite{Shucker1983} works verbatim also for projective unitary representations.}. Godement's result works for any unimodular group, but we shall only formulate it in our abelian case. Note that notions such as ``$\d x$'', ``$\d \xi$'', ``$\d g$'' always refer to integration with respect to a suitably normalized Haar measure.
\begin{thm}\label{thm:moyal}
    Let $(\mathcal H, \rho)$ be an irreducible projective unitary representation of the lca group $\Xi$ which is square integrable. Then, for every $\varphi, \psi \in \mathcal H$ it is
    \begin{align*}
        \int_\Xi |\langle \rho(x) \varphi, \psi\rangle|^2~dx < \infty.
    \end{align*}
    Further, there exists some constant $\lambda > 0$ such that for every $\varphi_1, \varphi_2, \psi_1, \psi_2 \in \mathcal H$ it is:
    \begin{align*}
        \int_\Xi \langle \rho(x) \varphi_1, \psi_1\rangle \overline{\langle \rho(x) \varphi_2, \psi_2\rangle}~dx = \lambda \langle \varphi_1, \varphi_2\rangle \overline{\langle \psi_1, \psi_2\rangle}.
    \end{align*}
    \end{thm}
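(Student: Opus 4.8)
The plan is to reduce everything to the special case where $\rho$ is realized as an irreducible subrepresentation of the $m$-regular representation $\lambda$ on $L^2(\Xi)$; this is legitimate because square integrability is exactly the hypothesis that gives us such a realization, and by the Mackey–Stone–von Neumann theorem (Theorem \ref{Mackey-Stone-vNeumann}) all irreducible $m$-representations are unitarily equivalent, so the statement is representation-independent. First I would fix a nonzero $\varphi_0$ with $x \mapsto \langle \rho(x)\varphi_0, \varphi_0\rangle \in L^2(\Xi)$ (the square-integrability witness) and define, for suitable $\psi$, the map $V_\psi \colon \varphi \mapsto \langle \rho(\cdot)\varphi, \psi\rangle$. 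The key algebraic identity is the covariance relation: using $\rho(x)\rho(y) = m(x,y)\rho(x+y)$ one computes $V_\psi(\rho(y)\varphi)(x) = \langle \rho(x)\rho(y)\varphi,\psi\rangle = m(x,y)\langle\rho(x+y)\varphi,\psi\rangle$, which up to the cocycle factor is a translate of $V_\psi\varphi$ — i.e. $V_\psi \rho(y) = \lambda(y) V_\psi$ as maps into (measurable) functions on $\Xi$, where $\lambda$ is the $m$-regular representation.

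Next I would verify that the set $\mathcal{D}$ of vectors $\varphi$ for which $V_{\varphi_0}\varphi \in L^2(\Xi)$ is a nonzero, $\rho$-invariant linear subspace (invariance is immediate from the covariance relation above and the fact that $\lambda(y)$ preserves $L^2$), and that $V_{\varphi_0}$ is closed on it. By irreducibility together with a standard closed-graph / commuting-operator argument, $\mathcal D$ is dense and $V_{\varphi_0}$ is a (possibly unbounded) closed operator intertwining $\rho$ with $\lambda|_{\overline{\operatorname{ran} V_{\varphi_0}}}$. Then $T := V_{\varphi_0}^* V_{\varphi_0}$ is a positive self-adjoint operator commuting with all $\rho(y)$ (using $\lambda(y)^* \lambda(y) = I$ and the intertwining property), so by Schur's lemma for irreducible projective representations $T = c\,I$ for some $c > 0$. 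This already yields $\int_\Xi |\langle\rho(x)\varphi,\varphi_0\rangle|^2\,dx = c\,\|\varphi\|^2 < \infty$ for all $\varphi$ in a dense subspace, and then for all $\varphi$ by the resulting boundedness; symmetrizing the roles of the two vectors (or running the same argument with $\varphi_0$ replaced by the relevant vectors and using the cocycle-twisted translation invariance of Haar measure) upgrades this to finiteness of $\int_\Xi |\langle\rho(x)\varphi,\psi\rangle|^2\,dx$ for arbitrary $\varphi,\psi$.

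For the full orthogonality relation I would then consider the sesquilinear form $B(\varphi_1,\varphi_2;\psi_1,\psi_2) := \int_\Xi \langle\rho(x)\varphi_1,\psi_1\rangle\overline{\langle\rho(x)\varphi_2,\psi_2\rangle}\,dx$, now known to be finite by Cauchy–Schwarz and the previous step. Fixing $\psi_1 = \psi_2 = \psi$, the map $(\varphi_1,\varphi_2)\mapsto B(\varphi_1,\varphi_2;\psi,\psi)$ defines a bounded positive sesquilinear form, hence equals $\langle S_\psi\varphi_1,\varphi_2\rangle$ for a bounded positive operator $S_\psi$; the covariance relation $V_\psi\rho(y) = \lambda(y)V_\psi$ and invariance of Haar measure under $x \mapsto x+y$ (the cocycle factor $m(x,y)$ has modulus one and cancels in the modulus-squared integrand) show $S_\psi$ commutes with every $\rho(y)$, so Schur gives $S_\psi = \lambda(\psi)\,I$ with $\lambda(\psi) \geq 0$; testing on a unit vector shows $\psi \mapsto \lambda(\psi)$ is itself a bounded positive quadratic form, hence $\lambda(\psi) = \lambda\|\psi\|^2$ for a single constant $\lambda > 0$ (positivity and nondegeneracy because $\rho$ is nonzero). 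Polarizing in $\psi_1,\psi_2$ then gives the stated formula with the factor $\overline{\langle\psi_1,\psi_2\rangle}$, and polarizing again in $\varphi_1,\varphi_2$ is automatic since the form is already linear/antilinear there. The main obstacle I anticipate is the functional-analytic bookkeeping in the first half — establishing that $V_{\varphi_0}$ is densely defined and closed, and justifying the application of Schur's lemma to the unbounded operator $T$ — since one must be careful that "commutes with $\rho(y)$" is meant in the strong (spectral-projection) sense; this is exactly the point where Shucker's streamlined argument is cleaner than Godement's original, and I would follow that route, citing \cite{Godement1947, Shucker1983} for the details rather than reproducing them.
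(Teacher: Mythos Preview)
The paper does not actually prove this theorem: it states it as a classical result of Godement \cite{Godement1947} and remarks in a footnote that Shucker's short proof \cite{Shucker1983} carries over verbatim to the projective setting. Your proposal therefore goes well beyond what the paper does, and indeed follows precisely the Shucker route the authors allude to --- define the wavelet-type map $V_\psi$, establish the intertwining relation $V_\psi\rho(y)=\lambda(y)V_\psi$ with the $m$-regular representation, and then apply Schur's lemma twice (once to $V_{\varphi_0}^*V_{\varphi_0}$, once to the operator associated with the quadratic form $\psi\mapsto\lambda(\psi)$). The sketch is correct; the only point I would flag is the step ``$\psi\mapsto\lambda(\psi)$ is a bounded positive quadratic form, hence $\lambda(\psi)=\lambda\|\psi\|^2$'': this needs the extra observation (which you have the ingredients for but did not make explicit) that the operator representing this quadratic form also commutes with each $\rho(y)$, which follows from a second change of variables $x\mapsto x+y$ in the defining integral, so that Schur applies again. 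With that addition, your argument is complete and matches the cited literature.
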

Of course, we can normalize the Haar measure on $\Xi$ such that we obtain $\lambda = 1$ in the above formula, which we always do in the following. 

We now have discussed most of the facts we need from representation theory. We will fix the prerequisites on $\Xi$ and $m$ described so far as a first assumption, which will always be in charge in the following.
\begin{assum}\label{assum:1}
Let $\Xi$ be a locally compact abelian group and $m: \Xi \times \Xi \to S^1$ a separately continuous Heisenberg multiplier on it. $(\mathcal H, \rho)$ and $U_x = \rho(x)$ denotes (an element of the equivalence class of) the unique irreducible, projective $m$-representation, which is also assumed to be square integrable. Then, the representation is also continuous in strong operator topology.
\end{assum}
We now provide a list of examples, which all satisfy the above assumptions. 
\begin{ex}\label{main_example}
Our main motivation and leading example to study the presented theory is the case of $\Xi = G \times \widehat{G}$ for a locally compact abelian group $G$ with Pontryagin dual $\widehat{G}$. Together with the multiplier $m((x,\xi), (y, \eta)) = \overline{\langle x, \eta\rangle}$, we obtain a projective representation on $L^2(G)$ given by
\begin{align*}
    U_{(x,\xi)}f(y) = \langle y, \xi\rangle f(y-x), \quad (x, \xi) \in G \times \widehat{G}.
\end{align*}
\end{ex}
Note that, by the results of \cite{Prasad_Shapiro_Vemuri2010}, there are examples of phase spaces $\Xi$, endowed with Heisenberg multipliers, which are not of the above product type $\Xi = G \times \widehat{G}$. While these examples are covered by our theory, we shall not explicitly consider them as examples.
\begin{ex}\label{mod:main_example}
If $m$ is a separately continuous Heisenberg multiplier on $\Xi$ and $a: \Xi \to S^1$ is continuous with $a(e_\Xi) = 1$, then 
\begin{align*}
    m_a(x,y) = \frac{a(x)a(y)}{a(x+y)}m(x,y).
\end{align*}
is also a separately continuous Heisenberg multiplier (indeed, the resulting symplectic forms are the same). If $U$ is the irreducible projective unitary representation of $\Xi$ on $\mathcal H$ with multiplier $m$, then $U_x^a = a(x) U_x$ is the irreducible projective representation with multiplier $m_a$. For being in accordance with Assumption \ref{assum:2}, which we will explain later, we will always assume that $a(x) = a(-x)$ for every $x \in \Xi$ whenever an example of this kind is considered.
\end{ex}
\begin{ex}
When $U$ is the projective representation of $\Xi = G \times \widehat{G}$ with multiplier $m((x, \xi), (y, \eta)) = \langle x, \eta\rangle$ and $\mathcal F: L^2(G) \to L^2(\widehat{G})$ is the Fourier transform, then $\widehat{U}_{(\xi, x)} = \mathcal F^{-1} U_{(x, \xi)} \mathcal F$ is a strongly continuous projective representation of $\widehat{X} = \widehat{G} \times G$ with multiplier $\widehat{m}((\xi, x),(\eta, y)) = \langle x, \eta\rangle$. If $a: \Xi \to \mathbb T$ is continuous, then $\mathcal F^{-1} U_{(x, \xi)}^a \mathcal F = \widehat{U}_{(\xi, x)}^{\widehat{a}}$ with $\widehat{a}(\xi, x) = a(x, \xi)$. We call $\widehat{U}_{(\xi, x)}^{\widehat{a}}$ the representation dual to $U_{(x, \xi)}^a$. Since $\widehat{\Xi} \cong \Xi$, both representations give rise to the same quantum harmonic analysis (up to unitary equivalence). But nevertheless, there are some interesting outcomes of this concept of a dual representation, relating operator theory on $L^2(G)$, $L^2(\widehat{G})$ and quantum harmonic analysis, that we plan to present in future work.
\end{ex}
\begin{ex}
We say that a locally compact abelian group $H$ is 2-regular when the map $H \ni g \mapsto g+g=2g$ is a topological isomorphism of $H$. In this case, denote by $\frac{g}{2}$ the image of $g$ under the inverse map. When $\Xi$ is 2-regular and the multiplier $m$ satisfies any of the three identities
\begin{align*}
    m(x,\frac{x}{2}) m(x,-x) m(-x,-\frac{x}{2}) &= 1,\\
    m(\frac{x}{4}, \frac{x}{4})m(x,-x) m(-\frac{x}{4}, -\frac{x}{4}) &= 1,\\
    m(\frac{x}{2}, x) m(x,-x) m(-\frac{x}{2}, -x) &= 1,
\end{align*}
where $x \in \Xi$ arbitrary and $x/4 = (x/2)/2$, then setting
\begin{align*}
    a(x) = m(x, \frac{x}{2}), \quad a(x) = m(\frac{x}{4}, \frac{x}{4}), \quad a(x) = m(\frac{x}{2}, x),
\end{align*}
according to which of the above identities is satisfied, gives a continuous $a: \Xi \to S^1$ and yields the representation $U_x^a$ satisfying
\begin{align*}
    (U_x^a)^\ast = U_{-x}^a, \quad x \in \Xi.
\end{align*}
When we assume that $\Xi$ is 2-regular, we will always assume that one of the above identities is satisfied, so we can pass to the representation $U_x^a$ satisfying this identity.
\end{ex}
\begin{ex}\label{ex:2regularweyl}
The lca group $G$ is 2-regular if and only if $\widehat{G}$ is 2-regular, which follows immediately from the functorial properties of Pontryagin duality. If this is the case, $\Xi = G \times \widehat{G}$ is 2-regular as well. Then, the multiplier $m((x, \xi), (y, \eta)) = \langle x, \eta\rangle$ satisfies the identities from the previous example. All definitions of the function $a(x,\xi)$ agree in this case: $a(x,\xi) = \langle x, \frac{\xi}{2}\rangle$. The projective unitary representation on $L^2(G)$ is given by
\begin{align*}
    U_{(x,\xi)}^af(y) = a(x,\xi) \langle y, \xi\rangle f(y-x) = \langle \frac{x}{2}+y, \xi \rangle f(y-x).
\end{align*}
This representation is now the representation through Weyl operators, satisfying $(U_{(x, \xi)}^a)^\ast = U_{(-x,-\xi)}$. We want to note that in this particular example, $m_a$ is a bicharacter of $\Xi$.
\end{ex}
\begin{ex}
    Let $G = \mathbb R^d$ with $d \geq 2$ and $\Xi \cong \mathbb R^d \times \mathbb R^d$. Let $B$ be a constant magnetic field on $\mathbb R^d$, i.e.\ a 2-form $B = \sum_{j < k} B_{jk} dx_j \wedge dx_k$, where each $B_{jk}$ is constant on $\mathbb R^d$. Let $A$ be a potential of $B$, i.e.\ $A = \sum_{k=1}^d A_k dx_k$ with $dA = B$, which boils down to $\partial_j A_k - \partial_k A_j = B_{jk}$. Letting
    \begin{align*}
        m((x, \xi), (y, \eta)) = e^{i\frac{\sigma((x, \xi), (y, \eta))}{2} - i\Gamma^B([0, -x, -x-y])},
    \end{align*}
    where $\sigma$ is the standard symplectic form $\sigma((x, \xi), (y, \eta)) = y\xi - x\eta$ and $\Gamma^B([0, 0-x, 0-x-y])$ is the flux of the magnetic field through the triangle $\Delta(0, -x,-x-y)$ with oriented corners $0, -x,-x-y$:
    \begin{align*}
        \Gamma^B([0, -x,-x-y]) = \int_{\Delta(0,-x,-x-y)}B.
    \end{align*}
    Then, $m$ is a Heisenberg multiplier and the CCR relations are realized by the magnetic Weyl operators
    \begin{align*}
        W_{(x, \xi)}^Af(t) = e^{-i\frac{x\xi}{2} + i t\xi - i\Gamma^A([t, t-x]} f(t-x).
    \end{align*}
    Here, $\Gamma^A([t,t-x])$ is the magnetic circulation through the line segment $[t,t-x]$, i.e.\ 
    \begin{align*}
        \Gamma^A([t,t-x]) = \int_{[t,t-x]} A.
    \end{align*}
    The magnetic Weyl operators are the building blocks of the magnetic Weyl calculus. Indeed, many of the results we obtain are closely related to properties of this calculus, we refer to \cite{Mantoiu_Purice2004, Nittis_Lein_Seri} for details. We only want to note that, upon picking magnetic fields which are not constant, one can still consider the magnetic Weyl operators. They are certainly still unitary operators which satisfy a form of the CCR relation. The difficulty now arises from the fact that the identity
    \begin{align*}
        W_{(x, \xi)}^A W_{(y, \eta)}^A = m((x, \xi), (y, \eta)) W_{(x+y, \xi + \eta)}^A,
    \end{align*}
    which is formally still satisfied, yields a cocycle $m$ which takes values in multiplication operators with unimodular symbols. Therefore, $m$ no longer commutes with the Weyl operators. This certainly leads to an interesting structure, which nevertheless is beyond the scope of this work.
\end{ex}

We will want to work with a unitary operator $R \in \mathcal U(\mathcal H)$ such that:
\begin{align}\label{eq:defR}
    U_x R = RU_{-x}, \quad x \in \Xi.
\end{align}
Note that, when such a $R$ is given, we have
\begin{align*}
m(x,y) U_{x+y} &= U_{x}U_y= RU_{-x}U_{-y}R^\ast \\
&= m(-x,-y)RU_{-(x+y)}R^\ast = m(-x,-y)U_{x+y}, \quad x,y \in \Xi,
\end{align*}
i.e.~$m(x,y) = m(-x,-y)$ for all $x,y \in \Xi$. We cannot come up with a single practically relevant situation where this identity is not satisfied. We add this as a minor second assumption:
\begin{assum}\label{assum:2}
The multiplier $m$ satisfies $m(x,y) = m(-x,-y)$ for all $x, y \in \Xi$.
\end{assum}
Indeed, when the above assumption is satisfied, one easily obtains such an operator $R$ that we were looking for: Considering $V_x = U_{-x}$, we have:
\begin{align*}
    V_x V_y = m(x,y) V_{x+y},
\end{align*}
i.e.~$V$ gives a new irreducible projective representation of $\Xi$ on $\mathcal H$. By Theorem \ref{Mackey-Stone-vNeumann}, there exists $R \in \mathcal U(\mathcal H)$ such that $U_x R = RV_x = RU_{-x}$ for all $x \in \Xi$, which is what we wanted. 
\begin{ex}
When $\Xi = G \times \widehat{G}$ with the multiplier $m((x, \xi),(y,\eta)) = \overline{\langle x, \eta\rangle}$, then $Rf(x) = f(-x), f \in L^2(G)$, provides the operator $R$. 
\end{ex}
\begin{ex}
When $R$ is the appropriate operator for the representation $U_x$ and $a: \Xi \to \mathbb T$ is continuous, then $R$ is still the appropriate operator for the representation $U_x^a$, provided that $a(x) = a(-x)$.
\end{ex}
Note that it is
\begin{align*}
    RRU_x = RU_{-x}R = U_x RR, \quad x \in \Xi.
\end{align*}
Therefore, by Schur's lemma, $RR = cI$ for some $c \in \mathbb C$. Since $RR$ is unitary, $|c| = 1$. Replacing $R$ by $R' = \frac{1}{c^{1/2}}R$, with $c^{1/2}$ any square root of $c$, $R'$ becomes self-adjoint and still satisfies the desired identity.

For some parts of the theory, we will need another assumption in charge, namely that the projective representation considered is \emph{integrable}. We recall the following theorem, the proof of which is analogous to \cite[Prop.~14.5.1]{Dixmier_1977}:
\begin{thm}
    Let $(\mathcal H, \rho)$ be an irreducible projective unitary representation of the locally compact abelian group $\Xi$. Then, the following are equivalent:
    \begin{enumerate}[(i)]
        \item There exists some $0 \neq \varphi \in \mathcal H$ such that $x \mapsto \langle \rho(x) \varphi, \varphi\rangle  \in L^1(\Xi)$.
        \item There exists a dense subspace $\mathcal H_1 \subset \mathcal H$ such that $x \mapsto \langle \rho(x)\varphi, \psi\rangle \in L^1(\Xi)$ for all $\varphi, \psi \in \mathcal H_1$.
    \end{enumerate}
\end{thm}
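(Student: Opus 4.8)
The plan is to prove the nontrivial implication $(i)\Rightarrow(ii)$; the converse is immediate, since one may take $\psi=\varphi$ for any nonzero $\varphi$ in the dense subspace provided by $(ii)$. So suppose $0\neq\varphi\in\mathcal H$ satisfies $x\mapsto\langle\rho(x)\varphi,\varphi\rangle\in L^1(\Xi)$, and set
\[
\mathcal H_1:=\operatorname{span}\{\rho(y)\varphi:y\in\Xi\}.
\]
Because $\rho(x)\rho(y)\varphi=m(x,y)\rho(x+y)\varphi$ and $|m(x,y)|=1$, the linear subspace $\mathcal H_1$ is invariant under every $\rho(x)$; hence its closure is a nonzero closed $\rho$-invariant subspace, and irreducibility forces $\overline{\mathcal H_1}=\mathcal H$. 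Thus $\mathcal H_1$ is a dense subspace, and it remains to verify the $L^1$-condition for coefficients of pairs of its elements.

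The second step is the key computation. For $y_1,y_2\in\Xi$, using the cocycle identity together with $\rho(y)^\ast=\overline{m(y,-y)}\,\rho(-y)$ one obtains
\[
\langle\rho(x)\rho(y_1)\varphi,\rho(y_2)\varphi\rangle
= m(x,y_1)\,\overline{m(y_2,-y_2)}\,m(-y_2,x+y_1)\,\langle\rho(x+y_1-y_2)\varphi,\varphi\rangle,
\]
and since all $m$-factors are unimodular,
\[
|\langle\rho(x)\rho(y_1)\varphi,\rho(y_2)\varphi\rangle|=|\langle\rho(x+y_1-y_2)\varphi,\varphi\rangle|.
\]
By translation invariance of the Haar measure on $\Xi$ this gives $\int_\Xi|\langle\rho(x)\rho(y_1)\varphi,\rho(y_2)\varphi\rangle|\,dx=\int_\Xi|\langle\rho(x)\varphi,\varphi\rangle|\,dx<\infty$. (Measurability of $x\mapsto\langle\rho(x)\psi,\psi'\rangle$ for arbitrary $\psi,\psi'\in\mathcal H$ is built into the definition of a projective representation, so only the finiteness of the $L^1$-norm needs an argument.) Finally, writing two arbitrary elements of $\mathcal H_1$ as finite sums $\sum_i a_i\rho(y_i)\varphi$ and $\sum_j b_j\rho(z_j)\varphi$ and expanding $\langle\rho(x)\bigl(\sum_i a_i\rho(y_i)\varphi\bigr),\sum_j b_j\rho(z_j)\varphi\rangle$ bilinearly, the resulting function of $x$ is a finite linear combination of functions already shown to lie in $L^1(\Xi)$, hence itself lies in $L^1(\Xi)$. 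This is $(ii)$.

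The only point requiring care is the bookkeeping of the unimodular scalar relating $\langle\rho(x)\rho(y_1)\varphi,\rho(y_2)\varphi\rangle$ to $\langle\rho(x+y_1-y_2)\varphi,\varphi\rangle$; once one passes to absolute values it plays no role. An alternative, closer to Dixmier's original argument, would be to use smoothed vectors $\rho(f)\varphi=\int_\Xi f(y)\,\rho(y)\varphi\,dy$ for $f\in C_c(\Xi)$ (which are dense and whose mixed coefficients are iterated convolutions of $L^1$-functions); the span-of-translates version above, however, involves no vector-valued integration and is therefore the cleaner route in the present generality, where $\Xi$ is not assumed to be second countable.
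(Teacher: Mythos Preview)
Your argument is correct. The paper itself does not give a proof but simply cites Dixmier \cite[Prop.~14.5.1]{Dixmier_1977}, whose argument for the nontrivial direction proceeds via the smoothed vectors $\rho(f)\varphi=\int_\Xi f(y)\rho(y)\varphi\,dy$ for $f\in C_c(\Xi)$ (or $L^1(\Xi)$), showing that their coefficients are convolutions of $L^1$-functions with the given coefficient. Your span-of-translates route is a genuinely different and more elementary alternative: you bypass vector-valued integration entirely and reduce everything to a single algebraic identity plus translation invariance of the Haar measure. You already identify this distinction yourself in the closing paragraph, and your remark that this avoids any second-countability or Bochner-integrability considerations is apt in the context of the paper. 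The only minor stylistic point is that the explicit product of three $m$-factors need not be written out, since measurability of $x\mapsto\langle\rho(x)\rho(y_1)\varphi,\rho(y_2)\varphi\rangle$ is already part of the definition and you only ever use the modulus; but the computation is correct as it stands.
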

The projective unitary representation $(\rho, \mathcal H)$ is called \emph{integrable} if it satisfies the above equivalent conditions. We will usually write that $U_x = \rho(x)$ is integrable.
\begin{ex}
The representations from Examples \ref{main_example} are integrable. In particular, if we pick a Schwartz-Bruhat function $\varphi \in \mathcal S(G) \subset \mathcal H$, then saying that $(x,\xi) \mapsto \langle U_x\varphi, \varphi\rangle \in L^1(\Xi)$ is the same as saying that $\varphi$ belongs to the Feichtinger algebra $\mathcal S_0(G)$. This is known to be true, see e.g.~\cite[Theorem 9]{Feichtinger1981}.
\end{ex}
\begin{ex}
When $U_x$ is integrable and $a: \Xi \to \mathbb T$ is continuous, then $U_x^a$ is also integrable.
\end{ex}
We indeed do not know if there are any representations within our scope which are not integrable, mostly because phase spaces $\Xi$ which are not of the form $G \times \widehat{G}$ are somewhat conceptually complicated. We will specifically mention when integrability of the representation is needed, as large parts of the theory work without this assumption.

Let us end this part on projective representations by mentioning the following result, which is an adaptation of \cite[Lemma 18]{Dammeier_Werner2023}. While we will not make use of it in this work, we have the feeling that this is the right place to set up the result for future reference. For this result, we will of course assume that all our assumptions are in charge. In principle, this result is an extension of the statement of Schur's lemma.
\begin{prop}
    Let $A \in \mathcal L(\mathcal H)$ such that for every $x \in \Xi$ there exists $c_x \in \mathbb C$ such that $U_x A U_x^\ast = c_x A$. Then, we have $A = bU_z$ for some $b \in \mathbb C$ and $z \in \Xi$.
\end{prop}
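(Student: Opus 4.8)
The plan is to first set aside the trivial case $A = 0$ (take $b = 0$ and any $z \in \Xi$) and assume $A \neq 0$ henceforth, so that the scalars $c_x$ are uniquely determined. The first task is to identify $x \mapsto c_x$ as a character. Since each $U_x$ is unitary, $|c_x|\,\norm{A} = \norm{U_x A U_x^\ast} = \norm{A}$, so $|c_x| = 1$. From $U_{x+y} = \overline{m(x,y)}\,U_x U_y$ we get $U_{x+y}^\ast = m(x,y)\,U_y^\ast U_x^\ast$, so the multiplier cancels under conjugation and
\begin{align*}
    c_{x+y}A = U_{x+y} A U_{x+y}^\ast = U_x U_y A U_y^\ast U_x^\ast = U_x(c_y A)U_x^\ast = c_x c_y A.
\end{align*}
As $A \neq 0$ this forces $c_{x+y} = c_x c_y$, and $c_0 = 1$ since $U_0 = I$. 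To upgrade this algebraic homomorphism to a continuous one, fix $\varphi_0, \psi_0 \in \mathcal H$ with $\langle A\varphi_0, \psi_0\rangle \neq 0$ and write $c_x = \langle A U_x^\ast \varphi_0,\, U_x^\ast \psi_0\rangle / \langle A\varphi_0, \psi_0\rangle$. Since $\rho$ is continuous in the strong (hence weak) operator topology, and since a net of unitaries converging weakly to a unitary converges strongly, the orbit maps $x \mapsto U_x^\ast \xi$ are norm-continuous; thus $x \mapsto c_x$ is continuous, i.e.\ $c := (c_x)_{x\in\Xi} \in \widehat{\Xi}$.

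Now I use that $m$ is a Heisenberg multiplier. The map $\Xi \ni z \mapsto \sigma(\,\cdot\,, z) \in \widehat{\Xi}$ is a topological isomorphism — it is the map $x \mapsto \sigma(x,\cdot)$ precomposed with inversion, since $\sigma(x,z) = \overline{\sigma(z,x)}$. Hence there is a (unique) $z \in \Xi$ with $\sigma(x,z) = c_x$ for every $x \in \Xi$. Recalling $U_x U_z = \sigma(x,z) U_z U_x$, i.e.\ $U_x U_z U_x^\ast = c_x U_z$, and therefore $U_x U_z^\ast U_x^\ast = (U_x U_z U_x^\ast)^\ast = \overline{c_x}\,U_z^\ast$, I compute
\begin{align*}
    U_x(A U_z^\ast)U_x^\ast = (U_x A U_x^\ast)(U_x U_z^\ast U_x^\ast) = (c_x A)(\overline{c_x}\,U_z^\ast) = |c_x|^2\, A U_z^\ast = A U_z^\ast .
\end{align*}
So $A U_z^\ast$ commutes with every $U_x$, and by irreducibility of $(\mathcal H,\rho)$ and Schur's lemma $A U_z^\ast = bI$ for some $b \in \mathbb C$; multiplying by the unitary $U_z$ gives $A = b U_z$.

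The cocycle bookkeeping and the final invocation of Schur's lemma are routine — the latter is exactly the argument already used in the excerpt to deduce $RR = cI$. The only step that genuinely needs the hypotheses is establishing that $x \mapsto c_x$ is a \emph{continuous} character: the strong continuity of $\rho$ enters here, and it is precisely the Heisenberg hypothesis (surjectivity of the induced self-duality $\Xi \cong \widehat{\Xi}$) that then lets us write $c = \sigma(\,\cdot\,, z)$ for some $z \in \Xi$. I expect this to be the main, and rather mild, obstacle.
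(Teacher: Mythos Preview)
Your proof is correct and follows essentially the same approach as the paper's: establish that $x \mapsto c_x$ is a continuous character, use the Heisenberg self-duality to realize it as $\sigma(\cdot,z)$, then twist $A$ by $U_z^\ast$ to obtain an operator commuting with all $U_x$ and invoke Schur's lemma. The only cosmetic differences are that the paper writes $c_x = \sigma(z,x)$ and works with $B = AU_{-z}^\ast$ (a sign convention), and obtains continuity of $c_x$ slightly more directly from the already-established weak$^\ast$ continuity of $x \mapsto U_x A U_x^\ast$ rather than via your (correct) argument about strong continuity of $x \mapsto U_x^\ast$.
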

\begin{proof}
    There is of course no loss of generality in assuming that $A \neq 0$. We first note that
    \begin{align*}
        c_{x+y}A = U_{x+y}AU_{x+y}^\ast = U_x U_y AU_y^\ast U_x^\ast = c_x c_y A 
    \end{align*}
    such that $c_{x+y} = c_x c_y$. Further, we have
    \begin{align*}
        \| A\| = \|U_x A U_x^\ast\| = \| c_x A\| = |c_x| \| A\|
    \end{align*}
    so that we obtain $|c_x| = 1$ for every $x \in \Xi$. Let $\varphi, \psi \in \mathcal H$ with $\langle A\varphi, \psi\rangle \neq 0$. Then,
    \begin{align*}
       \Xi \ni x \mapsto  c_x \langle A\varphi, \psi\rangle = \langle U_x A U_x^\ast \varphi, \psi\rangle
    \end{align*}
    is continuous such that $\Xi \ni x \mapsto c_x$ has to be continuous. Therefore, we have seen that $[x \mapsto c_x] \in \widehat{\Xi}$. Hence, there exists $z \in \Xi$ with $c_x = \sigma(z, x)$. We now let $B = A U_{-z}^\ast$. Then, 
    \begin{align*}
        U_x B U_x^\ast = U_x A U_{-z}^\ast U_x^\ast = \overline{\sigma(x, -z)}U_x A U_x^\ast U_{-z}^\ast = \frac{\sigma(z, x)}{\sigma(x, -z)} A U_{-z}^\ast.
    \end{align*}
    Since
    \begin{align*}
        \frac{\sigma(z, x)}{\sigma(x, -z)} = \sigma(z, x) \sigma(x, z) = \frac{m(z, x)}{m(x, z)} \cdot \frac{m(x, z)}{m(z,x)} = 1,
    \end{align*}
    we obtain $U_x B U_x^\ast = B$ for every $x \in \Xi$. By Schur's lemma, we arrive at $B = b \Id$ for some $b \in \mathbb C$ and hence
    \begin{align*}
        A = BU_{-z} = b U_{-z}.
    \end{align*}
    This finishes the proof.
\end{proof}

\section{Some facts from integration theory}\label{sec:Gelfand_Pettis}

Large parts of the present paper will make use of certain Banach space-valued integrals. To set the foundation for those occasions, we will now present some details on the two versions of vector-valued integrals that we will make use of: The Bochner integral and the Gelfand integral.  The classical reference for these integrals would be \cite[Section II]{Diestel_Uhl1977}. There, the standing assumption of the measure space being finite is made, which is nevertheless not crucial. For the reader's convenience (and to convince that finiteness of the measure space is indeed not crucial),  we will give some proofs for the properties of these integral.

Let $(\Omega, \mathcal A, \mu)$ be a measure space and let $X$ be a Banach space. A function $f: \Omega \to X$ is called a \emph{simple function} if there are $x_1, \dots, x_n\in X$ and $A_1, \dots, A_n \in \mathcal A$ such that $f = \sum_{j=1}^n x_j \mathbf 1_{A_j}$. $f$ is called $\mu$-measurable if there exists a sequence $(f_n)_{n\in \mathbb N}$ of $X$-valued simple functions such that $\| f_n(t) - f(t)\|_X \to 0$ for $\mu$-almost every $t \in \Omega$. Setting things up in this way, this yields a straightforward generalization of Lebesgue measurability in the vector-valued case, which in turn leads to a vector-valued integral called the \emph{Bochner integral}. We will come to this later, after we have discussed an alternative approach.

Instead of working with the above notion of measurability, one could consider functions which satisfy a weaker property: $f: \Omega \to X$ is said to be \emph{weakly measurable} if for every $\varphi \in X^\ast$, the function $\varphi \circ f$ is measurable. Now, Pettis' measurability theorem \cite[Theorem II.1.2]{Diestel_Uhl1977} states that $f$ is $\mu$-measurable if and only if it is weakly measurable \emph{and} it is $\mu$-essentially separable valued. The latter means that there exists a set $S \in \mathcal A$ with $\mu(S) = 0$ such that $f(\Omega \setminus S)$ is a subset of $X$ which is separable in norm. Weak measurability is of course simple to check. When the space $X$ itself is already separable, the condition of being $\mu$-essentially separable valued is of course trivially satisfied. If this is not the case, this technical condition is in practice sometimes hard to verify. 

The integral based on this notion of weak measurability are the Dunford- and the Pettis integral. The downside of the Dunford integral is that it, in general, takes values in $X''$ (the bidual of $X$). Proving that such an integral takes values in $X$ instead (i.e., saying that the integrand is Pettis integrable), is a nontrivial task, see, e.g., \cite{Talagrand1984}. We will not pursue this road here. 

Fortunately, there is a convenient variation of this integral when the Banach space $X$, in which the integrand takes values, is already a dual Banach space: $X = Y'$. This will mostly be useful when we consider integrals taking values in $\mathcal L(\mathcal H)$ (which is the dual space of $\mathcal T^1(\mathcal H)$, the trace class). In Section \ref{sec:coorbit}, we will replace the Hilbert space by a class of reflexive Banach spaces, where the above duality is still valid: For a reflexive Banach space $Z$, $\mathcal L(Z)$ is the dual of $\mathcal T^1(Z)$ (the nuclear operators). We will come back to this in Section \ref{sec:coorbit}.

Therefore, let now $X = Y'$, $(\Omega, \mathcal A, \mu)$ a measure space and $f: \Omega \to X$. $f$ is called weak$^\ast$ measurable if for every $y \in Y$, $\Omega \ni t \mapsto \langle f(t), y\rangle$ is measurable. Here, $\langle f(t), y\rangle$ denotes the application of the linear functional $f(t)$ to $y$. A weak$^\ast$ measurable function $f$ is called weak$^\ast$ integrable if for every $y \in Y$, $t \mapsto \langle f(t), y\rangle$ is integrable. Here's a fact about weak$^\ast$ (or Gelfand) integrals:
\begin{thm}
    Let $(\Omega, \mathcal A, \mu)$ be a measure space, $X = Y'$ a dual Banach space and $f: \Omega \to X$ weak$^\ast$ integrable. Then, there exists a unique $g \in X$ such that for every $y \in Y$:
    \begin{align*}
        \int \langle f, y\rangle ~d\mu = \langle g, y\rangle.
    \end{align*}
    With this $g$ we set $\int f ~d\mu := g$ and call this the \emph{Gelfand} or \emph{weak$^\ast$} integral of $f$.
\end{thm}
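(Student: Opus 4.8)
The plan is to realise $g$ as the element of $X = Y'$ represented by the linear functional $y \mapsto \int \langle f, y\rangle\,d\mu$ on $Y$; the only real content is to verify that this functional is bounded, for which I would invoke the closed graph theorem. Define $L\colon Y \to \CC$ by $L(y) = \int_\Omega \langle f(t), y\rangle\,d\mu(t)$. This is well defined because weak$^\ast$ integrability means precisely that $t \mapsto \langle f(t), y\rangle$ lies in $L^1(\Omega,\mu)$ for every $y \in Y$, and it is linear because $t \mapsto \langle f(t), y\rangle$ depends linearly on $y$ and the integral is linear. Once $L$ is known to be bounded we have $L \in Y' = X$, and setting $g := L$ yields $\langle g, y\rangle = L(y) = \int \langle f, y\rangle\,d\mu$ for all $y \in Y$, which is the assertion. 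Uniqueness is immediate: if $g_1, g_2 \in X$ both satisfy the identity, then $\langle g_1 - g_2, y\rangle = 0$ for every $y \in Y$, and since $Y$ separates the points of $Y'$ this forces $g_1 = g_2$.

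To prove boundedness of $L$ I would factor it through $L^1(\Omega,\mu)$. Consider the linear map $T\colon Y \to L^1(\Omega,\mu)$, $Ty = \langle f(\cdot), y\rangle$, again well defined by weak$^\ast$ integrability. Then $L = \iota \circ T$, where $\iota\colon L^1(\Omega,\mu) \to \CC$, $\iota(h) = \int h\,d\mu$, is bounded with $\|\iota\| \le 1$; so it suffices to show $T$ is bounded. Both $Y$ and $L^1(\Omega,\mu)$ are Banach spaces, so by the closed graph theorem I only need the graph of $T$ to be closed. Suppose $y_n \to y$ in $Y$ and $Ty_n \to h$ in $L^1(\Omega,\mu)$. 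Passing to a subsequence, $Ty_{n_k} \to h$ $\mu$-almost everywhere. On the other hand, for each fixed $t \in \Omega$ the functional $f(t) \in Y'$ is norm-continuous, so $\langle f(t), y_{n_k}\rangle \to \langle f(t), y\rangle$, i.e.\ $(Ty_{n_k})(t) \to (Ty)(t)$ for every $t$. Comparing the two limits gives $h = Ty$ $\mu$-a.e., hence $h = Ty$ in $L^1(\Omega,\mu)$, so the graph is closed and $T$, and therefore $L$, is bounded.

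The main obstacle is exactly this boundedness of $L$: a priori $L$ is only a linear functional, and there is no direct estimate bounding $\int \langle f, y\rangle\,d\mu$ in terms of $\|y\|$, since $f$ is not assumed to be Bochner integrable or even norm-measurable. The closed graph argument circumvents this using nothing more than the pointwise continuity of each $f(t)$ as an element of $Y'$ together with the fact that $L^1$-convergence implies almost-everywhere convergence along a subsequence. (Alternatively one could phrase the estimate via the uniform boundedness principle applied to the family of functionals $\{\,y \mapsto \langle f(t), y\rangle : t \in \Omega\,\}$, but a completeness argument of this flavour seems unavoidable.) Finally I would point out that no finiteness or $\sigma$-finiteness hypothesis on $(\Omega,\mathcal A,\mu)$ enters anywhere: $L^1(\Omega,\mu)$ is a Banach space for an arbitrary measure space, and that is all the argument uses — which is the precise sense in which the standing finiteness assumption of \cite{Diestel_Uhl1977} is inessential here.
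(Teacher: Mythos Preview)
Your proposal is correct and follows essentially the same route as the paper: you factor the candidate functional through the map $T\colon Y \to L^1(\mu)$, $Ty = \langle f(\cdot), y\rangle$, verify via the closed graph theorem (using a.e.\ convergence of a subsequence together with norm-continuity of each $f(t)$) that $T$ is bounded, and conclude that $y \mapsto \int \langle f, y\rangle\,d\mu$ defines the desired element of $Y'$. The paper's proof is the same argument, including the observation that no finiteness assumption on $\mu$ is used.
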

\begin{proof}
    The proof is essentially as in \cite[Lemma II.3.1]{Diestel_Uhl1977} and is given here for the reader's convenience.

    We consider the linear operator $T: Y \to L^1(\mu)$ given by $Ty = \langle f, y\rangle$. Since $f$ is weak$^\ast$ integrable, $T$ is well-defined on all of $Y$. Below, we will show that $T$ is a closed operator, which we already assume now. Then, the closed graph theorem shows that $T$ is a bounded linear operator from $Y$ to $L^1(\mu)$. Hence, the map
    \begin{align*}
        Y \ni y \mapsto \int \langle f, y\rangle~d\mu
    \end{align*}
    is a linear functional on $Y$ with
    \begin{align*}
        \left | \int \langle f, y\rangle ~d\mu  \right | &= \left | \int T(y) ~d\mu  \right |\\
        &\leq \int |Ty|~d\mu = \| Ty\|_{L^1(\mu)}\\
        &\leq \| T\| \|y\|,
    \end{align*}
    i.e., the functional is bounded. When denoting this functional by $g$, we obtain the element of $X$ that we were looking for. Since a bounded linear functional on a Banach space is uniquely determined by its values, $g$ is of course also unique.

    Hence, we are left with proving that the operator $T$ above is closed. Therefore, let $y_n$ be a sequence in $Y$ with $y_n \to y \in Y$ and $Ty_n \to h$ in $L^1(\mu)$. Since $Ty_n$ converges to $h$ in $L^1(\mu)$, there exists a subsequence $Ty_{n_k}$ which converges $\mu$-almost everywhere to $h$. On the other hand, $Ty_n = \langle f, y_n\rangle$ converges everywhere to $\langle f, y\rangle$, as $f(t)$ is a bounded linear functional for every $t \in \Omega$. Hence, also the subsequence $Ty_{n_k}$ converges to $\langle f, y\rangle = Ty$ everywhere. We obtain that $Ty$ and $h$ agree $\mu$-almost everywhere, i.e., they give rise to the same element of $L^1(\mu)$. 
\end{proof}
Clearly, the map $f \mapsto \int f~d\mu$ is linear. Here are some more properties that we will need. We do not formulate them in the largest possible generality, but already in the setting that we need.
\begin{thm}\label{thm:Gelfand}
    Let $G$ be an lca group, $X = Y'$ a dual Banach space, $f \in L^1(G)$ and $h: G \to X$ bounded and weak$^\ast$ continuous. Then, $f \cdot h: G \to X$ is Gelfand integrable and satisfies:
    \begin{enumerate}
        \item $\| \int_G f(g) h(g)~dg\|_X \leq \| f\|_{L^1}\| h\|_\infty$.
        \item If $A \in \mathcal L(Y)$ (and hence $A' \in \mathcal L(X)$), then  the following holds true:
        \begin{align*}
        A' \int_G f(g) h(g)~dg = \int_G f(g) A' h(g)~dg.
        \end{align*}
    \end{enumerate}
\end{thm}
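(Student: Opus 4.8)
The plan is to first establish Gelfand integrability of $f\cdot h$, and then obtain both listed properties by testing against an arbitrary functional $y\in Y$ and invoking the uniqueness clause of the preceding theorem. First I would check that $f\cdot h$ is weak$^\ast$ integrable. Fix $y\in Y$. Weak$^\ast$ continuity of $h$ means $g\mapsto\langle h(g),y\rangle$ is continuous, hence measurable, so $g\mapsto f(g)\langle h(g),y\rangle$ is measurable as a product of measurable functions. The pointwise estimate $|f(g)\langle h(g),y\rangle|\le\|h\|_\infty\|y\|\,|f(g)|$ together with $f\in L^1(G)$ shows that this scalar function lies in $L^1(G)$. Thus $f\cdot h$ is weak$^\ast$ integrable and the previous theorem produces a well-defined element $\int_G f(g)h(g)\,dg\in X$.

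For property (1), I would compute the norm on $X=Y'$ by testing against the closed unit ball of $Y$: for $\|y\|\le 1$,
\[
\left|\left\langle \int_G f(g)h(g)\,dg,\,y\right\rangle\right|=\left|\int_G f(g)\langle h(g),y\rangle\,dg\right|\le\int_G|f(g)|\,\|h(g)\|_X\,dg\le\|f\|_{L^1}\|h\|_\infty,
\]
and taking the supremum over all such $y$ yields $\|\int_G f(g)h(g)\,dg\|_X\le\|f\|_{L^1}\|h\|_\infty$.

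For property (2), I would first observe that $A'h:G\to X$ again belongs to the class of admissible integrands: it is bounded, since $\|A'h(g)\|_X\le\|A'\|\,\|h\|_\infty=\|A\|\,\|h\|_\infty$, and it is weak$^\ast$ continuous, since $\langle A'h(g),y\rangle=\langle h(g),Ay\rangle$ depends continuously on $g$ because $Ay\in Y$. Hence $f\cdot A'h$ is Gelfand integrable by the first part of the proof. Then for every $y\in Y$,
\[
\left\langle A'\!\int_G f(g)h(g)\,dg,\,y\right\rangle=\left\langle \int_G f(g)h(g)\,dg,\,Ay\right\rangle=\int_G f(g)\langle h(g),Ay\rangle\,dg=\int_G f(g)\langle A'h(g),y\rangle\,dg=\left\langle \int_G f(g)A'h(g)\,dg,\,y\right\rangle.
\]
Since $y\in Y$ is arbitrary and the Gelfand integral is the unique element of $X$ realizing these functionals, the two integrals coincide, which is the asserted identity.

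I do not expect a genuine obstacle here. The only points requiring a moment's care are that it is weak$^\ast$ \emph{continuity} of $h$ (rather than mere measurability, which is not part of the hypotheses) that makes the scalar integrands $g\mapsto f(g)\langle h(g),y\rangle$ measurable, and that the adjoint $A'$ of a bounded operator on $Y$ is a bounded operator on $X=Y'$ with $\|A'\|=\|A\|$, so that $A'h$ falls back into the exact class of integrands already handled in the first step.
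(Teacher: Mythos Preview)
Your proof is correct and follows essentially the same route as the paper's: verify weak$^\ast$ measurability and integrability via the pointwise bound, read off the norm estimate from that same bound, and establish (2) by the identical chain of equalities $\langle A'\int,\,y\rangle=\langle\int,\,Ay\rangle=\cdots$. The only difference is cosmetic: you spell out that $A'h$ is again bounded and weak$^\ast$ continuous before integrating it, and you phrase the norm estimate explicitly as a supremum over the unit ball, whereas the paper leaves both points implicit.
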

\begin{proof}
    Clearly, $f\cdot h$ is weak$^\ast$ measurable, as $\langle f \cdot h, y\rangle = f \cdot \langle h, y\rangle$ for $y \in Y$, which is the product of a continuous and a measurable function, hence measurable. Further,
    \begin{align*}
        \int_G |f(g) \langle h(g), y\rangle|~dg \leq \int_G |f(g)|\| h(g)\| \| y\|~dg \leq \| f\|_{L^1(G)} \| h\|_\infty \| y\| < \infty,
    \end{align*}
    such that $f \cdot g$ is weak$^\ast$ integrable. The above estimate also shows the first statement. For the second statement, note that for $y \in Y$:
    \begin{align*}
        \langle A' \int_G f(g) h(g)~dg, y\rangle &= \langle \int_G f(g) h(g)~dg, Ay\rangle\\
        &= \int_G f(g) \langle h(g), Ay\rangle~dg\\
        &= \int_G f(g) \langle A' h(g), y\rangle~dg\\
        &= \langle \int_G f(g) A' h(g)~dg, y\rangle.
    \end{align*}
    This finishes the proof.
\end{proof}
The previous theorem, besides being often useful during this paper, also shows the biggest drawback of the Gelfand integral: You cannot pull arbitrary linear functionals into the integral. This is only possible for functionals which are given as duals of a predual map: $\varphi: \mathbb C \to Y$. In particular, when forming integrals in the trace class (which is itself a dual Banach space), it is not possible to pull the trace functional into the integral, at least without further explanation (as the trace functional does not possess a predual). Therefore, we will in the following also make use of the Bochner integral. 

As already stated earlier, a function $\Omega \to X$ is $\mu$-measurable if and only if it can be written as the (a.e.-)limit of simple functions. A $\mu$-measurable function $f$ is called Bochner-integrable if $\| f\|_X$ is integrable. In that case, $f$ can be written as the limit of integrable simple functions (meaning for $\sum_{j=1}^n x_j \mathbf 1_{A_j}$ that $A_j$ has finite measure whenever $x_j \neq 0$), and $\int f~d\mu$ is defined as the limit of $\lim_{n \to \infty} \int f_n~d\mu$, where $(f_n)_{n \in \mathbb N}$ is any sequence of integrable simple functions converging to $f$. Some simple facts on Bochner integrability are listed below:
\begin{thm}
    Let $(\Omega, \mathcal A, \mu)$ be a measure space and $f, g: \Omega \to X$ be Bochner integrable. Then, the following facts hold true:
    \begin{enumerate}
        \item $\int \alpha f + g~d\mu = \alpha \int f~d\mu + \int g ~d\mu$ for $\alpha \in \mathbb C$;
        \item $\| \int f~d\mu\| \leq \int \| f\|~d\mu$;
        \item If $Z$ is another Banach space and $A\in \mathcal L(X, Z)$, then $Af$ is Bochner integrable with $A\int f~d\mu = \int Af~d\mu$.
        \item When $X$ is a dual Banach space, then $f$ is also Gelfand integrable and both the Gelfand and the Bochner integral of $f$ agree.
    \end{enumerate}
\end{thm}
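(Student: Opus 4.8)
The plan is to reduce each of the four claims to the corresponding statement for integrable simple functions, where all of them are immediate, and then pass to the limit. Throughout I would work with approximating sequences $(f_n)$ of integrable simple functions with $f_n \to f$ $\mu$-almost everywhere and $\int \|f_n - f\|_X~d\mu \to 0$, as underlies the definition of the Bochner integral above; recall that then $\int f~d\mu = \lim_n \int f_n~d\mu$ and that this limit does not depend on the choice of approximating sequence.

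For (1) and (2): for a simple function $f = \sum_j x_j \mathbf 1_{A_j}$ with the $A_j$ of finite measure one has $\int f~d\mu = \sum_j x_j\, \mu(A_j)$, so linearity is clear and $\|\int f~d\mu\|_X \le \sum_j \|x_j\|_X \mu(A_j) = \int \|f\|_X~d\mu$. For general $f, g$, pick approximating sequences $f_n, g_n$ as above; then $\alpha f_n + g_n$ approximates $\alpha f + g$ in the same way, so passing to the limit in $\int (\alpha f_n + g_n)~d\mu = \alpha \int f_n~d\mu + \int g_n~d\mu$ yields (1). For (2), the reverse triangle inequality gives $|\int \|f_n\|_X~d\mu - \int \|f\|_X~d\mu| \le \int \|f_n - f\|_X~d\mu \to 0$, hence $\|\int f~d\mu\|_X = \lim_n \|\int f_n~d\mu\|_X \le \lim_n \int \|f_n\|_X~d\mu = \int \|f\|_X~d\mu$.

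For (3): if $f_n \to f$ $\mu$-a.e.\ with $f_n$ simple, then $A f_n$ is simple and $\|A f_n(t) - Af(t)\|_Z \le \|A\|\, \|f_n(t) - f(t)\|_X \to 0$ $\mu$-a.e., so $Af$ is $\mu$-measurable; since $\|Af\|_Z \le \|A\|\, \|f\|_X$ is integrable, $Af$ is Bochner integrable. Taking the $f_n$ with $\int \|f_n - f\|_X~d\mu \to 0$ we get $\int \|A f_n - A f\|_Z~d\mu \to 0$, hence $\int Af~d\mu = \lim_n \int A f_n~d\mu = \lim_n A \int f_n~d\mu = A \int f~d\mu$, the middle identity being trivial for simple functions and the last step using continuity of $A$. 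For (4), assume $X = Y'$; for $y \in Y$ the functional $\langle \cdot, y\rangle : X \to \mathbb C$ is bounded, so (3) with $Z = \mathbb C$ shows that $t \mapsto \langle f(t), y\rangle$ is integrable (so that $f$ is weak$^\ast$ integrable) and that $\langle \int f~d\mu, y\rangle = \int \langle f, y\rangle~d\mu$. This is exactly the characterizing property of the Gelfand integral in the theorem above, so by the uniqueness asserted there the Gelfand and Bochner integrals of $f$ coincide.

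I do not expect a real obstacle here: the proof is entirely a matter of approximation by simple functions. The only point worth stating carefully is the mode of convergence of the approximating sequences (pointwise a.e.\ together with convergence of the norm integrals), which is what makes $\lim_n \int f_n~d\mu$ well defined and is implicitly part of the definition of Bochner integrability recalled above.
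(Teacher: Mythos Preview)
Your argument is correct and is precisely the standard route via approximation by integrable simple functions. The paper does not give a proof of this theorem at all: it lists these four properties as ``simple facts on Bochner integrability'' and moves on, so there is nothing to compare against beyond noting that your proof is the expected one.
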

Of course, the second-to-last property yields exactly what we want regarding linear functionals such as the trace. 

We will only consider the following special class of functions for which we will form Bochner integrals:
\begin{thm}\label{thm:Bochner}
    Let $G$ be an lca group, $X$ a Banach space, $f \in L^1(G)$ and $h: G \to X$ bounded and uniformly continuous. Then, $f \cdot h$ is Bochner integrable.
\end{thm}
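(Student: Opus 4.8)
The plan is to verify the two ingredients of Bochner integrability directly: that $f\cdot h$ is $\mu$-measurable (i.e.\ an a.e.-limit of simple functions), and that $g\mapsto\|f(g)h(g)\|_X = |f(g)|\,\|h(g)\|_X$ is integrable. The second point is immediate, since $\|h\|_\infty<\infty$ gives $\int_G |f(g)|\,\|h(g)\|_X\,dg \le \|h\|_\infty\|f\|_{L^1}<\infty$. So the real content is measurability, and by Pettis' measurability theorem it suffices to show that $f\cdot h$ is weakly measurable and $\mu$-essentially separably valued.

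Weak measurability is easy: for $\varphi\in X^\ast$, $\varphi(f(g)h(g)) = f(g)\,\varphi(h(g))$ is the product of the measurable function $f$ with the continuous (hence measurable) function $g\mapsto\varphi(h(g))$, so it is measurable. For essential separability I would use uniform continuity of $h$ together with $\sigma$-finiteness considerations coming from $f\in L^1(G)$. First I would reduce to a $\sigma$-compact subset: since $f\in L^1(G)$, the set $\{g : f(g)\neq 0\}$ is, up to a null set, contained in a $\sigma$-compact open subgroup $H\le G$ (any $L^1$ function is supported, modulo a null set, in a countable union of cosets of a compactly generated, hence $\sigma$-compact, open subgroup; on a subgroup of that type the statement is clean). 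Then $f\cdot h$ vanishes off $H$, so it is enough to see that $h|_H$ is separably valued. Now $H=\bigcup_n K_n$ with each $K_n$ compact; by uniform continuity, for each $n$ and each $\varepsilon>0$ there is a neighbourhood $V$ of the identity such that $\|h(g)-h(g')\|_X<\varepsilon$ whenever $g-g'\in V$, and covering the compact set $K_n$ by finitely many translates of $V$ shows $h(K_n)$ is totally bounded, hence $h(K_n)$ is separable; therefore $h(H)=\bigcup_n h(K_n)$ is separable. Thus $f\cdot h$ is $\mu$-essentially separably valued, Pettis applies, and $f\cdot h$ is $\mu$-measurable; combined with integrability of the norm, it is Bochner integrable.

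The main obstacle is the essential-separability step in the non-second-countable setting, which is exactly the kind of subtlety the paper flags around Haar measures on groups that are not second countable: one cannot simply say "$G$ is separable" or "$h$ is continuous on a separable space." The resolution is the two-part argument above --- localising the $L^1$ function $f$ to a $\sigma$-compact open subgroup, and exploiting that \emph{uniform} continuity (not mere continuity) upgrades continuity to total boundedness on compact sets, giving separability of the relevant image. Everything else --- weak measurability, the norm bound --- is routine and mirrors the corresponding verification in the proof of Theorem \ref{thm:Gelfand}.
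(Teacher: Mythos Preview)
Your proof is correct, but it takes a different route from the paper's. The paper proves $\mu$-measurability directly from the definition: after the same $\sigma$-compact reduction via Lemma~\ref{lemma:Deitmar}, it reduces by measure-theoretic induction to $f=\mathbf 1_A$ with $A$ relatively compact, then uses uniform continuity of $h$ to cover $\overline A$ by finitely many translates of a small neighbourhood and builds an explicit simple function approximating $\mathbf 1_A\cdot h$ uniformly. You instead invoke Pettis' measurability theorem and verify weak measurability plus essential separability of the range. Both arguments rest on the same two facts (the $\sigma$-compact support of $f$, and a finite-cover argument on compacta), but your route avoids the induction to indicator functions and is arguably cleaner; the paper's route is more self-contained in that it does not cite Pettis.

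One small remark: in your essential-separability step, uniform continuity is not actually needed---ordinary continuity already makes $h(K_n)$ compact, hence separable. Uniform continuity is what the paper's direct construction genuinely requires (to get a single neighbourhood $O$ working across all of $\overline A$), so your closing claim that uniform continuity is the key upgrade is slightly misplaced for your own argument, though of course it does no harm.
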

Before we prove this, we will mention the following fact that we will make repeated use of throughout the paper.
\begin{lem}{{\cite[Corollary 1.3.6]{Deitmar_Echterhoff2014}}}\label{lemma:Deitmar}
   Let $G$ be a locally compact abelian group and let $f \in L^p(G)$, $1 \leq p < \infty$. Then, $f$ is supported in some $\sigma$-compact open subgroup of $G$.
\end{lem}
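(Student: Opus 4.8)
The plan is to reduce the statement to the existence of a single open $\sigma$-compact subgroup of $G$ and then enlarge it by the countably many cosets on which $f$ is genuinely supported. First I would produce such a base subgroup. Using local compactness, pick a symmetric compact neighborhood $U$ of the identity (one may take $U \cap (-U)$ for any compact neighborhood) and set $H_0 = \bigcup_{n \geq 1}(U + \dots + U)$, the increasing union of the $n$-fold sumsets. This $H_0$ is a subgroup because $U = -U$ and the nested sumsets are closed under addition; it is open because it contains $U$, a neighborhood of the identity, and for $h \in H_0$ one has $h + U \subseteq H_0$; and it is $\sigma$-compact because the $n$-fold sumset is the image of the compact set $U^n$ under the continuous addition map, hence compact. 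Being open, $H_0$ is also closed, so its cosets partition $G$ into pairwise disjoint open-and-closed sets $G = \bigsqcup_{i \in I}(x_i + H_0)$.

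Next I would use integrability of $f$ to control how many cosets matter. Writing $\mu$ for Haar measure, put $a_i = \int_{x_i + H_0} |f|^p\, d\mu \in [0,\infty]$. For any finite $F \subseteq I$ the cosets indexed by $F$ are disjoint, so $\sum_{i \in F} a_i = \int_{\bigcup_{i \in F}(x_i + H_0)} |f|^p\, d\mu \leq \|f\|_p^p < \infty$. Taking the supremum over finite $F$ gives $\sum_{i \in I} a_i \leq \|f\|_p^p < \infty$, whence $J := \{i \in I : a_i > 0\}$ is at most countable. For $i \notin J$ we have $\int_{x_i + H_0}|f|^p\, d\mu = 0$, so $f = 0$ almost everywhere on $x_i + H_0$, and therefore $f = 0$ almost everywhere on $G \setminus \bigcup_{i \in J}(x_i + H_0)$. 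Thus $f$ is supported (up to a null set) in the countable union $\bigcup_{i \in J}(x_i + H_0)$.

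Finally I would assemble the subgroup. Let $\pi : G \to G/H_0$ be the quotient map; since $H_0$ is open, $G/H_0$ is discrete, and the images $\pi(x_i)$ for $i \in J$ generate a countable subgroup $\overline{H}$ of $G/H_0$ (a subgroup generated by countably many elements is countable). Set $H = \pi^{-1}(\overline{H})$. Then $H$ is a subgroup containing every $x_i + H_0$ with $i \in J$, hence it contains the support of $f$; it is open since it is a union of cosets of the open subgroup $H_0$; and it is $\sigma$-compact since it is the union, over the countable set $\overline{H}$, of translates of the $\sigma$-compact set $H_0$. This produces the desired open $\sigma$-compact subgroup in which $f$ is supported.

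The only genuinely delicate point is to avoid invoking countable (let alone uncountable) additivity of $\mu$ over the possibly uncountable index set $I$; this is circumvented by bounding each finite partial sum $\sum_{i \in F} a_i$ by $\|f\|_p^p$ directly, which already forces all but countably many cosets to carry zero mass. The remaining assertions—openness and $\sigma$-compactness of $H_0$ and of $H$—are routine consequences of local compactness and of the countability of a countably generated discrete group. I would also remark that finiteness of $p$ enters precisely in the estimate $\sum_{i \in F} a_i \leq \|f\|_p^p$, and that for $p = \infty$ the conclusion genuinely fails whenever $G$ itself is not $\sigma$-compact, as witnessed by the constant function $f \equiv 1$.
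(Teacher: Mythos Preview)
The paper does not prove this lemma at all; it merely records the statement and cites \cite[Corollary 1.3.6]{Deitmar_Echterhoff2014} as the source. Your argument is a complete and correct proof, so there is nothing substantive to compare against.

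One small remark: the passage from ``$f = 0$ a.e.\ on each coset $x_i + H_0$ with $i \notin J$'' to ``$f = 0$ a.e.\ on $G \setminus \bigcup_{i \in J}(x_i + H_0)$'' deserves a word of justification, since the latter is a possibly uncountable union of null sets. It does hold, by inner regularity of Haar measure: for any compact $K \subseteq \{f \neq 0\} \cap (G \setminus S)$, finitely many of the open cosets cover $K$, so $\mu(K) = 0$, and hence the full set has measure zero. You were careful about uncountable additivity in bounding $\sum_i a_i$, so it is worth being equally explicit here. Apart from this, the construction of $H_0$, the countability of $J$, and the passage to the subgroup $H = \pi^{-1}(\overline{H})$ are all clean and standard.
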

\begin{proof}[Proof of Theorem \ref{thm:Bochner}]
    By the previous lemma, we may assume that $f \cdot h$ is supported on a $\sigma$-finite open subgroup. Clearly, 
    \begin{align*} 
    \| f \cdot h\| \leq |f| \| h\|_\infty,
    \end{align*} 
    hence we only need to prove that $f \cdot h$ is $\mu$-measurable (with $\mu$ the Haar measure) to establish Bochner integrability. By measure-theoretic induction, it suffices to prove the statement for $f$ being a simple function: $f = \mathbf 1_A$ with $A$ in the Borel-$\sigma$-algebra of $G$. By $\sigma$-compactness of the subgroup in which $f$ is supported, it again suffices to prove the statement for $A \subset G$ being relatively compact. 

    Let $\varepsilon > 0$. Then, there exists an open neighborhood $O \subset G$ of $0 \in G$ such that, for all $y \in G$: $\| h(x+y) - h(x)\| \leq \varepsilon$, $x \in O$. Let $(O + x_k)_{k = 1, \dots, N}$ be an open cover of $\overline{A}$ and set $O_k = (O + x_k) \setminus (\cup_{j=1}^{k-1} O + x_j)$. Then, 
    \begin{align*}
        k(x) := \sum_{k=1}^N h(x_k) \cdot \mathbf 1_{O_k \cap A}
    \end{align*}
    is a simple function. We claim that $\| \mathbf 1_A (x) h(x) - k(x) \| \leq \varepsilon$ for each $x \in G$. Indeed, for $x \in A^c$ we trivially have that the difference is zero. For $x \in A$, we have $x \in O_k$ for exactly one $k = 1, \dots, N$. Then,
    \begin{align*}
        \| f(x) h(x) - k(x)\| &= \| h(x) - h(x_k)\| \leq \varepsilon,
    \end{align*}
    as $x \in O + x_k$ and using uniform continuity of $h$. Hence, $\mathbf 1_A \cdot h$ can be approximated (even uniformly) by a sequence of simple functions.
\end{proof}
We want to end this discussion on integration theory by mentioning the following facts, even though they are only about scalar-valued integration. It is written in the notations already introduced in Section \ref{sec:repr}, i.e., $(\Xi, m)$ is an lca group endowed with a 2-cocycle as in Assumptions \ref{assum:1} and \ref{assum:2} and $(U_x)_{x \in \Xi}$ the corresponding irreducible projective unitary representation on the Hilbert space $\mathcal H$. 
\begin{lem}\label{lemma1}
    Let $\varphi, \psi \in \mathcal H$. Then, there is a $\sigma$-compact subset $A \subseteq \Xi$ such that $\langle U_x \varphi, \psi\rangle = 0$ whenever $x \not \in A$.
\end{lem}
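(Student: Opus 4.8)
The plan is to reduce this to the integrability statement in Lemma \ref{lemma:Deitmar}, which says that any $L^p$ function (for $1 \le p < \infty$) on an lca group is supported in some $\sigma$-compact open subgroup. The natural candidate for the function to which we apply this is the matrix coefficient $x \mapsto \langle U_x \varphi, \psi \rangle$ itself. By Theorem \ref{thm:moyal} (Godement's orthogonality relations, available since the representation is square integrable by Assumption \ref{assum:1}), we have $\int_\Xi |\langle U_x \varphi, \psi\rangle|^2 \, dx = \|\varphi\|^2 \|\psi\|^2 < \infty$ with the normalization $\lambda = 1$. Hence $x \mapsto \langle U_x \varphi, \psi\rangle$ belongs to $L^2(\Xi)$.

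First I would invoke Lemma \ref{lemma:Deitmar} with $p = 2$ and $f(x) = \langle U_x \varphi, \psi \rangle \in L^2(\Xi)$: this yields a $\sigma$-compact open subgroup $H \subseteq \Xi$ outside of which $f$ vanishes almost everywhere. However, the conclusion of the present lemma asks for an \emph{everywhere} vanishing statement on the complement of a $\sigma$-compact set, not merely an almost-everywhere one, and a $\sigma$-compact open subgroup is already $\sigma$-compact, so the remaining task is to upgrade "a.e." to "everywhere." The key point here is strong continuity of the representation (Assumption \ref{assum:1}): the map $x \mapsto \langle U_x \varphi, \psi \rangle$ is continuous on $\Xi$. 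A continuous function that vanishes almost everywhere on the open set $\Xi \setminus \overline{H}$... but wait, $\Xi \setminus H$ need not be open since $H$ is open (its complement is closed), so I should be slightly more careful. Since $H$ is an open subgroup, it is also closed, so $\Xi \setminus H$ is open; a continuous function vanishing a.e. on an open set vanishes identically there (the non-vanishing set would be a nonempty open set of positive Haar measure). Therefore $\langle U_x \varphi, \psi\rangle = 0$ for every $x \notin H$, and we may take $A = H$, which is $\sigma$-compact.

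Actually, to be safe about the case where $\Xi \setminus H$ might have subtleties, the cleanest route is: let $A = H$ be the $\sigma$-compact open (hence closed) subgroup from Lemma \ref{lemma:Deitmar}. Then $f$ vanishes a.e. on the open set $\Xi \setminus H$. If $f(x_0) \neq 0$ for some $x_0 \in \Xi \setminus H$, then by continuity $f$ is bounded away from zero on an open neighborhood $V \subseteq \Xi \setminus H$ of $x_0$, and $V$ has strictly positive Haar measure (Haar measure is positive on nonempty open sets), contradicting that $f = 0$ a.e. on $\Xi \setminus H$. Hence $f \equiv 0$ on $\Xi \setminus H$, which is exactly the claim with $A = H$.

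I do not foresee a genuine obstacle here; the statement is essentially a packaging of Godement's orthogonality relations together with Lemma \ref{lemma:Deitmar} and strong continuity. The only mild subtlety to handle with care is the passage from the "a.e." conclusion of Lemma \ref{lemma:Deitmar} to the "everywhere" conclusion demanded in the lemma statement, which is dispatched by continuity of matrix coefficients and positivity of Haar measure on nonempty open sets; one should also note that an open subgroup is automatically closed, so that its complement is open. If one wanted to avoid even invoking $\overline{H}$, this is already clean as stated.
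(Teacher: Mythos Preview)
Your proof is correct and follows the same route as the paper's one-line argument: use square integrability (Theorem \ref{thm:moyal}) to place the matrix coefficient in $L^2(\Xi)$, then invoke Lemma \ref{lemma:Deitmar}. You are in fact more careful than the paper, which simply says the lemma ``follows immediately'': you explicitly use strong continuity of $x \mapsto U_x$ (and that an open subgroup is closed, so its complement is open) to upgrade the a.e.\ vanishing coming from Lemma \ref{lemma:Deitmar} to the pointwise vanishing the statement requires---a genuine detail the paper glosses over.
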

The lemma follows immediately from the fact that $x \mapsto \langle U_x \varphi, \psi\rangle \in L^2(\Xi)$ and Lemma \ref{lemma:Deitmar}.

Since the Haar measure restricted to $\sigma$-compact sets is $\sigma$-finite, Lemma \ref{lemma1} can be used to justify the application of the theorems of Fubini and Tonelli in certain situations. We will make repeated use of this throughout later sections.

Another issue that needs to be addressed when integrating over groups which are not second countable is the following: With the standard definition of $L^\infty(G)$, this space can no longer be identified with the dual of $L^1(G)$. This can be addressed by making the following modified definition of $L^\infty(G)$. A good reference for this is \cite[Section 2.3]{Folland2016}.

We say that a subset $A \subset G$ is \emph{locally Borel} if $A \cap F$ is a Borel set for every $\sigma$-finite Borel set $F \subset G$. $A$ is called \emph{locally null} if it is locally Borel and $A \cap F$ is of measure zero for every $\sigma$-finite Borel set $F \subset G$.

A function $f: G \to \mathbb R$ is called \emph{locally Borel-measurable} if $f^{-1}(B)$ is locally Borel for every Borel set $B \subset \mathbb R$. Such a function is in $\mathcal L^\infty(G)$ if there exists a constant $c > 0$ such that $\{ g \in G: ~|f(x)| > c\}$ is locally null. The infimum over all such constants is then the seminorm $\| f\|_{\mathcal L^\infty}$. $L^\infty(G)$ is now obtained by identifying functions which differ only on locally null sets. Of course, when $G$ is second-countable, this coincides with the standard definition of $L^\infty(G)$.

A more manageable description of local Borel sets is obtained when taking into account some structure theory: $G$ always has a subgroup $G_0$ which is open (hence automatically closed) and $\sigma$-compact, cf.\ \cite[Proposition 2.4]{Folland2016}. Now, a subset $A \subset G$ is locally Borel if and only if $A \cap yG_0$ is Borel for every $yG_0 \in G/G_0$. $A$ is locally null if and only if $A \cap yG_0$ is of measure zero for every $yG_0 \in G/G_0$. If $\{ yG_0: ~A \cap yG_0 \neq \emptyset\}$ is countable, then $A$ is a Borel set and satisfies:
\begin{align*}
    \lambda(A) = \sum_{yG_0 \in G/G_0} \lambda(A \cap yG_0),
\end{align*}
where $\lambda$ denotes the Haar measure. 

A rather explanatory example regarding the differences of both conventions of $L^\infty(G)$ is given immediately before and after Proposition \ref{prop:convprop} below.

\section{Convolutions in Quantum Harmonic Analysis}\label{sec:conv}
In this section, we will introduce the convolution operators which lie at the heart of quantum harmonic analysis. We want to note that several of our results can already be found in \cite{Halvdansson2022}, applied to the ordinary representations of the Heisenberg group affiliated to the phase space. Nevertheless, we decided to give a full presentation of the matter from the beginning: Besides the fact that the mathematics is a little more elegant in the abelian case, we will later of course make crucial use of the fact that the convolution of operators turns out to be commutative, if defined appropriately. For this, we will make use of the operator $R$ (as defined in Eq.~\eqref{eq:defR}). In the non-commutative (or even non-unimodular) setting, which is covered by the work of Halvdansson, the analogue of the operator $R$ (if it even exists) turns in general out to be an unbounded operator, causing obvious problems. The approach of Halvdansson is to simply ignore $R$ in the convolution of operators. Including $R$ is crucial in obtaining a convolution of functions and operators which is both associative and commutative, two properties that we want to have for our work. Hence, it is not feasible to follow the same road as Halvdansson in our setting. Besides this, it seems that \cite{Halvdansson2022} was written with second-countable groups in mind, without spelling this assumption out. As we will see, certain measure-theoretic issues arise when working on non-second countable groups (and, as a consequence, representations on non-separable Hilbert spaces), non of which has been addressed in \cite{Halvdansson2022}. Hence, we decided to give a full presentation of the theory without adhering to \cite{Halvdansson2022}.

We will frequently use the following definitions: For functions $f: \Xi \to \mathbb C$ we will write
\begin{align*}
    \alpha_{x}(f)(y) &= f(y-x), \quad x, y \in \Xi,\\
    \beta_{-}(f)(y) &= f(-y).
\end{align*}
For an operator $A \in \mathcal L(\mathcal H)$ we will denote
\begin{align*}
    \alpha_{x}(A) &= U_{x} A U_{x}^\ast, \quad x \in \Xi,\\
    \beta_{-}(A) &= RAR^\ast.
\end{align*}
$\alpha_{x}(f)$ and $\alpha_{x}(A)$ will be referred to as the \emph{shifts} of $f$ and $A$ respectively. Note that:
\begin{align*}
\alpha_{x}(\alpha_{y}(f)) &= \alpha_{x+y}(f), ~\alpha_{x}(\beta_{-}(f)) = \beta_{-}(\alpha_{-x}(f)),\\
\alpha_{x}(\alpha_{y}(A)) &= \alpha_{x+y}(A) = \alpha_y(\alpha_x(A)), ~\alpha_{x}(\beta_{-}(A)) = \beta_{-}(\alpha_{-x}(A)).
\end{align*}
Further, if $a: \Xi \to S^1$ is continuous with $a(x) = a(-x)$, then
\begin{align*}
    U_x^a A (U_x^a)^\ast = U_x A U_x^\ast,
\end{align*}
hence $\alpha_x$ does not depend on the choice of the factor $a$. If $R' = \frac{1}{c^{1/2}}R$ such that $R'$ is self-adjoint, then
\begin{align*}
    RAR^\ast = R' A (R')^\ast = R' A R',
\end{align*}
so whenever we consider $\beta_-(A)$, we may and will assume that $R$ is self-adjoint.
In the following lemma, $\mathcal T^1(\mathcal H)$ denotes the trace class and $\| A\|_{\mathcal T^1}$ the trace norm.
\begin{lem}\begin{enumerate}[(1)]
    \item Let $A \in \mathcal T^1(\mathcal H)$. Then, $\alpha_{x}(A) \in \mathcal T^1(\mathcal H)$, $\| \alpha_{x}(A)\|_{\mathcal T^1} = \| A\|_{\mathcal T^1}$ for every $x \in \Xi$ and $\Xi \ni x \mapsto \alpha_{x}(A)$ is continuous in trace norm.
    \item Let $A \in \mathcal L(\mathcal H)$. Then, $\| \alpha_{x}(A)\|_{op} = \| A\|_{op}$ for every $x \in \Xi$ and $\Xi \ni x \mapsto \alpha_{x}(A)$ is weak$^\ast$ continuous (with respect to the predual $\mathcal T^1(\mathcal H)$ of $\mathcal L(\mathcal H)$).
\end{enumerate}
\end{lem}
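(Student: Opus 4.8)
The plan is to reduce everything to two standard facts: unitary invariance of the trace norm and the operator norm, and strong operator continuity of the representation $\rho$ (Assumption~\ref{assum:1}).

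For part~(1), unitary invariance of the trace ideal immediately gives that $\alpha_x(A) = U_x A U_x^\ast \in \mathcal T^1(\mathcal H)$ with $\|\alpha_x(A)\|_{\mathcal T^1} = \|A\|_{\mathcal T^1}$ for every $x \in \Xi$. For the continuity statement I would first treat a rank-one operator $A = \varphi \otimes \psi$ (acting as $\xi \mapsto \langle \xi, \psi\rangle \varphi$): then $\alpha_x(A) = (U_x\varphi)\otimes(U_x\psi)$, and since $\|\varphi'\otimes\psi' - \varphi\otimes\psi\|_{\mathcal T^1} \le \|\varphi' - \varphi\|\,\|\psi'\| + \|\varphi\|\,\|\psi' - \psi\|$, strong continuity of $x \mapsto U_x\varphi$ and of $x \mapsto U_x\psi$ yields trace-norm continuity of $x \mapsto \alpha_x(A)$. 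By linearity this extends to finite-rank operators, and then to arbitrary $A \in \mathcal T^1(\mathcal H)$ by an $\varepsilon/3$-argument: given a finite-rank $F$ with $\|A - F\|_{\mathcal T^1}$ small, the isometry property gives
\[
    \|\alpha_x(A) - \alpha_y(A)\|_{\mathcal T^1} \le 2\|A - F\|_{\mathcal T^1} + \|\alpha_x(F) - \alpha_y(F)\|_{\mathcal T^1}.
\]
Note that non-separability of $\mathcal H$ is irrelevant here, since finite-rank operators are norm-dense in $\mathcal T^1(\mathcal H)$ regardless.

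For part~(2), unitary invariance of the operator norm gives $\|\alpha_x(A)\|_{op} = \|A\|_{op}$. Weak$^\ast$ continuity means precisely that for each fixed $T \in \mathcal T^1(\mathcal H)$ the scalar function $x \mapsto \tr(\alpha_x(A)\,T)$ is continuous. Using $U_x^\ast = \overline{m(x,-x)}U_{-x}$ one checks that $U_x^\ast T U_x = \alpha_{-x}(T)$, and cyclicity of the trace (valid since $T \in \mathcal T^1(\mathcal H)$ and the remaining factors are bounded) gives
\[
    \tr(\alpha_x(A)\, T) = \tr(A\, U_x^\ast T U_x) = \tr(A\,\alpha_{-x}(T)).
\]
By part~(1), $x \mapsto \alpha_{-x}(T)$ is continuous into $\mathcal T^1(\mathcal H)$, while $S \mapsto \tr(AS)$ is a bounded linear functional on $\mathcal T^1(\mathcal H)$ with $|\tr(AS)| \le \|A\|_{op}\,\|S\|_{\mathcal T^1}$; composing the two yields the claimed continuity.

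Nothing here is genuinely difficult; the only point needing a little care is the reduction to rank-one operators in part~(1) and checking that the density argument does not secretly rely on separability of $\mathcal H$. Everything else follows directly from unitary invariance of the norms, strong continuity of $\rho$, and cyclicity of the trace.
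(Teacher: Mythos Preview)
Your proof is correct and follows essentially the same approach as the paper: reduce part~(1) to rank-one operators via strong continuity of $U_x$, extend by density of finite-rank operators in $\mathcal T^1(\mathcal H)$, and deduce part~(2) from part~(1) by duality. The paper's proof is a brief sketch referring to \cite{Werner1984}, whereas you spell out the duality step explicitly via $\tr(\alpha_x(A)T) = \tr(A\,\alpha_{-x}(T))$; this identity does use Assumption~\ref{assum:2} (to get $m(x,-x) = m(-x,x)$), which is in force here, though even without it the argument goes through since strong continuity of $x \mapsto U_x$ on a Hilbert space automatically yields strong continuity of $x \mapsto U_x^\ast$.
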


\begin{proof}
    The proof is analogous to that in \cite{Werner1984}: For rank one operators, use that $x \mapsto U_{x}$ is continuous in strong operator topology. Approximate a generic trace class operator by finite rank operators. Further, the norm is preserved since every $U_{x}$ is unitary. The statement of (2) follows now easily from (1) by duality.    
\end{proof}
We let $\d x$ be the Haar measure of $\Xi$, normalized such that $\lambda = 1$ in Theorem \ref{thm:moyal}. Given $f \in L^1(\Xi)$ and $A \in \mathcal L(\mathcal H)$, we can consider the function $h(x) = f(x) \alpha_{x}(A)$. Since $f$ is measurable and $x \mapsto \alpha_{x}(A)$ is weak$^\ast$ continuous, $h$ is weak$^\ast$ measurable. Thus, the integral
\begin{align*}
    f \ast A := A \ast f := \int_{\Xi} f(x) ~\alpha_{x}(A)~\d x
\end{align*}
exists as a Gelfand integral, cf.~Theorem \ref{thm:Gelfand}. $f \ast A$ defines again a bounded operator on $\mathcal H$ with $\| f \ast A\|_{op} \leq \| f\|_{L^1(\Xi)} \| A\|_{op}$. If $A \in \mathcal T^1(\mathcal H)$, then the continuity properties in trace class guarantee that $f \ast A \in \mathcal T^1(\mathcal H)$ (since the integral is defined as a Bochner integral in $\mathcal T^1(\mathcal H)$ by Theorem \ref{thm:Bochner}) with $\| f \ast A\|_{\mathcal T^1} \leq \| f\|_{L^1(\Xi)} \| A\|_{\mathcal T^1}$. Note that this ``convolution'' shares some similarities with the classical convolution of functions: If $f, g \in L^1(\Xi)$, then
\begin{align*}
    f \ast g = \int_{\Xi} f(x) g(\cdot - x)~\d x = \int_{\Xi} f(x) \alpha_{x}(g)~\d x,
\end{align*}
which can again be understood as a Bochner integral in $L^1(\Xi)$. Similarly to the well-known formula
\begin{align}
    \alpha_{x}(f \ast g) = (\alpha_{x}(f)) \ast g = f \ast (\alpha_{x}(g)),
\end{align}
it is also true that
\begin{align}
    \alpha_{x}(f \ast A) = f \ast (\alpha_{x}(A)).
\end{align}
Further, we have:
\begin{align}
    \alpha_x(f \ast A) &= \int_\Xi f(y) \alpha_{x+y}(A)~\d y\\
    &= \int_\Xi f(y-x) \alpha_y(A)~\d y = \alpha_x(f) \ast A.
\end{align}
These two convolutions are accompanied by a third convolution, which is taken between two operators. To motivate this, note that for $f, g \in L^1(\Xi)$:
\begin{align*}
    f \ast g(y) = \int_{\Xi} f(x) \alpha_{y}(\beta_{-}(g))(x)~\d x.
\end{align*}
Since the operator trace plays the analogous role for $\mathcal T^1(\mathcal H)$ as the integral plays for $L^1(\Xi)$, we define the following: Given $A \in \mathcal T^1(\mathcal H)$ and $B \in \mathcal L(\mathcal H)$, we set:
\begin{align*}
    A \ast B(x) := \tr(A \alpha_{x}(\beta_{-}(B))) = \tr(\alpha_{x}(\beta_{-}(A)) B).
\end{align*}
Using the continuity properties of $x \mapsto \alpha_{x}(A)$ in $\mathcal T^1(\mathcal H)$, it is not hard to see that $A \ast B$ is a bounded and uniformly continuous function on $\Xi$ satisfying $\| A \ast B\|_{\infty} \leq \| A\|_{\mathcal T^1} \| B\|_{op}$. Furthermore, we again have $\alpha_x(A\ast B) = \alpha_x(A)\ast B = A\ast\alpha_x(B)$. The key property to this third convolution is the following:
\begin{prop}\label{prop:thirdconv}
Let $A, B \in \mathcal T^1(\mathcal H)$. Then, it is $A \ast B \in L^1(\Xi)$ with $\| A \ast B\|_{L^1(\Xi)} \leq \| A\|_{\mathcal T^1} \| B\|_{\mathcal T^1}$ and $\int_{\Xi} A \ast B(x)~\d x = \tr(A) \tr(B)$.
\end{prop}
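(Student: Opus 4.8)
The plan is to reduce all three assertions to the case of rank-one operators and then read them off from Moyal's identity, Theorem~\ref{thm:moyal}. For $\varphi,\psi\in\mathcal H$ write $\varphi\otimes\psi$ for the rank-one operator $\xi\mapsto\langle\xi,\psi\rangle\varphi$, so that $\tr(\varphi\otimes\psi)=\langle\varphi,\psi\rangle$ and $\|\varphi\otimes\psi\|_{\mathcal T^1}=\|\varphi\|\,\|\psi\|$. The point is that for fixed $x$ the maps $B\mapsto\beta_-(B)$, $C\mapsto\alpha_x(C)$ and $C\mapsto\tr(AC)$ are linear, so $(A,B)\mapsto A\ast B$ is bilinear; combined with the already established bound $\|A\ast B\|_\infty\le\|A\|_{\mathcal T^1}\|B\|_{op}$, this lets me start from rank-ones and pass to Schmidt expansions.

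First I would treat $A=\varphi_1\otimes\psi_1$, $B=\varphi_2\otimes\psi_2$. Taking $R$ self-adjoint, $\beta_-(B)=(R\varphi_2)\otimes(R\psi_2)$ and hence $\alpha_x(\beta_-(B))=(U_xR\varphi_2)\otimes(U_xR\psi_2)$; multiplying the two rank-one operators and taking the trace gives
\begin{align*}
    A\ast B(x)=\langle U_xR\varphi_2,\psi_1\rangle\,\overline{\langle U_xR\psi_2,\varphi_1\rangle}.
\end{align*}
Now Theorem~\ref{thm:moyal} (with $\lambda=1$) applied to the two vectors $R\varphi_2,R\psi_2$ in the first slots yields directly
\begin{align*}
    \int_\Xi A\ast B(x)\,\d x=\langle R\varphi_2,R\psi_2\rangle\,\overline{\langle\psi_1,\varphi_1\rangle}=\langle\varphi_2,\psi_2\rangle\langle\varphi_1,\psi_1\rangle=\tr(A)\tr(B),
\end{align*}
using that $R$ is unitary; and since $|A\ast B(x)|=|\langle U_xR\varphi_2,\psi_1\rangle|\,|\langle U_xR\psi_2,\varphi_1\rangle|$, Cauchy--Schwarz in $L^2(\Xi)$ together with Theorem~\ref{thm:moyal} applied with equal vectors (which gives $\int_\Xi|\langle U_xR\varphi_2,\psi_1\rangle|^2\,\d x=\|\varphi_2\|^2\|\psi_1\|^2$, and similarly for the other factor) gives $\|A\ast B\|_{L^1(\Xi)}\le\|\varphi_1\|\|\psi_1\|\|\varphi_2\|\|\psi_2\|=\|A\|_{\mathcal T^1}\|B\|_{\mathcal T^1}$. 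By Lemma~\ref{lemma1} the coefficient functions, hence $A\ast B$, vanish outside a $\sigma$-compact set, so $A\ast B\in L^1(\Xi)$ in the rank-one case.

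For general $A,B\in\mathcal T^1(\mathcal H)$ I would use Schmidt expansions $A=\sum_n s_n\,e_n\otimes f_n$, $B=\sum_m t_m\,e_m'\otimes f_m'$ with orthonormal systems $(e_n),(f_n),(e_m'),(f_m')$ and $\sum_n s_n=\|A\|_{\mathcal T^1}$, $\sum_m t_m=\|B\|_{\mathcal T^1}$. For the partial sums $A_N,B_N$, bilinearity gives $A_N\ast B_N=\sum_{n,m\le N}s_nt_m\,(e_n\otimes f_n)\ast(e_m'\otimes f_m')$; applying the rank-one estimate to the tails shows $(A_N\ast B_N)_N$ is Cauchy in $L^1(\Xi)$ with limit $g$ satisfying $\|g\|_{L^1}\le\|A\|_{\mathcal T^1}\|B\|_{\mathcal T^1}$, while $\|A\ast B-A_N\ast B_N\|_\infty\le\|A-A_N\|_{\mathcal T^1}\|B\|_{op}+\|A_N\|_{\mathcal T^1}\|B-B_N\|_{op}\to0$ shows $A_N\ast B_N\to A\ast B$ uniformly. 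Hence $A\ast B=g\in L^1(\Xi)$ with the claimed bound, and $\int_\Xi A\ast B\,\d x=\lim_N\tr(A_N)\tr(B_N)=\tr(A)\tr(B)$ by continuity of the trace on $\mathcal T^1(\mathcal H)$. The step I expect to be the main obstacle is this last identification of the $L^1$-limit with the uniform limit in the possibly non-second-countable setting: one has to note that the countably many functions $A_N\ast B_N$ (and therefore $A\ast B$) are all supported in one fixed $\sigma$-compact, hence $\sigma$-finite, subgroup (Lemmas~\ref{lemma1} and \ref{lemma:Deitmar}), so that the usual ``an $L^1$-convergent sequence has an a.e.-convergent subsequence'' argument applies and can be matched against the uniform convergence; the rest is the bilinear bookkeeping above.
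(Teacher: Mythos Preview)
Your proof is correct and follows essentially the same approach as the paper: reduce to rank-one operators via the explicit formula $A\ast B(x)=\langle U_xR\varphi_2,\psi_1\rangle\overline{\langle U_xR\psi_2,\varphi_1\rangle}$, apply Moyal's identity and Cauchy--Schwarz, then pass to general trace-class operators by series expansion. Your extension step is in fact more carefully argued than the paper's, which simply says ``write them as a convergent series of rank one operators and deduce the result from the rank one case''; your attention to the $\sigma$-compact support issue in the non-second-countable case is a genuine (and correct) addition.
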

Before attempting the proof, let us note that we will write rank one operators on $\mathcal H$ in the tensor product notation $A = \varphi \otimes \psi$, which means $A(\phi) = (\varphi \otimes \psi)(\phi) = \langle \phi, \psi\rangle \varphi$. In particular, the tensor products are linear in the first and anti-linear in the second factor.
\begin{proof}[Proof of Proposition \ref{prop:thirdconv}]
First, we consider rank one operators $A = \varphi_1 \otimes \psi_1$ and $B = \varphi_2 \otimes \psi_2$. Then,
\begin{align}
    A \ast B(x) = \langle U_x R \varphi_2, \psi_1\rangle \overline{\langle U_x R \psi_2, \varphi_1\rangle}
\end{align}
such that Theorem \ref{thm:moyal} gives:
\begin{align*}
    \int_\Xi A \ast B(x) ~dx = \langle \varphi_1, \psi_1\rangle \langle \varphi_2, \psi_2\rangle = \tr(A)\tr(B).
\end{align*}
The Cauchy-Schwarz inequality yields
\begin{align*}
    \int_\Xi |A \ast B(x)|~dx &= \int_\Xi | \langle U_x R \varphi_2, \psi_1\rangle \langle U_x R \psi_2, \varphi_1\rangle|~dx\\
    &\leq \left( \int_\Xi |\langle U_x R \varphi_2, \psi_1\rangle|^2~dx \int_\Xi |\langle U_x R \psi_2, \varphi_1\rangle|^2~dx \right)^{1/2}\\
    &= \| \varphi_1\| \| \varphi_2\| \| \psi_1\| \| \psi_2\|\\
    &= \| A\|_{\mathcal T^1} \| B\|_{\mathcal T^1},
\end{align*}
where we again used Theorem \ref{thm:moyal}. Hence, the result follows for rank one operators. For arbitrary trace class operators, write them as a convergent series of rank one operators and deduce the result from the rank one case.
\end{proof}
\begin{cor}\label{lemma2}
For $A \in \mathcal T^1(\mathcal H)$ and $\varphi, \psi \in \mathcal H$, the map $x \mapsto \langle A U_{x}^\ast \varphi, U_{x}^\ast \psi\rangle$ is in $L^1(\Xi)$ with $\int_\Xi |\langle A U_x^\ast \varphi, U_x^\ast \psi\rangle|~\d x \leq \| A\|_{\mathcal T^1} \| \varphi\| \| \psi\|$.
\end{cor}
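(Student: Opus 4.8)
The plan is to identify the function $x \mapsto \langle A U_x^\ast \varphi, U_x^\ast \psi\rangle$ with an operator--operator convolution of two trace class operators and then invoke Proposition \ref{prop:thirdconv}.

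First I would rewrite the integrand using only the shift $\alpha_x$ and a rank one operator. Since $U_x$ is unitary and $\tr(C(\varphi \otimes \psi)) = \langle C\varphi, \psi\rangle$ for any $C \in \mathcal L(\mathcal H)$, we have
\begin{align*}
\langle A U_x^\ast \varphi, U_x^\ast \psi\rangle = \langle U_x A U_x^\ast \varphi, \psi\rangle = \langle \alpha_x(A)\varphi, \psi\rangle = \tr\big(\alpha_x(A)\,(\varphi \otimes \psi)\big).
\end{align*}
Next I would match this with the definition $C \ast D(x) = \tr(\alpha_x(\beta_-(C))\,D)$. We may and will assume that $R$ is self-adjoint; then $R^2 = I$ (as $R$ is also unitary), so $\beta_-$ is an involution on $\mathcal L(\mathcal H)$, in particular $\beta_-(\beta_-(A)) = A$. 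Therefore, setting $C := \beta_-(A) = RAR$ and $D := \varphi \otimes \psi$, we obtain
\begin{align*}
C \ast D(x) = \tr\big(\alpha_x(\beta_-(C))\,D\big) = \tr\big(\alpha_x(A)\,(\varphi \otimes \psi)\big) = \langle A U_x^\ast \varphi, U_x^\ast \psi\rangle .
\end{align*}

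Finally I would check the hypotheses of Proposition \ref{prop:thirdconv}: since $R$ is unitary, $C = RAR \in \mathcal T^1(\mathcal H)$ with $\|C\|_{\mathcal T^1} = \|A\|_{\mathcal T^1}$, and $D = \varphi \otimes \psi \in \mathcal T^1(\mathcal H)$ with $\|D\|_{\mathcal T^1} = \|\varphi\|\,\|\psi\|$. Proposition \ref{prop:thirdconv} then gives $C \ast D \in L^1(\Xi)$ together with the bound $\|C \ast D\|_{L^1(\Xi)} \leq \|C\|_{\mathcal T^1}\|D\|_{\mathcal T^1} = \|A\|_{\mathcal T^1}\|\varphi\|\,\|\psi\|$, which is exactly the assertion. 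I do not expect any genuine obstacle; the only point needing a little care is that $R$ enters the definition of $\ast$ through $\beta_-$, so one must use that $R$ can be taken self-adjoint (hence an involution) in order to cancel the $\beta_-$. Alternatively, one could work with the equivalent form $C \ast D(x) = \tr(C\,\alpha_x(\beta_-(D)))$, absorb $R$ into the rank one operator via $\beta_-(\varphi \otimes \psi) = (R\varphi) \otimes (R\psi)$ (which has the same trace norm $\|\varphi\|\,\|\psi\|$), and then use invariance of the Haar measure under $x \mapsto -x$ to conclude the same $L^1$-bound.
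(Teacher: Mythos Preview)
Your proof is correct and follows essentially the same approach as the paper: identify the function with an operator--operator convolution of two trace class operators and invoke Proposition \ref{prop:thirdconv}. The only cosmetic difference is that you place the $\beta_-$ on $A$ (working with $C = RAR$ and $D = \varphi \otimes \psi$), whereas the paper places it on the rank one operator (working with $A$ and $R\varphi \otimes R\psi$, so that $A \ast (R\varphi \otimes R\psi)(x) = \langle A U_x \varphi, U_x \psi\rangle$) and then uses the substitution $x \mapsto -x$ at the end; your variant actually sidesteps that last step, and the ``alternative'' you sketch is exactly the paper's route.
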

\begin{proof}
    One verifies easily that $A \ast (R\varphi \otimes R\psi)(x) = \langle AU_{x} \varphi, U_{x}\psi\rangle$, which is in $L^1(\Xi)$ by the previous result and satisfies
    \begin{align*}
        \int_\Xi |A \ast (R \varphi \otimes R \psi) (x)|~\d x \leq \| A\|_{\mathcal T^1} \| R \varphi \otimes R \psi\|_{\mathcal T^1} = \| A\|_{\mathcal T^1} \| \varphi\| \| \psi\|.
    \end{align*}
    Now, using that the Haar measure is unimodular, we see that
    \begin{align*}
        \int_\Xi |\langle A U_x^\ast \varphi, U_x^\ast \psi\rangle| ~\d x &= \int_\Xi |\langle A U_{-x} \varphi, U_{-x}\psi\rangle|~\d x\\
        &= \int_\Xi |\langle AU_x \varphi, U_x \psi\rangle|~\d x,
    \end{align*}
    which finishes the proof.
\end{proof}
Having defined all the necessary convolutions between elements of $L^1(\Xi)$ and $\mathcal T^1(\mathcal H)$, we can now extend it to a bilinear operation on $L^1(\Xi) \oplus \mathcal T^1(\mathcal H)$, denoted by the same symbol:
\begin{align}
    (f, A) \ast (g, B) := (f \ast g + A \ast B, f \ast B + g \ast A).
\end{align}
\begin{prop}
 The operation ``$\ast$'' on $L^1(\Xi) \oplus \mathcal T^1(\mathcal H)$ is commutative, associative and distributive. 
\end{prop}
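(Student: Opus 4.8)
The plan is to check the three properties separately; distributivity and commutativity are essentially formal, and associativity carries all the work.

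\emph{Distributivity} is just bilinearity of the operation. Each of the three component convolutions is, by inspection of its defining formula (an integral in $L^1(\Xi)$, a Gelfand/Bochner integral against $f$, and a trace), separately linear in each of its two arguments, so $(f,A)\ast(g,B)$ is bilinear, which is what distributivity amounts to. For \emph{commutativity} it is enough to see that each component convolution is commutative, since then the two coordinates of $(f,A)\ast(g,B)$ and of $(g,B)\ast(f,A)$ coincide term by term. Commutativity of $f\ast g$ is that of convolution on the abelian group $\Xi$; $f\ast A=A\ast f$ holds by definition; and for $A,B\in\mathcal{T}^1(\mathcal{H})$ one uses the two expressions already recorded in the definition of the third convolution, $A\ast B(x)=\tr(A\,\alpha_x(\beta_-(B)))=\tr(\alpha_x(\beta_-(A))\,B)$, together with cyclicity of the trace, to get $A\ast B(x)=\tr(B\,\alpha_x(\beta_-(A)))=B\ast A(x)$. (This is the one place where $R$ is genuinely used: the identity of the two expressions above rests on $U_xR=RU_{-x}$ and on $R=R^\ast$, as in the discussion preceding the lemma on trace-class shifts.)

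\emph{Associativity.} Expanding $((f,A)\ast(g,B))\ast(h,C)$ and $(f,A)\ast((g,B)\ast(h,C))$ into their $L^1(\Xi)$- and $\mathcal{T}^1(\mathcal{H})$-coordinates and discarding, by the commutativity just proved, the terms that match trivially, one finds that associativity of $\ast$ is equivalent to four "atomic" identities:
\begin{enumerate}[(a)]
  \item $(f\ast g)\ast h=f\ast(g\ast h)$ for $f,g,h\in L^1(\Xi)$;
  \item $(f\ast g)\ast A=f\ast(g\ast A)$ for $f,g\in L^1(\Xi)$, $A\in\mathcal{T}^1(\mathcal{H})$;
  \item $(f\ast A)\ast B=f\ast(A\ast B)$ for $f\in L^1(\Xi)$, $A,B\in\mathcal{T}^1(\mathcal{H})$;
  \item $(A\ast B)\ast C=A\ast(B\ast C)$ for $A,B,C\in\mathcal{T}^1(\mathcal{H})$;
\end{enumerate}
where in (c) the left side is a function--operator convolution followed by an operator--operator convolution, the right side an operator--operator convolution followed by an ordinary one, and both sides of (d) lie in $\mathcal{T}^1(\mathcal{H})$. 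Identity (a) is the classical associativity of convolution. For (b) I would pair both sides with $\varphi,\psi\in\mathcal{H}$, expand the nested integrals, use $U_y^\ast U_x^\ast=\overline{m(x,y)}\,U_{x+y}^\ast$ to merge the two shifts into one, and finish by Fubini and the change of variables $z=x+y$; Fubini is legitimate because $f,g$ are supported in a common $\sigma$-compact open subgroup by Lemma~\ref{lemma:Deitmar}, while the integrand is dominated by $|f(x)||g(y)|\,\|A\|_{op}\|\varphi\|\|\psi\|$. For (c) no Fubini is needed: writing $f\ast A=\int_\Xi f(y)\,\alpha_y(A)\,dy$ as a Bochner integral in $\mathcal{T}^1(\mathcal{H})$ (Theorem~\ref{thm:Bochner}) and pulling the bounded functional $T\mapsto\tr(T\,\alpha_x(\beta_-(B)))$ inside it, one gets $(f\ast A)\ast B(x)=\int_\Xi f(y)\,\tr(\alpha_y(A)\,\alpha_x(\beta_-(B)))\,dy$, and since every $\alpha_z$ is a trace-preserving $\ast$-automorphism, $\tr(\alpha_y(A)\,\alpha_x(\beta_-(B)))=\tr(\alpha_{y-x}(A)\,\beta_-(B))=(A\ast B)(x-y)$, which is exactly $f\ast(A\ast B)(x)$.

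The main obstacle is (d). Using the norm bounds $\|A\ast B\|_{L^1}\le\|A\|_{\mathcal{T}^1}\|B\|_{\mathcal{T}^1}$ (Proposition~\ref{prop:thirdconv}) and $\|f\ast C\|_{\mathcal{T}^1}\le\|f\|_{L^1}\|C\|_{\mathcal{T}^1}$, together with bilinearity and trace-norm density of finite-rank operators, one reduces to rank-one $A=\varphi_1\otimes\psi_1$, $B=\varphi_2\otimes\psi_2$, $C=\varphi_3\otimes\psi_3$. Testing $(A\ast B)\ast C$ and $A\ast(B\ast C)=(B\ast C)\ast A$ against vectors $\varphi,\psi$ and inserting the explicit rank-one formula $A\ast B(x)=\langle U_xR\varphi_2,\psi_1\rangle\overline{\langle U_xR\psi_2,\varphi_1\rangle}$ leaves one with an equality of two integrals over $\Xi$, each a product of four matrix coefficients of $x\mapsto U_x$ (two of them conjugated). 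This is verified by a direct computation based on the orthogonality relations of Theorem~\ref{thm:moyal} -- for instance by reading the two factors of each integrand as coefficients of the projective representation $x\mapsto U_x\otimes U_x$ on $\mathcal{H}\otimes\mathcal{H}$, so that Theorem~\ref{thm:moyal} (applied in that space, after decomposing it into irreducibles and summing if need be) collapses both integrals to the same scalar $\langle\varphi_1,\psi_1\rangle\langle\varphi_2,\psi_2\rangle\langle\varphi_3,\psi_3\rangle\,\overline{\langle\psi,\varphi\rangle}$ up to one fixed constant; the intertwiner is absorbed throughout by $U_xR=RU_{-x}$ and $R^\ast=R$, and any interchange of integrals is again justified via Lemma~\ref{lemma:Deitmar}. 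This computation -- essentially the one Werner carries out for $\Xi=\mathbb{R}^{2n}$ in \cite{Werner1984} -- is the substantive content of the proposition; once (a)--(d) are available, the remaining matchings in the associativity identity are routine bookkeeping with these four relations and commutativity.
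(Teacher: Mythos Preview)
Your handling of distributivity, commutativity, and the mixed associativities (a)--(c) is correct and is essentially what the paper has in mind; in particular, pulling the bounded trace functional through the Bochner integral in (c) is exactly the right move. The gap is in your argument for (d).

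The representation $x\mapsto U_x\otimes U_x$ on $\mathcal H\otimes\mathcal H$ is a projective representation with multiplier $m^2$, and it is in general \emph{not} irreducible, so Theorem~\ref{thm:moyal} does not apply to it. Your parenthetical fix (``decompose into irreducibles and sum'') does not rescue the argument, because there is no reason the formal degrees of the irreducible pieces coincide. More decisively, the value you claim for the integral is simply wrong: if both sides collapsed to a fixed constant times $\langle\varphi_1,\psi_1\rangle\langle\varphi_2,\psi_2\rangle\langle\varphi_3,\psi_3\rangle\,\overline{\langle\psi,\varphi\rangle}$, then $\langle((A\ast B)\ast C)\varphi,\psi\rangle$ would be a scalar multiple of $\langle\varphi,\psi\rangle$ for all $\varphi,\psi$, i.e.\ $(A\ast B)\ast C$ would lie in $\mathbb C\cdot I$ for every triple of rank-one operators. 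That is false already for $\Xi=\mathbb R^2$ with $A=B=C=\varphi_0\otimes\varphi_0$ a Gaussian rank-one projection: then $A\ast B$ is a Gaussian on $\Xi$ and $(A\ast B)\ast C$ is a nonzero compact (Toeplitz) operator, certainly not a multiple of the identity.

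The route the paper intends --- it refers to \cite{Werner1984, fulsche2020correspondence, luef_skrettingland2018} for the details --- is a genuine Fubini/Tonelli interchange rather than a one-shot application of the orthogonality relations. Concretely, after reducing (by density and the norm estimates you already quote) to rank-one $A,B,C$ and testing against $\varphi,\psi\in\mathcal H$, one arrives at a single integral of a product of four matrix coefficients; rather than trying to evaluate it, one expands one of the factors as a further integral over $\Xi$ (e.g.\ via the reproducing identity $\langle\eta_1,\eta_2\rangle=\int_\Xi\langle\eta_1,U_y\phi_0\rangle\overline{\langle\eta_2,U_y\phi_0\rangle}\,dy$ coming from Theorem~\ref{thm:moyal}, or equivalently from Lemma~\ref{lemma:average}), swaps the two integrals --- this is where Lemma~\ref{lemma1} enters, exactly as the paper says --- applies Theorem~\ref{thm:moyal} to the inner integral (now a product of only \emph{two} coefficients and their conjugates), and regroups. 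The orthogonality relations are thus used twice on genuine two-factor integrals, not once on a four-factor one.
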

\begin{proof}
    The proofs are very similar to that of the case with $\Xi = \mathbb R^{2n}$, using Fubini/Tonelli at the same places as justified by Lemma \ref{lemma1}. We will not repeat the details here and refer instead to \cite{Werner1984, fulsche2020correspondence, luef_skrettingland2018}. Nevertheless, we want to emphasize that commutativity of the operator convolution, $A \ast B = B \ast A$, is satisfied due to the identities
    \begin{align}
        U_{-x}^\ast A U_{-x} = \overline{m(-x,x)}U_x A U_{-x} = U_x A U_x^\ast, \quad x \in \Xi,
    \end{align}
    and the possibility of choosing $R$ self-adjoint.
\end{proof}
\begin{rem}
    We recall that, in the following, we will consider the space $L^\infty(\Xi)$ as it was defined at the end of Section \ref{sec:Gelfand_Pettis}, which keeps the duality $L^1(\Xi)' \cong L^\infty(\Xi)$ intact even if $\Xi$ is not second countable.
\end{rem}
Clearly, $f \ast g$ is also well-defined if $f \in L^1(\Xi)$ and $g \in L^\infty(\Xi)$. Similarly, $A \ast B$ makes sense by the same formula for $A \in \mathcal T^1(\mathcal H)$ and $B \in \mathcal L(\mathcal H)$. We have also seen that $f \ast A$ is well-defined for $f \in L^1(\Xi)$ and $A \in \mathcal L(\mathcal H)$. There is one gap to fill in this scheme: $f \ast A$ also makes sense when $f \in L^\infty(\Xi)$ and $A \in \mathcal T^1(\mathcal H)$. Then, the integral is defined in the weak$^\ast$ sense: For $N \in \mathcal T^1(\mathcal H)$, it is
    \begin{align*}
        \langle \int_{\Xi} f(x) \alpha_{x}(A) ~\d x , N\rangle_{tr} &:= \int_{\Xi} \langle f(x) \alpha_x(A), N\rangle_{tr} ~\d x \\
        &= \int_{\Xi} f(x) N \ast \beta_-(A)(x)~\d x.
    \end{align*}
    Here, $\langle \cdot, \cdot\rangle_{tr}$ denoted the bilinear trace duality pairing. By Proposition \ref{prop:thirdconv}, this expression is well-defined with
    \begin{align*}
        \left | \langle \int_\Xi f(x) \alpha_x(A)~dx, N\rangle_{tr} \right | \leq \| f\|_{L^\infty} \| A\|_{\mathcal T^1} \| N\|_{\mathcal T^1}.
    \end{align*}    
    Hence, by Theorem \ref{thm:Gelfand}, the integral $f \ast A := \int_\Xi f(x) \alpha_x(A)$ exists as a Gelfand integral in $\mathcal L(\mathcal H)$ with $\| f \ast A\| \leq \| f\|_\infty \| A\|_{\mathcal T^1}$. More or less by definition, this convolution is now also compatible with the duality pairing between $\mathcal T^1(\mathcal H)$ and $\mathcal L(\mathcal H)$, e.g., $\langle f \ast A, N\rangle_{tr} = \langle f, \beta_-(A) \ast N\rangle_{tr}$, cf.~\cite{Werner1984,fulsche2020correspondence,luef_skrettingland2018}. 
\begin{ex}
    Let $\Xi = G \times \widehat{G}$ with $G$ discrete and $U_{(x, \xi)}$ as in Example \ref{main_example}, acting on $L^2(G) = \ell^2(G)$. Then, $\delta_0 \in \ell^2(G)$. Let $f \in \ell^\infty(G)$ and extend $f$ to a function on $\Xi$ by $f_0(x, \xi) = f(x)$. Then, since $\widehat{G}$ is compact, we have for $\varphi \in \ell^2(G)$ and $y \in G$:
    \begin{align*}
        f_0 \ast (\delta_0 \otimes \delta_0)(\varphi)(y) &= \sum_{x \in G} \int_{\widehat{G}} f_0(x,\xi) [U_{(x,\xi)}\delta_0](y) \langle \varphi, U_{(x,\xi)}\delta_0\rangle~d\xi\\
        &= \sum_{x \in G} f(x)\varphi(x) \delta_0(y-x) \int_{\widehat{G}} \xi(y) \overline{\xi(x)}~d\xi\\
        &= f(y)\varphi(y).
    \end{align*}
    Hence, $f_0 \ast (\delta_0 \otimes \delta_0) = M_f$, the operator of multiplication by $f$.
\end{ex}
    Here is one special instance of this convolution:
\begin{lem}\label{lemma:average}
    Let $A \in \mathcal T^1(\mathcal H)$. Then, it is:
    \begin{align*}
        1 \ast A = \int_{\Xi} \alpha_{x}(A)~\d x = \tr(A) I.
    \end{align*}
\end{lem}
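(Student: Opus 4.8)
The plan is to test the Gelfand integral $1 \ast A = \int_\Xi \alpha_x(A)\,\d x \in \mathcal L(\mathcal H)$ against an arbitrary trace class operator and reduce everything to Proposition \ref{prop:thirdconv}. Recall from the discussion preceding the lemma that, by definition of the weak$^\ast$ (Gelfand) integral, for every $N \in \mathcal T^1(\mathcal H)$ we have
\begin{align*}
    \langle 1 \ast A, N\rangle_{tr} = \int_\Xi \langle \alpha_x(A), N\rangle_{tr}\,\d x = \int_\Xi N \ast \beta_-(A)(x)\,\d x,
\end{align*}
where in the last step I use $\langle \alpha_x(A), N\rangle_{tr} = \tr(\alpha_x(A) N) = N \ast \beta_-(A)(x)$, which follows from the defining formula $C \ast D(x) = \tr(\alpha_x(\beta_-(C))D)$ together with $\beta_-(\beta_-(A)) = A$ (valid since $R$ may be taken self-adjoint and unitary, so $R^2 = I$).

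Now I would simply invoke Proposition \ref{prop:thirdconv} with the pair $N, \beta_-(A) \in \mathcal T^1(\mathcal H)$ — here one first notes that $\beta_-(A) = RAR^\ast \in \mathcal T^1(\mathcal H)$ because $R$ is unitary — to obtain
\begin{align*}
    \int_\Xi N \ast \beta_-(A)(x)\,\d x = \tr(N)\,\tr(\beta_-(A)) = \tr(N)\,\tr(A),
\end{align*}
the last equality again using unitarity of $R$ and the trace property $\tr(RAR^\ast) = \tr(R^\ast R A) = \tr(A)$. On the other hand, $\langle \tr(A) I, N\rangle_{tr} = \tr(A)\tr(N)$. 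Hence $\langle 1 \ast A, N\rangle_{tr} = \langle \tr(A) I, N\rangle_{tr}$ for every $N \in \mathcal T^1(\mathcal H)$, and since $\mathcal L(\mathcal H) = \mathcal T^1(\mathcal H)'$ under this pairing, uniqueness of the Gelfand integral gives $1 \ast A = \tr(A) I$.

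There is no real obstacle here: the analytic content — that $N \ast \beta_-(A)$ is genuinely in $L^1(\Xi)$ and integrates to $\tr(N)\tr(A)$ — has already been absorbed into Proposition \ref{prop:thirdconv}. The only points requiring a little care are bookkeeping: matching the bilinear trace pairing conventions, checking $\beta_-$ preserves $\mathcal T^1(\mathcal H)$ and the trace, and remembering that the integral $\int_\Xi \alpha_x(A)\,\d x$ exists only in the weak$^\ast$ sense (it is typically not Bochner convergent, as $\|\alpha_x(A)\|_{op} \equiv \|A\|_{op}$ over a possibly non-compact $\Xi$), so that the identity must be established by duality rather than by manipulating the integrand directly.
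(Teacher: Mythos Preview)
Your proof is correct. The key analytic input—Proposition \ref{prop:thirdconv}—is the same as in the paper, but the route differs slightly. The paper first observes that $\alpha_y(1\ast A)=1\ast A$ by translation-invariance of the Haar integral, invokes irreducibility and Schur's Lemma to conclude $1\ast A=cI$, and only then pairs with a single trace class operator $B$ to identify $c=\tr(A)$. You skip the Schur step entirely: you pair $1\ast A$ with an \emph{arbitrary} $N\in\mathcal T^1(\mathcal H)$ from the outset, and the duality $\mathcal L(\mathcal H)\cong\mathcal T^1(\mathcal H)'$ then pins down $1\ast A$ uniquely. Your argument is therefore a little more elementary (it does not appeal to irreducibility at this point), while the paper's version makes the structural reason---shift-invariance forces a scalar---more visible before the trace computation.
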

\begin{proof}
    Observe that
    \begin{align*}
        \alpha_{y}(1 \ast A) = \int_{\Xi} \alpha_{x+y}(A)~\d x = \int_{\Xi} \alpha_{x}(A)~\d x  = 1 \ast A.
    \end{align*}
    By irreducibility of the representation, Schur's Lemma implies $1 \ast A = c I$ for some $c\in \mathbb C$. Now, for any $B \in \mathcal T^1(\mathcal H)$ it is:
    \begin{align*}
        c \tr(B) = \tr((1 \ast A)B) = \int_{\Xi} 1 \cdot (\beta_-(A) \ast B)(x) ~\d x = \tr(A) \tr(B),
    \end{align*}
    showing that $c =\tr(A)$.
\end{proof}

We want to end this discussion by mentioning that, using the mapping properties we have obtained so far for convolutions together with the complex interpolation method \cite{Bergh_Lofstrom1976}, one obtains the following version of Young's convolution inequality:
\begin{thm}
    Let $r, p, q \in [1, \infty]$ such that $1 + \frac{1}{r} = \frac{1}{p} + \frac{1}{q}$. Further, let $f, g: \Xi \to \mathbb C$ be measurable functions and $A, B \in \mathcal L(\mathcal H)$. Then, the following estimates hold true:
    \begin{align}
        \| f \ast g\|_{L^r} &\leq \| f\|_{L^p} \| g\|_{L^q},\\
        \| f \ast B\|_{\mathcal T^r} &\leq \| f\|_{L^p} \| A\|_{\mathcal T^q},\\
        \| A \ast g\|_{\mathcal T^r} &\leq \| A\|_{\mathcal T^p} \| g\|_{L^q},\\
        \| A \ast B\|_{L^r} &\leq \| A\|_{T^p} \| B\|_{T^q}.
    \end{align}
\end{thm}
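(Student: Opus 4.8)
The plan is to obtain all four inequalities by complex interpolation from the endpoint estimates established earlier in the section, exactly as the theorem statement advertises. The strategy is to fix two of the exponents at extreme values and interpolate in the third, treating each of the four bilinear maps separately but in the same way.

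\medskip

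\textbf{Step 1: Collect the endpoint estimates.} For the function-function convolution $f \ast g$, the classical Young inequality at the corners $(p,q,r) = (1,1,1)$ and $(1,\infty,\infty)$ (and symmetrically $(\infty,1,\infty)$) is standard; alternatively these follow from $\|f\ast g\|_{L^1}\le\|f\|_{L^1}\|g\|_{L^1}$ and $\|f\ast g\|_{L^\infty}\le\|f\|_{L^1}\|g\|_{L^\infty}$ together with commutativity. For $f\ast A$ we have from the constructions in this section $\|f\ast A\|_{\mathcal T^1}\le\|f\|_{L^1}\|A\|_{\mathcal T^1}$ (Bochner integral in $\mathcal T^1$), $\|f\ast A\|_{op}\le\|f\|_{L^1}\|A\|_{op}$ (Gelfand integral in $\mathcal L(\mathcal H)$), and $\|f\ast A\|_{op}\le\|f\|_{L^\infty}\|A\|_{\mathcal T^1}$ (the weak$^\ast$ Gelfand integral). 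For $A\ast B$ we have $\|A\ast B\|_\infty\le\|A\|_{\mathcal T^1}\|B\|_{op}$ (from the uniform continuity discussion) and $\|A\ast B\|_{L^1}\le\|A\|_{\mathcal T^1}\|B\|_{\mathcal T^1}$ (Proposition~\ref{prop:thirdconv}); and $A\ast g$ is just $g\ast A$ by commutativity.

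\medskip

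\textbf{Step 2: Interpolate in the operator argument.} Fix $f\in L^1(\Xi)$. The map $A\mapsto f\ast A$ is bounded $\mathcal T^1\to\mathcal T^1$ with norm $\le\|f\|_{L^1}$ and bounded $\mathcal L(\mathcal H)\to\mathcal L(\mathcal H)$ (i.e.\ $\mathcal T^\infty\to\mathcal T^\infty$) with the same norm bound. Since the Schatten classes $(\mathcal T^1,\mathcal T^\infty)$ form a complex interpolation scale with $[\mathcal T^1,\mathcal T^\infty]_\theta = \mathcal T^{q}$ for $1/q = 1-\theta$ \cite{Bergh_Lofstrom1976}, Stein interpolation gives $\|f\ast A\|_{\mathcal T^q}\le\|f\|_{L^1}\|A\|_{\mathcal T^q}$ for every $q\in[1,\infty]$. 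Symmetrically, for fixed $A\in\mathcal T^1$, the map $g\mapsto A\ast g = g\ast A$ is bounded $L^\infty\to\mathcal T^\infty$ and $L^1\to\mathcal T^1$ with norm $\le\|A\|_{\mathcal T^1}$, so interpolating the $L$-scale gives $\|A\ast g\|_{\mathcal T^q}\le\|A\|_{\mathcal T^1}\|g\|_{L^q}$.

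\medskip

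\textbf{Step 3: Interpolate in the scalar/function argument, then bootstrap.} Now fix $q$ and interpolate in the remaining exponent. For instance, for $f\ast A$ with $A\in\mathcal T^q$ fixed, combine $\|f\ast A\|_{\mathcal T^q}\le\|f\|_{L^1}\|A\|_{\mathcal T^q}$ (Step~2) with $\|f\ast A\|_{op}\le\|f\|_{L^\infty}\|A\|_{\mathcal T^1}\le\|f\|_{L^\infty}\|A\|_{\mathcal T^q}$, wait -- here one must be careful, as the second estimate needs $A\in\mathcal T^1$. The cleaner route is a single two-parameter interpolation: view the bilinear map $(f,A)\mapsto f\ast A$ as bounded on the three corners $L^1\times\mathcal T^1\to\mathcal T^1$, $L^1\times\mathcal T^\infty\to\mathcal T^\infty$, and $L^\infty\times\mathcal T^1\to\mathcal T^\infty$, and apply bilinear complex interpolation (e.g.\ \cite[\S4.4]{Bergh_Lofstrom1976}) over the triangle of admissible $(1/p,1/q)$ to land at $L^p\times\mathcal T^q\to\mathcal T^r$ with $1+1/r = 1/p+1/q$. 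The same triangle-of-corners argument handles $f\ast g$ (corners $(1,1,1)$, $(1,\infty,\infty)$, $(\infty,1,\infty)$), $A\ast g$, and $A\ast B$ (corners $\mathcal T^1\times\mathcal T^1\to L^1$, $\mathcal T^1\times\mathcal T^\infty\to L^\infty$, $\mathcal T^\infty\times\mathcal T^1\to L^\infty$). For $A\ast B$ one uses commutativity to get the third corner.

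\medskip

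\textbf{Main obstacle.} The routine calculations are genuinely routine; the only real care needed is bookkeeping with the interpolation functor when one of the endpoint spaces is $\mathcal L(\mathcal H) = \mathcal T^\infty$, which is not reflexive and for which $[\mathcal T^1,\mathcal T^\infty]_\theta = \mathcal T^q$ still holds but requires citing the correct statement (the Calder\'on complex method applied to Schatten ideals), and with the Gelfand-integral definition of $f\ast A$ in the $L^\infty\times\mathcal T^1$ corner, so that the bilinear map is genuinely defined and bounded there before interpolation is invoked. A secondary subtlety, invisible in the $\Xi=\RR^{2n}$ case, is that $L^\infty(\Xi)$ must be the modified space from the end of Section~\ref{sec:Gelfand_Pettis} for the duality $L^1(\Xi)'\cong L^\infty(\Xi)$ to hold, which is what legitimizes the weak$^\ast$ integral and hence the $(\infty,1)$ corner. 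Once these are in place, the four inequalities drop out of the standard bilinear interpolation theorem with no further work, and I would simply refer the reader to \cite{Werner1984} for the $\Xi=\RR^{2n}$ prototype and to \cite{Bergh_Lofstrom1976} for the interpolation machinery.
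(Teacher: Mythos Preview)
Your proposal is correct and matches the paper's approach exactly: the paper does not give a detailed proof but simply states, immediately before the theorem, that the result follows from ``the mapping properties we have obtained so far for convolutions together with the complex interpolation method \cite{Bergh_Lofstrom1976}''. Your write-up is in fact a careful elaboration of precisely this one-line argument, identifying the endpoint estimates from the section and the bilinear interpolation needed to fill the triangle.
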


\section{Convolutions and positivity}\label{sec:positivity}

An operator $A \in \mathcal L(\mathcal H)$ is said to be positive, $A \geq 0$, if it is positive in the sense of quadratic forms: $\langle A\varphi, \varphi\rangle \geq 0$ for all $\varphi \in \mathcal H$. Functions $f \in L^\infty(\Xi)$ are called positive when they satisfy $f \geq 0$ almost everywhere. An element $(f, A)$ of $L^\infty(\Xi) \oplus \mathcal L(\mathcal H)$ is positive, $(f, A) \geq 0$, if both entries are positive.
\begin{lem}
    Let $A \in \mathcal T^1(\mathcal H), B \in \mathcal L(\mathcal H)$, $A, B \geq 0$ and $f \in L^\infty(\Xi)$, $g \in L^1(\Xi)$, $f, g \geq 0$. Then, it is $f \ast A \geq 0$, $A \ast B \geq 0$ and $g \ast B \geq 0$.
\end{lem}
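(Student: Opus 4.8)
The plan is to prove the three positivity claims separately, reducing each to a statement about the positivity of a scalar quantity for a single test vector $\varphi \in \mathcal H$ (for the operator outputs) or for almost every $x \in \Xi$ (for the function output), and then exploiting that $\alpha_x$ and $\beta_-$ are implemented by conjugation with unitaries, hence preserve positivity.

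First I would treat $g \ast B \ge 0$, where $g \in L^1(\Xi)$, $g \ge 0$, and $B \in \mathcal L(\mathcal H)$, $B \ge 0$. Here $g \ast B = \int_\Xi g(x)\, \alpha_x(B)\, \d x$ as a Gelfand integral in $\mathcal L(\mathcal H)$. For fixed $\varphi \in \mathcal H$ one has, by the defining property of the Gelfand integral (pairing against the rank-one trace-class operator $\varphi \otimes \varphi$, cf.\ Theorem \ref{thm:Gelfand}),
\begin{align*}
    \langle (g \ast B)\varphi, \varphi\rangle = \int_\Xi g(x)\, \langle \alpha_x(B)\varphi, \varphi\rangle ~\d x = \int_\Xi g(x)\, \langle U_x^\ast B U_x^\ast{}^{-1}\varphi, \varphi \rangle ~\d x,
\end{align*}
wait --- more precisely $\langle \alpha_x(B)\varphi,\varphi\rangle = \langle B U_x^\ast \varphi, U_x^\ast\varphi\rangle \ge 0$ since $B \ge 0$; as $g \ge 0$ the integral is nonnegative, so $g \ast B \ge 0$. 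The same computation, now pairing the Gelfand integral in $\mathcal L(\mathcal H)$ against $\varphi \otimes \varphi$, handles $f \ast A \ge 0$ for $f \in L^\infty(\Xi)$, $f \ge 0$, $A \in \mathcal T^1(\mathcal H)$, $A \ge 0$: indeed $\langle (f \ast A)\varphi, \varphi\rangle = \int_\Xi f(x)\, \langle A U_x^\ast\varphi, U_x^\ast\varphi\rangle~\d x \ge 0$, using that the integrand is a product of two nonnegative functions and that by Corollary \ref{lemma2} the second factor is in $L^1(\Xi)$ so the integral converges.

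For $A \ast B \ge 0$ as an element of $L^\infty(\Xi)$ (with $A \in \mathcal T^1(\mathcal H)$, $A,B \ge 0$, $B \in \mathcal L(\mathcal H)$), I would evaluate the function pointwise: $A \ast B(x) = \tr(A\, \alpha_x(\beta_-(B)))$. Since $A \ge 0$ is trace class it has a positive square root $A^{1/2} \in \mathcal T^2(\mathcal H)$, and $\alpha_x(\beta_-(B)) = (U_x R) B (U_x R)^\ast \ge 0$ because $B \ge 0$ and $U_x R$ is unitary (recall $R$ may be taken self-adjoint and unitary, so $R^\ast = R$). Hence $\tr(A\, \alpha_x(\beta_-(B))) = \tr(A^{1/2}\, \alpha_x(\beta_-(B))\, A^{1/2}) \ge 0$ as the trace of a positive operator. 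This holds for every $x$, so $A \ast B \ge 0$ everywhere, in particular almost everywhere.

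I do not anticipate a serious obstacle here; the only points requiring a little care are (i) justifying that one may pull the vector functional $\langle \,\cdot\,\varphi,\varphi\rangle$ inside the Gelfand integral --- this is exactly the content of Theorem \ref{thm:Gelfand}(2) applied to the rank-one operator $\varphi\otimes\varphi \in \mathcal T^1(\mathcal H)$, whose dual action on $\mathcal L(\mathcal H)$ is $B \mapsto \langle B\varphi,\varphi\rangle$; and (ii) in the $f \ast A$ case, making sure the scalar integrand is genuinely integrable, which follows from Corollary \ref{lemma2}. Everything else is the elementary observation that conjugation by a unitary preserves the cone of positive operators and that the trace is positive on positive operators.
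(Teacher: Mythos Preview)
Your proof is correct and, for the two function--operator convolutions $f\ast A$ and $g\ast B$, follows exactly the paper's argument. For $A\ast B$ you take a slightly different but equally short route: the paper reduces to rank-one $A=\varphi\otimes\varphi$ and reads off $A\ast B(x)=\langle B\,U_x^\ast\varphi, U_x^\ast\varphi\rangle\ge 0$, whereas you keep $A$ general and use the square-root/cyclicity trick $\tr(A\,\alpha_x(\beta_-(B)))=\tr(A^{1/2}\alpha_x(\beta_-(B))A^{1/2})\ge 0$. Both are standard one-liners; your version has the minor advantage of not needing to pass through a spectral decomposition of $A$.
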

\begin{proof}
    Let $\varphi \in \mathcal H$. Then, we have
    \begin{align*}
        \langle f \ast A \varphi, \varphi\rangle &= \int_\Xi f(x) \langle AU_{x}^\ast \varphi, U_{x}^\ast \varphi\rangle~\d x \geq 0.
    \end{align*}
    Similarly, one obtains that $g \ast B \geq 0$. 
    For the convolution of $A$ and $B$, it suffices to verify the claim for $A$ being a positive rank one operator, say $A = \varphi\otimes \varphi$. Then, one readily verifies that 
    \begin{align*}
        A \ast B(x) = \langle BU_{x}^\ast \varphi, U_{x}^\ast \varphi\rangle \geq 0.
    \end{align*}
    This finishes the proof.
\end{proof}
Let us briefly discuss a fact from classical harmonic analysis. One might be tempted to state the following as a true statement, say for $G$ a locally compact abelian group:
\begin{align}\label{eq:convprop}
\begin{split}
    0 \leq f \in L^\infty(G) \text{ s.th. } & f \ast g \in L^1(G) \text{ for every } 0 \leq g \in L^1(G) \\
    & \Rightarrow f \in L^1(G).
\end{split}
\end{align}
When the Haar measure on $G$ is $\sigma$-finite, the above statement is easily seen to be true by Tonelli's theorem:
\begin{align*}
    \| f\|_{L^1} \| g\|_{L^1} = \| f\ast g\|_{L^1} < \infty,
\end{align*}
hence $f \in L^1(G)$. Nevertheless, for general locally compact abelian groups the statement needs some caution.

To give an example, we consider the group $G = \mathbb R \times \mathbb R_d$, where $\mathbb R_d$ denotes the additive group of real numbers, endowed with the discrete topology. Then, if we denote by $D = \{ (x, x): ~x \in \mathbb R\}$ the diagonal in $\mathbb R^2$, it is not hard to see that $D$ is a member of the Borel-$\sigma$-algebra of $G$, hence $0 \leq \mathbf 1_D \in L^\infty(G)$. Further, using the description of the Haar measure of $G$ given in \cite[Section 2.3]{Folland2016}, it is not hard to see that the Haar measure of $D$ equals $\infty$, i.e.\ $\mathbf 1_D \not \in L^1(G)$. Further, one can prove that $\mathbf 1_D \ast g = 0$ for every $g \in L^1(G)$. Hence, this might look as if one has obtained a counterexample to \eqref{eq:convprop}. Nevertheless, when dealing with the right formulation of the result, which is the following, it still holds true:
\begin{prop}\label{prop:convprop}
    Let $0 \leq f \in L^\infty(G)$ such that $f \ast g \in L^1(G)$ for every $0 \leq g \in L^1(G)$. Then, there exists a representative $f'$ of $f \in L^\infty(G)$ such that $f' \in L^1(G)$.

    If further $f \in C_b(G)$, i.e.\ $f$ is continuous, then $f \in L^1(G)$.
\end{prop}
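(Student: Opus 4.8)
The plan is to feed the convolution hypothesis a single, carefully chosen function and then to localize the computation to $\sigma$-compact pieces where Tonelli's theorem is available. Let $G_0 \leq G$ be an open (hence closed) $\sigma$-compact subgroup, as furnished by \cite[Proposition 2.4]{Folland2016}, and fix a relatively compact open neighbourhood $V$ of the identity with $V \subseteq G_0$, so that $0 \leq \mathbf{1}_V \in L^1(G)$ and $\lambda(V) > 0$. By hypothesis $h := f \ast \mathbf{1}_V \in L^1(G)$, and $h \geq 0$. Since $V \subseteq G_0$, for $x$ in a coset $c + G_0$ we have $h(x) = \int_{x - V} f(y)\,dy$ with $x - V \subseteq c + G_0$, so the whole situation splits over the cosets of $G_0$; note that only this particular choice $g = \mathbf{1}_V$ enters the argument.

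Next I would apply Tonelli's theorem on each coset separately. The coset $c + G_0$ carries a $\sigma$-finite measure (being $\sigma$-compact), and $f$ restricted to it is genuinely Borel measurable because $f$ is locally Borel; hence $(x,y) \mapsto f(y)\,\mathbf{1}_V(x-y)$ is a nonnegative Borel function on $(c+G_0)\times(c+G_0)$ and Tonelli gives
\[
    \int_{c+G_0} h(x)\,dx = \int_{c+G_0}\!\int_{c+G_0} f(y)\,\mathbf{1}_V(x-y)\,dy\,dx = \lambda(V)\int_{c+G_0} f(y)\,dy ,
\]
the last equality because, for fixed $y \in c+G_0$, the map $x \mapsto x-y$ carries $c+G_0$ onto $G_0$ measure-preservingly, so $\int_{c+G_0}\mathbf{1}_V(x-y)\,dx = \lambda(V)$. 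For any finite family of cosets the sum of the left-hand sides is the integral of the nonnegative function $h$ over their disjoint union, hence at most $\|h\|_{L^1(G)}$; therefore $\sum_{c+G_0 \in G/G_0}\int_{c+G_0} f = \lambda(V)^{-1}\sum_{c+G_0}\int_{c+G_0} h \leq \lambda(V)^{-1}\|h\|_{L^1(G)} < \infty$. (One may instead invoke Lemma \ref{lemma:Deitmar} to confine $h$, and hence the relevant cosets, to a $\sigma$-compact open subgroup from the outset; either way the total is finite.)

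A convergent sum of nonnegative terms has at most countably many nonzero entries, so there are cosets $c_1+G_0, c_2+G_0,\dots$ with $\int_{c_n+G_0} f > 0$, while $\int_{c+G_0} f = 0$ --- and hence $f = 0$ almost everywhere on $c+G_0$, since $f \geq 0$ --- on every other coset. Consequently $\{f > 0\}\setminus\bigcup_n(c_n+G_0)$ meets every coset of $G_0$ in a null set, i.e.\ it is locally null, so $f' := f\cdot\mathbf{1}_{\bigcup_n(c_n+G_0)}$ represents the same class in $L^\infty(G)$ as $f$. Since $\bigcup_n(c_n+G_0)$ is $\sigma$-compact, $f'$ is an honest Borel function with $\int_G f' = \sum_n\int_{c_n+G_0} f < \infty$, so $f' \in L^1(G)$; this is the first claim. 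For the second, if $f$ is in addition continuous and vanishes almost everywhere on the open set $c+G_0$, then it vanishes identically there; hence $f$ is supported on the $\sigma$-compact set $\bigcup_n(c_n+G_0)$ and integrable on it, so $f$ itself lies in $L^1(G)$.

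The only genuine obstacle is that the Haar measure on a non-second-countable $G$ need not be $\sigma$-finite, which blocks the one-line argument $\|f\ast\mathbf{1}_V\|_{L^1} = \lambda(V)\|f\|_{L^1}$ via a single application of Tonelli. Restricting to cosets of a $\sigma$-compact open subgroup legitimizes Tonelli, and the elementary fact that the coset-sum of nonnegative integrals is automatically dominated by $\|h\|_{L^1(G)}$ supplies the countability of the support at no extra cost; what remains is routine bookkeeping with locally null sets and, for the continuous case, the observation that a nonnegative continuous function vanishing a.e.\ on an open set vanishes there outright.
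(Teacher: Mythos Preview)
Your proof is correct and, in fact, cleaner than the paper's. Both arguments hinge on the same structural move---restricting to cosets of an open $\sigma$-compact subgroup $G_0$ so that Tonelli becomes available---but the paper proceeds by a three-way case analysis on whether the set of cosets meeting $\{f>0\}$ in positive measure is empty, countable, or uncountable, disposing of the uncountable case by a contradiction involving a level set $\{f>\varepsilon\}$ of infinite measure. You instead feed the hypothesis the single test function $g=\mathbf 1_V$ and compute directly that $\int_{c+G_0} h=\lambda(V)\int_{c+G_0} f$ on every coset; the finiteness of $\sum_{c+G_0}\int_{c+G_0} h\le\|h\|_{L^1}$ then forces the countability of the ``support cosets'' automatically, with no contradiction step needed. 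This buys you a shorter, more constructive argument, and makes transparent that only one test function is actually required. The continuous case is handled identically in both proofs.
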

In the above example, the representative would simply be $f' = 0$, as the diagonal $D$ is a locally null set. At this stage, we want to refer the reader back to the discussion regarding locally null sets at the end of Section \ref{sec:Gelfand_Pettis}. Indeed, using the description of the measure of locally Borel sets, it is not hard to verify that in the example $G = \mathbb R \times \mathbb R_d$, where $G_0 = \mathbb R \times \{ 0\}$, the diagonal $D$ is locally zero such that $\mathbf 1_D = 0$ in $L^\infty(G)$. 

We now come back to Proposition \ref{prop:convprop}. While the statement is certainly well-known, we could not locate a decent reference in the literature. Therefore, we provide a proof for completeness.
\begin{proof}[Proof of Proposition \ref{prop:convprop}]
    For convenience, we choose to fix a particular representative in $\mathcal L^\infty(G)$ of $f \in L^\infty(G)$, which we will nevertheless still denote by $f$. Of course, we can choose this representative such that it is non-negative everywhere. 
    
    Let $A \subset G$ be $\sigma$-finite. Then, we have for every $0 \leq g \in L^1(G)$:
    \begin{align*}
        (\mathbf 1_A f) \ast g(x) = \int_A f(y) g(x-y)~dy \leq f \ast g(x).
    \end{align*}
    Hence, the assumption shows that $(\mathbf 1_A f) \ast g \in L^1(G)$ for every $0 \leq g \in L^1(G)$. Fubini's theorem yields that $\mathbf 1_A f \in L^1(G)$ for every $\sigma$-finite subset $A \subset G$.

    We now set $B = \{ x \in G: ~f(x) > 0\}$. Since $f$ is locally Borel measurable, this set is locally Borel. Further, fix $G_0$ an open and $\sigma$-compact subgroup of $G$. We distinguish several cases:
    \begin{enumerate}[(i)]
        \item If $B$ is locally null, then $0 = f \in L^\infty(G)$ such that the statement holds true with $f' = 0$. If $f$ is continuous, then $f = 0$, because continuous functions which are locally null have to be the zero function.
        \item Assume that 
        \begin{align*}
           C := \{ y G_0 \in G/G_0: ~\lambda(B \cap yG_0) \neq 0\}
        \end{align*}
        is countable. Since $G_0$ is $\sigma$-compact, it is $\sigma$-finite. Hence $B \cap yG_0$ is $\sigma$-finite for every $yG_0 \in C$. Consequently, 
        \begin{align*}
            f' = f \mathbf 1_{\cup(B \cap yG_0)},
        \end{align*}
        where the union is taken over all $yG_0 \in C$, is supported on a $\sigma$-finite subset of $G$ and $f - f'$ is locally zero. Hence, Fubini's theorem can now be employed to prove that $f' \in L^1(G)$. 

        When $f$ is continuous and $f(x) \neq 0$ for some $x \in G$, then $f(x) \neq 0$ in a neighborhood of $x$. Hence, $B$ is open. Thus, $B \cap yG_0 \neq \emptyset$ implies $\lambda(B \cap yG_0) > 0$. In particular, $f = f'$, i.e.\ $f \in L^1(G)$.
        \item Assume that
        \begin{align*}
            \{ y G_0 \in G/G_0: ~\lambda(B \cap yG_0) \neq 0\}
        \end{align*}
        is uncountable. In this case, it is not hard to see that there is $\varepsilon > 0$ such that $\{ x \in G: ~f(x)> \varepsilon\}$ has a Borel subset $A$ which is $\sigma$-finite and of infinite Haar measure. Then, $\mathbf 1_A f \not \in L^1(G)$, a contradiction. Therefore, this case cannot occur.\qedhere
    \end{enumerate}
\end{proof}
From the perspective of quantum harmonic analysis, it is now natural to ask when implications of the following kind hold true:
\begin{align*}
    0 \leq A \in \mathcal L(\mathcal H) \text{ s.th. } g \ast A \in \mathcal T^1(\mathcal H) \text{ for all } 0 \leq g \in L^1(\Xi) \Rightarrow A \in \mathcal T^1(\mathcal H)\\
    0 \leq A \in \mathcal L(\mathcal H) \text{ s.th. } B \ast A \in L^1(\Xi) \text{ for all } 0 \leq B \in \mathcal T^1(\mathcal H) \Rightarrow A \in \mathcal T^1(\mathcal H)\\
    0 \leq f \in L^\infty(\Xi) \text{ s.th. } B \ast f \in \mathcal T^1(\mathcal H) \text{ for all } 0 \leq B \in \mathcal T^1(H) \Rightarrow f \in L^1(\Xi).
\end{align*}
We now discuss versions of these implications.
\begin{lem}
    Let $0 \leq A \in \mathcal L(\mathcal H)$ such that $g \ast A \in \mathcal T^1(\mathcal H)$ for every $0 \leq g \in L^1(\Xi)$. Then, it is $A \in \mathcal T^1(\mathcal H)$.
\end{lem}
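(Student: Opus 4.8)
The plan is to fix some $0 \neq \varphi \in \mathcal H$ and test the operator $A$ against the rank-one projection $\varphi \otimes \varphi$ via the function-operator convolution, in order to extract a scalar-valued function from $A$ to which Proposition \ref{prop:convprop} (the classical result we just proved) can be applied. Concretely, set
\begin{align*}
    F_\varphi(x) := \langle A U_x^\ast \varphi, U_x^\ast \varphi \rangle = A \ast (R\varphi \otimes R\varphi)(-x),
\end{align*}
using the identity already observed in the proof of Corollary \ref{lemma2}. Since $A \geq 0$, we have $F_\varphi \geq 0$, and since $A$ is bounded and $x \mapsto U_x^\ast \varphi$ is continuous, $F_\varphi \in C_b(\Xi)$. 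The key computation is that, for $0 \leq g \in L^1(\Xi)$,
\begin{align*}
    \langle (g \ast A)\varphi, \varphi\rangle = \int_\Xi g(x) \langle A U_x^\ast \varphi, U_x^\ast\varphi\rangle~\d x = (\check g \ast F_\varphi)(0),
\end{align*}
where $\check g(x) = g(-x)$; more usefully, a shift of this gives $(\check g \ast F_\varphi)(y) = \langle \alpha_{-y}(g \ast A)\varphi, \varphi\rangle = \langle (g \ast A) U_{-y}^\ast\varphi, U_{-y}^\ast\varphi\rangle$, which is finite and, as a function of $y$, bounded by $\|g\ast A\|_{op}\|\varphi\|^2$. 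The hypothesis says $g \ast A \in \mathcal T^1(\mathcal H)$; I would like to conclude $\check g \ast F_\varphi \in L^1(\Xi)$.

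The crucial link is Corollary \ref{lemma2}: applied to the \emph{trace-class} operator $g \ast A$, it tells us precisely that $y \mapsto \langle (g\ast A) U_y^\ast \varphi, U_y^\ast\varphi\rangle \in L^1(\Xi)$ with $L^1$-norm at most $\|g \ast A\|_{\mathcal T^1}\|\varphi\|^2$. Combining this with the displayed convolution identity yields $\check g \ast F_\varphi \in L^1(\Xi)$ for every $0 \leq g \in L^1(\Xi)$; since $g \mapsto \check g$ is a bijection of the nonnegative cone of $L^1(\Xi)$, this is exactly the hypothesis of Proposition \ref{prop:convprop} for the nonnegative continuous function $F_\varphi$. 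That proposition (its "continuous" clause) gives $F_\varphi \in L^1(\Xi)$, i.e.
\begin{align*}
    \int_\Xi \langle A U_x^\ast\varphi, U_x^\ast\varphi\rangle~\d x < \infty.
\end{align*}
Finally, I would pass from this to $A \in \mathcal T^1(\mathcal H)$: writing $A^{1/2}$ for the positive square root, the integrand is $\|A^{1/2}U_x^\ast\varphi\|^2 = \langle \alpha_{-x}(A)\varphi,\varphi\rangle$, and by the Moyal-type orthogonality (Theorem \ref{thm:moyal}), choosing an orthonormal basis $(e_i)$ and a fixed unit vector $\varphi$, one has $\int_\Xi \langle A^{1/2}U_x^\ast e_i, U_x^\ast e_i\rangle~\d x$ controlled in terms of $\int_\Xi \langle A U_x^\ast\varphi, U_x^\ast\varphi\rangle~\d x$ by the reconstruction formula $\sum_i |\langle U_x^\ast e_i, \psi\rangle|^2$-type identities; summing over $i$ and using Tonelli (justified on a $\sigma$-compact set by Lemma \ref{lemma1}) gives $\sum_i \langle A e_i, e_i\rangle < \infty$, i.e. $\tr(A) < \infty$.

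The main obstacle I anticipate is the last step: turning finiteness of the single integral $\int_\Xi \langle AU_x^\ast\varphi, U_x^\ast\varphi\rangle~\d x$ (for one $\varphi$) into finiteness of $\tr(A)$. The clean way is to instead run the argument with $A^{1/2}$ in place of $A$ from the start — applying Corollary \ref{lemma2} to $g \ast A = (g\ast A^{1/2}\text{-type})$ is not literally available, so more care is needed — or to exploit that $g \ast A \in \mathcal T^1$ together with $1 \ast A^{1/2}$-style averaging (Lemma \ref{lemma:average}) and Theorem \ref{thm:moyal} to identify $\int_\Xi \langle A^{1/2}U_x^\ast\varphi, U_x^\ast\varphi\rangle \, \d x$ with $\tr(A)\|\varphi\|^2$ up to the normalization $\lambda = 1$. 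I would expect the correct route is: apply Proposition \ref{prop:convprop} to $F_\varphi$ to get integrability, then invoke that for a positive operator $\int_\Xi \langle AU_x^\ast\varphi,U_x^\ast\varphi\rangle~\d x = \tr(A)\|\varphi\|^2$ whenever either side is finite (an instance of Theorem \ref{thm:moyal} applied to $A = \sum_i \lambda_i \psi_i\otimes\psi_i$ with Tonelli), which closes the proof.
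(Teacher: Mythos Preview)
Your approach is correct but takes a substantially longer route than the paper's. The paper proceeds in one line: fix $0\leq g\in C_c(\Xi)$, $g\neq 0$, and an orthonormal basis $\mathcal F$ of $\mathcal H$; since $\{U_x^\ast\phi:\phi\in\mathcal F\}$ is again an orthonormal basis for each $x$, one has
\[
\tr(g\ast A)=\sum_{\phi\in\mathcal F}\int_\Xi g(x)\langle AU_x^\ast\phi,U_x^\ast\phi\rangle\,dx
=\int_\Xi g(x)\Big(\sum_{\phi\in\mathcal F}\langle AU_x^\ast\phi,U_x^\ast\phi\rangle\Big)dx=\tr(A)\|g\|_{L^1},
\]
the interchange being justified by monotone convergence for nets. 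Finiteness of the left-hand side immediately gives $\tr(A)<\infty$. No use of Proposition~\ref{prop:convprop}, Corollary~\ref{lemma2}, or Moyal's identity is needed.

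Your detour through $F_\varphi$ and Proposition~\ref{prop:convprop} does work, but the step you flagged as the obstacle needs to be stated more carefully than your parenthetical ``$A=\sum_i\lambda_i\psi_i\otimes\psi_i$'', which presupposes compactness of $A$. The clean justification is the one you hinted at with $A^{1/2}$: write $F_\varphi(x)=\|A^{1/2}U_x^\ast\varphi\|^2=\sum_i|\langle U_x^\ast\varphi,A^{1/2}e_i\rangle|^2$ for an orthonormal basis $(e_i)$, note that $F_\varphi\in L^1(\Xi)$ forces $F_\varphi$ (being continuous and nonnegative) to vanish outside a $\sigma$-compact open subgroup, apply Tonelli there, and use Theorem~\ref{thm:moyal} termwise to obtain $\int_\Xi F_\varphi=\|\varphi\|^2\sum_i\langle Ae_i,e_i\rangle=\|\varphi\|^2\tr(A)$. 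So your argument closes, but the paper's direct computation is both shorter and avoids all of these intermediate results.
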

\begin{proof}
      Assume $0 \leq g \in C_c(\Xi)$, $g \neq 0$. Let $\mathcal F$ be any orthonormal basis of $\mathcal H$ and note that $\{ U_x^\ast \phi: ~\phi \in \mathcal F\}$ is, for every $x \in \Xi$, again an orthonormal basis of $\mathcal H$. We see that
        \begin{align*}
            \infty > \sum_{\phi \in \mathcal F} \langle (g \ast A) \phi, \phi\rangle &= \sum_{\phi \in \mathcal F} \int_\Xi g(x) \langle U_x A U_x^\ast \phi, \phi\rangle~dx\\
            &= \sum_{\phi \in \mathcal F} \int_\Xi g(x) \langle AU_x^\ast \phi, U_x^\ast \phi\rangle~dx.
        \end{align*}
        Here, the sum over the possibly uncountable set $\mathcal F$ is defined as
        \begin{align*}
            \sum_{\phi \in \mathcal F} \langle (g \ast A)\phi, \phi\rangle := \sup_{\mathcal F' \subset \mathcal F: \mathcal F' \text{finite}} \sum_{\phi \in \mathcal F'} \langle (g \ast A)\phi, \phi\rangle,
        \end{align*}
        and similarly for the other sums. An appropriate version of the monotone convergence theorem for nets, see for example \cite[Theorem IV.15]{Reed_Simon1}\footnote{Note that Reed and Simon didn't spell out the assumption that the net consists of non-negative functions, which is necessary for the theorem to hold.}, gives:
        \begin{align*}
            \sum_{\phi \in \mathcal F} \int_\Xi g(x) \langle AU_x^\ast \phi, U_x^\ast \phi\rangle~dx &= \int_\Xi g(x) \sum_{\phi \in \mathcal F} \langle AU_x^\ast \phi, U_x^\ast \phi\rangle ~dx\\
            &= \tr(A) \| g\|_{L^1}.
        \end{align*}
        This proves that $A$ is trace class.
\end{proof}

\begin{lem}
    Let $0 \leq A \in \mathcal L(\mathcal H)$ such that $A \ast B \in L^1(\Xi)$ for every $0 \leq B \in \mathcal T^1(\mathcal H)$. Then, $A \in \mathcal T^1(\mathcal H)$.
\end{lem}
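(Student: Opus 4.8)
The plan is to reduce the claim to the analogous statement for functions, namely Proposition \ref{prop:convprop}, by producing an auxiliary scalar-valued function out of $A$ and a fixed positive trace class operator. The key observation is that $A \ast B$ is a bona fide function on $\Xi$, so if I can exhibit a single $0 \leq B_0 \in \mathcal T^1(\mathcal H)$ for which $A \ast B_0$ dominates (or coincides up to constants with) some function $f$ whose $L^1$-membership forces $A \in \mathcal T^1(\mathcal H)$, I would be done. Concretely, I would pick a unit vector $\varphi \in \mathcal H$ and set $B_0 = R\varphi \otimes R\varphi \geq 0$, a rank one operator. By the computation already used in the proof of Corollary \ref{lemma2}, one has $A \ast B_0(x) = A \ast (R\varphi \otimes R\varphi)(x) = \langle A U_x \varphi, U_x\varphi\rangle$ (equivalently $\langle A U_x^\ast \varphi, U_x^\ast\varphi\rangle$ after using unimodularity), which is a nonnegative continuous function of $x$.

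First I would fix $B_0 = R\varphi\otimes R\varphi$ for a unit vector $\varphi$ and record that $h_\varphi(x) := A \ast B_0(x) = \langle A U_x^\ast \varphi, U_x^\ast \varphi\rangle$ is a nonnegative, bounded, continuous function on $\Xi$, which by hypothesis lies in $L^1(\Xi)$. Then $\int_\Xi h_\varphi(x)\,\d x < \infty$. Next I would try to turn this into control on $\tr(A)$. The natural route is: for any $0 \leq g \in L^1(\Xi)$, consider $g \ast A$ and pair it against $B_0$; using the compatibility of the convolutions with the trace duality (the identity $\tr((g \ast A) N) = \int_\Xi g(x)\, (A \ast \beta_-(N))(x)\,\d x$ type formula, valid here since $g \ast A \in \mathcal L(\mathcal H)$ and $B_0 \in \mathcal T^1$), one gets
\begin{align*}
    \langle (g\ast A)\varphi, \varphi\rangle = \tr\bigl((g \ast A)\, (\varphi\otimes\varphi)\bigr) = \int_\Xi g(x)\, (A \ast \beta_-(\varphi\otimes\varphi))(x)\,\d x = \int_\Xi g(x)\, h_\varphi(x)\,\d x = (g \ast h_\varphi)(0)
\end{align*}
up to a reflection of $g$; more usefully, $\langle(g\ast A)\varphi,\varphi\rangle = (\check g \ast h_\varphi)(0)$ or an analogous convolution value, which is finite and bounded by $\|g\|_{L^1}\|h_\varphi\|_\infty$. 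Running this over an orthonormal basis $\mathcal F$ of $\mathcal H$ (taking $\varphi = \phi \in \mathcal F$) and summing, exactly as in the proof of the previous lemma, I would use the monotone convergence theorem for nets (cf.\ \cite[Theorem IV.15]{Reed_Simon1}) to interchange $\sum_{\phi\in\mathcal F}$ with $\int_\Xi g(x)(\cdot)\,\d x$, obtaining $\tr(g\ast A) = \sum_{\phi\in\mathcal F}(\check g \ast h_\phi)(0)$.

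The cleanest finish, though, avoids the basis sum and instead invokes the previous lemma directly: I would show that the hypothesis "$A \ast B \in L^1$ for all $0 \leq B \in \mathcal T^1$" implies "$g \ast A \in \mathcal T^1$ for all $0 \leq g \in L^1(\Xi)$", and then quote the already-proved lemma. To see the implication, for $0 \leq g \in L^1(\Xi)$ and any $0 \leq B \in \mathcal T^1(\mathcal H)$ one has $\tr((g\ast A)B) = \langle g \ast A, B\rangle_{tr} = \langle A, \beta_-(g) \ast B\rangle_{tr}$; but $\beta_-(g)\ast B$ need not be positive, so instead I would use positivity of $g \ast A$ (it is positive by the lemma on positivity of convolutions) and compute its trace as the supremum over finite orthonormal subsets of $\sum_\phi \langle(g\ast A)\phi,\phi\rangle = \sum_\phi \int_\Xi g(x) h_\phi(x)\,\d x$. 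By the net monotone convergence theorem this equals $\int_\Xi g(x) \sum_\phi h_\phi(x)\,\d x$; and $\sum_\phi h_\phi(x) = \sum_\phi \langle A U_x^\ast\phi, U_x^\ast\phi\rangle = \tr(A)$ is constant in $x$ (independent of the basis, since $\{U_x^\ast\phi\}$ is again an orthonormal basis). So $\tr(g\ast A) = \tr(A)\,\|g\|_{L^1}$, which is finite precisely when $\tr(A) < \infty$ — but a priori we only know $g \ast A$ is a positive bounded operator, so this computation shows $g \ast A \in \mathcal T^1$ \emph{if and only if} $\tr(A) < \infty$. Thus I still need an independent argument that $\tr(A) < \infty$, and that is exactly where $h_\varphi \in L^1(\Xi)$ enters: $\int_\Xi h_\varphi(x)\,\d x < \infty$ gives finiteness of $\int_\Xi \langle A U_x^\ast\varphi, U_x^\ast\varphi\rangle\,\d x$ for each fixed $\varphi$, and I would leverage Theorem \ref{thm:moyal} together with a resolution of identity / integration argument to convert this into $\tr(A) < \infty$: integrating $h_\varphi$ and using that $x \mapsto U_x^\ast\varphi$ sweeps out a "continuous frame" with the Moyal orthogonality relations should yield $\int_\Xi h_\varphi(x)\,\d x = \|\varphi\|^2 \tr(A)$ when $A \in \mathcal T^1$, but since we do not yet know $A$ is trace class, I would instead approximate $A$ from below by $A_n = P_n A P_n$ with $P_n$ finite rank projections, apply the identity to each $A_n$ (Fatou/monotone convergence in $n$), and conclude $\tr(A) = \sup_n \tr(A_n) = \sup_n \|\varphi\|^{-2}\int_\Xi (A_n \ast B_0)(x)\,\d x \leq \|\varphi\|^{-2}\int_\Xi h_\varphi(x)\,\d x < \infty$. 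The main obstacle, and the step requiring care, is precisely this last passage — justifying the monotone limit $\int_\Xi (A_n \ast B_0) \to \int_\Xi (A \ast B_0)$ and the identity $\int_\Xi(A_n\ast B_0) = \|\varphi\|^2 \tr(A_n)$ via Theorem \ref{thm:moyal}, together with checking that all the measure-theoretic manipulations (Tonelli, monotone convergence for nets) are licit on the non-second-countable group, which is handled by the $\sigma$-compactness reductions of Section \ref{sec:Gelfand_Pettis}.
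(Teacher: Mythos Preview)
Your overall strategy---pick a rank-one $B_0 = R\varphi \otimes R\varphi$, obtain $h_\varphi(x) = \langle A U_x\varphi, U_x\varphi\rangle \in L^1(\Xi)$, and then extract $\tr(A)$ from $\int_\Xi h_\varphi$ via the Moyal relations---is sound and is in fact more direct than the paper's proof, which proceeds by a case distinction (compact vs.\ non-compact $A$, the latter handled by a separate contradiction argument using spectral projections).

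However, the final approximation step as you wrote it has a genuine gap. You claim to approximate $A$ from below by $A_n = P_n A P_n$ with $P_n$ finite-rank projections, and then assert $\int_\Xi (A_n \ast B_0)\,\d x \leq \int_\Xi h_\varphi\,\d x$. This would require $P_n A P_n \leq A$, which is \emph{false} for general projections: e.g.\ with $A = \begin{pmatrix}1&1\\1&1\end{pmatrix}$ and $P = \begin{pmatrix}1&0\\0&0\end{pmatrix}$ one has $A - PAP = \begin{pmatrix}0&1\\1&1\end{pmatrix}$, which is indefinite. Nor is $P_n A P_n$ monotone in $n$ for nested $P_n$, so neither monotone convergence nor Fatou salvages the inequality (Fatou goes the wrong way: it gives $\int h_\varphi \leq \liminf_n \|\varphi\|^2\tr(A_n)$, not an upper bound on $\tr(A_n)$). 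Spectral truncations $A\,\mathbf 1_{[1/n,\infty)}(A)$ do satisfy $\leq A$, but are trace class only if $A$ is already compact---precisely the case the paper's argument (i) handles, leaving the non-compact case open.

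The fix is to drop the operator approximation and instead expand $h_\varphi$ itself: write $h_\varphi(x) = \|A^{1/2}U_x\varphi\|^2$, and for any orthonormal basis $\{e_\gamma\}$ and finite $F$ use Bessel's inequality to get $\sum_{\gamma\in F}|\langle U_x\varphi, A^{1/2}e_\gamma\rangle|^2 \leq h_\varphi(x)$. Integrating the finite sum and applying Theorem~\ref{thm:moyal} termwise yields $\|\varphi\|^2\sum_{\gamma\in F}\|A^{1/2}e_\gamma\|^2 \leq \int_\Xi h_\varphi < \infty$; taking the supremum over $F$ gives $\|\varphi\|^2\tr(A) \leq \int_\Xi h_\varphi < \infty$. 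This works uniformly for all positive bounded $A$, avoids the paper's case split entirely, and sidesteps the measure-theoretic issues you were worried about since the interchange is only of a finite sum with an integral.
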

\begin{proof}
    We distinguish two cases.
    \begin{enumerate}[(i)]
        \item $A \in \mathcal K(\mathcal H)$. In this case, we write $A = \sum_{j=1}^\infty \lambda_j \varphi_j \otimes \varphi_j$, where $\lambda_j$ is a non-negative and decreasing sequence converging to $0$ and $(\varphi_j)_{j \in \mathbb N}$ is an orthonormal system. Let $0 \leq B = \sum_{k=1}^\infty \mu_k \psi_k \otimes \psi_k \in \mathcal T^1(\mathcal H)$ such that $\mu_k$ is a non-negative decreasing sequence in $\ell^1(\mathbb N)$ and $(\psi_k)_k$ is an orthonormal system in $\mathcal H$. Then,
        \begin{align*}
            A \ast B(x) &= \sum_{j, k=1}^\infty \lambda_j \mu_k (\varphi_j \otimes \varphi_j) \ast (\psi_k \otimes \psi_k)(x)\\
            &=  \sum_{j, k=1}^\infty \lambda_j \mu_k  \tr((\varphi_j \otimes \varphi_j) U_x R (\psi_k \otimes \psi_k) R U_x^\ast)\\
            &=  \sum_{j, k=1}^\infty \lambda_j \mu_k |\langle U_x R \psi_k, \varphi_j\rangle|^2.
        \end{align*}
        By the monotone convergence theorem and Theorem \ref{thm:moyal}, we obtain:
        \begin{align*}
            \infty &> \int_\Xi A \ast B(x)~dx = \sum_{j, k=1}^\infty \lambda_j \mu_k \int_\Xi \int_\Xi  |\langle U_x R \psi_k, \varphi_j\rangle|^2 ~dx\\
            &=  \sum_{j, k=1}^\infty \lambda_j \mu_k \| \varphi_j\|^2 \| \psi_k\|^2 = \tr(A) \tr(B).
        \end{align*}
        \item Assume that $A$ is not compact. We distinguish two cases:
        \begin{enumerate}[(I)]
            \item $0$ is the only accumulation point of $\sigma(A)$. Since $A$ is not compact, we know that there has to be an eigenvalue $\lambda \neq 0$ of infinite multiplicity. Let $(\psi_n)_{n \in \mathbb N}$ be an orthonormal system within the eigenspace of $\lambda$. Then, consider the projection $T = \sum_{n =1}^\infty \psi_n \otimes \psi_n$. Given any rank one operator $B = \varphi \otimes \varphi$, we have
            \begin{align*}
                A \ast B(x) = \langle A U_x R \varphi, U_x R \varphi\rangle \geq \langle TAU_x R \varphi, U_x R \varphi\rangle
            \end{align*}
            by the spectral theorem, as $\mathbf 1_{\lambda}(t) \cdot t \leq t$ for $t \geq 0$, and because $T$ commutes with $A$. By the monotone convergence theorem and Theorem \ref{thm:moyal}:
            \begin{align*}
                \int_\Xi \langle TAU_x R \varphi, U_x R \varphi\rangle~dx &= \sum_{n=1}^\infty \int_\Xi \langle (\psi_n \otimes \psi_n)A U_x R \varphi, U_x R \varphi\rangle~dx\\
                &= \sum_{n=1}^\infty \int_\Xi \langle U_x R \varphi, A \psi_n\rangle \langle \psi_n, U_x R \varphi\rangle~dx\\
                &= \sum_{n=1}^\infty \| \varphi\|^2 \langle \psi_n, A\psi_n\rangle\\
                &= \sum_{n=1}^\infty \| \varphi\|^2 \lambda = \infty.
            \end{align*}
            Hence, we also have $\int_\Xi A \ast B(x)~dx = \infty$, which contradicts the assumption.
            \item There exists an accumulation point $\lambda_0 > 0$ of $\sigma(A)$. In this case, let $0 < \varepsilon < \lambda_0$. Consider, in the sense of the spectral theorem, $\mathbf 1_{B(\lambda_0, \varepsilon)}(A)$. By the spectral mapping theorem, this operator has infinite-dimensional range. Let $(\psi_n)_{n \in \mathbb N}$ be an orthonormal system within the range. Similarly to the case (I), one now has (with $B = \varphi \otimes \varphi$) that $A \ast B(x) \geq (\mathbf 1_{B(\lambda_0, \varepsilon)}(A)A) \ast B$ and:
            \begin{align*}
                \int_\Xi (\mathbf 1_{B(\lambda_0, \varepsilon)}(A)A) \ast B(x)~dx &= \sum_{n=1}^\infty \int \langle U_x R \varphi, A\psi_n\rangle \langle \psi_n, U_x R \varphi\rangle~dx\\
                &= \sum_{n=1}^\infty \| \varphi\|^2 \langle \psi_n, A\psi_n\rangle\\
                &\geq \sum_{n=1}^\infty \| \varphi\|^2 (\lambda_0 - \varepsilon)\\
                &= \infty.
            \end{align*}
            Hence, we also have $\int_\Xi A \ast B(x)~dx = \infty$.\qedhere
        \end{enumerate}
    \end{enumerate}
\end{proof}
We now come to the last of the four implications. While we think the statement should be true (at least in some form) for general $0 \leq f \in L^\infty(\Xi)$, some measure-theoretic issues with the monotone convergence theorem for nets force us to formulate and prove the statement only for bounded continuous functions.
\begin{lem}\label{lem:charl1}
    Let $0 \leq f \in C_b(\Xi)$ such that $B \ast f \in \mathcal T^1(\mathcal H)$ for every $0 \leq B \in \mathcal T^1(\mathcal H)$. Then, $f \in L^1(\Xi)$. 
\end{lem}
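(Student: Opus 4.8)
The plan is to probe the hypothesis with a single rank-one operator and extract $\|f\|_{L^1}$ from its trace. Fix $\varphi\in\mathcal H$ with $\norm{\varphi}=1$ and put $B=\varphi\otimes\varphi$, a positive element of $\mathcal T^1(\mathcal H)$. By assumption $B\ast f=f\ast B$ lies in $\mathcal T^1(\mathcal H)$, and by the positivity lemma of this section it is a positive operator, so $\tr(f\ast B)<\infty$. I claim that in fact $\tr(f\ast B)=\int_\Xi f(y)\,dy$, which then immediately yields $f\in L^1(\Xi)$.

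To compute the trace, recall that $f\ast B=\int_\Xi f(y)\,\alpha_y(B)\,dy$ is a Gelfand integral in $\mathcal L(\mathcal H)$ which is compatible with the trace duality pairing, so that for every $\phi\in\mathcal H$
\begin{align*}
\langle (f\ast B)\phi,\phi\rangle=\int_\Xi f(y)\,\langle\alpha_y(B)\phi,\phi\rangle\,dy=\int_\Xi f(y)\,\abs{\langle\phi,U_y\varphi\rangle}^2\,dy,
\end{align*}
where the integrand vanishes off a $\sigma$-compact set by Lemma \ref{lemma1} and is therefore unambiguously integrable. Summing over a finite subset $\mathcal F'$ of an orthonormal basis $\mathcal F$ of $\mathcal H$ gives $\sum_{\phi\in\mathcal F'}\langle(f\ast B)\phi,\phi\rangle=\int_\Xi f(y)\,\norm{P_{\mathcal F'}U_y\varphi}^2\,dy$, with $P_{\mathcal F'}$ the orthogonal projection onto $\operatorname{span}\mathcal F'$; and $\tr(f\ast B)$ is the supremum of these integrals over all finite $\mathcal F'$.

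The one genuinely delicate point — and the reason the statement is restricted to continuous $f$ — is that along the net of finite subsets $\mathcal F'$ the functions $\norm{P_{\mathcal F'}U_y\varphi}^2$ increase pointwise to the constant $1$, but there is no monotone convergence theorem available along such a net on the (possibly non-$\sigma$-finite) group $\Xi$; indeed it fails already for finite measures. To get around this I would first replace $f$ by an arbitrary $g\in C_c(\Xi)$ with $0\le g\le f$, using that $\int_\Xi f\,dy=\sup\{\int_\Xi g\,dy:\ g\in C_c(\Xi),\,0\le g\le f\}$ for continuous $f\ge 0$. For such a fixed $g$, the set $\{U_y\varphi:\ y\in\supp g\}$ is compact in $\mathcal H$ by strong continuity of $U$, hence spans a separable closed subspace $\mathcal H_0$; since the trace is basis-independent, I may take $\mathcal F$ to be the union of an orthonormal basis $(e_n)_{n\in\mathbb N}$ of $\mathcal H_0$ with one of $\mathcal H_0^\perp$. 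Then for $y\in\supp g$ the projections $P_N$ onto $\operatorname{span}\{e_1,\dots,e_N\}$ satisfy $\norm{P_N U_y\varphi}^2\uparrow\norm{U_y\varphi}^2=1$ along the cofinal \emph{sequence} $N\to\infty$, so ordinary monotone convergence on the finite-measure set $\supp g$ gives $\int_\Xi g(y)\,\norm{P_N U_y\varphi}^2\,dy\to\int_\Xi g\,dy$.

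Combining the pieces, $\tr(f\ast B)\ge\sup_N\int_\Xi f(y)\norm{P_N U_y\varphi}^2\,dy\ge\sup_N\int_\Xi g(y)\norm{P_N U_y\varphi}^2\,dy=\int_\Xi g\,dy$ for every admissible $g$, whence $\tr(f\ast B)\ge\int_\Xi f\,dy$. Since $\tr(f\ast B)<\infty$ this forces $f\in L^1(\Xi)$ (and in fact equality holds, the reverse inequality being the trivial bound $\norm{P_{\mathcal F'}U_y\varphi}^2\le 1$). The routine checks to fill in are the measurability of $y\mapsto\norm{P_{\mathcal F'}U_y\varphi}^2$ against the locally-Borel representative of $f$ and the $\sigma$-compactness of the relevant supports, both of which follow from Lemma \ref{lemma1} together with Lemma \ref{lemma:Deitmar}.
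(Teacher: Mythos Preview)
Your argument is correct and follows the same overall strategy as the paper: bound $\tr(B\ast f)$ from below by $\tr(B)\int_K f$ (equivalently $\int g$) for arbitrary compact $K$ (equivalently $g\in C_c$ with $0\le g\le f$), then let $K$ (or $g$) exhaust $f$.

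The one genuine difference is how the sum--integral interchange is handled. The paper fixes a compact $K$, works with an arbitrary $0\le B\in\mathcal T^1(\mathcal H)$, and invokes a monotone convergence theorem for nets (Reed--Simon, Theorem IV.15) to pass the possibly uncountable sum $\sum_{\gamma\in\Gamma}$ through $\int_K$. You instead take $B=\varphi\otimes\varphi$ rank one, observe that $\{U_y\varphi:\,y\in\supp g\}$ is compact and hence spans a separable subspace, and thereby reduce to an ordinary sequential monotone convergence on the finite-measure set $\supp g$. Your route is more self-contained: as you correctly note, monotone convergence along uncountable nets can fail even on finite measure spaces (e.g.\ $f_F=\mathbf 1_F$ over finite $F\subset[0,1]$), so the paper's appeal to the Reed--Simon statement requires some care about what exactly that theorem asserts, whereas your separability reduction sidesteps the issue entirely. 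The price you pay is the extra step identifying $\int_\Xi f=\sup\{\int g:\,g\in C_c,\,0\le g\le f\}$, which is standard for lower semicontinuous $f$ against a Radon measure.
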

\begin{proof}
    We fix an orthonormal basis $(\varphi_{\gamma})_{\gamma \in \Gamma}$ of $\mathcal H$. For any compact set $K \subset \Xi$ we have:
    \begin{align*}
        B \ast (\mathbf 1_K f) \leq B \ast f.
    \end{align*}
    Hence, $B \ast (\mathbf 1_K f) \in \mathcal T^1(\mathcal H)$. Thus,
    \begin{align*}
        \tr(B \ast (\mathbf 1_K f)) &= \sum_{\gamma \in \Gamma} \langle (B \ast (\mathbf 1_K f) \varphi_\gamma, \varphi_\gamma\rangle\\
        &= \sum_{\gamma \in \Gamma} \int_K f(x) \langle B U_{-x}\varphi_\gamma, U_{-x}\varphi_\gamma\rangle~dx
    \end{align*}
    Using again the version of the monotone convergence theorem for nets from \cite[Theorem IV.15]{Reed_Simon1} yields:
    \begin{align*}
        \tr(B \ast (\mathbf 1_K f)) = \int_K f(x) \sum_{\gamma \in \Gamma} \langle B U_{-x}\varphi_\gamma, U_{-x}\varphi_\gamma\rangle~dx &= \tr(B) \int_K f(x)~dx.
    \end{align*}
    Hence, we have seen that for every $K \subset \Xi$ compact it is:
    \begin{align*}
        \tr(B \ast f) \geq \tr(B) \int_K f(x)~dx.
    \end{align*}
    If $f$ is not supported on a $\sigma$-finite subset, it is an easy exercise in measure theory to prove that $\int_\Xi f(x)~dx = \infty$, contradicting $\tr(B \ast f) < \infty$. If the support of $f$ is $\sigma$-finite, then we conclude that $\int_\Xi f(x)~dx \leq \frac{\tr(B \ast f)}{\tr(B)} < \infty$, which finishes the proof.
\end{proof}

In the following, we will occasionally identify $L^\infty(\Xi)$ and $\mathcal L(\mathcal H)$ with the subspaces $L^\infty(\Xi) \oplus \{ 0\}$ and $\{ 0 \} \oplus \mathcal L(\mathcal H)$ of $L^\infty(\Xi) \oplus \mathcal L(\mathcal H)$ without notationally distinguishing between them. Similarly, we will sometimes write ``$f$'' instead of ``$(f, 0)$'' and ``$A$'' instead of ``$(0, A)$''. 

We recall that $L^\infty(\Xi) \oplus \mathcal L(\mathcal H)$ is the dual space of $L^1(\Xi) \oplus \mathcal T^1(\mathcal H)$ under the isometric identification $\varphi_{(g, B)}(f, A) = \int f(x) g(x)~\d x + \tr(AB)$, $(g, B) \in L^\infty(\Xi) \oplus \mathcal L(\mathcal H)$. With respect to this predual, we can consider the weak$^\ast$ topology on $L^\infty(\Xi) \oplus \mathcal L(\mathcal H)$. Further, $L^\infty(\Xi) \oplus \mathcal L(\mathcal H)$ can be considered as a unital $C^\ast$ algebra with product $(f, A) (g, B) = (fg, AB)$, unit $(1, I)$ and norm $\| (f, A)\| = \max \{ \| f\|_\infty, \| A\|_{op}\}$.

One of the main reasons for Werner to investigate operator convolutions in \cite{Werner1984} was their connection to positive correspondence rules:
\begin{defin}
    A \emph{positive correspondence rule} is a map $\Gamma: L^\infty(\Xi) \oplus \mathcal L(\mathcal H) \to L^\infty(\Xi) \oplus \mathcal L(\mathcal H)$ with $\Gamma(L^\infty(\Xi)) \subset \mathcal L(\mathcal H)$ and $\Gamma(\mathcal L(\mathcal H)) \subset L^\infty(\Xi)$ satisfying the following properties:
    \begin{enumerate}
        \item $\Gamma$ is weak$^\ast$ continuous;
        \item $\Gamma(f, A) \geq 0$ whenever $(f, A) \geq 0$;
        \item $\Gamma(1,0) = (0, I)$ and $\Gamma(0, I) = (1, 0)$;
        \item $\alpha_{x}(\Gamma(f,A)) = \Gamma(\alpha_{x}(f,A))$.
    \end{enumerate}
\end{defin}
\begin{thm}\label{thm:corrrule}
    \begin{enumerate}[(1)]
    \item Let $\Gamma: L^\infty(\Xi) \oplus \mathcal L(\mathcal H) \to L^\infty(\Xi) \oplus \mathcal L(\mathcal H)$. Then, $\Gamma$ is a positive correspondence rule if and only if there are $B_1, B_2 \in \mathcal T^1(\mathcal H)$ with $B_j \geq 0$, $\tr(B_j) = 1$, such that $\Gamma(f, A) = (A \ast B_1, f \ast B_2)$. In this case, the $B_j$ are uniquely determined.
    \item If $\Gamma$ is a positive correspondence rule, then it is completely positive. In particular, $\Gamma((f, A)^\ast)  \Gamma ((f, A)) \leq \Gamma( (f, A)^\ast (f, A))$.
    \item $\Gamma$ satisfies the Berezin-Lieb inequalities: Assume $\Gamma$ is a positive correspondence rule. If $A = A^\ast \in \mathcal L(\mathcal H)$ and $\varphi: \sigma(A) \to [0, \infty)$ is convex and continuous ($\sigma(A)$ denoting the spectrum), then:
    \begin{align*}
        \int_\Xi \varphi(\Gamma(A)) \leq \tr(\varphi(A)).
    \end{align*}
    Similarly, if $f = \overline{f} \in L^\infty(\Xi)$ and $\varphi: \operatorname{essran}(f) \to [0, \infty)$ is continuous and convex ($\operatorname{essran}(f)$ denoting the essential range), then:
    \begin{align*}
        \varphi(\tr(\Gamma(f))) \leq \int_\Xi \varphi(f).
    \end{align*}
    \end{enumerate}
\end{thm}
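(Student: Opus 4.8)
The plan is to handle the three parts in order; part~(1) carries essentially all of the work, and the converse direction of part~(1) is the only genuinely hard statement. For the ``if'' direction of~(1), suppose $\Gamma(f,A)=(A\ast B_1,f\ast B_2)$ with $B_j\ge 0$, $\tr B_j=1$. That $\Gamma(L^\infty(\Xi))\subseteq\mathcal L(\mathcal H)$ and $\Gamma(\mathcal L(\mathcal H))\subseteq L^\infty(\Xi)$ is just the mapping behaviour of the three convolutions; positivity preservation is exactly the positivity lemmas proved above; weak$^\ast$ continuity holds because convolution against a \emph{fixed} trace-class operator is, in each instance, the Banach-space adjoint of a bounded operation on the predual (the duality identities $\langle f\ast A,N\rangle_{tr}=\langle f,\beta_-(A)\ast N\rangle_{tr}$ and the analogous one for $A\ast B_1$); covariance is the identity $\alpha_x(f\ast A)=\alpha_x(f)\ast A=f\ast\alpha_x(A)$; and the normalisations are precisely $\Gamma(1,0)=(0,1\ast B_2)=(0,(\tr B_2)I)=(0,I)$ by \ref{lemma:average}, and $\Gamma(0,I)=(I\ast B_1,0)=((\tr B_1)\mathbf 1,0)=(1,0)$. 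For the ``only if'' direction I would first use that $\Gamma$ is linear together with the mapping hypothesis to write $\Gamma(f,A)=(\Gamma_2(A),\Gamma_1(f))$ with linear maps $\Gamma_1\colon L^\infty(\Xi)\to\mathcal L(\mathcal H)$ and $\Gamma_2\colon\mathcal L(\mathcal H)\to L^\infty(\Xi)$, each weak$^\ast$ continuous, positive, covariant, and with $\Gamma_1(1)=I$, $\Gamma_2(I)=1$.

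\textbf{The map $\Gamma_2$.} This is the easy structure theorem. Pass to the pre-adjoint $S\colon L^1(\Xi)\to\mathcal T^1(\mathcal H)$; it is positive, covariant in the sense $S(\alpha_x g)=\alpha_x(S(g))$, and trace-preserving ($\tr S(g)=\int_\Xi g$), hence a contraction. Fix an approximate identity $(g_i)$ in $L^1(\Xi)$ consisting of non-negative functions with $\int g_i=1$. The $S(g_i)$ are density operators, so after passing to a subnet they converge weak$^\ast$ in $\mathcal L(\mathcal H)$ to some $B_1'\ge 0$ with $\tr B_1'\le 1$. Covariance gives $S(g\ast g_i)=g\ast S(g_i)$; since $g-g\ast g_i\to 0$ in $L^1(\Xi)$ for $g\in C_c(\Xi)$ and $A\mapsto g\ast A$ is weak$^\ast$ continuous on $\mathcal L(\mathcal H)$, letting $i\to\infty$ yields $S(g)=g\ast B_1'$ first for $g\in C_c(\Xi)$ and then, by density and continuity, for all $g\in L^1(\Xi)$; taking traces with $\int g=1$ forces $\tr B_1'=1$. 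Dualising back gives $\Gamma_2(A)=A\ast B_1$ with $B_1:=\beta_-(B_1')$.

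\textbf{The map $\Gamma_1$ (the crux).} The restriction of $\Gamma_1$ to $C_b(\Xi)$ is a unital positive map out of a commutative $C^\ast$-algebra, hence completely positive, and therefore corresponds to a normalised positive operator-valued measure $E$ on $\Xi$ with $\Gamma_1(f)=\int_\Xi f\,dE$ on $C_0(\Xi)$; covariance reads $E(A+x)=U_xE(A)U_x^\ast$. Here it is essential that $\Gamma_1$ is defined on $L^\infty(\Xi)$ in the sense of Section~\ref{sec:Gelfand_Pettis}, i.e.\ on functions modulo \emph{locally null} sets: this forces $E$ to annihilate locally null sets, so $E$ is absolutely continuous with respect to Haar measure. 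I would then prove that an absolutely continuous covariant POVM for the irreducible square-integrable projective $m$-representation $U$ is necessarily $E(dx)=\alpha_x(B_2)\,dx$ for a unique density operator $B_2$. For this, build the covariant Naimark/Stinespring dilation $(\mathcal H_E,\pi_E,V)$ of $\Gamma_1$ from $C_c(\Xi)\otimes\mathcal H$ with inner product $\langle f\otimes\varphi,g\otimes\psi\rangle_E=\int_\Xi\overline{g}\,f\,\langle E(dx)\varphi,\psi\rangle$; the diagonal action $W_x(f\otimes\varphi)=(\alpha_x f)\otimes(U_x\varphi)$ defines a strongly continuous projective representation $W$ of $\Xi$ on $\mathcal H_E$ with the \emph{same} multiplier $m$, intertwined with $U$ by $V$. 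By \ref{Mackey-Stone-vNeumann} — and the fact that for a Heisenberg multiplier every $m$-representation is a multiple of the unique irreducible one — one gets $W\cong U\otimes I_{\mathcal K}$, hence $V\varphi=\varphi\otimes v$ for a unit vector $v\in\mathcal K$; unwinding $\Gamma_1(f)=V^\ast\pi_E(f)V$ and using the Moyal identity \ref{thm:moyal} produces $B_2$ and shows $\tr B_2=1$. Thus $\Gamma_1(f)=f\ast B_2$ on $C_0(\Xi)$, and since $C_0(\Xi)$ is weak$^\ast$ dense in $L^\infty(\Xi)$ and both sides are weak$^\ast$ continuous, on all of $L^\infty(\Xi)$. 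Uniqueness of $B_1,B_2$ is immediate: if $C\in\mathcal T^1(\mathcal H)$ and $C\ast M=0$ for all $M\in\mathcal T^1(\mathcal H)$, then $C\ast(R\psi\otimes R\varphi)(x)=\langle CU_x\psi,U_x\varphi\rangle$ is continuous and vanishes a.e., hence at $x=e_\Xi$, so $C=0$; two representations of $\Gamma$ in the stated form therefore coincide. The passage from "absolutely continuous covariant POVM" to "$\alpha_x(B_2)\,dx$" in this non-second-countable generality — in particular the strong continuity of $W$ and the multiplicity decomposition of $W$ on a possibly non-separable $\mathcal H_E$ — is the step I expect to be the main obstacle; everything else is bookkeeping with the conventions of Section~\ref{sec:Gelfand_Pettis}.

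\textbf{Parts (2) and (3).} Write $\Gamma(f,A)=(A\ast B_1,f\ast B_2)$. On $M_n(L^\infty(\Xi)\oplus\mathcal L(\mathcal H))=M_n(L^\infty(\Xi))\oplus M_n(\mathcal L(\mathcal H))$ the amplification $\Gamma\otimes\mathrm{id}_n$ decouples into $(A_{ij})\mapsto(A_{ij}\ast B_1)$ and $(f_{ij})\mapsto(f_{ij}\ast B_2)$; the first is positive as a positive map \emph{into} the commutative algebra $L^\infty(\Xi)$, the second as a positive map \emph{out of} it, so $\Gamma$ is completely positive. Since $\Gamma$ is also unital ($\Gamma(1,I)=(1,I)$) and $\ast$-linear, the Kadison--Schwarz inequality $\Gamma(X)^\ast\Gamma(X)\le\Gamma(X^\ast X)$ holds, which is the displayed inequality for $X=(f,A)$. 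Finally, both Berezin--Lieb inequalities reduce to Jensen's inequality applied spectrally, using that $B_1,B_2$ are states. Diagonalising $B_1=\sum_k\mu_k\,\psi_k\otimes\psi_k$ exhibits $(A\ast B_1)(x)=\sum_k\mu_k\langle\beta_-(A)U_x^\ast\psi_k,U_x^\ast\psi_k\rangle$ as a convex combination of numerical-range values of $\beta_-(A)$; convexity of $\varphi$, the inequality $\varphi(\langle B\chi,\chi\rangle)\le\langle\varphi(B)\chi,\chi\rangle$ for self-adjoint $B$, and $\varphi(\beta_-(A))=\beta_-(\varphi(A))$ give $\varphi(A\ast B_1)(x)\le(\varphi(A)\ast B_1)(x)$ pointwise; integrating and using $\int_\Xi(\varphi(A)\ast B_1)=\tr\varphi(A)$ from \ref{prop:thirdconv} yields $\int_\Xi\varphi(\Gamma(A))\le\tr\varphi(A)$ (trivial when $\varphi(A)\notin\mathcal T^1(\mathcal H)$). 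The companion inequality $\tr(\varphi(\Gamma(f)))\le\int_\Xi\varphi(f)$ is obtained the same way, diagonalising $f\ast B_2$ through its spectral measure, applying Jensen eigenvector-by-eigenvector, and summing with the monotone convergence theorem for nets \cite[Theorem IV.15]{Reed_Simon1} (needed since $\mathcal H$ may be non-separable) together with $\sum_\gamma\langle\alpha_x(B_2)e_\gamma,e_\gamma\rangle=\tr B_2=1$.
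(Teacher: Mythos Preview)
Your proposal is correct and, for part~(1), takes a genuinely different route from the paper. The paper treats the map $\Gamma_1\colon L^\infty(\Xi)\to\mathcal L(\mathcal H)$ first, by showing that its restriction to $L^1\cap L^\infty$ extends to a bounded map $\widetilde{\Gamma}\colon L^1(\Xi)\to\mathcal T^1(\mathcal H)$, then forming the $\mathcal T^1$-valued vector measure $S\mapsto\widetilde{\Gamma}(\mathbf 1_S)$ on a $\sigma$-compact open subgroup and invoking the Radon--Nikod\'ym property of $\mathcal T^1(\mathcal H)$ (via $\mathcal H\widehat{\otimes}_\pi\mathcal H^\ast$) to obtain a density $h_H$; covariance then forces $h_H(y)=\alpha_y(B)$, and one patches over subgroups. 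The map $\Gamma_2\colon\mathcal L(\mathcal H)\to L^\infty(\Xi)$ is not handled directly but reduced to the $\Gamma_1$ case by proving that $\Gamma_2|_{\mathcal T^1}$ lands in $L^1$ (using \cref{prop:convprop}) and then looking at the adjoint $(\Gamma_2|_{\mathcal T^1})^\ast$, which is of $\Gamma_1$ type.

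By contrast, you handle $\Gamma_2$ directly with an approximate-identity/weak$^\ast$-compactness argument, which is shorter and more transparent; and you treat $\Gamma_1$ via a covariant Naimark dilation combined with Theorem~\ref{Mackey-Stone-vNeumann}. Both strategies are standard in the literature the paper cites (Radon--Nikod\'ym as in Werner and Kiukas; imprimitivity/covariant Stinespring as in Cassinelli--De~Vito--Toigo). The trade-off is exactly the one you flag: the Radon--Nikod\'ym route is more self-contained in the non-second-countable setting because it works $\sigma$-compact-subgroup by $\sigma$-compact-subgroup and avoids any multiplicity analysis of a possibly non-separable dilation; your Naimark route is more conceptual but needs care with strong continuity of the dilated representation and with the decomposition $W\cong U\otimes I_{\mathcal K}$ when $\mathcal H_E$ is non-separable. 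For parts~(2) and~(3) your arguments agree with the paper's (which appeals to St{\o}rmer's pseudo-multiplicative criterion and to Werner's original computation, respectively), and your more explicit sketch of the Berezin--Lieb inequalities is fine.
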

We first want to emphasize that the above result, initially formulated by Werner for the case $\Xi = \mathbb R^{2n}$, is of independent interest, as it characterizes candidates for positivity preserving quantization maps. Parts of the result have been generalized to a significant class of locally compact groups, see e.g.\ \cite{Cassinelli_DeVito_Toigo2003, Kiukas_Lahti_Ylinen2006, Kiukas2006}. While those results are usually formulated for second countable groups, this restriction is not a serious one. The proofs of these generalizations boil down either to an application of Mackey's imprimitivity theorem or the Radon-Nikod\'{y}m property of the trace class, the latter having also being used in Werner's initial proof. Both approaches can be used in the non-second countable case. We will follow the proof close to Werner's original ideas. Further, Dammeier and Werner \cite{Dammeier_Werner2023} recently described how this theorem can be more easily be proven using the quantum Bochner theorem, Theorem \ref{thm:Bochner} below. Since our version of the quantum Bochner theorem needs a little extra structure (we need to assume that $m$ is similar to a bicharacter), which is not necessary for the present theorem, we chose to give the longer proof, which is independent of the quantum Bochner theorem. 
\begin{proof}[Proof of Theorem \ref{thm:corrrule}]
    \begin{enumerate}[(1)]
        \item Given such $B$, it is not hard to verify that $\Gamma$ is indeed a positive correspondence rule. The main part consists of proving the converse direction. Before attempting the proof, let us start by recalling some properties about the Radon-Nikod\'{y}m property. We refer to \cite{Diestel_Uhl1977, Ryan2002} for details. Since the Hilbert space $\mathcal H$ satisfies the approximation property, we can naturally identify the trace class $\mathcal T^1(\mathcal H)$ with the projective tensor product $\mathcal T^1(\mathcal H)  \cong \mathcal H \widehat{\otimes}_\pi \mathcal H^\ast$. Since Hilbert spaces also have the Radon-Nikod\'{y}m property, this implies that $\mathcal T^1(\mathcal H)$ also has the Radon-Nikod\'{y}m property. We recall that having the Radon-Nikod\'{y}m property means that whenever $(\Omega, \mathcal A, \lambda)$ is a finite measure space and $X: \Omega \to \mathcal T^1(\mathcal H)$ is a vector measure of bounded variation which is absolutely continuous with respect to $\lambda$ (i.e.\ $\lambda(S) = 0$ implies $X(S) = 0$ for $S \in \mathcal A$), then there exists some $f: \Omega \to \mathcal T^1(\mathcal H)$ which is Bochner integrable with respect to $\lambda$ such that $X(S) = \int_S f~d\lambda$. Further, $f$ is unique up to a.e.-equality and also essentially bounded. Clearly, this property then extends to $\sigma$-finite measure spaces $(\Omega, \mathcal A, \lambda)$ and vector measures which are of bounded variation on sets $S \in \mathcal A$ with $\lambda(S) < \infty$ as well (where, now, $f$ only needs to be bounded on sets with finite measure).

        We will split the proof into two parts: As the first step, part (i) below, we show that a map $\Gamma: L^\infty(\Xi) \to \mathcal L(\mathcal H)$, which is positivity preserving, $\alpha$-covariant, weak$^\ast$ continuous and satisfies $\Gamma(1) = \Id$, is of the form $\Gamma(f) = B \ast f$ for some $0 \leq B \in \mathcal T^1(\mathcal H)$ with $\tr(B) = 1$. As a second step, part (ii), we show that every map $\Gamma: \mathcal L(\mathcal H) \to L^\infty(\Xi)$ which is positivity preserving, $\alpha$-covariant, weak$^\ast$ continuous and satisfies $\Gamma(\Id) = 1$ is of the form $\Gamma(A) = B \ast A$, where again $0 \leq B \in \mathcal T^1(\mathcal H)$ with $\tr(B) = 1$. Putting both pieces together, we obtain the general statement of the theorem, where the two operators $B_1, B_2$ may be different.
        
        (i) We will show that there is an operator $B \in \mathcal T^1(\mathcal H)$, $B \geq 0$, such that $\Gamma(f)= B \ast f$. In doing so, we closely follow an argument taken from \cite{Kiukas2006}. We give Kiukas' argument again to both verify that second countability of the group (which Kiukas assumed) plays no role, and also to give the interested reader the proof right away. For proving that $\Gamma(f) = B \ast f$, we actually only work with the restriction of $\Gamma$ to $L^\infty(\Xi)$, a fact that we will use later again.
        
        Since $\Gamma$ is weak$^\ast$ continuous, there exists a continuous pre-adjoint map $\Gamma_\ast: L^1(\Xi) \oplus \mathcal T^1(\mathcal H) \to L^1(\Xi) \oplus \mathcal T^1(\mathcal H)$ with $\Gamma_\ast(L^1(\Xi)) \subset \mathcal T^1(\mathcal H)$ and $\Gamma_\ast(\mathcal T^1(\mathcal H)) \subset L^1(\Xi)$, preserving positivity, satisfying $\alpha_{x}(\Gamma_\ast(f,A)) = \Gamma_\ast(\alpha_{x}(f,A))$ and $\int_\Xi \Gamma_\ast(A) + \tr(\Gamma_\ast(f)) = \int_\Xi f + \tr(A)$. Assume that $f \in L^1(\Xi) \cap L^\infty(\Xi)$ and $A \in \mathcal T^1(\mathcal H)$ with $f, A \geq 0$. Then, it is:
        \begin{align*}
            \tr(\Gamma(f)\alpha_{x}(A)) &= \tr(\Gamma(\alpha_{-x}(f)) A) \\
            &= \int_\Xi \alpha_{-x}(f)(y) \Gamma_\ast (A)(y)~\d y.
        \end{align*}
        Since the supports occurring in the following integrals are $\sigma$-finite and the integrands non-negative, we may apply Tonelli's theorem to get:
        \begin{align*}
            \int_\Xi &\tr(\Gamma(f) \alpha_{x}(A))~\d x \\
            &= \int_\Xi \int_\Xi \alpha_{-x}(f)(y) \Gamma_\ast (A)(y)~\d y ~\d x\\
            &= \int_\Xi \Gamma_\ast(A)(y) \int_\Xi f(y-x)~\d x ~\d y\\
            &= \| \Gamma_\ast(A)\|_{L^1(\Xi)} \| f\|_{L^1(\Xi)}.
        \end{align*}
        Since
        \begin{align*}
            \int_\Xi &\tr(\Gamma(f) \alpha_{x}(A))~\d x = \int_\Xi \Gamma(f) \ast (\beta_-(A))(x)~\d x \\
            &= \tr(\Gamma(f)) \tr(\beta_-(A)) = \tr(\Gamma(f)) \tr(A),
        \end{align*}
        it follows that $\tr(\Gamma(f)) \tr(A) = \| f\|_{L^1(\Xi)} \| \Gamma_\ast(A)\|_{L^1(\Xi)}$. In particular, $\Gamma|_{L^1(\Xi) \cap L^\infty(\Xi)}$ extends to a continuous map $\widetilde{\Gamma}: L^1(\Xi) \to \mathcal T^1(\mathcal H)$. 

    Now, let $H \subset \Xi$ be a $\sigma$-compact open subgroup (e.g., the subgroup generated by an open and relatively compact neighborhood of the zero element). Then, the map
    \begin{align*}
        H \supset S \mapsto \widetilde{\Gamma}(\mathbf 1_{S}) \in \mathcal T^1(\mathcal H),
    \end{align*}
    where $S$ is a Borel subset of $H$, is a vector measure for the $\sigma$-finite measure space $(H, \mathcal B(H), \mu|_H)$ (where $\mathcal B(H)$ denotes the Borel-$\sigma$-algebra of $H$ and $\mu|_H$ the restricted Haar measure). Since 
    \begin{align*}
        \| \widetilde{\Gamma}(\mathbf 1_S)\|_{\mathcal T^1} = c\| \mathbf 1_S\|_{L^1(\Xi)} = c \mu(S) < \infty
    \end{align*}
    for some constant $c$, it is not hard to verify that this vector measure is of bounded variation on subsets of $H$ with finite measure. Further, it is easily seen to be absolutely continuous with respect to $\mu|_H$. By the Radon-Nikod\'{y}m property, there exists a unique $h_H: H \to \mathcal T^1(\mathcal H)$ such that
    \begin{align*}
        \widetilde{\Gamma}(S) = \int_{S} h_H(y)~\d y.
    \end{align*}
    Since $\widetilde{\Gamma}$ is covariant, it follows for $x \in H$ and $S \subset H$ of finite measure:
    \begin{align*}
        \int_S h_H(y+x)~\d y &= \int_{S+x} h_H(y)~\d y\\ &= \widetilde{\Gamma}(\alpha_{x}(\mathbf 1_S)) = \alpha_{x}(\widetilde{\Gamma}(f)) \\
        &= U_{x} \int_S f(y) h_H(y)~\d y U_{x}^\ast\\
        &= \int_S f(y) U_{x}h_H(y) U_{x}^\ast ~\d y.
    \end{align*}
    Uniqueness of $h_H$ now implies that $h_H(y+x) = U_{x} h_H(y) U_{x}^\ast$ $y$-almost everywhere. Consider $g_H(y) = \alpha_{-y}(h_H(y))$. Then, $g_H$ is again Bochner integrable and satisfies (in an a.e.-sense) $g_H(x+y) = g_H(y)$. By \cite[Lemma 4(b)]{Kiukas_Lahti_Ylinen2006} (which was formulated for second-countable groups, but the proof works for arbitrary $\sigma$-compact groups), there exists a $B_H \in \mathcal T^1(\mathcal H)$ such that $g_H(y) = B_H$ and hence $h_H(y) = \alpha_y(B_H)$ almost everywhere. In particular, $h_H$ is essentially bounded on all of $H$. Further, uniqueness of $h_H$ enforces that $B_H$ is also uniquely determined. Now, by the dominated convergence theorem (for Bochner integrals), we obtain for every $f \in L^1(H)$ that
    \begin{align*}
        \widetilde{\Gamma}(f) = \int_H f(y) \alpha_y(B_H)~dy = f \ast B_H.
    \end{align*}
    For arbitrary $f \in L^1(\Xi)$, one now uses that $f$ is always supported within some $\sigma$-compact open subgroup $H$ of $\Xi$, hence we obtain $\widetilde{\Gamma}(f) = f \ast B_H$ for every $f \in L^1(\Xi)$. The proof is concluded by the fact that $B_H$ does not depend on the choice of the subgroup $H$. Indeed, if $H_1, H_2$ are two open and $\sigma$-compact subgroups of $\Xi$, then the subgroup $H$ generated by $H_1$ and $H_2$ is again open and $\sigma$-compact. By uniqueness of $B_H$, $B_{H_1}$ and $B_{H_2}$, respectively, we see that $B_{H_1} = B_H = B_{H_2}$, which finishes this part of the proof.

    (ii) Let $0 \leq A \in \mathcal T^1(\mathcal H)$ and $0 \leq g \in L^1(\Xi)$. Then, we have (where $\Gamma_\ast$ is again the pre-adjoint, $\Gamma_\ast: L^1(\Xi) \to \mathcal T^1(\mathcal H)$, of $\Gamma$)
    \begin{align*}
        \int_\Xi \Gamma(A) \ast g(x) ~dx &= \int_\Xi \langle \Gamma(A), \alpha_x(\beta_-(g))\rangle~dx\\
        &= \int_\Xi \langle A, \alpha_x \Gamma_\ast (\beta_-(g))\rangle~dx \\
        &= \int_\Xi A \ast \beta_-(\Gamma_\ast(\beta_-(g)))(x)~dx\\
        &= \tr(A) \tr(\Gamma_\ast(\beta_-(g)))
    \end{align*}
    by Proposition \ref{prop:thirdconv}. Since
    \begin{align*}
        \tr(\Gamma_\ast(\beta_-(g))) = \langle \Gamma_\ast(\beta_-(g)), \Id\rangle = \langle \beta_-(g), \Gamma(\Id)\rangle = \langle \beta_-(g), 1\rangle = \| g\|_{L^1},
    \end{align*}
    we arrive at $\int_\Xi \Gamma(A) \ast g(x)~dx = \| A\|_{\mathcal T^1}\| g\|_{L^1}$. Since $\Gamma(A)$ is continuous for $A \in \mathcal T^1(\mathcal H)$, which follows from the covariance and 
    \begin{align*}
        \| \Gamma(A) - \alpha_x(\Gamma(A))\|_\infty &= \| \Gamma( A - \alpha_x(A))\|_\infty \leq \| A - \alpha_x(A)\|_{op}\\
        &\leq \| A - \alpha_x(A)\|_{\mathcal T^1}\to 0, \quad x \to 0,
    \end{align*}
we obtain from Proposition \ref{prop:convprop} that $\Gamma(A) \in L^1(\Xi)$ (meaning that $\Gamma(A)$ has a continuous representative, which is then integrable). Therefore, $\widetilde{\Gamma} := \Gamma|_{\mathcal T^1(\mathcal H)}$ maps $\mathcal T^1(\mathcal H) \to L^1(\Xi)$ continuously, with appropriate covariance, and is positivity and norm preserving. Hence  $\widetilde{\Gamma}^\ast: L^\infty(\Xi) \to \mathcal L(\mathcal H)$ is covariant, weak$^\ast$ continuous, positivity preserving, and we have, for $0 \leq A \in \mathcal T^1(\mathcal H)$:
\begin{align*}
    \langle (\widetilde{\Gamma})^\ast (1), A\rangle = \langle 1, \widetilde{\Gamma}(A)\rangle = \int_\Xi \widetilde{\Gamma}(A) = \tr(A).
\end{align*}
Thus, $\widetilde{\Gamma}^\ast(1) = \Id$. Therefore, $\widetilde{\Gamma}^\ast$ is a map of the kind considered in (i). Hence, there exists a unique $0 \leq B \in \mathcal T^1(\mathcal H)$ such that $\tr(B) = 1$ and $\widetilde{\Gamma}^\ast(f) = B \ast f$. Now, the (unique) predual to that map is
\begin{align*}
    \widetilde{\Gamma}(A) = \beta_-(B) \ast A.
\end{align*}
By density and weak$^\ast$ continuity of $\Gamma$, we arrive at $\Gamma(A) = \beta_-(B) \ast A$ for every $A \in \mathcal L(\mathcal H)$.
    \item These statements follow from the results in \cite{Stormer1974}. Note that the map $\Gamma: L^\infty(\Xi) \oplus \mathcal L(\mathcal H) \to L^\infty(\Xi) \oplus \mathcal L(\mathcal H)$ is pseudo-multiplicative in the sense of \cite{Stormer1974},
    \begin{align*}
        \Gamma((f, A)) = \Gamma((1, I) (f, A)) = (1, I) \Gamma(f, A),\quad (f, A) \in L^\infty(\Xi) \oplus \mathcal L(\mathcal H).
    \end{align*}
    Since $\Gamma$ is assumed positive by definition, it is completely positive by \cite[Theorem 2.2]{Stormer1974}. The estimate is \cite[Theorem 3.1]{Stormer1974}.
    \item This proof works exactly as in \cite{Werner1984}.
    \end{enumerate}
\end{proof}

\section{The Fourier transform of Quantum Harmonic Analysis}\label{sec:fouriertrafo}

In Quantum Harmonic Analysis, there are at least two different Fourier transforms around: The first Fourier transform needed is the Fourier transform mapping functions on $\Xi$ to functions on $\widehat{\Xi}$. The second Fourier transform will be a transform mapping operators on $\mathcal H$ to functions on $\Xi$. If $\Xi = G \times \widehat{G}$ for some lca group $G$, then there is even the standard Fourier transform $\mathcal F: L^2(G) \to L^2(\widehat{G})$. This last Fourier transform will not be particularly important to us, we will focus on the first two Fourier transforms. 

Let us first come to the Fourier transform for functions on $\Xi$. Of course, we could simply treat $\Xi$ as a generic locally compact abelian group and use the standard Fourier theory. It will nevertheless be more suitable to instead use the symplectic Fourier transform, which is possible since we always identify $\widehat{\Xi}$ with $\Xi$ through the bicharacter $\sigma$. The main difference between the standard Fourier transform and the symplectic Fourier transform is the fact that the symplectic Fourier transform is self-inverse. Unfortunately, we couldn't locate a decent reference for the theory of the symplectic Fourier transform on lca groups of the form with general Heisenberg multipliers. Since essentially all the proofs work analogous to the standard group Fourier transform, we only give definitions and results.
For $f \in L^1(\Xi)$, the symplectic Fourier transform of $f$ is given by
\begin{align*}
    \mathcal F_\sigma(f)(\xi) = \int_{\Xi} \sigma(x, \xi) f(x)~\d x, \quad \xi \in \Xi.
\end{align*}
The symplectic Fourier transform satisfies a collection of well-known properties:
\begin{thm}[Riemann-Lebesgue Lemma for functions]
Let $f \in L^1(\Xi)$. Then, it is $\mathcal F_\sigma(f) \in C_0(\Xi)$ with $\| \mathcal F_\sigma(f)\|_\infty \leq \| f\|_{L^1}$.
\end{thm}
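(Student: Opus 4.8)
The plan is to split the assertion into the norm bound, the continuity, and the decay at infinity, and to obtain the last — the only point with real content — by factoring $\mathcal F_\sigma$ through the ordinary group Fourier transform and invoking the classical Riemann--Lebesgue lemma for locally compact abelian groups.

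The bound $\|\mathcal F_\sigma(f)\|_\infty \le \|f\|_{L^1}$ is immediate: $\sigma$ takes values in $S^1$, so $|\sigma(x,\xi) f(x)| = |f(x)|$ and hence $|\mathcal F_\sigma(f)(\xi)| \le \int_\Xi |f(x)|\,\d x$ for every $\xi \in \Xi$. In particular $\mathcal F_\sigma$ is a contraction from $L^1(\Xi)$ into the bounded functions on $\Xi$, and since $C_0(\Xi)$ is closed in the supremum norm it would suffice to establish $\mathcal F_\sigma(f) \in C_0(\Xi)$ for $f$ in a dense subspace of $L^1(\Xi)$.

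The key step is the following identification. Let $\Psi \colon \Xi \to \widehat\Xi$, $\Psi(x) = \sigma(x,\cdot)$, be the topological isomorphism induced by the Heisenberg multiplier (see Section~\ref{sec:repr}), and let $\widehat{\,\cdot\,}$ denote the ordinary Fourier transform on $L^1(\Xi)$ with the convention $\widehat g(\chi) = \int_\Xi \overline{\chi(x)}\, g(x)\,\d x$. Using $\sigma(x,\xi) = \overline{\sigma(\xi,x)}$ (a consequence of $\sigma(x,\xi)\sigma(\xi,x) = 1$ and $|\sigma| \equiv 1$) one computes $\widehat f(\Psi(\xi)) = \int_\Xi \overline{\Psi(\xi)(x)}\, f(x)\,\d x = \int_\Xi \sigma(x,\xi) f(x)\,\d x = \mathcal F_\sigma(f)(\xi)$, i.e.\ $\mathcal F_\sigma(f) = \widehat f \circ \Psi$. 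Since $\Psi$ is a homeomorphism, pullback along $\Psi$ carries $C_0(\widehat\Xi)$ isometrically onto $C_0(\Xi)$; therefore $\mathcal F_\sigma(f) \in C_0(\Xi)$ with $\|\mathcal F_\sigma(f)\|_\infty = \|\widehat f\|_\infty \le \|f\|_{L^1}$ as soon as $\widehat f \in C_0(\widehat\Xi)$ and $\|\widehat f\|_\infty \le \|f\|_{L^1}$. The latter is precisely the classical Riemann--Lebesgue lemma, valid for every locally compact abelian group — in particular without any second countability hypothesis — see for instance \cite{Folland2016} or \cite{Deitmar_Echterhoff2014}.

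If one instead wants a direct proof not relying on that citation, the continuity of $\mathcal F_\sigma(f)$ follows from the estimate $|\mathcal F_\sigma(f)(\xi+\eta) - \mathcal F_\sigma(f)(\xi)| \le \int_\Xi |\sigma(x,\eta) - 1|\,|f(x)|\,\d x$, which tends to $0$ as $\eta \to e_\Xi$ by choosing a compact set carrying all but $\varepsilon$ of the mass of $f$ and using that the image under $\Psi$ of this compact set is an equicontinuous family of characters. The genuine obstacle is the decay at infinity; the natural self-contained route is to note that $V := \{ f \in L^1(\Xi) : \mathcal F_\sigma(f) \in C_0(\Xi)\}$ is a closed ideal of the convolution algebra $L^1(\Xi)$ and to show, via the structure theorem $\Xi \cong \mathbb R^n \times H$ with $H$ containing a compact open subgroup and explicit elements of $V$ (Gaussians on the $\mathbb R^n$-factor, indicators of a compact open subgroup on $H$, suitably modulated so that their symplectic Fourier transforms are supported near any prescribed point), that $V$ has empty hull, whence $V = L^1(\Xi)$. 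I expect this last route — which ultimately appeals to Wiener's Tauberian theorem for $L^1(\Xi)$ — to be the main technical hurdle if citations are to be avoided; for the present purpose the reduction of the previous paragraph is entirely adequate and matches the remark preceding the statement.
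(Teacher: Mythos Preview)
Your proof is correct and matches the paper's treatment: the paper does not prove this theorem at all but simply remarks that ``essentially all the proofs work analogous to the standard group Fourier transform'' and gives only the statement. Your reduction $\mathcal F_\sigma(f) = \widehat f \circ \Psi$ via the self-duality isomorphism $\Psi$ is precisely the way to make that remark rigorous, and the norm bound and appeal to the classical Riemann--Lebesgue lemma are exactly right; the third paragraph (the Tauberian route) is unnecessary and could be dropped.
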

\begin{thm}[Injectivity of symplectic Fourier transform]
    Let $f \in L^1(\Xi)$. Then, it is $f = 0$ if and only if $\mathcal F_\sigma(f) = 0$.
\end{thm}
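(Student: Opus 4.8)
The plan is to reduce the statement to the classical uniqueness theorem for the Fourier transform on a locally compact abelian group. The forward implication is trivial, so the content is: if $f \in L^1(\Xi)$ satisfies $\mathcal{F}_\sigma(f) = 0$, then $f$ vanishes almost everywhere.

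First I would observe that, for each fixed $\xi \in \Xi$, the function $x \mapsto \sigma(x,\xi)$ is a continuous character of $\Xi$: this follows from the bicharacter identity $\sigma(x+y,\xi) = \sigma(x,\xi)\sigma(y,\xi)$ together with the separate continuity of $\sigma$. Moreover, since $m$ is a Heisenberg multiplier, $\xi \mapsto \sigma(\xi,\cdot)$ is a topological isomorphism of $\Xi$ onto $\widehat\Xi$; composing if necessary with the inversion automorphism $\chi \mapsto \overline\chi$ of $\widehat\Xi$ (to match conventions) shows that $\Phi\colon \Xi \to \widehat\Xi$, $\Phi(\xi) = [\,x \mapsto \sigma(x,\xi)\,]$, is itself a topological isomorphism. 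Consequently $\mathcal{F}_\sigma(f) = \widehat{f}\circ \Phi$, where $\widehat{f}$ denotes the ordinary group Fourier transform of $f$ on $\widehat\Xi$; since $\Phi$ is a bijection, $\mathcal{F}_\sigma(f) = 0$ is equivalent to $\widehat{f} \equiv 0$ on $\widehat\Xi$. Then the classical uniqueness theorem for the Fourier transform on lca groups — obtainable, e.g., from the Fourier inversion theorem, or from the semisimplicity of the group algebra $L^1(\Xi)$, whose Gelfand spectrum is $\widehat\Xi$; see, e.g., \cite{Deitmar_Echterhoff2014} — gives $f = 0$ in $L^1(\Xi)$, as claimed.

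The only points requiring care are bookkeeping rather than substance: matching the conjugation and normalization conventions between $\mathcal{F}_\sigma$ and $\widehat{\,\cdot\,}$ so that the identity $\mathcal{F}_\sigma(f) = \widehat{f}\circ\Phi$ holds verbatim, and checking that $\Phi$ is a homeomorphism and not merely a continuous bijection. The latter is precisely the content of the Heisenberg-multiplier hypothesis, together with the elementary fact that inversion on $\widehat\Xi$ is a homeomorphism. I expect no difficulty from non-second-countability of $\Xi$, since both the statement and the classical uniqueness theorem hold for arbitrary locally compact abelian groups.
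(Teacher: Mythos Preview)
Your proposal is correct and matches the paper's intended approach: the paper does not give an explicit proof of this statement, remarking only that ``essentially all the proofs work analogous to the standard group Fourier transform'' via the self-duality $\Xi \cong \widehat{\Xi}$ induced by $\sigma$, which is precisely the reduction you carry out. Your care about the bookkeeping (the inversion on $\widehat{\Xi}$ needed to pass between $\xi \mapsto \sigma(\xi,\cdot)$ and $\xi \mapsto \sigma(\cdot,\xi)$, and the fact that the Heisenberg-multiplier hypothesis guarantees a topological isomorphism rather than a mere bijection) is exactly right and fills in what the paper leaves implicit.
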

\begin{thm}
Let $f, g \in L^1(\Xi)$. Then, it is:
\begin{enumerate}
    \item $\mathcal F_\sigma(\alpha_{x}(f)) = \sigma(x, (\cdot))\mathcal F_\sigma(f)$, for $x \in \Xi$.
    \item $\mathcal F_\sigma(\sigma(x, \cdot) f) = \alpha_{x}(\mathcal F_\sigma(f))$, for $x \in \Xi$.
    \item \emph{The convolution theorem:} $\mathcal F_\sigma(f \ast g) = \mathcal F_\sigma(f) \cdot \mathcal F_\sigma(g)$.
    \item $\mathcal F_\sigma(L^1(\Xi))$ is dense in $C_0(\Xi)$.
\end{enumerate}
\end{thm}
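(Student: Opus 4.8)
The plan is to handle the four assertions in order: parts (1)--(3) are direct manipulations of the defining integral, while only the density statement (4) requires a genuine argument (Stone--Weierstrass).

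For (1), I would write $\mathcal F_\sigma(\alpha_x(f))(\xi) = \int_\Xi \sigma(y,\xi) f(y-x)\,\d y$, substitute $y \mapsto y+x$ using translation invariance of the Haar measure, and factor $\sigma(y+x,\xi) = \sigma(x,\xi)\sigma(y,\xi)$ since $\sigma(\cdot,\xi)$ is a character; this yields $\sigma(x,\xi)\mathcal F_\sigma(f)(\xi)$. For (2), the point is that $\sigma$ is unimodular, so $\overline{\sigma(y,x)} = \sigma(y,x)^{-1} = \sigma(x,y)$; then $\alpha_x(\mathcal F_\sigma(f))(\xi) = \mathcal F_\sigma(f)(\xi-x) = \int_\Xi \sigma(y,\xi)\sigma(y,-x)f(y)\,\d y = \int_\Xi \sigma(y,\xi)\sigma(x,y)f(y)\,\d y = \mathcal F_\sigma(\sigma(x,\cdot)f)(\xi)$. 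For (3), I would expand $(f\ast g)(x) = \int_\Xi f(y)g(x-y)\,\d y$, interchange the $x$- and $y$-integrations, and substitute $x\mapsto x+y$ as above to peel off $\sigma(y,\xi)$. The interchange is legitimate because, by Lemma~\ref{lemma:Deitmar}, $f$ and $g$ are supported in a common $\sigma$-compact (hence $\sigma$-finite) open subgroup, so Fubini applies even when $\Xi$ is not second countable.

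For (4), set $\mathcal A := \mathcal F_\sigma(L^1(\Xi))$, which lies in $C_0(\Xi)$ by the Riemann--Lebesgue lemma above. By (3) it is a subalgebra of $C_0(\Xi)$. It is closed under complex conjugation: a short computation (substituting $x\mapsto -x$) gives $\overline{\mathcal F_\sigma(f)} = \mathcal F_\sigma(\beta_-(\bar f))$, and $\beta_-(\bar f)\in L^1(\Xi)$ because $\Xi$ is unimodular. It vanishes nowhere, since for each $\xi_0\in\Xi$ the function $\sigma(\cdot,\xi_0)$ is a nonzero element of $L^\infty(\Xi) = L^1(\Xi)^\ast$ (in the sense of Section~\ref{sec:Gelfand_Pettis}), so some $f\in L^1(\Xi)$ has $\mathcal F_\sigma(f)(\xi_0)\neq 0$. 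Finally it separates points: if $\mathcal F_\sigma(f)(\xi_1) = \mathcal F_\sigma(f)(\xi_2)$ for all $f\in L^1(\Xi)$, then $\sigma(\cdot,\xi_1) - \sigma(\cdot,\xi_2)$ annihilates $L^1(\Xi)$, hence is locally null, hence identically zero by continuity, so $\sigma(\cdot,\xi_1-\xi_2)\equiv 1$; since $m$ is a Heisenberg multiplier, $\xi\mapsto\sigma(\cdot,\xi)$ is injective, forcing $\xi_1 = \xi_2$. The Stone--Weierstrass theorem for $C_0$ of a locally compact Hausdorff space then gives density of $\mathcal A$ in $C_0(\Xi)$.

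The main obstacle is step (4): the only non-formal ingredients are Stone--Weierstrass and, for point separation, the self-duality hard-wired into the Heisenberg condition. I would emphasize that nothing here uses second countability, since Stone--Weierstrass holds for $C_0$ of any locally compact Hausdorff space and the duality $L^1(\Xi)' \cong L^\infty(\Xi)$ is available in general with the conventions fixed at the end of Section~\ref{sec:Gelfand_Pettis}.
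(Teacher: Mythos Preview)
Your proof is correct. The paper does not actually prove this theorem---it explicitly states that ``essentially all the proofs work analogous to the standard group Fourier transform'' and omits them---and your argument (direct computation for (1)--(3), Stone--Weierstrass for (4)) is precisely that standard argument, with appropriate care taken for the non-second-countable case via Lemma~\ref{lemma:Deitmar} and the $L^\infty$ conventions of Section~\ref{sec:Gelfand_Pettis}.
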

Equally well, one can define the symplectic Fourier transform of a finite Radon measure $\mu$ on $\Xi$, essentially by the same formula:
\begin{align*}
    \mathcal F_\sigma(\mu)(\xi) = \int_{\Xi} \sigma(x, \xi) \d \mu(x), \quad \xi \in \Xi.
\end{align*}
\begin{prop}
    Let $\mu$ be a finite Radon measure on $\Xi$. Then, $\mathcal F_\sigma(\mu) \in L^\infty(\Xi)$ with $\|\mathcal F_\sigma(\mu)\|_\infty \leq \| \mu\|$, the total variation norm of $\mu$.
\end{prop}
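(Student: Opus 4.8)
The plan is to reduce everything to the pointwise bound coming from $\abs{\sigma}=1$, together with the (classical) continuity of a Fourier--Stieltjes transform; the only nontrivial point is that continuity requires care in a possibly non-metrizable group. First, since $\abs{\sigma(x,\xi)}=1$ for all $x,\xi\in\Xi$ and $\mu$ is finite, the integral defining $\mathcal F_\sigma(\mu)(\xi)$ converges absolutely for every $\xi$, and
\begin{align*}
    \abs{\mathcal F_\sigma(\mu)(\xi)}\;\le\;\int_\Xi \abs{\sigma(x,\xi)}~\d\abs{\mu}(x)\;=\;\norm{\mu},\qquad \xi\in\Xi.
\end{align*}
So $\mathcal F_\sigma(\mu)$ is everywhere defined and everywhere bounded by $\norm{\mu}$; once we know it is (locally Borel) measurable, this yields $\norm{\mathcal F_\sigma(\mu)}_\infty\le\norm{\mu}$ immediately. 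Thus the real content is measurability, for which I would in fact prove the stronger statement that $\mathcal F_\sigma(\mu)\in C_b(\Xi)$.

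Second, I would identify $\mathcal F_\sigma(\mu)$ with an ordinary Fourier--Stieltjes transform. Let $J\colon\Xi\to\widehat\Xi$, $J(x)=\sigma(x,\placeholder)$, be the self-duality isomorphism; it is a homeomorphism, so $\nu:=J_\ast\mu$ is a finite Radon measure on $\widehat\Xi$ with $\norm{\nu}=\norm{\mu}$. By Pontryagin duality each $\xi\in\Xi\cong\widehat{\widehat\Xi}$ is the character $\chi\mapsto\langle\chi,\xi\rangle$ on $\widehat\Xi$, and $\sigma(x,\xi)=\langle J(x),\xi\rangle$, whence
\begin{align*}
    \mathcal F_\sigma(\mu)(\xi)\;=\;\int_\Xi\langle J(x),\xi\rangle~\d\mu(x)\;=\;\int_{\widehat\Xi}\langle\chi,\xi\rangle~\d\nu(\chi)\;=\;\widehat\nu(\xi),
\end{align*}
so $\mathcal F_\sigma(\mu)$ is (a conjugate/inverse of) the classical Fourier--Stieltjes transform of $\nu$, which is a bounded continuous function with $\norm{\widehat\nu}_\infty\le\norm{\nu}=\norm{\mu}$, and $C_b(\Xi)\subset L^\infty(\Xi)$ with no change of the supremum. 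If one prefers not to invoke this folklore (the paper's style elsewhere), the continuity is the only thing to check and I would argue it directly as follows.

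Fix $\xi_0\in\Xi$ and a net $\xi_\alpha\to\xi_0$. Given $\varepsilon>0$, inner regularity of the finite Radon measure $\mu$ gives a compact $K\subset\Xi$ with $\abs{\mu}(\Xi\setminus K)<\varepsilon$. Joint continuity of the canonical pairing $\widehat\Xi\times\Xi\to S^1$ combined with continuity of $J$ makes $\sigma$ jointly continuous, so $\sigma(\placeholder,\xi_\alpha)\to\sigma(\placeholder,\xi_0)$ uniformly on the compact set $K$, and therefore
\begin{align*}
    \abs{\mathcal F_\sigma(\mu)(\xi_\alpha)-\mathcal F_\sigma(\mu)(\xi_0)}\;\le\;\abs{\mu}(K)\,\sup_{x\in K}\abs{\sigma(x,\xi_\alpha)-\sigma(x,\xi_0)}+2\varepsilon,
\end{align*}
whose $\limsup$ over $\alpha$ is at most $2\varepsilon$; letting $\varepsilon\downarrow0$ shows $\mathcal F_\sigma(\mu)$ is continuous at $\xi_0$. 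The main obstacle is precisely this passage to nets in a group that need not be second countable: sequential continuity would not suffice, and the point of the reduction to a compact $K$ is that on $K$ convergence in the compact-open topology of $\widehat\Xi$ is genuine uniform convergence. Everything else is formal.
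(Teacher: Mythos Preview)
Your proof is correct. The paper does not actually prove this proposition: in Section~\ref{sec:fouriertrafo} it states that ``essentially all the proofs work analogous to the standard group Fourier transform, we only give definitions and results,'' and this proposition is one of those results listed without argument. Your reduction to the classical Fourier--Stieltjes transform via the self-duality $J\colon\Xi\to\widehat{\Xi}$ is exactly the kind of argument the paper has in mind, and your direct net-based continuity proof (using inner regularity and joint continuity of $\sigma$, the latter obtained from continuity of $J$ and joint continuity of the canonical pairing) is a clean way to make this precise without invoking the classical result as a black box --- especially appropriate given the paper's care elsewhere about non-second-countable groups.
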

\begin{thm}
The map $\mu \mapsto \mathcal F_\sigma(\mu)$ is injective.
\end{thm}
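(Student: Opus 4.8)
The plan is to run the classical uniqueness argument for Fourier--Stieltjes transforms in the symplectic disguise. Suppose $\mu$ is a finite Radon measure on $\Xi$ with $\mathcal F_\sigma(\mu) = 0$. Since $\mu$ is finite and Radon, the map $g \mapsto \int_\Xi g\,\d\mu$ is a bounded linear functional on $C_0(\Xi)$, and by the Riesz representation theorem $\mu$ is determined by this functional; so it suffices to show the functional vanishes. Because $\mathcal F_\sigma(L^1(\Xi))$ is dense in $C_0(\Xi)$, it is in turn enough to prove
\begin{align*}
    \int_\Xi \mathcal F_\sigma(f)(\xi)\,\d\mu(\xi) = 0 \qquad \text{for every } f \in L^1(\Xi).
\end{align*}

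\textbf{Main step: a multiplication formula.} I would establish that for $f \in L^1(\Xi)$,
\begin{align*}
    \int_\Xi \mathcal F_\sigma(f)(\xi)\,\d\mu(\xi) = \int_\Xi f(x)\,\mathcal F_\sigma(\mu)(-x)\,\d x .
\end{align*}
To see this, write the left-hand side as the iterated integral $\int_\Xi \int_\Xi \sigma(x,\xi)\,f(x)\,\d x\,\d\mu(\xi)$ and interchange the order of integration. This interchange is legitimate: by Lemma \ref{lemma:Deitmar} the function $f$ is supported on a $\sigma$-compact (hence $\sigma$-finite) open subgroup of $\Xi$, $\mu$ is finite, and the integrand is dominated by $|f(x)|$ with $\iint |f(x)|\,\d x\,\d|\mu|(\xi) = \|f\|_{L^1}\|\mu\| < \infty$, so Tonelli/Fubini applies exactly as in the other applications of Fubini in this paper justified via Lemma \ref{lemma1}. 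After interchanging, the inner integral becomes $\int_\Xi \sigma(x,\xi)\,\d\mu(\xi)$, and since $\sigma$ is an alternating bicharacter one has $\sigma(x,\xi) = \sigma(\xi,-x)$, so this equals $\mathcal F_\sigma(\mu)(-x)$ by definition of the symplectic transform of a measure. (The precise placement of the reflection is immaterial here, since $\mathcal F_\sigma(\mu)$ is identically zero.)

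\textbf{Conclusion and main obstacle.} Once the multiplication formula is in hand the proof closes immediately: its right-hand side vanishes for every $f \in L^1(\Xi)$ because $\mathcal F_\sigma(\mu) = 0$, hence $\int_\Xi \mathcal F_\sigma(f)\,\d\mu = 0$ for all $f \in L^1(\Xi)$, hence $\int_\Xi g\,\d\mu = 0$ for all $g$ in a dense subspace of $C_0(\Xi)$, and therefore $\mu = 0$. I expect the only genuinely delicate point to be the Fubini step in the non-second-countable setting, where the kernel $\sigma$ is merely separately (not jointly) continuous; this is handled as for the analogous interchanges elsewhere in the paper, by restricting to the $\sigma$-compact subgroup carrying $f$ and using finiteness of $\mu$. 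An essentially equivalent alternative would be to introduce the convolution $f \ast \mu \in L^1(\Xi)$, check $\mathcal F_\sigma(f \ast \mu) = \mathcal F_\sigma(f)\,\mathcal F_\sigma(\mu)$, deduce $f \ast \mu = 0$ from injectivity of $\mathcal F_\sigma$ on $L^1(\Xi)$, and let $f$ run through an approximate identity; but the multiplication-formula route is shorter.
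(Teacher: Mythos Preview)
The paper does not actually prove this statement: it explicitly declares, just before listing the properties of $\mathcal F_\sigma$, that ``essentially all the proofs work analogous to the standard group Fourier transform'' and therefore gives ``only definitions and results.'' Your argument is correct and is precisely the classical uniqueness proof for Fourier--Stieltjes transforms, transported through the self-duality $\Xi \cong \widehat{\Xi}$ induced by $\sigma$; this is presumably exactly what the authors have in mind. One minor remark: your worry that $\sigma$ is only separately continuous is unnecessary, since the map $x \mapsto \sigma(x,\cdot)$ is a topological isomorphism onto $\widehat{\Xi}$ and the Pontryagin pairing $\Xi \times \widehat{\Xi} \to \mathbb T$ is jointly continuous, so $\sigma$ is in fact jointly continuous and the Fubini step is unproblematic.
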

\begin{thm}[Fourier inversion]
    Let $f \in L^1(\Xi)$ such that $\mathcal F_\sigma(f) \in L^1(\Xi)$. Then, it is $\mathcal F_\sigma(\mathcal F_\sigma(f)) = cf$, where $c > 0$ is independent of $f$. 
\end{thm}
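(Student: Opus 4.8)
The plan is to deduce the statement from the classical Fourier inversion theorem on the locally compact abelian group $\Xi$, transported along the self-duality $\Phi\colon\Xi\to\widehat\Xi$, $\Phi(\xi)=\sigma(\cdot,\xi)$, which by Assumption \ref{assum:1} is a topological isomorphism and through which we always identify $\Xi$ with $\widehat\Xi$. Write $h\mapsto\widehat h$ for the ordinary group Fourier transform $L^1(\Xi)\to C_0(\widehat\Xi)$, say $\widehat h(\chi)=\int_\Xi\overline{\chi(x)}\,h(x)\,dx$, and $\check h(x)=h(-x)$. From $\overline{\sigma(x,\xi)}=\sigma(-x,\xi)$ together with the substitution $x\mapsto -x$ one gets the identity
\[
    \mathcal F_\sigma(h) = \widehat{\check h}\circ\Phi ,
\]
valid for all $h\in L^1(\Xi)$. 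Since $\Phi$ is a topological isomorphism it pushes the fixed Haar measure of $\Xi$ forward to a positive multiple $c_1$ of the Plancherel-dual Haar measure of $\widehat\Xi$; in particular the hypothesis $\mathcal F_\sigma(f)\in L^1(\Xi)$ is equivalent to $\widehat f\in L^1(\widehat\Xi)$, so the classical inversion theorem applies to $f$.

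First I would iterate the displayed identity. Using that $\Phi$ is a group homomorphism, hence $\Phi(-\xi)=\Phi(\xi)^{-1}$, together with the elementary relation $\widehat h(\chi^{-1})=\widehat{\check h}(\chi)$, one reduces $\mathcal F_\sigma(\mathcal F_\sigma f)$ to $\widehat{(\widehat f\circ\Phi)}\circ\Phi$. After the change of variables $\psi=\Phi(\xi)$ (contributing the factor $c_1$), the inner Fourier transform equals $c_1\,\widehat{\widehat f}\circ(\Phi^{-1})^{\ast}$, and the classical inversion theorem gives $\widehat{\widehat f}=c_0\,\check f$ under Pontryagin reflexivity $\iota\colon\Xi\to\widehat{\widehat\Xi}$, with $c_0>0$ depending only on the chosen normalizations. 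Everything then comes down to evaluating $\iota^{-1}\circ(\Phi^{-1})^{\ast}$: the crucial structural input is that $\sigma$ is \emph{alternating}, which says precisely that $\Phi$ is skew, $\Phi^{\ast}\circ\iota=\Phi\circ(-\mathrm{id})$. Hence $\iota^{-1}\circ(\Phi^{-1})^{\ast}=(-\mathrm{id})\circ\Phi^{-1}$, so the reflection $x\mapsto-x$ inherited from the classical formula is exactly cancelled, and one is left with $\mathcal F_\sigma(\mathcal F_\sigma f)=c\,f$ where $c=c_0c_1>0$ is independent of $f$. For a symmetric self-duality the same computation would instead yield $c\,\check f$; this is the one place where the alternating nature of $\sigma$, as opposed to a generic duality bracket, is really used.

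An alternative, closer to the "adapt the classical proof" philosophy used for the earlier theorems in this section, would be to copy the standard argument directly: reduce via Lemma \ref{lemma:Deitmar} to $f$ supported on a $\sigma$-compact open subgroup, then run the inversion proof through functions of symplectic positive type and an approximate identity $(\psi_U)_U$ with $\mathcal F_\sigma(\psi_U)\geq0$, recovering $f$ almost everywhere as a limit of expressions of the form $\mathcal F_\sigma(f)\cdot\mathcal F_\sigma(\psi_U)$; even there, self-inverseness — rather than self-inverseness up to a reflection — hinges on the alternating property. The main obstacle is thus not the reduction, since classical Fourier inversion may be cited as a black box, but the bookkeeping ensuring that the symplectic normalization is self-consistent, i.e.\ that the double transform genuinely returns $f$ and not $\check f$; this rests entirely on $\sigma(x,y)\sigma(y,x)=1$. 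The non-second-countable case, in the spirit of Section \ref{sec:Gelfand_Pettis}, adds nothing serious here: a topological isomorphism automatically transports Haar measures to Haar measures, and $L^1$-functions are supported in $\sigma$-compact open subgroups by Lemma \ref{lemma:Deitmar}.
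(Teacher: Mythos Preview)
Your argument is correct. The paper does not give a rigorous proof of this statement; it offers only a heuristic sketch using the formal orthogonality relation
\[
\int_\Xi \sigma(\xi,y)\sigma(x,\xi)\,d\xi = \int_\Xi \sigma(\xi,y-x)\,d\xi = \delta_{y-x},
\]
remarking that ``there is no problem in turning this into a good argument (say, for $f$ a Schwartz-Bruhat or a Feichtinger function)'' and that the normalization constant $c$ comes from the choice of dual Haar measure. Your approach is genuinely different in execution: rather than rerunning the classical inversion argument with $\sigma$ in place of the dual pairing, you transport along the self-duality $\Phi$ and invoke the ordinary inversion theorem on $\Xi$ as a black box, then do the bookkeeping to track the double transform back to $\Xi$.

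Both approaches isolate the same structural point --- the alternating identity $\sigma(x,y)\sigma(y,x)=1$ is what kills the reflection $f\mapsto\check f$ that one would otherwise pick up --- but they use it differently. The paper's sketch uses it in the single line $\sigma(\xi,y)\sigma(x,\xi)=\sigma(\xi,y)/\sigma(\xi,x)=\sigma(\xi,y-x)$; you encode it as the skewness relation $\Phi^\ast\circ\iota=\Phi\circ(-\mathrm{id})$, which is exactly the right abstract translation. Your route has the advantage of being fully rigorous without further work and of making explicit where the constant $c=c_0c_1$ comes from (the push-forward of Haar measure under $\Phi$ and the classical Plancherel constant), at the cost of a somewhat heavier notational load with dual maps and the Pontryagin isomorphism $\iota$. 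The paper's sketch is shorter and more transparent about \emph{why} the result holds, but would need the indicated work (restriction to a nice test class, approximate identities) to be made honest.
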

Since it is the previous theorem which distinguishes the symplectic Fourier transform from the standard Fourier transform, we wish to very briefly sketch why this holds true. Recall that we identify $\Xi$ and $\widehat{\Xi}$ by $\Xi \ni x \mapsto \sigma(x, (\cdot))\in \widehat{\Xi}$. Therefore, we have (in an appropriate weak sense, up to the appropriate normalization of the dual Haar measure $d\xi$):
\begin{align*}
    \int_\Xi \sigma(\xi,y) \sigma(x,\xi)~\d \xi = \int_\Xi \frac{\sigma(\xi,y)}{\sigma(\xi,x)}~\d \xi = \int_\Xi \sigma(\xi,y-x)~\d \xi = \delta_{y-x}.
\end{align*}
We therefore obtain:
\begin{align*}
    \mathcal F_\sigma (\mathcal F_\sigma(f))(y) &= \int_\Xi \sigma(\xi,y) \int_\Xi \sigma(x,\xi) f(x)~\d x ~\d \xi\\
    &= \int_\Xi f(x) \int_\Xi \sigma(\xi,y) \sigma(x,\xi)~\d \xi ~\d x\\
    &= \int_\Xi f(x) \delta_{y-x}~\d x\\
    &= f(y).
\end{align*}
While the above reasoning is obviously not rigorous, there is no problem in turning this into a good argument (say, for $f$ a Schwartz-Bruhat or a Feichtinger function). The only thing which is unclear from the above reasoning is the normalization of the Haar measure $\d \xi$, which is done by the constant $c$ in the inversion theorem. 

So far, we have always identified $\Xi$ and $\widehat{\Xi}$ through the symplectic bicharacter $\sigma$. We will continue to identify both groups in this way. Nevertheless, we will now use the symbol $\widehat{\Xi}$ to indicate that the Haar measure is getting normalized as $\d\xi = \frac{1}{c}~\d x$, with $c$ from the previous result. From now on, $\mathcal F_\sigma$ will map functions on $\Xi$ to function on $\widehat{\Xi}$, simply indicating that we renormalize the Haar measure. Further, we will denote 
\begin{align*}
    \mathcal F_\sigma'(f)(y) = \int_{\widehat{\Xi}} \sigma(\xi,y) f(\xi)~\d\xi = \frac{1}{c} \int_\Xi \sigma(x,y) f(x)~\d x.
\end{align*}
Thus, the Fourier inversion formula now reads $\mathcal F_\sigma' \mathcal F_\sigma f = f$ whenever $f \in L^1(\Xi)$, $\mathcal F_\sigma(f) \in L^1(\widehat{\Xi})$.
\begin{thm}[Plancherel's theorem]
With the above renormalization of the Haar measure on  $\widehat{\Xi}$ we have that $\mathcal F_\sigma$ extends to a unitary operator $\mathcal F_\sigma: L^2(\Xi) \to L^2(\widehat{\Xi})$ with inverse $\mathcal F_\sigma': L^2(\widehat{\Xi}) \to L^2(\Xi)$.
\end{thm}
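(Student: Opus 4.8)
The plan is to run the classical Plancherel argument: prove the isometric identity $\|\mathcal F_\sigma f\|_{L^2(\widehat\Xi)}=\|f\|_{L^2(\Xi)}$ on a convenient dense subspace, extend by continuity, and then use the already-established Fourier inversion formula to identify $\mathcal F_\sigma'$ as a two-sided inverse, which yields unitarity. As the dense test space I would take the Feichtinger algebra $\mathcal S_0(\Xi)$ (the Schwartz--Bruhat space $\mathcal S(\Xi)$ would serve equally well): it is contained in $L^1(\Xi)\cap L^2(\Xi)$, dense in $L^2(\Xi)$, a Banach algebra under both convolution and pointwise multiplication, invariant under $f\mapsto\overline f$ and $f\mapsto f(-\,\cdot\,)$, and -- crucially -- it is carried onto $\mathcal S_0(\widehat\Xi)$ by $\mathcal F_\sigma$, with the inversion identities $\mathcal F_\sigma'\mathcal F_\sigma=\mathrm{id}$ on $\mathcal S_0(\Xi)$ and $\mathcal F_\sigma\mathcal F_\sigma'=\mathrm{id}$ on $\mathcal S_0(\widehat\Xi)$ valid in the strong (everywhere) sense. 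All of this is available for arbitrary lca groups, so second countability plays no role; here one uses the identification $\widehat\Xi\cong\Xi$ via $\sigma$, which is a topological group isomorphism, to transport $\mathcal S_0(\widehat\Xi)$ to $\mathcal S_0(\Xi)$.

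For the isometry on $\mathcal S_0(\Xi)$, fix $f\in\mathcal S_0(\Xi)$, set $f^\star(x):=\overline{f(-x)}$, and put $g:=f\ast f^\star\in\mathcal S_0(\Xi)$. Since $\sigma(\,\cdot\,,\xi)$ is a character, $\sigma(-x,\xi)=\overline{\sigma(x,\xi)}$, and Haar measure is invariant under $x\mapsto -x$; substituting gives $\mathcal F_\sigma(f^\star)=\overline{\mathcal F_\sigma(f)}$. The convolution theorem then yields $\mathcal F_\sigma(g)=|\mathcal F_\sigma(f)|^2\ge 0$, and this is in $L^1(\widehat\Xi)$ because $\mathcal F_\sigma(f)\in\mathcal S_0(\widehat\Xi)\subset L^2(\widehat\Xi)$. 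Applying the Fourier inversion formula to $g$ (both $g$ and $\mathcal F_\sigma g$ are in $L^1$) and evaluating the resulting continuous functions at $0$, using $\sigma(\xi,0)=1$, gives
\[
\|f\|_{L^2(\Xi)}^2 = g(0) = \mathcal F_\sigma'(\mathcal F_\sigma g)(0) = \int_{\widehat\Xi}\mathcal F_\sigma(g)(\xi)\,\d\xi = \int_{\widehat\Xi}|\mathcal F_\sigma(f)(\xi)|^2\,\d\xi = \|\mathcal F_\sigma f\|_{L^2(\widehat\Xi)}^2 .
\]
Hence $\mathcal F_\sigma|_{\mathcal S_0(\Xi)}$ is isometric for the $L^2$-norms and extends uniquely to an isometry $\mathcal F_\sigma\colon L^2(\Xi)\to L^2(\widehat\Xi)$; the same computation carried out on $\widehat\Xi$ extends $\mathcal F_\sigma'$ to an isometry $L^2(\widehat\Xi)\to L^2(\Xi)$.

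Bijectivity is then formal: the bounded operators $\mathcal F_\sigma'\mathcal F_\sigma$ on $L^2(\Xi)$ and $\mathcal F_\sigma\mathcal F_\sigma'$ on $L^2(\widehat\Xi)$ agree with the identity on the dense subspaces $\mathcal S_0(\Xi)$, resp. $\mathcal S_0(\widehat\Xi)$, hence are the identity everywhere. Thus $\mathcal F_\sigma$ is a bijective isometry with inverse $\mathcal F_\sigma'$, i.e.\ a unitary operator, as claimed.

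The main obstacle is not the functional-analytic extension but the construction of the test space: one must confirm that $\mathcal S_0(\Xi)$ (equivalently the Schwartz--Bruhat space) genuinely has all the properties invoked above -- in particular $\mathcal F_\sigma$-invariance and the pointwise inversion formula with the normalization $\d\xi=\tfrac1c\,\d x$ -- over an arbitrary self-dual lca group carrying a Heisenberg multiplier, and that no pathology intervenes in the non-second-countable case (recall that by Lemma \ref{lemma:Deitmar} every $L^2$-function is supported on a $\sigma$-compact open subgroup, but $\mathcal F_\sigma$ does not respect this decomposition, so one should not try to reduce to that situation). If instead one wanted to work only with $C_c(\Xi)$, the difficulty would migrate into showing that the continuous positive-definite $L^1$-function $g=f\ast f^\star$ has $\mathcal F_\sigma(g)\in L^1(\widehat\Xi)$ -- a Bochner-type assertion requiring a regularization argument -- so passing through $\mathcal S_0(\Xi)$ is the efficient route.
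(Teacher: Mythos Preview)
Your proof is correct and follows precisely the classical route that the paper tacitly invokes: the paper does not actually prove this statement, saying only that ``essentially all the proofs work analogous to the standard group Fourier transform,'' and your argument supplies exactly that standard argument (isometry via $g=f\ast f^\star$ and Fourier inversion at $0$, extension by density, bijectivity from inversion on a dense set). One small remark: since the paper only states the inversion formula in one direction, you might note that the formulas give $\mathcal F_\sigma'(f)(y)=\tfrac{1}{c}\int_\Xi\sigma(x,y)f(x)\,\d x=\tfrac{1}{c}\mathcal F_\sigma(f)(y)$ as functions on the underlying set $\Xi=\widehat\Xi$, so $\mathcal F_\sigma\mathcal F_\sigma'=\tfrac{1}{c}\mathcal F_\sigma^2=\mathrm{id}$ follows immediately from $\mathcal F_\sigma^2=c\cdot\mathrm{id}$, and no separate argument for the second inversion identity is needed.
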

\begin{thm}[Hausdorff-Young inequality]
    For $p \in [1, 2]$ and $q$ the conjugate exponent, $\frac{1}{p} + \frac{1}{q} = 1$, it is:
    \begin{align*}
        \| \mathcal F_\sigma(f)\|_{L^q(\widehat{\Xi})}\leq \| f\|_{L^p(\Xi)}, \quad f \in L^p(\Xi).
    \end{align*}
\end{thm}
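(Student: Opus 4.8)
The plan is to deduce the inequality from the Riesz--Thorin interpolation theorem applied to the two endpoint mapping properties already in hand. At $p=1$ (so $q=\infty$) the bound is nothing but the estimate $\|\mathcal F_\sigma(f)\|_\infty \le \|f\|_{L^1}$ recorded in the Riemann--Lebesgue lemma for functions, which shows that $\mathcal F_\sigma$ maps $L^1(\Xi)$ into $C_0(\widehat\Xi)\subset L^\infty(\widehat\Xi)$ with norm at most $1$ (rescaling the target Haar measure by the positive constant $1/c$ affects neither the $L^\infty$ norm nor, below, unitarity). At $p=q=2$ it is Plancherel's theorem, which gives that $\mathcal F_\sigma\colon L^2(\Xi)\to L^2(\widehat\Xi)$ is unitary, in particular of norm $1$. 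Interpolating these two estimates produces the full scale $p\in[1,2]$, and since both endpoint norms are $1$ the interpolated constant is again $1$.

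Concretely, first I would check that the two incarnations of $\mathcal F_\sigma$ --- the one defined by the defining integral on $L^1(\Xi)$ and the Plancherel extension on $L^2(\Xi)$ --- coincide on $L^1(\Xi)\cap L^2(\Xi)$, so that there is a single operator to interpolate; this is the usual density argument, using that $C_c(\Xi)$ is dense in $L^1(\Xi)$, in $L^2(\Xi)$, and in $L^1(\Xi)\cap L^2(\Xi)$ for the norm $\|\cdot\|_{L^1}+\|\cdot\|_{L^2}$, that the integral formula is $L^1$-to-$L^\infty$ continuous, that the Plancherel map is $L^2$-to-$L^2$ continuous, and that the two agree on $C_c(\Xi)$. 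Then I would invoke Riesz--Thorin with the pairs $(p_0,q_0)=(1,\infty)$ and $(p_1,q_1)=(2,2)$: for $\theta\in[0,1]$ put $\tfrac1{p_\theta}=1-\tfrac\theta2$ and $\tfrac1{q_\theta}=\tfrac\theta2$, so that $\tfrac1{p_\theta}+\tfrac1{q_\theta}=1$ and, as $\theta$ ranges over $[0,1]$, $p_\theta$ ranges over $[1,2]$ with $q_\theta$ its conjugate exponent. The interpolation theorem then yields $\|\mathcal F_\sigma\|_{L^{p_\theta}(\Xi)\to L^{q_\theta}(\widehat\Xi)}\le 1^{1-\theta}\,1^{\theta}=1$, which is exactly the asserted inequality.

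The only genuine point of care --- the "hard part", such as it is --- is that $\Xi$ need not be second countable, so its Haar measure need not be $\sigma$-finite, and one should confirm that Riesz--Thorin applies in this generality. It does: the complex-interpolation proof via the three-lines lemma manipulates only simple functions and is valid over arbitrary measure spaces (Haar measure being in any case a semifinite Radon measure, and $L^\infty(\widehat\Xi)$ being understood in the modified sense of Section~\ref{sec:Gelfand_Pettis}, which is exactly what keeps the duality with $L^1$ intact). Alternatively, in the spirit of the reductions used elsewhere in this paper, one may observe that by Lemma~\ref{lemma:Deitmar} any $f\in L^{p}(\Xi)$ with $p<\infty$ is supported in a $\sigma$-compact, hence $\sigma$-finite, open subgroup, which confines all the integrals appearing in the argument to a $\sigma$-finite measure space. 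Beyond this remark the proof is entirely routine and proceeds exactly as in the Euclidean case.
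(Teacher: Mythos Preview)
Your proposal is correct and matches the paper's own approach: the paper does not spell out a proof but explicitly indicates (in the discussion preceding this block of results, and again for the operator Hausdorff--Young inequality) that these follow by the standard route of complex interpolation between the Riemann--Lebesgue and Plancherel endpoints. Your care about the non-$\sigma$-finite case via Lemma~\ref{lemma:Deitmar} is appropriate and in keeping with how the paper handles such issues elsewhere.
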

\begin{thm}[Bochner's theorem]
    Let $f$ be a bounded continuous function on $\widehat{\Xi}$. Then, $f = \mathcal F_\sigma(\mu)$ for some (positive) finite Radon measure $\mu$ on $\Xi$ if and only if $f$ is positive definite.
\end{thm}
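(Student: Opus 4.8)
The plan is to deduce this from the classical Bochner theorem on locally compact abelian groups, using only that $\mathcal F_\sigma$ is, under the standing identification $\widehat\Xi \cong \Xi$ induced by $\sigma$, nothing but the ordinary Fourier--Stieltjes transform (possibly precomposed with $x \mapsto -x$). In particular, no extra structural assumption on $m$ (such as being similar to a bicharacter) is needed here, in contrast to the operator Bochner theorem discussed later.

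First I would dispatch the easy implication. If $f = \mathcal F_\sigma(\mu)$ with $0 \le \mu$ a finite Radon measure on $\Xi$, then $f$ is bounded by the proposition above and continuous by dominated convergence (the measure is finite and $\sigma$ is jointly continuous, $\sigma$ inducing a topological isomorphism $\Xi \cong \widehat\Xi$). Positive definiteness is then a one-line computation: for any $\xi_1, \dots, \xi_n \in \widehat\Xi$ and $c_1, \dots, c_n \in \CC$, using $\sigma(x, \xi_j - \xi_k) = \sigma(x,\xi_j)\overline{\sigma(x,\xi_k)}$,
\[
\sum_{j,k=1}^n c_j \overline{c_k}\, f(\xi_j - \xi_k) = \int_\Xi \Bigl| \sum_{j=1}^n c_j \sigma(x,\xi_j) \Bigr|^2 d\mu(x) \ge 0.
\]
This is the only display I would spell out for this direction.

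For the converse — the substantive direction — suppose $f$ is a bounded continuous positive definite function on $\widehat\Xi$. I would invoke the classical Bochner theorem for the lca group $\widehat\Xi$, in a form valid without any countability hypothesis (see, e.g., Folland or Rudin), which produces a positive finite Radon measure $\mu$ on the Pontryagin dual of $\widehat\Xi$ — which is $\Xi$ — such that $f$ equals the Fourier--Stieltjes transform of $\mu$. It then remains to match this transform with $\mathcal F_\sigma$: since $\xi \in \Xi$ is identified with the character $x \mapsto \sigma(x,\xi)$ of $\Xi$, the Fourier--Stieltjes transform of $\mu$ at $\xi$ is exactly $\int_\Xi \sigma(x,\xi)\, d\mu(x) = \mathcal F_\sigma(\mu)(\xi)$, up to the orientation of the duality pairing; should the conventions differ by a conjugate (i.e.\ the pairing realizes $\sigma(\xi,x) = \overline{\sigma(x,\xi)}$ instead), I would replace $\mu$ by its push-forward under $x \mapsto -x$, which is again a positive finite Radon measure and restores the identity because $\sigma(-x,\xi) = \overline{\sigma(x,\xi)}$.

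I do not expect a genuine obstacle: the statement is the classical Bochner theorem read through the $\sigma$-identification, and the positive-definiteness condition is purely group-theoretic and does not involve $\sigma$ at all. The only points demanding attention are bookkeeping — confirming that, under $\widehat\Xi \cong \Xi$, the map $\mathcal F_\sigma$ on measures coincides with the ordinary Fourier--Stieltjes transform up to a sign (which the inversion sketch preceding the theorem already makes transparent), and citing a version of the classical Bochner theorem that does not presuppose second countability, which is standard.
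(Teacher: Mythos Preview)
Your proposal is correct and matches the paper's approach exactly: the paper does not give a proof of this theorem, stating instead that ``essentially all the proofs work analogous to the standard group Fourier transform, [so] we only give definitions and results.'' Your reduction to the classical Bochner theorem via the $\sigma$-induced identification $\Xi \cong \widehat{\Xi}$ (with the sign/reflection bookkeeping you mention) is precisely what the paper has in mind.
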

\begin{thm}[Wiener's approximation theorem]\label{thm:wienerfunctions}
Let $S \subset L^1(\Xi)$ be any subset. Then, $\{ \alpha_{x}(f): x \in \Xi, ~f\in S\}$ spans a dense subspace of $L^1(\Xi)$ if and only if
\begin{align*}
    \bigcap_{f \in S} \{ \xi \in \widehat{\Xi}: ~\mathcal F_\sigma(f)(\xi) = 0\} = \emptyset.
\end{align*}
\end{thm}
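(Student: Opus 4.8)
The plan is to recognise the closed subspace in question as a closed ideal of the convolution algebra $L^1(\Xi)$ and then to invoke (a mild reformulation of) the classical Wiener Tauberian theorem. First I would record the dictionary: since $\Xi$ is self-dual via $\sigma$, the symplectic Fourier transform $\mathcal F_\sigma$ is the ordinary Fourier transform of $L^1(\Xi)$ composed with the topological isomorphism $x \mapsto \sigma(x,\cdot)$ of $\Xi$ onto $\widehat\Xi$; hence the Gelfand theory of the commutative Banach algebra $(L^1(\Xi),\ast)$ is available, with $\mathcal F_\sigma$ playing the role of the Gelfand transform, its spectrum being $\widehat\Xi$, and the convolution theorem $\mathcal F_\sigma(f\ast g)=\mathcal F_\sigma(f)\mathcal F_\sigma(g)$ expressing exactly this. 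Next I would use the standard fact that, because $L^1(\Xi)$ has a bounded approximate identity, a closed subspace is translation invariant precisely when it is a (closed) ideal; this is a short argument (for the nontrivial inclusion: if $V$ is a closed ideal and $f\in V$, then $\alpha_x(f)=\delta_x\ast f=\lim_i(\delta_x\ast e_i)\ast f\in V$). Consequently the closed linear span $V$ of $\{\alpha_x(f):x\in\Xi,\ f\in S\}$ equals the smallest closed ideal of $L^1(\Xi)$ containing $S$, and its hull is exactly $\bigcap_{f\in S}\{\xi\in\widehat\Xi:\mathcal F_\sigma(f)(\xi)=0\}$.

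With this in place the statement becomes: $V=L^1(\Xi)$ if and only if the hull is empty. The direction ``$V$ dense $\Rightarrow$ empty hull'' is easy: if some $\xi_0$ lay in the hull, then $g\mapsto\mathcal F_\sigma(g)(\xi_0)$ would be a nonzero bounded multiplicative functional on $L^1(\Xi)$ (bounded by Riemann--Lebesgue, multiplicative by the convolution theorem, nonzero because $\mathcal F_\sigma(L^1(\Xi))$ is dense in $C_0(\widehat\Xi)$) vanishing on all of $S$, hence on the ideal $V$, so $V$ would be a proper closed subspace, contradicting density.

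The converse ``empty hull $\Rightarrow$ $V=L^1(\Xi)$'' is the Tauberian heart of the matter, and here I would invoke the classical Wiener Tauberian theorem for the group algebra of an arbitrary locally compact abelian group; crucially, the standard references (Reiter--Stegeman, Rudin, Folland) establish it without any second countability hypothesis, so nothing in our generality is lost. If a self-contained argument is preferred, the route is: (a) $L^1(\Xi)$ is a regular Banach algebra and the functions whose Fourier transform has compact support are dense (an approximate-identity/regularisation argument on top of Riemann--Lebesgue); (b) Wiener's lemma --- if $\mathcal F_\sigma(u)$ is nonvanishing near $\xi_0$ then $1/\mathcal F_\sigma(u)$ agrees near $\xi_0$ with the Fourier transform of some $L^1$ function --- so that, the hull being empty, for every $\xi_0$ one finds $u\in V$ with $\mathcal F_\sigma(u)\equiv 1$ on a neighbourhood of $\xi_0$; (c) a localisation step: given $g\in L^1(\Xi)$ with $\mathcal F_\sigma(g)$ compactly supported, cover $\supp\mathcal F_\sigma(g)$ by finitely many such neighbourhoods, subordinate a partition of unity whose pieces are themselves Fourier transforms of $L^1$ functions (again by regularity), and thereby write $g$ as a finite sum of elements $g_j\ast u_j$ with $u_j\in V$, so $g\in V$. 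Combining (a) and (c) yields $V\supseteq\overline{\{g:\mathcal F_\sigma(g)\text{ compactly supported}\}}=L^1(\Xi)$.

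I expect the genuine obstacle to be step (c) together with the behaviour ``at infinity'': one must ensure finitely many local patches suffice, which is exactly where compactness of $\supp\mathcal F_\sigma(g)$ is used, and one must be careful that the splitting functions are Fourier transforms of $L^1$-functions, which is again the regularity of $L^1(\Xi)$. The algebraic reduction to ideals and the easy direction are routine; everything delicate is packaged either into the cited Tauberian theorem or, if reproved, into Wiener's lemma and the regularity of $L^1(\Xi)$, and none of it is sensitive to second countability.
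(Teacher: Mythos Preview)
The paper does not prove this theorem: it is listed among the standard facts about the symplectic Fourier transform for which the authors explicitly say ``we only give definitions and results,'' deferring to the classical theory of the group Fourier transform. Your proposal is therefore not competing with any argument in the paper; rather, you are supplying exactly the kind of proof the authors chose to omit. Your approach---identify $\mathcal F_\sigma$ with the Gelfand transform of $(L^1(\Xi),\ast)$ via the self-duality induced by $\sigma$, recognise the closed span of translates of $S$ as the closed ideal generated by $S$, and then invoke (or reprove) the Wiener Tauberian theorem for the group algebra of an arbitrary lca group---is correct and is the standard route; the sketch of the self-contained argument via regularity of $L^1(\Xi)$, local invertibility, and a compact-support partition of unity is also accurate, and you are right that nothing here requires second countability.

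One small cosmetic point: in your parenthetical you write $\alpha_x(f)=\delta_x\ast f=\lim_i(\delta_x\ast e_i)\ast f$, but $\delta_x$ is not in $L^1(\Xi)$, so this should be read as shorthand for $\alpha_x(e_i)\ast f=\alpha_x(e_i\ast f)\to\alpha_x(f)$ with $(e_i)$ a bounded approximate identity. The content is unaffected.
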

A set of functions satisfying the conditions in the above theorem is termed \emph{regular}.
\begin{rem}
Note that any function in $C_0(\widehat{\Xi})$ is always supported in some $\sigma$-compact subset of $\widehat{\Xi}$ (for example by Lemma \ref{lemma:Deitmar}, as compactly supported continuous functions are of course contained in $L^1(\widehat{\Xi})$). Hence, when $\Xi$ is no longer $\sigma$-compact, the translates of a single function from $L^1(\Xi)$ can no longer span a dense subspace of $L^1(\Xi)$. Thus, it is necessary at this level of generality to work with the family-version of Wiener's theorem, and when $\Xi$ is not $\sigma$-compact, any family satisfying the assumptions of Wiener's theorem has to be uncountable.
\end{rem}

Having now summarized the properties and definitions of the symplectic Fourier transform, it is our goal to define the Fourier transform for operators and give analogous properties to those of the symplectic Fourier transform.
\begin{defin}
    Let $A \in \mathcal T^1(\mathcal H)$. Then, we define its Fourier transform $\mathcal F_U(A)$ by
    \begin{align*}
        \mathcal F_U(A)(\xi) = \tr(AU_\xi^\ast), \quad \xi \in \widehat{\Xi}.
    \end{align*}
\end{defin}
There is a clear motivation for having this as the definition of a Fourier transform: The operators $U_{\xi}$ should be thought of as the analogues of the characters $\sigma(\xi, (\cdot))$. The (symplectic) Fourier transform $\mathcal F_\sigma(f)$ consists of pairing the function $f$ with the character $\overline{\sigma(\xi, \cdot)}$ and then integrating. Replacing the character by the operator $U_{\xi}$, complex conjugation by taking the adjoint and integration by taking the trace yields immediately the definition at hand.
\begin{rem}
    \begin{enumerate}[(1)]
    \item The index ``$U$'' in the notation $\mathcal F_U(A)$ indicates that the Fourier transform is taken with respect to the representation $U$. Of course, different projective representations of $\Xi$ over $\mathcal H$ give rise to different Fourier transforms on $\mathcal T^1(\mathcal H)$. In previous works on Quantum Harmonic Analysis, the notation was usually $\mathcal F_W(A)$, with ``W'' standing for either ``Weyl'' or ``Wigner'' (depending in the preference of the author). While it would still be appropriate to refer to $\mathcal F_U$ as the Fourier-Weyl-Wigner transform, we find it necessary on this level of generality to notationally fix the precise representation one is dealing with.
    \item We want to emphasize that the Fourier-Weyl transform $\mathcal F_U(A)$ has also appeared under other names in the literature. Just to give one example, in \cite{Feichtinger_Kozek1998} (and further follow up literature in the setting of time-frequency analysis) it was named the \emph{spreading function}. If the operator $A$ is simply a rank one operator $A = f \otimes g$ in the standard setting $\Xi = G \times \widehat{G}$, then $\mathcal F_U(A)$ is essentially the short-time Fourier transform $V_g(f)(z) = \langle f, U_z g\rangle$.
    \end{enumerate}
\end{rem}
\begin{prop}\label{prop:convolutions}
    Let $f \in L^1(\Xi)$ and $A, B \in \mathcal T^1(\mathcal H)$. Then, the following properties hold true:
    \begin{enumerate}[(1)]
        \item $\mathcal F_U(f \ast A) = \mathcal F_\sigma(f) \cdot \mathcal F_U(A)$.
        \item $\mathcal F_\sigma(A \ast B)(\xi) = m(\xi, -\xi) \mathcal F_U(A)(\xi) \mathcal F_U(B)(\xi)$.
        \item $\mathcal F_U(\alpha_{\eta}(A))(\xi) = \sigma(\eta, \xi) \mathcal F_U(A)(\xi)$.
        \item $\alpha_{\eta}(\mathcal F_U(A))(\xi) = m(-\eta, \xi) \mathcal F_U(U_{-\eta}^\ast A)(\xi) = m(\xi,-\eta) \mathcal F_U(AU_{-\eta}^\ast)(\xi)$.
    \end{enumerate}
\end{prop}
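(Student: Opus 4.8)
The plan is to verify each of the four identities by reducing to the rank-one case and then invoking Theorem \ref{thm:moyal} together with the definitions of the three convolutions and of $\mathcal F_U$. For (1), I would start from the definition $f \ast A = \int_\Xi f(x)\,\alpha_x(A)\,dx$ as a Bochner integral in $\mathcal T^1(\mathcal H)$ (valid by Theorem \ref{thm:Bochner}), apply the bounded linear functional $T\mapsto \tr(TU_\xi^\ast)$, which commutes with the integral, and compute $\tr(U_x A U_x^\ast U_\xi^\ast)$. Using $U_x^\ast U_\xi^\ast = \overline{m(\xi,-\xi)}\,U_x^\ast U_{-\xi}$ and the commutation relation $U_x U_{-\xi} = \sigma(x,-\xi) U_{-\xi} U_x$, one rearranges the trace to pull out a factor $\sigma(x,\xi)$, so that $\tr(\alpha_x(A)U_\xi^\ast) = \sigma(x,\xi)\,\mathcal F_U(A)(\xi)$; integrating against $f(x)$ gives $\mathcal F_\sigma(f)(\xi)\,\mathcal F_U(A)(\xi)$. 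Identity (3) is really the special case $f\to \delta_\eta$ of this computation, i.e.\ it is exactly the relation $\mathcal F_U(\alpha_\eta(A))(\xi) = \sigma(\eta,\xi)\mathcal F_U(A)(\xi)$ that appears mid-way, so I would prove (3) first as a lemma-style step and then deduce (1) by integration.

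For (2), the natural route is to pass to rank-one operators $A=\varphi_1\otimes\psi_1$, $B=\varphi_2\otimes\psi_2$, where Proposition \ref{prop:thirdconv} already records $A\ast B(x) = \langle U_x R\varphi_2,\psi_1\rangle\,\overline{\langle U_x R\psi_2,\varphi_1\rangle}$. Then $\mathcal F_\sigma(A\ast B)(\xi) = \int_\Xi \sigma(x,\xi)\langle U_xR\varphi_2,\psi_1\rangle\overline{\langle U_xR\psi_2,\varphi_1\rangle}\,dx$. The idea is to absorb the character $\sigma(x,\xi)$ into one of the matrix coefficients using the commutation relation: $\sigma(x,\xi)\langle U_x R\varphi_2,\psi_1\rangle = \langle U_\xi U_x U_\xi^\ast R\varphi_2,\psi_1\rangle = \langle U_x U_\xi^\ast R\varphi_2, U_\xi^\ast\psi_1\rangle$ up to the scalar $m$-factors coming from $U_\xi U_x = m(\xi,x)U_{\xi+x}$ and $U_{\xi+x}U_\xi^\ast = \overline{m(\xi,-\xi)}\,m(\xi+x,-\xi)U_x$. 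After collecting the cocycle factors — this is where the $m(\xi,-\xi)$ prefactor will appear — one applies the orthogonality relation of Theorem \ref{thm:moyal} to the product of the two matrix coefficients and recognizes the result as $\langle R\varphi_2, U_\xi R\varphi_1'\rangle$-type expressions that reassemble into $\tr(AU_\xi^\ast)\tr(BU_\xi^\ast)$. Extending from rank-one to general trace-class $A,B$ is done by writing each as a norm-convergent series of rank-one operators and using the continuity bounds from Proposition \ref{prop:thirdconv} and the Riemann–Lebesgue bound $\|\mathcal F_U(A)\|_\infty\le\|A\|_{\mathcal T^1}$.

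For (4), I would simply compute directly from the definition: $\alpha_\eta(\mathcal F_U(A))(\xi) = \mathcal F_U(A)(\xi-\eta) = \tr(A U_{\xi-\eta}^\ast)$, and then expand $U_{\xi-\eta}^\ast$. From $U_\xi = m(-\eta,\xi)^{-1}U_{-\eta}U_{\xi}$... more carefully, $U_{-\eta}U_\xi = m(-\eta,\xi)U_{\xi-\eta}$, so $U_{\xi-\eta} = \overline{m(-\eta,\xi)}\,U_{-\eta}U_\xi$ and hence $U_{\xi-\eta}^\ast = m(-\eta,\xi)\,U_\xi^\ast U_{-\eta}^\ast$. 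Then $\tr(AU_{\xi-\eta}^\ast) = m(-\eta,\xi)\tr(AU_\xi^\ast U_{-\eta}^\ast) = m(-\eta,\xi)\tr(U_{-\eta}^\ast A U_\xi^\ast) = m(-\eta,\xi)\mathcal F_U(U_{-\eta}^\ast A)(\xi)$, which is the first claimed form; the second form follows symmetrically by writing $U_{\xi-\eta} = U_\xi U_{-\eta}\cdot\overline{m(\xi,-\eta)}$, i.e.\ $U_{\xi-\eta}^\ast = m(\xi,-\eta)U_{-\eta}^\ast U_\xi^\ast$, and cycling the trace the other way. No limiting argument is needed here since everything is a single trace.

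I expect the main obstacle to be (2): keeping careful track of the cocycle factors $m(\xi,x)$, $m(\xi,-\xi)$, $m(\xi+x,-\xi)$ that accumulate when one moves $U_\xi$ past $U_x$ and uses $U_\xi^\ast = \overline{m(\xi,-\xi)}U_{-\xi}$, and checking that they collapse to exactly the single prefactor $m(\xi,-\xi)$ claimed, with the remaining character being absorbed correctly so that Theorem \ref{thm:moyal} applies. The bookkeeping is purely formal but error-prone; everything else (the reduction to rank-one, the continuity estimates, the trace manipulations in (1), (3), (4)) is routine given the results already established.
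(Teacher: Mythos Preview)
Your plan for (1), (3), and (4) matches the paper's: direct computation from the definitions and the CCR relations, with (3) being the pointwise identity $\tr(\alpha_x(A)U_\xi^\ast)=\sigma(x,\xi)\mathcal F_U(A)(\xi)$ that one then integrates against $f$ to obtain (1).

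For (2), your approach is correct but differs from the paper's. You reduce to rank-one operators, absorb the character $\sigma(x,\xi)$ into one matrix coefficient via $\sigma(x,\xi)U_x=U_\xi^\ast U_xU_\xi$, and then apply Theorem~\ref{thm:moyal} directly; the cocycle bookkeeping does collapse to the single factor $m(\xi,-\xi)$ (coming from $U_{-\xi}=m(\xi,-\xi)U_\xi^\ast$ after using $U_\xi R=RU_{-\xi}$ and the unitarity of $R$). The paper instead avoids rank-one operators entirely: it writes $\mathcal F_\sigma(A\ast B)(\xi)=\int_\Xi\sigma(y,\xi)\tr(AU_y\beta_-(B)U_y^\ast)\,dy$, inserts $U_\xi^\ast(\,\cdot\,)U_\xi$ under the trace, uses the commutation relation to remove the character, and recognizes the resulting integral as $\int_\Xi (U_\xi^\ast A)\ast(BU_{-\xi})(y)\,dy$, to which Proposition~\ref{prop:thirdconv} applies directly to give $\tr(U_\xi^\ast A)\tr(BU_{-\xi})$. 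This route is shorter and sidesteps precisely the error-prone cocycle accounting you flagged as the main obstacle; your route is more hands-on and shows explicitly how Moyal's identity underlies the convolution theorem.
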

\begin{proof}
    \begin{enumerate}[(1)]
        \item This follows from immediate computations:
        \begin{align*}
            \mathcal F_U(f \ast A)(\xi) &= \tr( \int_{\Xi} f(y) \alpha_{y}(A) ~\d y~ U_{\xi}^\ast)\\
            &= \int_{\Xi} f(y) \tr( U_{y} A U_{y}^\ast U_{\xi}^\ast) ~\d y\\
            &= \int_\Xi f(y) \sigma(y, \xi) \tr(AU_{\xi}^\ast)~\d y\\
            &= \mathcal F_\sigma(f)(\xi) \cdot \mathcal F_U(A)(\xi).
        \end{align*}
        \item The following computations show the result:
        \begin{align*}
            \mathcal F_\sigma(A \ast B)(\xi) &= \int_\Xi \sigma(y,\xi) A \ast B(y) ~\d y\\
            &= \int_\Xi \sigma(\xi, y) \tr(A U_{y} \beta_-(B)U_{y}^\ast)~\d y\\
            &= \int_\Xi \sigma(\xi, y) \tr(U_{\xi}^\ast A U_{y} \beta_-(B)U_{y}^\ast U_{\xi})~\d y\\
            &= \int_\Xi \tr(U_{\xi}^\ast A U_{y} \beta_-(BU_{-\xi}) U_{y}^\ast)~\d y
            \intertext{Using now Proposition \ref{prop:thirdconv} it is:}
            &= \tr(U_{\xi}^\ast A) \tr(\beta_-(BU_{-\xi})) \\
            &= \mathcal F_U(A)(\xi) \tr(BU_{-\xi})\\
            &=m(\xi,-\xi) \mathcal F_U(A)(\xi) \mathcal F_U(B)(\xi).
        \end{align*}
        \item and (4) are immediate consequences of the CCR relations.
    \end{enumerate}
\end{proof}
The moralic outcome of point (4) is the following: Shifting the Fourier transform should be the same as taking the Fourier transform of some modulation. With respect to these formulas, it seems that there is no natural analogue of modulation for operators in general. Nevertheless, with some additional structure there is a better formula. We want to emphasize that the character $m_a$ from Example \ref{ex:2regularweyl} satisfies the hypothesis of the following lemma.
\begin{lem}\label{lem:5.15}
    Let $\Xi$ be 2-regular and $m$ be a bicharacter such that $m(x,y) = \overline{m(y,x)}$ for all $x, y \in \Xi$. Then, we have for $A \in \mathcal T^1(\mathcal H)$:
    \begin{align*}
        \mathcal F_U(U_{\frac{x}{2}} A U_{\frac{x}{2}})(\xi) = \mathcal F_U(A)(\xi - x).
    \end{align*}
\end{lem}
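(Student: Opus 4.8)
The plan is to reduce the statement to the single operator identity
\[
    U_{\frac{x}{2}}\, U_\xi^\ast\, U_{\frac{x}{2}} = U_{\xi - x}^\ast, \qquad x, \xi \in \Xi,
\]
after which cyclicity of the trace disposes of everything at once: for $A \in \mathcal T^1(\mathcal H)$ one has $\mathcal F_U(U_{\frac{x}{2}} A U_{\frac{x}{2}})(\xi) = \tr\big(U_{\frac{x}{2}} A U_{\frac{x}{2}} U_\xi^\ast\big) = \tr\big(A\, U_{\frac{x}{2}} U_\xi^\ast U_{\frac{x}{2}}\big) = \tr\big(A\, U_{\xi-x}^\ast\big) = \mathcal F_U(A)(\xi - x)$. (One could equally well deduce the lemma from Proposition~\ref{prop:convolutions}(4), but the direct route is shorter.)

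The one observation that makes this work — and essentially the only step requiring any thought — is that under the present hypotheses the multiplier satisfies $m(z,z) = 1$ for every $z \in \Xi$. Indeed, taking $y = x$ in $m(x,y) = \overline{m(y,x)}$ shows that $m(z,z)$ is real, hence $m(z,z) \in \{-1,1\}$; since $m$ is a bicharacter, $m(2z,2z) = m(z,z)^4 = 1$, and since $\Xi$ is 2-regular every element has the form $2(z/2)$, so $m(z,z) = m\big(2(\tfrac{z}{2}), 2(\tfrac{z}{2})\big) = 1$. (Alternatively: $z \mapsto m(z,z)$ is a character of $\Xi$ with values in $\{\pm 1\}$, hence trivial on the divisible group $\Xi$.)

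With this in hand the remainder is routine bilinearity bookkeeping. From $m(z,-z)m(z,z) = m(z,0) = 1$ we get $m(z,-z) = \overline{m(z,z)} = 1$, so $U_z^\ast = \overline{m(z,-z)}\,U_{-z} = U_{-z}$; in particular $U_\xi^\ast = U_{-\xi}$ and $U_{x-\xi} = U_{-(\xi-x)} = U_{\xi-x}^\ast$. Applying $U_a U_b = m(a,b) U_{a+b}$ twice gives $U_{\frac{x}{2}} U_{-\xi} U_{\frac{x}{2}} = m(\tfrac{x}{2},-\xi)\,m(\tfrac{x}{2}-\xi,\tfrac{x}{2})\,U_{x-\xi}$; expanding the second phase factor by bilinearity and using $m(\tfrac{x}{2},\tfrac{x}{2}) = 1$ leaves $m(\tfrac{x}{2},-\xi)\,m(-\xi,\tfrac{x}{2})$, which is $1$ by the symmetry $m(-\xi,\tfrac{x}{2}) = \overline{m(\tfrac{x}{2},-\xi)}$. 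Hence $U_{\frac{x}{2}} U_\xi^\ast U_{\frac{x}{2}} = U_{\frac{x}{2}} U_{-\xi} U_{\frac{x}{2}} = U_{x-\xi} = U_{\xi-x}^\ast$, which is the identity above, and we are done. I do not expect a genuine obstacle here: the only non-obvious input is $m(z,z) = 1$, and the only thing a reader might pause over is the asymmetric-looking conjugation $U_{\frac{x}{2}} A U_{\frac{x}{2}}$ — which is natural precisely because here $U_z^\ast = U_{-z}$, so this expression is the operator-side analogue of a modulation of $A$.
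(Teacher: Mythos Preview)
Your proof is correct and follows essentially the same route as the paper: establish that $m(z,z)=1$ (equivalently $m(z,-z)=1$) from the conjugate symmetry together with 2-regularity, deduce $U_z^\ast=U_{-z}$, and then compute $U_{\frac{x}{2}}U_{-\xi}U_{\frac{x}{2}}$ directly using the CCR relations and the bicharacter property to see that the phase factor collapses to $1$. The only cosmetic difference is that you prove $m(z,z)=1$ first and derive $m(z,-z)=1$, while the paper does it in the opposite order; the underlying argument (the fourth-power trick via $2$-regularity) is identical.
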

\begin{proof}
    We first note that the assumptions on $m$ imply
    \begin{align*}
        m(x,-x) = \frac{1}{m(x,x)} = \frac{1}{\overline{m(x,x)}} = \overline{m(x,-x)}
    \end{align*}
    such that $m(x,-x) \in \{ -1, 1\}$. Since $m(x, -x) = m(\frac{x}{2}, -\frac{x}{2})^4$, and we know that $m(\frac{x}{2}, -\frac{x}{2}) \in \{ -1, 1\}$, we necessarily have $m(x, -x) = 1$. In particular, we see that $U_{-x} = U_x^\ast$. Hence,
    \begin{align*}
        \mathcal F_U(U_{\frac{x}{2}}A U_{\frac{x}{2}})(\xi) &=\tr(A U_{\frac{x}{2}} U_{-\xi} U_{\frac{x}{2}})\\
        &= m(\frac{x}{2}, -\xi) m(\frac{x}{2}-\xi, \frac{x}{2}) \tr(A U_{\xi - x})\\
        &= \frac{m(\xi, \frac{x}{2})m(\frac{x}{2}, \frac{x}{2})}{m(\xi, \frac{x}{2})} \mathcal F_U(A)(\xi - x).
    \end{align*}
    Since $m(x,x) = \frac{1}{m(x,-x)} = 1$ for every $x \in \Xi$, we see that the coefficient equals $1$, which finishes the proof.
\end{proof}
The previous lemma shows that, under the assumptions of the lemma, $U_{\frac{x}{2}}A U_{\frac{x}{2}}$ plays the role of the \emph{modulation} of the operator $A$ by $x$. This structure has already been observed in a special case in \cite{Berge_Berge_Fulsche}. Here, we will not pursue this modulation of operators any further, leaving it as a topic for further studies.

\begin{prop}[Riemann-Lebesgue Lemma for $\mathcal F_U$]
    Let $A \in \mathcal T^1(\mathcal H)$. Then, it is $\mathcal F_U(A) \in C_0(\widehat{\Xi})$ with $\| \mathcal F_U(A)\|_\infty \leq \| A\|_{\mathcal T^1}$.
\end{prop}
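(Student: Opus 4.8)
The plan is to mimic the standard proof of the classical Riemann–Lebesgue lemma, reducing to a dense subclass of operators where the statement is transparent, and then using the norm bound to pass to the general case. First I would establish the norm estimate $\|\mathcal{F}_U(A)\|_\infty \leq \|A\|_{\mathcal{T}^1}$: for every $\xi \in \widehat{\Xi}$ we have $|\mathcal{F}_U(A)(\xi)| = |\tr(A U_\xi^\ast)| \leq \|A\|_{\mathcal{T}^1}\|U_\xi^\ast\|_{op} = \|A\|_{\mathcal{T}^1}$, since $U_\xi$ is unitary. Continuity of $\mathcal{F}_U(A)$ on $\widehat{\Xi}$ follows from weak$^\ast$-continuity of $\xi \mapsto U_\xi^\ast$ (equivalently, strong continuity of the representation, which holds under Assumption \ref{assum:1}) together with $A \in \mathcal{T}^1(\mathcal{H})$: if $\xi_n \to \xi$, then $\tr(A U_{\xi_n}^\ast) \to \tr(A U_\xi^\ast)$ because $A$ is trace class and $U_{\xi_n}^\ast \to U_\xi^\ast$ in the weak operator topology.

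The remaining point is that $\mathcal{F}_U(A)$ vanishes at infinity. The natural approach is to find a dense subset of $\mathcal{T}^1(\mathcal{H})$ on which this is clear, and then invoke the uniform bound just proved to conclude that $\mathcal{F}_U(\mathcal{T}^1(\mathcal{H})) \subseteq C_0(\widehat{\Xi})$, since $C_0(\widehat{\Xi})$ is closed in the sup norm. A convenient dense subset is provided by the rank-one operators of the form $A = (f \ast (\varphi \otimes \psi))$ with $f \in L^1(\Xi)$ and $\varphi, \psi \in \mathcal{H}$, or more directly one can argue as follows. For a rank-one operator $A = \varphi \otimes \psi$ we have $\mathcal{F}_U(A)(\xi) = \tr((\varphi\otimes\psi)U_\xi^\ast) = \langle U_\xi^\ast \varphi, \psi\rangle = \overline{m(\xi,-\xi)}\langle U_{-\xi}\varphi,\psi\rangle$, which lies in $L^2(\widehat{\Xi})$ by square integrability (Theorem \ref{thm:moyal}); but membership in $L^2$ does not immediately give $C_0$. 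Instead I would use the convolution identity from Proposition \ref{prop:convolutions}(1): pick a net $(f_i)$ in $L^1(\Xi)$ which is an approximate identity for the convolution $f \ast A$ (such approximate identities exist for the QHA convolution, since $1 \ast B = \tr(B) I$ and one rescales), so that $f_i \ast A \to A$ in $\mathcal{T}^1(\mathcal{H})$; then $\mathcal{F}_U(f_i \ast A) = \mathcal{F}_\sigma(f_i)\cdot\mathcal{F}_U(A)$, and $\mathcal{F}_\sigma(f_i) \in C_0(\widehat{\Xi})$ by the Riemann–Lebesgue lemma for functions. Since $\mathcal{F}_U(A)$ is bounded and continuous, the product $\mathcal{F}_\sigma(f_i)\cdot\mathcal{F}_U(A)$ lies in $C_0(\widehat{\Xi})$; and $\mathcal{F}_U(f_i \ast A) \to \mathcal{F}_U(A)$ uniformly by the norm bound $\|\mathcal{F}_U(f_i \ast A - A)\|_\infty \leq \|f_i \ast A - A\|_{\mathcal{T}^1} \to 0$. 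Hence $\mathcal{F}_U(A) \in C_0(\widehat{\Xi})$.

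The main obstacle I anticipate is the existence of a suitable approximate identity: one needs a net $(f_i) \subset L^1(\Xi)$ with $f_i \ast A \to A$ in trace norm for all $A \in \mathcal{T}^1(\mathcal{H})$, uniformly enough to run the argument. This is where a little care is needed on non-second-countable groups, but it follows from the continuity of $x \mapsto \alpha_x(A)$ in trace norm (Lemma after \eqref{eq:defR} analogue, stated in Section \ref{sec:conv}): taking $f_i$ to be normalized indicator functions (divided by their mass) of a neighborhood basis of $0 \in \Xi$, one gets $f_i \ast A = \int_\Xi f_i(x)\alpha_x(A)\,dx \to \alpha_0(A) = A$ in $\mathcal{T}^1(\mathcal{H})$ by the standard averaging estimate $\|f_i \ast A - A\|_{\mathcal{T}^1} \leq \sup_{x \in \operatorname{supp} f_i}\|\alpha_x(A) - A\|_{\mathcal{T}^1}$. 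With this in hand the proof is complete; alternatively, one can sidestep approximate identities by noting that finite sums $\sum_j f_j \ast (\varphi_j \otimes \psi_j)$ are dense in $\mathcal{T}^1(\mathcal{H})$ and computing $\mathcal{F}_U$ on these directly via Proposition \ref{prop:convolutions}(1).
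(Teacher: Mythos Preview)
Your argument is correct. The norm bound and continuity are handled essentially as in the paper. For the vanishing at infinity, however, the paper takes a different and somewhat slicker route: instead of building an approximate identity in $L^1(\Xi)$ and using Proposition~\ref{prop:convolutions}(1), it observes that $A \ast A \in L^1(\Xi)$ (Proposition~\ref{prop:thirdconv}), applies the classical Riemann--Lebesgue lemma to this function, and then invokes Proposition~\ref{prop:convolutions}(2) to get $\mathcal F_\sigma(A \ast A)(\xi) = m(\xi,-\xi)\,\mathcal F_U(A)(\xi)^2 \in C_0(\widehat{\Xi})$; since $|m(\xi,-\xi)| = 1$, this forces $\mathcal F_U(A)^2 \in C_0(\widehat{\Xi})$ and hence $\mathcal F_U(A) \in C_0(\widehat{\Xi})$. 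The paper's trick avoids any limiting procedure and the mild care you note about approximate identities on non-second-countable groups; your approach, on the other hand, is the more classical template (reduce to a dense subclass where $C_0$ is manifest) and only uses the function--operator convolution theorem, not the operator--operator one. Both are perfectly valid.
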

\begin{proof}
The norm estimate is clear. Since $\mathcal F_U$ is continuous with respect to the trace norm, it suffices to verify that $\mathcal F_U(A)$ is continuous for $A$ a rank one operator. But if $A = \varphi \otimes \psi$, then $\mathcal F_U(A)(\xi) = \langle \varphi, U_{\xi}\psi\rangle$ and continuity of the Fourier transform follows from the strong continuity of the representation.

For seeing that $\mathcal F_U(A)$ vanishes at infinity, observe that we have $A \ast A \in L^1(\Xi)$, hence $\mathcal F_\sigma(A \ast A)(\xi) = m(\xi, -\xi) \mathcal F_U(A)(\xi)^2 \in C_0(\widehat{\Xi})$. Since $m(\xi, -\xi)$ is of modulus one, we obtain that $\mathcal F_U(A)^2 \in C_0(\widehat{\Xi})$, hence $\mathcal F_U(A) \in C_0(\widehat{\Xi})$.
\end{proof}
\begin{prop}
The Fourier transform $\mathcal F_U: \mathcal T^1(\mathcal H) \to C_0(\widehat{\Xi})$ is injective.
\end{prop}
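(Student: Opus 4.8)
The plan is to reduce injectivity of $\mathcal F_U$ to the already-established injectivity of the symplectic Fourier transform $\mathcal F_\sigma$ on $L^1(\Xi)$, using the operator--operator convolution as a bridge. Suppose $A \in \mathcal T^1(\mathcal H)$ satisfies $\mathcal F_U(A) = 0$; the goal is to conclude $A = 0$. The key observation is the product formula of Proposition \ref{prop:convolutions}(2): for any $B \in \mathcal T^1(\mathcal H)$ we have $\mathcal F_\sigma(A \ast B)(\xi) = m(\xi,-\xi)\,\mathcal F_U(A)(\xi)\,\mathcal F_U(B)(\xi)$, and since $\mathcal F_U(A) \equiv 0$ the right-hand side vanishes identically. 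By Proposition \ref{prop:thirdconv} we know $A \ast B \in L^1(\Xi)$, so the injectivity of $\mathcal F_\sigma$ on $L^1(\Xi)$ forces $A \ast B = 0$ in $L^1(\Xi)$.

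Next I would upgrade this a.e.-statement to pointwise vanishing. Since $A \in \mathcal T^1(\mathcal H)$ and $B \in \mathcal L(\mathcal H)$, the function $A \ast B$ is bounded and uniformly continuous (as recorded just before Proposition \ref{prop:thirdconv}); a continuous function that is $0$ almost everywhere is identically $0$, so $A \ast B \equiv 0$ on $\Xi$. Then I would specialize $B$ to rank-one operators of the form $R\varphi \otimes R\psi$, for which the proof of Corollary \ref{lemma2} gives the explicit identity $A \ast (R\varphi \otimes R\psi)(x) = \langle A U_x \varphi, U_x \psi\rangle$. Evaluating at $x = 0$ yields $\langle A\varphi, \psi\rangle = 0$, and since $\varphi, \psi \in \mathcal H$ are arbitrary we obtain $A = 0$.

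I do not expect any genuine obstacle: every ingredient (the product formula, $L^1$-membership of $A \ast B$, injectivity of $\mathcal F_\sigma$, and the rank-one formula) is already in place. The only point deserving a word of care is the passage from ``$A \ast B = 0$ in $L^1(\Xi)$'' to ``$A \ast B \equiv 0$'', which is why one argues via continuity rather than attempting to evaluate an $L^1$-class at the point $x=0$; alternatively one could pair $A \ast B$ with an approximate identity concentrated near $0$. A different route — proving that $\{U_\xi : \xi \in \widehat\Xi\}$ spans a weak$^\ast$-dense subspace of $\mathcal L(\mathcal H)$ and pairing this with $A$ via the trace — would also give the result, but it requires more representation-theoretic input, so the convolution argument above is the cleaner one to write down.
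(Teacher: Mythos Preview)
Your proof is correct, but it takes a genuinely different route from the paper's. The paper argues directly via operator algebras: irreducibility of the representation plus Schur's lemma give $\{U_\xi : \xi \in \Xi\}' = \mathbb C I$, and then von Neumann's bicommutant theorem implies that the linear span of the $U_\xi$ is weak$^\ast$-dense in $\mathcal L(\mathcal H)$; hence $\tr(AU_\xi^\ast)=0$ for all $\xi$ means $A$ is annihilated by a weak$^\ast$-dense set in its dual, so $A=0$. This is precisely the alternative you mention (and dismiss) in your final paragraph.

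Your argument instead stays entirely within the QHA convolution framework, reducing injectivity of $\mathcal F_U$ to that of $\mathcal F_\sigma$ via the product formula $\mathcal F_\sigma(A\ast B)=m(\cdot,-\cdot)\,\mathcal F_U(A)\,\mathcal F_U(B)$ and then extracting $A=0$ from $A\ast B\equiv 0$ by specializing $B$ to rank-one operators. Each approach has its merits: the paper's is shorter and uses nothing beyond Schur and the bicommutant theorem, both of which are already standing assumptions in the setup; yours is more self-contained relative to the harmonic-analysis machinery developed in the paper, and makes explicit the philosophy that $\mathcal F_U$ inherits its basic properties from $\mathcal F_\sigma$ through the convolution theorem. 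Your care about upgrading the $L^1$-vanishing to pointwise vanishing via continuity is well placed and necessary.
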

\begin{proof}
    Since the projective representation is irreducible, Schur's lemma yields that $U' = \{ U_{\xi}: ~\xi \in \Xi\}' = \mathbb C I$ (where the commutant is taken in $\mathcal L(\mathcal H)$). Since the span of $U$ is a self-adjoint unital algebra in $\mathcal L(\mathcal H)$, von Neumann's bicommutant theorem shows that it is dense in $\mathcal L(\mathcal H)$ in weak$^\ast$ topology (w.r.t.\ the predual $\mathcal T^1(\mathcal H)$). Therefore, $\mathcal F_U(A)(\xi) = \overline{m(\xi, -\xi)}\tr(AU_{-\xi}) = 0$ for all $\xi \in \Xi$ implies that $A$ is annihilated by a dense subspace of the dual of $\mathcal T^1(\mathcal H)$, i.e.\ $A = 0$.
\end{proof}

Before discussing the following properties, we introduce the transform that will in the end turn out to be the inverse of $\mathcal F_U$, hence we already denote it in this way. More practically, it is simply the integrated form of the representation $U$.
\begin{defin}
    For $f \in L^1(\widehat{\Xi})$ we set
    \begin{align*}
        \mathcal F_U^{-1}(f) = \int_{\widehat{\Xi}} f(\xi) U_{\xi}~\d \xi.
    \end{align*}
\end{defin}
We recall again that by $\widehat{\Xi}$ we denote $\Xi$ endowed with the dual measure in the sense of Plancherel's theorem. By the strong continuity of the representation, $\mathcal F_U^{-1}(f) \in \mathcal L(\mathcal H)$ is well-defined, in strong operator topology with $\| \mathcal F_U^{-1}(f)\|_{op}\leq \| f\|_{L^1(\widehat{\Xi})}$ (e.g., it could be understood pointwise as a Bochner integral by Theorem \ref{thm:Bochner}). Further, we want to mention that
\begin{align*}
    \mathcal F_U^{-1}(f)^\ast = \mathcal F_U^{-1}(f^{\ast_m}), \quad \text{where} \quad f^{\ast_m}(x) = \overline{f(-x)m(-x,x)}.
\end{align*}
\begin{prop}\label{prop:twistedconv}
    Let $f, g \in L^1(\widehat{\Xi})$. Then, it is $\mathcal F_U^{-1}(f) \mathcal F_U^{-1}(g) = \mathcal F_U^{-1}(f \ast_m g)$. Here, $f \ast_m g$ denotes the twisted convolution
    \begin{align*}
        f \ast_m g(\xi) = \int_{\widehat{\Xi}} f(\xi-\eta) g(\eta) m(\xi-\eta,\eta)~\d \eta.
    \end{align*}
\end{prop}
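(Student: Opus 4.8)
The plan is to compute the product $\mathcal F_U^{-1}(f)\mathcal F_U^{-1}(g)$ directly by writing both factors as (strong-operator) integrals of the representation and then exploiting the cocycle identity $U_\xi U_\eta = m(\xi,\eta) U_{\xi+\eta}$ to collapse the double integral into a single one. First I would write, for $\varphi,\psi\in\mathcal H$,
\begin{align*}
    \langle \mathcal F_U^{-1}(f)\mathcal F_U^{-1}(g)\varphi,\psi\rangle
    = \int_{\widehat\Xi}\int_{\widehat\Xi} f(\zeta) g(\eta)\,\langle U_\zeta U_\eta\varphi,\psi\rangle~\d\eta~\d\zeta.
\end{align*}
The interchange of the two integrals is justified because the inner integrand is bounded by $|f(\zeta)||g(\eta)|\|\varphi\|\|\psi\|$, which is integrable on $\widehat\Xi\times\widehat\Xi$ (using Lemma \ref{lemma:Deitmar} to reduce to a $\sigma$-compact, hence $\sigma$-finite, subgroup so that Fubini applies). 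Substituting $U_\zeta U_\eta = m(\zeta,\eta) U_{\zeta+\eta}$ and then making the change of variables $\xi = \zeta+\eta$ (with $\eta$ fixed), so $\zeta = \xi-\eta$, and using translation invariance of the Haar measure on $\widehat\Xi$, the double integral becomes
\begin{align*}
    \int_{\widehat\Xi}\left(\int_{\widehat\Xi} f(\xi-\eta) g(\eta) m(\xi-\eta,\eta)~\d\eta\right)\langle U_\xi\varphi,\psi\rangle~\d\xi
    = \langle \mathcal F_U^{-1}(f\ast_m g)\varphi,\psi\rangle,
\end{align*}
provided $f\ast_m g\in L^1(\widehat\Xi)$, which follows from the estimate $\|f\ast_m g\|_{L^1}\le\|f\|_{L^1}\|g\|_{L^1}$ (the modulus-one factor $m$ plays no role in this bound, and Young's inequality for the ordinary convolution on $\widehat\Xi$ gives the rest). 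Since $\varphi,\psi$ are arbitrary, this yields the claimed operator identity.

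I would then spend a sentence or two checking that $\xi\mapsto f\ast_m g(\xi)$ is genuinely a well-defined $L^1$ function: $m$ is separately continuous and bounded, so $(\xi,\eta)\mapsto f(\xi-\eta)g(\eta)m(\xi-\eta,\eta)$ is measurable (after restricting to a $\sigma$-compact subgroup containing the supports of $f$ and $g$, again via Lemma \ref{lemma:Deitmar}), and Tonelli gives $\int_{\widehat\Xi}\int_{\widehat\Xi}|f(\xi-\eta)||g(\eta)|~\d\eta~\d\xi = \|f\|_{L^1}\|g\|_{L^1}<\infty$, so $f\ast_m g$ is defined a.e.\ and lies in $L^1(\widehat\Xi)$; hence $\mathcal F_U^{-1}(f\ast_m g)$ makes sense on the right-hand side.

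The main obstacle here is not the algebra — the cocycle manipulation is a one-line substitution — but the measure-theoretic bookkeeping in the non-second-countable setting: one must make sure that all the Fubini/Tonelli interchanges are legitimate, which is exactly where the reduction to a $\sigma$-compact open subgroup (Lemma \ref{lemma:Deitmar}) is needed, since the supports of $f$ and $g$ (and hence of $f\ast_m g$) lie in such a subgroup and the Haar measure restricted there is $\sigma$-finite. A secondary point to be careful about is that the integrals defining $\mathcal F_U^{-1}$ are, a priori, strong-operator-topology integrals, so the cleanest route is to work entirely with the scalar-valued sesquilinear forms $\langle\,\cdot\,\varphi,\psi\rangle$ as above and only at the very end conclude the operator identity; this sidesteps any question about vector-valued Fubini.
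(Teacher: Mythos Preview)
Your proposal is correct and follows essentially the same route as the paper: write the product as a double integral, apply the cocycle identity $U_\zeta U_\eta = m(\zeta,\eta)U_{\zeta+\eta}$, change variables, and invoke Lemma~\ref{lemma:Deitmar} to justify Fubini. The only cosmetic difference is that you pass to the scalar sesquilinear form $\langle\,\cdot\,\varphi,\psi\rangle$ from the outset, whereas the paper keeps the computation at the operator level and reduces to scalars (via pairing with a trace class operator) only at the single step where the order of integration is exchanged; your additional remark that $f\ast_m g\in L^1(\widehat\Xi)$ is a small point the paper leaves implicit.
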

\begin{proof}
    It is:
    \begin{align*}
        \mathcal F_U^{-1}(f) \mathcal F_U^{-1}(g) &= \int_{\widehat{\Xi}} f(\xi) U_{\xi}~\d \xi \int_{\widehat{\Xi}} g(\eta) U_{\eta}~\d \eta\\
        &= \int_{\widehat{\Xi}} \int_{\widehat{\Xi}} f(\xi) g(\eta) U_{\xi} U_{\eta}~\d \xi~\d \eta\\
        &= \int_{\widehat{\Xi}} \int_{\widehat{\Xi}} f(\xi) g(\eta) m(\xi,\eta)U_{\xi +\eta} ~\d \xi~\d \eta\\
        &= \int_{\widehat{\Xi}} \int_{\widehat{\Xi}} f(\xi-\eta) g(\eta) m(\xi-\eta,\eta)U_{\xi} ~\d \xi~\d \eta
        \intertext{Exchanging the order of integration (which may be justified by pairing with an arbitrary trace class operator and applying Fubini's theorem, which is legal by Lemma \ref{lemma:Deitmar}), it is:}
        &= \int_{\widehat{\Xi}} f \ast_m g(\xi) U_{\xi}~\d \xi.\qedhere
    \end{align*}
\end{proof}
\begin{prop}\label{prop:fourierinversion}
    Let $A \in \mathcal T^1(\mathcal H)$ such that $\mathcal F_U(A) \in L^1(\widehat{\Xi})$. Then, it is $\mathcal F_U^{-1}(\mathcal F_U(A)) = A$. 
\end{prop}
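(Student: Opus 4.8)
The plan is to prove the operator identity weakly: since $\mathcal F_U(A)\in L^1(\widehat\Xi)$ we have $\mathcal F_U^{-1}(\mathcal F_U(A))\in\mathcal L(\mathcal H)$, and two bounded operators agree as soon as all their matrix coefficients agree, so it suffices to show
\[
\langle \mathcal F_U^{-1}(\mathcal F_U(A))\psi,\varphi\rangle=\langle A\psi,\varphi\rangle\qquad\text{for all }\varphi,\psi\in\mathcal H.
\]
Reading $\mathcal F_U^{-1}(\mathcal F_U(A))\psi=\int_{\widehat\Xi}\mathcal F_U(A)(\xi)\,U_\xi\psi\,\d\xi$ as an $\mathcal H$-valued Bochner integral (Theorem~\ref{thm:Bochner}) and pulling the bounded functional $\langle\,\cdot\,,\varphi\rangle$ inside, the left-hand side becomes $\int_{\widehat\Xi}\tr(AU_\xi^\ast)\,\langle U_\xi\psi,\varphi\rangle\,\d\xi$.

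First I would treat rank-one $A=\eta\otimes\zeta$, where $\tr(AU_\xi^\ast)=\langle\eta,U_\xi\zeta\rangle=\overline{\langle U_\xi\zeta,\eta\rangle}$; the integral then reads $\int_{\widehat\Xi}\langle U_\xi\psi,\varphi\rangle\,\overline{\langle U_\xi\zeta,\eta\rangle}\,\d\xi$, and the orthogonality relation of Theorem~\ref{thm:moyal} evaluates it to $\langle\psi,\zeta\rangle\,\overline{\langle\varphi,\eta\rangle}=\langle(\eta\otimes\zeta)\psi,\varphi\rangle$. For a general $A\in\mathcal T^1(\mathcal H)$ I would fix a singular-value decomposition $A=\sum_j\lambda_j\,\eta_j\otimes\zeta_j$ with $\|\eta_j\|=\|\zeta_j\|=1$, $\lambda_j\ge 0$ and $\sum_j\lambda_j=\|A\|_{\mathcal T^1}<\infty$, so that $\tr(AU_\xi^\ast)=\sum_j\lambda_j\langle\eta_j,U_\xi\zeta_j\rangle$. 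By Cauchy--Schwarz together with Theorem~\ref{thm:moyal} every summand satisfies $\int_{\widehat\Xi}\bigl|\langle\eta_j,U_\xi\zeta_j\rangle\,\langle U_\xi\psi,\varphi\rangle\bigr|\,\d\xi\le\|\psi\|\,\|\varphi\|$ (this is the estimate behind Corollary~\ref{lemma2}), hence the series defining $\tr(AU_\xi^\ast)\langle U_\xi\psi,\varphi\rangle$ is dominated in $L^1(\widehat\Xi)$; interchanging summation and integration (monotone convergence for the absolute series, then dominated convergence) and applying the rank-one case term by term gives $\sum_j\lambda_j\langle(\eta_j\otimes\zeta_j)\psi,\varphi\rangle=\langle A\psi,\varphi\rangle$, which finishes the argument.

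The one step requiring real care — and the one I regard as the crux — is the normalization. The orthogonality relation of Theorem~\ref{thm:moyal} is stated for the Haar measure $\d x$ on $\Xi$ fixed by $\lambda=1$, whereas $\mathcal F_U^{-1}$ integrates against the dual measure $\d\xi=\frac1c\,\d x$ on $\widehat\Xi$, $c$ being the symplectic Fourier inversion constant; thus the computation above literally produces $\mathcal F_U^{-1}(\mathcal F_U(A))=\frac1c A$, and the assertion of the proposition is precisely that $c=1$. I would dispose of this by a short auxiliary computation: choose a unit eigenvector $e$ of the (self-adjoint) operator $R$ and put $u(x)=\langle e,U_xe\rangle$. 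Then $\beta_-(e\otimes e)=e\otimes e$, so $(e\otimes e)\ast(e\otimes e)=|u|^2$, while $\mathcal F_U(e\otimes e)=u$; Proposition~\ref{prop:convolutions}(2) gives $\mathcal F_\sigma\bigl((e\otimes e)\ast(e\otimes e)\bigr)(\xi)=m(\xi,-\xi)\,u(\xi)^2$, whose modulus is $|u(\xi)|^2$. Comparing the squared $L^2$-norms of both sides via Plancherel's theorem and $\d\xi=\frac1c\,\d x$ yields $\int_\Xi|u|^4\,\d x=\frac1c\int_\Xi|u|^4\,\d x$, and since $u$ is continuous with $u(0)=1$ (so that $0<\int_\Xi|u|^4\,\d x<\infty$) we conclude $c=1$. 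With this in hand the orthogonality relation applies verbatim over $\widehat\Xi$ and the reconstruction constant is exactly $1$.
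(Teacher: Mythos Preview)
Your proof is correct and takes a genuinely different route from the paper's. The paper does not decompose $A$ in singular values; instead it invokes Lemma~\ref{lemma:average} (i.e.\ $\tr(AU_\xi^\ast)\Id=\int_\Xi\alpha_x(AU_\xi^\ast)\,\d x$) to rewrite $\langle \mathcal F_U^{-1}(\mathcal F_U(A))\varphi,\psi\rangle$ as a double integral, which it then recognises as $\mathcal F_\sigma'\bigl(\mathcal F_\sigma(A\ast(R\varphi\otimes R\psi))\bigr)(0)$; the scalar Fourier inversion $\mathcal F_\sigma'\mathcal F_\sigma=\operatorname{id}$ then gives $(A\ast(R\varphi\otimes R\psi))(0)=\langle A\varphi,\psi\rangle$ directly, with the normalisation already absorbed in the definition of $\d\xi$. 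You instead apply the orthogonality relations of Theorem~\ref{thm:moyal} term by term to the singular-value expansion, which is more elementary (no detour through Lemma~\ref{lemma:average} or the classical inversion formula), but since those relations are stated on $(\Xi,\d x)$ rather than $(\widehat\Xi,\d\xi)$, you must separately establish $c=1$. Your Plancherel argument for this---comparing $\lVert\,|u|^2\rVert_{L^2(\Xi)}$ with $\lVert\mathcal F_\sigma(|u|^2)\rVert_{L^2(\widehat\Xi)}$ via Proposition~\ref{prop:convolutions}(2)---is valid and is a useful fact in its own right, one the paper never records explicitly. Both proofs are sound; yours trades the dependence on Lemma~\ref{lemma:average} for an explicit normalisation lemma.
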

\begin{proof}
    It is, using Lemma \ref{lemma:average}:
    \begin{align*}
        \mathcal F_U^{-1}(\mathcal F_U(A)) &= \int_{\widehat{\Xi}} \mathcal F_U(A)(\xi) U_{\xi}~\d \xi\\
        &= \int_{\widehat{\Xi}} \tr(AU_{\xi}^\ast) U_{\xi}~\d \xi\\
        &= \int_{\widehat{\Xi}} \int_{\Xi} U_{x} A U_{\xi}^\ast U_{x}^\ast ~\d x \ U_{\xi}~\d \xi.
    \end{align*}
    This double integral is, a priori, defined weakly as an element of $\mathcal L(\mathcal H)$. We will prove that the sesquilinear form $\langle \mathcal F_U^{-1}(\mathcal F_U(A)) \varphi, \psi\rangle$ agrees with $\langle A \varphi, \psi\rangle$ for every $\varphi, \psi \in \mathcal H$, which then proves the equality. For this, we have:
    \begin{align*}
       \langle \mathcal F_U^{-1}(\mathcal F_U(A)) \varphi, \psi\rangle &= \int_{\widehat{\Xi}} \int_{\Xi} \langle U_{x} A U_{\xi}^\ast U_{x}^\ast  U_{\xi} \varphi, \psi\rangle~\d x~\d \xi\\
       &= \int_{\widehat{\Xi}} \int_{\Xi} \sigma(\xi, x) \langle AU_{x}^\ast \varphi, U_{x}^\ast \psi\rangle~\d x~\d\xi\\\
       &= \int_{\widehat{\Xi}} \int_\Xi \sigma(\xi, x) (A \ast (R\varphi \otimes R\psi))(x)~\d x~\d \xi\\
       &= \int_{\widehat{\Xi}} \mathcal F_\sigma(A \ast (R\varphi \otimes R\psi))(-\xi)~\d\xi
\end{align*}
Since 
\begin{align*}
\mathcal F_\sigma(A \ast (R\varphi \otimes R\psi))(-\xi) = m(-\xi, \xi)\mathcal F_U(A)(\xi) \mathcal F_U(R\varphi \otimes R\psi)(\xi)
\end{align*}
and since, furthermore, $|m(\xi, -\xi)| = 1$, $\mathcal F_U(A) \in C_0(\widehat{\Xi})$ and 
\begin{align*}
    F_U(R\varphi \otimes R\psi)(\xi) = \tr((R\varphi \otimes R\psi)U_\xi^\ast) = \langle R\varphi, U_\xi R\psi\rangle \in L^2(\widehat{\Xi})
\end{align*}
as a function of $\xi$ by Theorem \ref{thm:moyal}, we obtain $\mathcal F_\sigma(A \ast (R\varphi \otimes R\psi)) \in L^2(\widehat{\Xi})$, which justifies the use of the Fourier inversion in the following. We therefore obtain, using the identity $\mathcal F_\sigma(\beta_-(g)) = \beta_-(\mathcal F_\sigma(g))$:
\begin{align*}
      \langle \mathcal F_U^{-1}(\mathcal F_U(A))\varphi, \psi\rangle &= \mathcal F_\sigma[\mathcal F_\sigma(A \ast (R \varphi \otimes R\psi))(-(\cdot))](0)\\
      &= \mathcal F_\sigma(\mathcal F_\sigma(A \ast (R \varphi \otimes R\psi)))(0)\\
       &= A \ast (R\varphi \otimes R\psi))(0)\\
       &= \tr(A (\varphi \otimes \psi)) = \langle A \varphi, \psi\rangle.
    \end{align*}
Since $\mathcal F_U^{-1}(\mathcal F_U(A))$ and $A$ are given by the same quadratic form, they agree.
\end{proof}
We obtain the following corollary, which is - at least in the setting of elementary LCA groups - known in a similar form \cite[Lemma 7.6.5]{Feichtinger_Kozek1998}.
\begin{cor}\label{lemma:fouriertrafoproduct}
Let $A, B \in \mathcal T^1(\mathcal H)$ with $\mathcal F_U(A), \mathcal F_U(B) \in L^1(\widehat{\Xi})$. Then, the following holds true:
\begin{align*}
    \mathcal F_U(AB) = \mathcal F_U(A) \ast_m \mathcal F_U(B).
\end{align*}
\end{cor}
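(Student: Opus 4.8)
The plan is to reduce everything to the Fourier inversion formula for $B$ together with a careful expansion of the trace. Since $\mathcal F_U(B) \in L^1(\widehat{\Xi})$, Proposition~\ref{prop:fourierinversion} gives $B = \mathcal F_U^{-1}(\mathcal F_U(B)) = \int_{\widehat{\Xi}} \mathcal F_U(B)(\eta)\, U_\eta\, \d\eta$ in strong operator topology; write $g := \mathcal F_U(B)$, which by Lemma~\ref{lemma:Deitmar} is supported in a $\sigma$-compact open subgroup of $\widehat{\Xi}$. Also $AB \in \mathcal T^1(\mathcal H)$ (a product of two trace-class operators), so $\mathcal F_U(AB)$ is defined. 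Morally one would like to argue $AB = \mathcal F_U^{-1}(\mathcal F_U(A))\,\mathcal F_U^{-1}(\mathcal F_U(B)) = \mathcal F_U^{-1}(\mathcal F_U(A) \ast_m \mathcal F_U(B))$ by Propositions~\ref{prop:fourierinversion} and~\ref{prop:twistedconv} and then apply $\mathcal F_U$; the gap is that $\mathcal F_U \circ \mathcal F_U^{-1}$ has not been established to be the identity, and this cannot be patched by pulling the trace through the defining integral of $\mathcal F_U^{-1}$ since the $U_\eta$ are not trace class. So instead I would compute $\mathcal F_U(AB)(\xi) = \tr(AB\,U_\xi^\ast)$ directly.

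Fix $\xi \in \widehat{\Xi}$. The key observation is that $AB\,U_\xi^\ast$ has range inside $\operatorname{ran}(A)$, which is separable because $A \in \mathcal T^1(\mathcal H)$. Hence one can pick an orthonormal basis $(\varphi_\gamma)_{\gamma \in \Gamma}$ of $\mathcal H$, a countable subfamily $(\varphi_\gamma)_{\gamma \in \Gamma_0}$ of which is an orthonormal basis of $\overline{\operatorname{ran}(A)}$, so that for every bounded $C$ one has $\tr(AC) = \sum_{\gamma \in \Gamma_0} \langle AC\varphi_\gamma, \varphi_\gamma\rangle$, a countable sum (the terms with $\gamma \notin \Gamma_0$ vanish). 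Taking $C = B\,U_\xi^\ast$, writing $\langle AB\,U_\xi^\ast \varphi_\gamma, \varphi_\gamma\rangle = \langle B\,U_\xi^\ast \varphi_\gamma, A^\ast \varphi_\gamma\rangle$, and using $B\,U_\xi^\ast \varphi_\gamma = \int_{\widehat{\Xi}} g(\eta)\, U_\eta U_\xi^\ast \varphi_\gamma\, \d\eta$ as a Bochner integral in $\mathcal H$ (into which the bounded functional $\langle\,\cdot\,, A^\ast\varphi_\gamma\rangle$ may be pulled), I obtain $\mathcal F_U(AB)(\xi) = \sum_{\gamma \in \Gamma_0} \int_{\widehat{\Xi}} g(\eta)\, \langle U_\eta U_\xi^\ast \varphi_\gamma, A^\ast \varphi_\gamma\rangle\, \d\eta$.

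The interchange of this countable sum with the integral is the main technical step. I would justify it by Tonelli's theorem on the $\sigma$-finite product of the counting measure on $\Gamma_0$ with the Haar measure of $\widehat{\Xi}$ restricted to $\operatorname{supp}(g)$, using that $U_\eta U_\xi^\ast$ is a unimodular multiple of the unitary $U_{\eta-\xi}$, so that $\sum_{\gamma \in \Gamma_0} |\langle U_\eta U_\xi^\ast \varphi_\gamma, A^\ast \varphi_\gamma\rangle| = \sum_{\gamma \in \Gamma_0} |\langle A U_\eta U_\xi^\ast \varphi_\gamma, \varphi_\gamma\rangle| \le \|A U_\eta U_\xi^\ast\|_{\mathcal T^1} = \|A\|_{\mathcal T^1}$, which makes the double integral finite (its value being at most $\|g\|_{L^1}\|A\|_{\mathcal T^1}$). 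This turns the identity into $\mathcal F_U(AB)(\xi) = \int_{\widehat{\Xi}} g(\eta)\, \tr(A\, U_\eta U_\xi^\ast)\, \d\eta$.

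Finally I would evaluate the trace using the CCR relations. From $U_\xi^\ast = \overline{m(\xi,-\xi)}\,U_{-\xi}$ and $U_\eta U_{-\xi} = m(\eta,-\xi)\,U_{\eta-\xi}$ one gets $U_\eta U_\xi^\ast = \overline{m(\xi,-\xi)}\,m(\eta,-\xi)\,U_{\eta-\xi}$, and from $U_{\xi-\eta}^\ast = \overline{m(\xi-\eta,\eta-\xi)}\,U_{\eta-\xi}$ one gets $U_{\eta-\xi} = m(\xi-\eta,\eta-\xi)\,U_{\xi-\eta}^\ast$, whence $\tr(A\,U_\eta U_\xi^\ast) = \overline{m(\xi,-\xi)}\,m(\eta,-\xi)\,m(\xi-\eta,\eta-\xi)\,\mathcal F_U(A)(\xi-\eta)$. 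The cocycle identity applied with $(x,y,z) = (\xi-\eta,\eta,-\xi)$ reads $m(\xi,-\xi)\,m(\xi-\eta,\eta) = m(\xi-\eta,\eta-\xi)\,m(\eta,-\xi)$, so the scalar prefactor collapses to $m(\xi-\eta,\eta)$, giving $\mathcal F_U(AB)(\xi) = \int_{\widehat{\Xi}} \mathcal F_U(A)(\xi-\eta)\,\mathcal F_U(B)(\eta)\,m(\xi-\eta,\eta)\,\d\eta = (\mathcal F_U(A) \ast_m \mathcal F_U(B))(\xi)$. The genuinely delicate part is thus the measure-theoretic bookkeeping forced by possible non-separability of $\mathcal H$ and non-$\sigma$-compactness of $\widehat{\Xi}$ (reducing the trace to a countable sum, running Fubini on a $\sigma$-finite product); once that is in place the rest is the cocycle manipulation. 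Note that only $\mathcal F_U(B) \in L^1$ and $A \in \mathcal T^1$ are actually used, the hypothesis $\mathcal F_U(A) \in L^1$ serving to make the right-hand side manifestly lie in $L^1(\widehat{\Xi})$.
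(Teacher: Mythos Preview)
Your proof is correct. Interestingly, the paper's own proof is precisely the short route you describe and then set aside: it writes $\mathcal F_U(AB)=\mathcal F_U\bigl(\mathcal F_U^{-1}(\mathcal F_U(A))\,\mathcal F_U^{-1}(\mathcal F_U(B))\bigr)=\mathcal F_U\bigl(\mathcal F_U^{-1}(\mathcal F_U(A)\ast_m\mathcal F_U(B))\bigr)=\mathcal F_U(A)\ast_m\mathcal F_U(B)$, invoking only Propositions~\ref{prop:twistedconv} and~\ref{prop:fourierinversion}. Your worry about the last equality is legitimate: at this point in the paper only $\mathcal F_U^{-1}\circ\mathcal F_U=\Id$ on $\{A\in\mathcal T^1:\mathcal F_U(A)\in L^1\}$ has been established, not $\mathcal F_U\circ\mathcal F_U^{-1}=\Id$ on the relevant class of $L^1$ functions, and one cannot simply pull the trace through the integral defining $\mathcal F_U^{-1}$ since $U_\eta U_\xi^\ast$ is not trace class.

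Your direct computation closes this gap cleanly. The reduction to a countable sum via an orthonormal basis adapted to $\overline{\operatorname{ran}(A)}$, together with the $\sigma$-compact support of $g=\mathcal F_U(B)$ from Lemma~\ref{lemma:Deitmar}, makes the Fubini/Tonelli step honest; the uniform bound $\sum_\gamma|\langle AU_\eta U_\xi^\ast\varphi_\gamma,\varphi_\gamma\rangle|\le\|A\|_{\mathcal T^1}$ is the standard diagonal estimate for trace-class operators. The cocycle reduction of the scalar prefactor to $m(\xi-\eta,\eta)$ is carried out correctly. Your closing remark that only $A\in\mathcal T^1$ and $\mathcal F_U(B)\in L^1$ are used (the hypothesis $\mathcal F_U(A)\in L^1$ serving merely to place the right-hand side in $L^1$) is a genuine sharpening over what the paper's argument would give even if its last step were filled in.
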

\begin{proof}
Propositions \ref{prop:twistedconv} and \ref{prop:fourierinversion} yield:
    \begin{align*}
        \mathcal F_U(AB) &= \mathcal F_U(\mathcal F_U^{-1}(\mathcal F_U(A)) \mathcal F_U^{-1}(\mathcal F_U(B))) \\
        &= \mathcal F_U( \mathcal F_U^{-1}(\mathcal F_U(A) \ast_m \mathcal F_U(B))) \\
        &= \mathcal F_U(A) \ast_m \mathcal F_U(B).
    \end{align*}
    This finishes the proof.
\end{proof}
\begin{cor}
 For $A \in \mathcal T^1(\mathcal H)$ such that $\mathcal F_U(A) \in L^1(\widehat{\Xi})$ it holds true that
    \begin{align*}
    \| A\|_{\mathcal T^2} = \| \mathcal F_U(A)\|_{L^2(\widehat{\Xi})}.
    \end{align*}
\end{cor}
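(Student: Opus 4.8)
The plan is to obtain this Plancherel-type identity directly from the operator Fourier inversion formula (Proposition~\ref{prop:fourierinversion}) together with the universal property of the Gelfand integral. Since $\mathcal{T}^1(\mathcal{H}) \subseteq \mathcal{T}^2(\mathcal{H})$, the quantity $\|A\|_{\mathcal{T}^2}^2 = \tr(A^\ast A)$ is a well-defined finite number, so it suffices to prove $\tr(A^\ast A) = \int_{\widehat{\Xi}} |\mathcal{F}_U(A)(\xi)|^2\,\d\xi$.

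First I would use Proposition~\ref{prop:fourierinversion} to write $A = \mathcal{F}_U^{-1}(\mathcal{F}_U(A)) = \int_{\widehat{\Xi}} \mathcal{F}_U(A)(\xi)\, U_\xi\,\d\xi$, which exists as a Gelfand integral in $\mathcal{L}(\mathcal{H}) = \mathcal{T}^1(\mathcal{H})'$ by Theorem~\ref{thm:Gelfand}, since $\mathcal{F}_U(A) \in L^1(\widehat{\Xi})$ by hypothesis and $\xi \mapsto U_\xi$ is bounded (of operator norm $1$) and weak$^\ast$ continuous, the latter being a consequence of the strong continuity of the representation. By the defining property of the Gelfand integral, pairing this identity against $A^\ast \in \mathcal{T}^1(\mathcal{H})$ in the trace duality $\langle T, N\rangle_{tr} = \tr(TN)$ and using cyclicity of the trace (legitimate since $A^\ast \in \mathcal{T}^1(\mathcal{H})$ and $A \in \mathcal{L}(\mathcal{H})$), one gets
\begin{align*}
    \tr(A^\ast A) = \langle A, A^\ast\rangle_{tr} = \int_{\widehat{\Xi}} \mathcal{F}_U(A)(\xi)\,\langle U_\xi, A^\ast\rangle_{tr}\,\d\xi = \int_{\widehat{\Xi}} \mathcal{F}_U(A)(\xi)\,\tr(A^\ast U_\xi)\,\d\xi.
\end{align*}

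It then remains to simplify $\tr(A^\ast U_\xi)$: using $\overline{\tr(TS)} = \tr(S^\ast T^\ast)$ with $T = A$, $S = U_\xi^\ast$ together with cyclicity, one has $\tr(A^\ast U_\xi) = \overline{\tr(A U_\xi^\ast)} = \overline{\mathcal{F}_U(A)(\xi)}$. Substituting, and noting that $\mathcal{F}_U(A) \in L^1(\widehat{\Xi}) \cap C_0(\widehat{\Xi}) \subseteq L^1(\widehat{\Xi}) \cap L^\infty(\widehat{\Xi})$ so that $|\mathcal{F}_U(A)|^2 \in L^1(\widehat{\Xi})$ and the integral is absolutely convergent, we arrive at $\tr(A^\ast A) = \int_{\widehat{\Xi}} |\mathcal{F}_U(A)(\xi)|^2\,\d\xi = \|\mathcal{F}_U(A)\|_{L^2(\widehat{\Xi})}^2$; taking square roots finishes the proof (and, as a byproduct, shows $\mathcal{F}_U(A) \in L^2(\widehat{\Xi})$).

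There is no genuine obstacle here; the only point requiring a moment's care is the legitimacy of pulling the trace-class functional $T \mapsto \langle T, A^\ast\rangle_{tr}$ inside the Gelfand integral defining $\mathcal{F}_U^{-1}$, which is precisely the content of the integral's defining property (it is induced by the predual element $A^\ast \in \mathcal{T}^1(\mathcal{H})$) and needs neither Bochner integrability nor the Radon--Nikod\'{y}m property. As an alternative one could instead apply the scalar Fourier inversion theorem at the point $0$ to the function $A \ast (R A^\ast R) \in L^1(\Xi)$ and combine it with Proposition~\ref{prop:convolutions}(2), but that route involves slightly more bookkeeping with the operator $R$ (using $R$ self-adjoint with $R^2 = \Id$ and $R U_x R = U_{-x}$).
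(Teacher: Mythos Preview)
Your proof is correct. It differs from the paper's in that the paper goes through the previous corollary (the twisted-convolution identity $\mathcal F_U(AB)=\mathcal F_U(A)\ast_m\mathcal F_U(B)$), evaluates $\tr(AA^\ast)=\mathcal F_U(AA^\ast)(0)$, and then simplifies $\mathcal F_U(A)\ast_m\mathcal F_U(A^\ast)$ at $0$ using the auxiliary identity $\mathcal F_U(A^\ast)(\xi)=\overline{m(-\xi,\xi)\,\mathcal F_U(A)(-\xi)}$. You instead invoke Fourier inversion directly and pair the resulting Gelfand integral against $A^\ast\in\mathcal T^1(\mathcal H)$, which avoids both the twisted-convolution corollary and the formula for $\mathcal F_U(A^\ast)$. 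The two arguments rest on the same input (Proposition~\ref{prop:fourierinversion}); yours is a shade more economical, while the paper's route has the side benefit of exercising the twisted-convolution machinery it just set up.
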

\begin{proof}
   This follows immediately from the previous corollary: Since we have $\mathcal F_U(A^\ast)(\xi) = \overline{m(-\xi, \xi) \mathcal F_U(A)(-\xi)}$, which is readily verified, we have
    \begin{align*}
        \tr(AA^\ast) &= \mathcal F_U(AA^\ast)(0) = \mathcal F_U(A) \ast_m \mathcal F_U(A^\ast)(0)\\
        &= \int_{\widehat{\Xi}} \mathcal F_U(A)(-\eta) \overline{\mathcal F_U(A)(-\eta)} m(-\eta, \eta)\overline{m(-\eta, \eta)}~\d\eta\\
        &= \| \mathcal F_U(A)\|_{L^2(\widehat{\Xi})}^2.\qedhere
    \end{align*}
\end{proof}
We have now reached the point where we need to assume integrability of the representation. Indeed, for $\varphi, \psi \in \mathcal H$ we have
\begin{align*}
    \mathcal F_U(\varphi \otimes \psi)(\xi) = \langle \varphi, U_\xi \psi\rangle.
\end{align*}
Therefore, integrability of the representation ensures that there exists a dense subspace of $\mathcal T^1(\mathcal H)$ such that $\mathcal F_U(A) \in L^1(\widehat{\Xi})$ for each $A$ from this dense subspace. In particular, this ensures that the identity $\| A\|_{\mathcal T^2} = \| \mathcal F_U(A)\|_{L^2(\widehat{\Xi})}$ is satisfied by $A$ from a dense subspace of $\mathcal T^1(\mathcal H)$. Combining all the previous results, we have obtained:
\begin{thm}[Plancherel's theorem for operators]
Assume the representation is integrable. Then, the Fourier transform extends to a unitary operator $\mathcal F_U: \mathcal T^2(\mathcal H) \to L^2(\widehat{\Xi})$ with adjoint $\mathcal F_U^{-1}: L^2(\widehat{\Xi}) \to \mathcal T^2(\mathcal H)$.
\end{thm}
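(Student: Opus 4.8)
The plan is to establish the three pieces: $\mathcal F_U$ is an isometry on a dense subspace of $\mathcal T^2(\mathcal H)$, the continuous extension is onto $L^2(\widehat\Xi)$, and its inverse is the integrated representation $\mathcal F_U^{-1}$. For the first piece, set $\mathcal D:=\{A\in\mathcal T^1(\mathcal H):\mathcal F_U(A)\in L^1(\widehat\Xi)\}$. By the integrability hypothesis (and the characterisation of integrability recalled above) there is a dense subspace $\mathcal H_1\subseteq\mathcal H$ with $\langle\rho(\cdot)\varphi,\psi\rangle\in L^1(\Xi)$ for all $\varphi,\psi\in\mathcal H_1$; hence $\varphi\otimes\psi\in\mathcal D$ for such vectors, and finite linear combinations of these rank-one operators are dense both in $\mathcal T^1(\mathcal H)$ and in $\mathcal T^2(\mathcal H)$, so $\mathcal D$ is dense in $\mathcal T^2(\mathcal H)$. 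The preceding corollary gives $\|\mathcal F_U(A)\|_{L^2(\widehat\Xi)}=\|A\|_{\mathcal T^2}$ for $A\in\mathcal D$, so $\mathcal F_U|_{\mathcal D}$ is a linear isometry and extends uniquely to a linear isometry $\mathcal F_U:\mathcal T^2(\mathcal H)\to L^2(\widehat\Xi)$. A short check shows this extension agrees with the original transform on all of $\mathcal T^1(\mathcal H)$: approximating $A\in\mathcal T^1(\mathcal H)$ in trace norm by a sequence in $\mathcal D$, the original $\mathcal F_U$ converges uniformly while the extension converges in $L^2$, so a common subsequence identifies the two limits a.e.; in particular $\mathcal F_U(\varphi\otimes\psi)(\xi)=\langle\varphi,U_\xi\psi\rangle$ for all $\varphi,\psi\in\mathcal H$, which lies in $L^2(\widehat\Xi)$ by Theorem \ref{thm:moyal}. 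Since an isometry between Hilbert spaces has closed range, only surjectivity remains.

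Surjectivity is the substantive step. Suppose $g\in L^2(\widehat\Xi)$ is orthogonal to $\mathcal F_U(\mathcal T^2(\mathcal H))$. Testing against $\varphi\otimes\psi$ for arbitrary $\varphi,\psi\in\mathcal H$ gives $\int_{\widehat\Xi}\langle\varphi,U_\xi\psi\rangle\,\overline{g(\xi)}\,\d\xi=0$. Replacing $(\varphi,\psi)$ by $(U_\eta\varphi,U_\eta\psi)$ and using the CCR identity $U_\eta^\ast U_\xi U_\eta=\sigma(\xi,\eta)U_\xi$ turns this into $\int_{\widehat\Xi}\sigma(\eta,\xi)\,\langle\varphi,U_\xi\psi\rangle\,\overline{g(\xi)}\,\d\xi=0$ for every $\eta\in\Xi$. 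As $\langle\varphi,U_\cdot\psi\rangle\in L^2(\widehat\Xi)$ and $g\in L^2(\widehat\Xi)$, the product $k_{\varphi,\psi}:=\langle\varphi,U_\cdot\psi\rangle\,\overline g$ lies in $L^1(\widehat\Xi)$, and — since $\eta\mapsto\sigma(\eta,\cdot)$ runs over all characters (the Heisenberg condition) — the last display says exactly that $\mathcal F_\sigma(k_{\varphi,\psi})$ vanishes identically; injectivity of the symplectic Fourier transform then forces $\langle\varphi,U_\xi\psi\rangle\,g(\xi)=0$ for a.e.\ $\xi$, for every $\varphi,\psi\in\mathcal H$. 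Now fix a unit vector $\psi_0\in\mathcal H$ and put $V_0:=\{\xi\in\widehat\Xi:\langle\psi_0,U_\xi\psi_0\rangle\neq 0\}$, an open neighbourhood of $0$. Taking $\varphi=U_\eta\psi_0$, $\psi=\psi_0$ and cancelling a unimodular cocycle factor yields $g(\xi)\,\langle\psi_0,U_{\xi-\eta}\psi_0\rangle=0$ a.e., so $g$ vanishes a.e.\ on $V_0+\eta$, for every $\eta\in\Xi$. Since $g$ is supported in some $\sigma$-compact open subgroup $\Xi_0$ (Lemma \ref{lemma:Deitmar}), countably many of the translates $V_0+\eta$ with $\eta\in\Xi_0$ already cover $\Xi_0$, and therefore $g=0$. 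Hence the range of $\mathcal F_U$ is dense and, being closed, equals $L^2(\widehat\Xi)$; thus $\mathcal F_U$ is unitary.

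It remains to identify the inverse. I would compute, for $A\in\mathcal D$ and $f\in L^1\cap L^2(\widehat\Xi)$, that $\langle\mathcal F_U(A),f\rangle_{L^2}=\langle A,\mathcal F_U^{-1}(f)\rangle_{\mathcal T^2}$, a routine manipulation of the trace pairing using $\mathcal F_U^{-1}(f)^\ast=\mathcal F_U^{-1}(f^{\ast_m})$ together with the substitution $\xi\mapsto-\xi$. This simultaneously shows $\mathcal F_U^{-1}(f)\in\mathcal T^2(\mathcal H)$ with $\|\mathcal F_U^{-1}(f)\|_{\mathcal T^2}\le\|f\|_{L^2}$ and identifies $\mathcal F_U^{-1}$, on the dense subspace $L^1\cap L^2(\widehat\Xi)$, with the Hilbert-space adjoint of $\mathcal F_U$; since $\mathcal F_U$ is unitary, its continuous extension $\mathcal F_U^{-1}:L^2(\widehat\Xi)\to\mathcal T^2(\mathcal H)$ equals $(\mathcal F_U)^\ast=(\mathcal F_U)^{-1}$, as asserted.

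The one genuinely non-routine point is surjectivity: an isometry onto a proper closed subspace can still possess a left inverse on a dense set, so Fourier inversion (Proposition \ref{prop:fourierinversion}) by itself does not suffice. The trick that closes the gap is to convert orthogonality to the range into the vanishing of the symplectic Fourier transform of the $L^1$-function $\langle\varphi,U_\cdot\psi\rangle\,\overline g$, and then to propagate the resulting local vanishing of $g$ over all of $\Xi$ by translating the single open set $V_0$; this argument also goes through without any second-countability assumption, which is precisely why Lemma \ref{lemma:Deitmar} is called upon at the end.
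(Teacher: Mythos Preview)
Your proof is correct, and in one respect it is more careful than the paper's. The paper treats Plancherel's theorem as an immediate consequence of the preceding results (``Combining all the previous results, we have obtained''), i.e.\ it records the isometry identity $\|A\|_{\mathcal T^2}=\|\mathcal F_U(A)\|_{L^2(\widehat\Xi)}$ on the dense set $\mathcal D=\{A\in\mathcal T^1:\mathcal F_U(A)\in L^1(\widehat\Xi)\}$ together with the integrability hypothesis, and leaves the passage to a unitary as understood. Your proposal makes explicit precisely the step the paper glosses over: surjectivity does not follow from the one-sided inversion $\mathcal F_U^{-1}\mathcal F_U=\mathrm{id}$ on $\mathcal D$ alone, and you supply a clean argument for it.

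Your surjectivity argument is genuinely different in flavour from anything spelled out in the paper. Rather than trying to verify the reverse inversion $\mathcal F_U\mathcal F_U^{-1}=\mathrm{id}$ on a dense subset of $L^2(\widehat\Xi)$ (which would require first knowing $\mathcal F_U^{-1}(f)\in\mathcal T^2$), you take $g\perp\mathcal F_U(\mathcal T^2)$, use the shift covariance $\mathcal F_U(\alpha_\eta(A))=\sigma(\eta,\cdot)\mathcal F_U(A)$ to recognise the orthogonality relations as the vanishing of the symplectic Fourier transform of the $L^1$ function $\langle\varphi,U_\cdot\psi\rangle\,\overline g$, and then propagate the resulting pointwise vanishing via countably many translates of the open set $\{\xi:\langle\psi_0,U_\xi\psi_0\rangle\neq0\}$, invoking Lemma~\ref{lemma:Deitmar} to control the non-$\sigma$-finite case. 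This is elegant, uses only ingredients already established in the paper (injectivity of $\mathcal F_\sigma$, the Heisenberg self-duality, the support lemma), and sidesteps any delicate trace computations. The identification of the adjoint with $\mathcal F_U^{-1}$ via the pairing $\langle\mathcal F_U(A),f\rangle_{L^2}=\tr(A\,\mathcal F_U^{-1}(f)^\ast)$ is routine as you say, and correct.
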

The following consequence is in sharp contrast to the behaviour of the (symplectic) Fourier transform of functions:
\begin{cor}
   Assume the representation is integrable.  Then, for every $A \in \mathcal T^1(\mathcal H)$ it is $\mathcal F_U(A)\in C_0(\widehat{\Xi}) \cap L^2(\widehat{\Xi})$. In particular, the formula
    \begin{align*}
        \mathcal F_U(AB) = \mathcal F_U(A) \ast_m \mathcal F_U(B)
    \end{align*}
    holds for all operators $A,B\in \mathcal T^1(\mathcal H)$.
\end{cor}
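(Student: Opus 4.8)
The statement bundles two claims: that $\mathcal{F}_U(A) \in C_0(\widehat{\Xi}) \cap L^2(\widehat{\Xi})$ for every $A \in \mathcal{T}^1(\mathcal{H})$, and that the product formula then extends to all of $\mathcal{T}^1(\mathcal{H})$. Membership in $C_0(\widehat{\Xi})$ is already established by the Riemann--Lebesgue lemma for $\mathcal{F}_U$ and uses no integrability, so the first task is the $L^2$-membership. The plan is to propagate the identity $\|A\|_{\mathcal{T}^2} = \|\mathcal{F}_U(A)\|_{L^2(\widehat{\Xi})}$, which is known on the dense subspace $\mathcal{D} \subset \mathcal{T}^1(\mathcal{H})$ spanned by the rank-one operators $\varphi \otimes \psi$ with $\varphi, \psi$ in the dense subspace $\mathcal{H}_1$ furnished by integrability (so that $\mathcal{F}_U(\varphi \otimes \psi)(\xi) = \langle \varphi, U_\xi \psi\rangle \in L^1(\widehat{\Xi})$), to arbitrary $A \in \mathcal{T}^1(\mathcal{H})$. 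Given such $A$, I would pick $A_n \in \mathcal{D}$ with $A_n \to A$ in trace norm; then $(\mathcal{F}_U(A_n))_n$ is Cauchy both in $L^2(\widehat{\Xi})$, since $\|\mathcal{F}_U(A_n) - \mathcal{F}_U(A_m)\|_{L^2} = \|A_n - A_m\|_{\mathcal{T}^2} \le \|A_n - A_m\|_{\mathcal{T}^1}$, and uniformly, by the Riemann--Lebesgue bound $\|\mathcal{F}_U(A_n) - \mathcal{F}_U(A_m)\|_\infty \le \|A_n - A_m\|_{\mathcal{T}^1}$; its uniform limit is the continuous function $\mathcal{F}_U(A)$.

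It then remains to identify this uniform limit with the $L^2$-limit $F$. Since each $\mathcal{F}_U(A_n) \in L^2(\widehat{\Xi})$, by Lemma~\ref{lemma:Deitmar} it is supported in a $\sigma$-compact open subgroup; together with a fixed open $\sigma$-compact subgroup of $\widehat{\Xi}$ these countably many subgroups generate a single $\sigma$-compact open subgroup $H$, outside of which all $\mathcal{F}_U(A_n)$, hence $F$ and (being a uniform limit) $\mathcal{F}_U(A)$, vanish. On the $\sigma$-finite group $H$, a subsequence of $(\mathcal{F}_U(A_n))$ converges $\mu$-a.e.\ to $F$ while it converges everywhere to $\mathcal{F}_U(A)$, so $F = \mathcal{F}_U(A)$ a.e.\ on $H$; thus $\mathcal{F}_U(A) \in L^2(\widehat{\Xi})$ with $\|\mathcal{F}_U(A)\|_{L^2} = \|A\|_{\mathcal{T}^2}$. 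This is the only step where a little genuine care is needed, and it is essentially the sole obstacle: one must make sure the $L^2$-object produced by density really is the concrete function $\xi \mapsto \tr(AU_\xi^\ast)$ and that the failure of $\widehat{\Xi}$ to be $\sigma$-compact does not spoil the ``$L^2$-limit equals uniform limit'' argument --- both handled by localizing to $H$ and using that a continuous locally null function is zero.

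For the product formula, let $A, B \in \mathcal{T}^1(\mathcal{H})$, so that $AB \in \mathcal{T}^1(\mathcal{H})$ and, by the first part, $\mathcal{F}_U(A), \mathcal{F}_U(B), \mathcal{F}_U(AB) \in C_0 \cap L^2$. I would approximate again: choose $A_n, B_n \in \mathcal{D}$ with $A_n \to A$ and $B_n \to B$ in trace norm. For these, Corollary~\ref{lemma:fouriertrafoproduct} applies (as $\mathcal{F}_U(A_n), \mathcal{F}_U(B_n) \in L^1$) and gives $\mathcal{F}_U(A_n B_n) = \mathcal{F}_U(A_n) \ast_m \mathcal{F}_U(B_n)$. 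On the left, $A_n B_n \to AB$ in trace norm --- via $\|A_n B_n - AB\|_{\mathcal{T}^1} \le \|A_n - A\|_{\mathcal{T}^1}\|B_n\|_{op} + \|A\|_{\mathcal{T}^1}\|B_n - B\|_{op}$ together with $B_n \to B$ in operator norm --- so $\mathcal{F}_U(A_n B_n) \to \mathcal{F}_U(AB)$ uniformly by Riemann--Lebesgue. On the right, note that for $f, g \in L^2(\widehat{\Xi})$ the twisted convolution $f \ast_m g(\xi) = \int_{\widehat{\Xi}} f(\xi - \eta) g(\eta) m(\xi - \eta, \eta)\,\d\eta$ is well defined (Cauchy--Schwarz, using $|m| = 1$ and invariance of the Haar measure) with $\|f \ast_m g\|_\infty \le \|f\|_{L^2}\|g\|_{L^2}$; since $\mathcal{F}_U(A_n) \to \mathcal{F}_U(A)$ and $\mathcal{F}_U(B_n) \to \mathcal{F}_U(B)$ in $L^2$ by the norm identity of the first part, bilinearity yields $\mathcal{F}_U(A_n) \ast_m \mathcal{F}_U(B_n) \to \mathcal{F}_U(A) \ast_m \mathcal{F}_U(B)$ uniformly. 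Equating the two uniform limits gives $\mathcal{F}_U(AB) = \mathcal{F}_U(A) \ast_m \mathcal{F}_U(B)$. Apart from the identification issue flagged above, every ingredient here is a routine combination of the Riemann--Lebesgue lemma, the Plancherel-type norm identity, and Corollary~\ref{lemma:fouriertrafoproduct}, propagated to all of $\mathcal{T}^1(\mathcal{H})$ by density.
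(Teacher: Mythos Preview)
Your proof is correct and follows the same route as the paper, which simply says ``the first fact follows from $\mathcal T^1(\mathcal H)\subset\mathcal T^2(\mathcal H)$'' (invoking the just-proved Plancherel theorem) and then appeals to ``standard density reasoning'' for the product formula. You have in fact been more careful than the paper on one point: the paper tacitly uses that the unitary extension $\mathcal F_U:\mathcal T^2(\mathcal H)\to L^2(\widehat{\Xi})$ agrees on $\mathcal T^1(\mathcal H)$ with the concrete formula $\xi\mapsto\tr(AU_\xi^\ast)$, whereas you explicitly verify this by matching the $L^2$-limit with the uniform limit inside a common $\sigma$-compact open subgroup.
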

\begin{proof}
    The first fact simply follows from $\mathcal T^1(\mathcal H) \subset \mathcal T^2(\mathcal H)$. Therefore, the twisted convolution $\mathcal F_U(A) \ast_m \mathcal F_U(B)$ is defined for all trace-class operators, being the twisted convolution of two  $L^2$ functions. The convolution identity follows by standard density reasoning.
\end{proof}
The following fact is now the standard consequence from the Riemann-Lebesgue lemma, Plancherel's theorem and the complex interpolation method (see, e.g., \cite{Bergh_Lofstrom1976}):
\begin{prop}[Hausdorff-Young inequality for $\mathcal F_U$]
   Assume the representation is integrable.  For $p \in [1, 2]$ and $q$ the conjugate exponent, $\frac{1}{p} + \frac{1}{q} = 1$, it holds:
    \begin{align*}
        \| \mathcal F_U(A)\|_{L^q(\widehat{\Xi})} \leq \| A\|_{\mathcal T^p}, \quad A \in \mathcal T^p(\mathcal H).
    \end{align*}
\end{prop}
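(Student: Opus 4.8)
The plan is to derive this exactly by the classical Hausdorff--Young template, i.e.\ by complex interpolation between two endpoint estimates that are already in hand. The two endpoints are: (i) the Riemann--Lebesgue lemma for $\mathcal F_U$, which gives $\mathcal F_U\colon \mathcal T^1(\mathcal H) \to C_0(\widehat\Xi) \subset L^\infty(\widehat\Xi)$ with $\|\mathcal F_U(A)\|_\infty \le \|A\|_{\mathcal T^1}$; and (ii) Plancherel's theorem for operators (this is where integrability of the representation enters), which says that $\mathcal F_U$ restricts to an isometric bijection $\mathcal T^2(\mathcal H) \to L^2(\widehat\Xi)$, so $\|\mathcal F_U(A)\|_{L^2} = \|A\|_{\mathcal T^2}$. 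Since $\mathcal T^1(\mathcal H) \subset \mathcal T^2(\mathcal H)$ and the Plancherel extension agrees on $\mathcal T^1(\mathcal H)$ with the original definition $\mathcal F_U(A)(\xi) = \tr(AU_\xi^\ast)$, the two maps from (i) and (ii) are compatible on the intersection of the Banach couple $(\mathcal T^1(\mathcal H),\mathcal T^2(\mathcal H))$; in other words $\mathcal F_U$ is a well-defined morphism of interpolation couples $(\mathcal T^1(\mathcal H),\mathcal T^2(\mathcal H)) \to (L^\infty(\widehat\Xi), L^2(\widehat\Xi))$ of norm $\le 1$ on each endpoint. Note also that, for $p \in [1,2]$, one has $\mathcal T^p(\mathcal H) \subset \mathcal T^2(\mathcal H)$, so $\mathcal F_U(A)$ already makes sense for every $A \in \mathcal T^p(\mathcal H)$.

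Next I would invoke the identification of the relevant complex interpolation spaces. On the operator side the Schatten ideals form a complex interpolation scale, $[\mathcal T^1(\mathcal H),\mathcal T^2(\mathcal H)]_\theta = \mathcal T^p(\mathcal H)$ isometrically with $\frac{1}{p} = 1 - \frac{\theta}{2}$ --- a classical fact (see e.g.\ \cite{Bergh_Lofstrom1976} together with the interpolation theory of Schatten ideals; alternatively it follows from the same three-lines argument as Riesz--Thorin applied to singular value decompositions). On the function side, $[L^\infty(\widehat\Xi), L^2(\widehat\Xi)]_\theta = L^q(\widehat\Xi)$ with $\frac{1}{q} = \frac{\theta}{2}$. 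As $\theta$ ranges over $[0,1]$ these are precisely $p \in [1,2]$ and its conjugate exponent $q$, since $\frac{1}{p} + \frac{1}{q} = \bigl(1-\frac{\theta}{2}\bigr) + \frac{\theta}{2} = 1$. The interpolation theorem for the complex method then yields that $\mathcal F_U$ maps $\mathcal T^p(\mathcal H)$ boundedly into $L^q(\widehat\Xi)$ with norm at most $1^{\,1-\theta}\cdot 1^{\,\theta} = 1$, which is the asserted inequality. (If one wishes to avoid the abstract identification $[\mathcal T^1,\mathcal T^2]_\theta = \mathcal T^p$, one may instead run Stein's interpolation theorem by hand: for $A \in \mathcal T^p(\mathcal H)$ in a singular value decomposition and $g \in L^p(\widehat\Xi)$ a simple function, form the standard analytic families $A_z$, $g_z$ obtained by raising the singular values of $A$, respectively $|g|$, to suitable $z$-dependent powers, and apply the three-lines lemma to $z \mapsto \langle \mathcal F_U(A_z), g_z\rangle$, with the two extreme lines controlled by (i) and (ii); this is routine.)

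The only point requiring a little care --- and it is minor --- is measure-theoretic: the identification $[L^\infty,L^2]_\theta = L^q$ and the Riesz--Thorin three-lines argument are usually stated for $\sigma$-finite measure spaces, whereas $\widehat\Xi$ need not be $\sigma$-compact. This causes no trouble here, since the three-lines argument only manipulates simple functions with supports of finite measure and is therefore valid over an arbitrary measure space; and in any case every element of $\mathcal F_U(\mathcal T^1(\mathcal H))$, hence every $\mathcal F_U(A)$ with $A$ of finite rank, is supported in a $\sigma$-compact open subgroup of $\widehat\Xi$ by Theorem~\ref{thm:moyal} together with Lemma~\ref{lemma:Deitmar}. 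Apart from this, the argument is the standard one and all the inputs have been assembled above.
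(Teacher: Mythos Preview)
Your proposal is correct and follows exactly the approach the paper indicates: the paper does not give a detailed proof but simply states that the result is ``the standard consequence from the Riemann-Lebesgue lemma, Plancherel's theorem and the complex interpolation method (see, e.g., \cite{Bergh_Lofstrom1976}).'' Your write-up fleshes out precisely this argument, with the extra care about $\sigma$-finiteness being a welcome addition.
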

We want to note that there are also versions of the Riemann-Lebesgue lemma and the Hausdorff-Young inequality for $\mathcal F_U^{-1}$.
\begin{prop}[Riemann-Lebesgue lemma for $\mathcal F_U^{-1}$]
Assume the representation is integrable. Let $f \in L^1(\widehat{\Xi})$. Then, it is $\mathcal F_U^{-1}(f) \in \mathcal K(\mathcal H)$ with $\| \mathcal F_U^{-1}(f)\|_{op} \leq \| f\|_{L^1(\widehat{\Xi})}$.
\end{prop}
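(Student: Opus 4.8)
Since the operator-norm bound $\norm{\mathcal F_U^{-1}(f)}_{op}\le\norm{f}_{L^1(\widehat\Xi)}$ has already been recorded when $\mathcal F_U^{-1}$ was introduced, the only thing left to prove is that $\mathcal F_U^{-1}(f)$ is compact. The plan is to combine two soft facts: $\mathcal K(\mathcal H)$ is closed in $\mathcal L(\mathcal H)$ for the operator norm, and $C_c(\widehat\Xi)$ (hence also $L^1(\widehat\Xi)\cap L^2(\widehat\Xi)$) is dense in $L^1(\widehat\Xi)$. It therefore suffices to show $\mathcal F_U^{-1}(f)\in\mathcal K(\mathcal H)$ for every $f\in L^1(\widehat\Xi)\cap L^2(\widehat\Xi)$: given an arbitrary $f\in L^1(\widehat\Xi)$, choose $f_n\in C_c(\widehat\Xi)$ with $f_n\to f$ in $L^1(\widehat\Xi)$; then $\norm{\mathcal F_U^{-1}(f)-\mathcal F_U^{-1}(f_n)}_{op}\le\norm{f-f_n}_{L^1(\widehat\Xi)}\to 0$, and since each $\mathcal F_U^{-1}(f_n)$ is compact so is the limit $\mathcal F_U^{-1}(f)$.

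For $f\in L^1(\widehat\Xi)\cap L^2(\widehat\Xi)$ the idea is to identify $\mathcal F_U^{-1}(f)$ with a Hilbert--Schmidt operator, which is automatically compact. By Plancherel's theorem for operators (this is the one place where integrability of the representation enters) $\mathcal F_U$ is a unitary $\mathcal T^2(\mathcal H)\to L^2(\widehat\Xi)$, so its inverse sends $f$ to some $P\in\mathcal T^2(\mathcal H)$. It then remains to check that this $P$ coincides with the operator $\mathcal F_U^{-1}(f)=\int_{\widehat\Xi}f(\xi)U_\xi\,\d\xi$ defined earlier, which is a short duality computation against rank-one operators. On one side, by definition of $\mathcal F_U^{-1}$ on $L^1(\widehat\Xi)$ one has $\langle\mathcal F_U^{-1}(f)\varphi,\psi\rangle=\int_{\widehat\Xi}f(\xi)\langle U_\xi\varphi,\psi\rangle\,\d\xi$ for all $\varphi,\psi\in\mathcal H$. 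On the other side, $\langle P\varphi,\psi\rangle=\tr\!\big(P(\varphi\otimes\psi)\big)=\langle P,\psi\otimes\varphi\rangle_{\mathcal T^2}$, and the $\mathcal F_U$-isometry together with $\mathcal F_U(P)=f$ and $\mathcal F_U(\psi\otimes\varphi)(\xi)=\langle\psi,U_\xi\varphi\rangle$ turns this into $\langle f,\mathcal F_U(\psi\otimes\varphi)\rangle_{L^2(\widehat\Xi)}=\int_{\widehat\Xi}f(\xi)\langle U_\xi\varphi,\psi\rangle\,\d\xi$. The two sesquilinear forms agree, hence $\mathcal F_U^{-1}(f)=P\in\mathcal T^2(\mathcal H)\subset\mathcal K(\mathcal H)$, and by the first paragraph the theorem follows.

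The main, though modest, obstacle is precisely this consistency check: $\mathcal F_U^{-1}$ was defined on $L^1(\widehat\Xi)$ by an operator-valued integral and, separately, on $L^2(\widehat\Xi)$ as the Plancherel inverse, and one must see that the two agree on $L^1(\widehat\Xi)\cap L^2(\widehat\Xi)$ before the compactness argument can be run. There is no shortcut via approximating $\int_{\widehat\Xi}f(\xi)U_\xi\,\d\xi$ by Riemann-type sums $\sum_k f(\xi_k)U_{\xi_k}\,\abs{O_k}$, since those are finite linear combinations of unitaries and need not be compact; the compactness of $\mathcal F_U^{-1}(f)$ is a genuinely $L^2$-averaging phenomenon, which is why the argument must pass through $\mathcal T^2(\mathcal H)$. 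Everything else --- density of $C_c(\widehat\Xi)$ in $L^1(\widehat\Xi)$, closedness of $\mathcal K(\mathcal H)$, and the bookkeeping in the duality computation --- is routine.
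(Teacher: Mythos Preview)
Your proof is correct and follows essentially the same approach as the paper: show that $\mathcal F_U^{-1}$ lands in $\mathcal T^2(\mathcal H)$ on the dense set $C_c(\widehat{\Xi})\subset L^1(\widehat{\Xi})\cap L^2(\widehat{\Xi})$ via the Plancherel theorem, and then use the $L^1$-operator norm bound and closedness of $\mathcal K(\mathcal H)$ to pass to the limit. The paper's proof is two lines and simply asserts that $\mathcal F_U^{-1}(f)\in\mathcal T^2(\mathcal H)$ for $f\in C_c(\widehat{\Xi})$; your version is more careful in that it spells out the consistency check between the $L^1$-integral definition of $\mathcal F_U^{-1}$ and the Plancherel inverse on $L^1(\widehat{\Xi})\cap L^2(\widehat{\Xi})$, which the paper leaves implicit.
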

\begin{proof}
    For $f \in C_c(\widehat{\Xi}) \subset L^1(\widehat{\Xi})$ we have $\mathcal F_U^{-1}(f) \in \mathcal T^2(\mathcal H)$. Hence, for arbitrary $f \in L^1(\widehat{\Xi})$ we can approximate $\mathcal F_U^{-1}(f)$ by compact operators. The estimate $\| \mathcal F_U^{-1}(f)\|_{op} \leq \| f\|_{L^1(\widehat{\Xi})}$ follows easily.
\end{proof}
\begin{prop}[Hausdorff-Young inequality for $\mathcal F_U^{-1}$]
Assume the representation is integrable. For $p \in [1, 2]$ and $q$ the conjugate exponent, $\frac{1}{p} + \frac{1}{q} = 1$, it holds:
\begin{align*}
    \| \mathcal F_U^{-1}(f)\|_{\mathcal T^q} \leq \| f\|_{L^p(\widehat{\Xi})}, \quad f \in L^p(\widehat{\Xi}).
\end{align*}
\end{prop}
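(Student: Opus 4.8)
The plan is to obtain the bound by complex interpolation between the two endpoint cases $p=1$ and $p=2$, exactly mirroring how the Hausdorff--Young inequality for $\mathcal F_U$ was deduced. First I would record the $p=2$ endpoint: by Plancherel's theorem for operators (which is available here, since the representation is assumed integrable), $\mathcal F_U^{-1}: L^2(\widehat{\Xi}) \to \mathcal T^2(\mathcal H)$ is unitary, so in particular $\| \mathcal F_U^{-1}(f)\|_{\mathcal T^2} = \| f\|_{L^2(\widehat{\Xi})}$ for all $f \in L^2(\widehat{\Xi})$; this is the $q=2$ case. Second, the $p=1$ endpoint is the Riemann--Lebesgue lemma for $\mathcal F_U^{-1}$ proved just above: $\| \mathcal F_U^{-1}(f)\|_{op} \leq \| f\|_{L^1(\widehat{\Xi})}$, which is the $q=\infty$ case (recall $\mathcal T^\infty(\mathcal H)$ is by convention $\mathcal L(\mathcal H)$ with the operator norm, and the image actually lies in $\mathcal K(\mathcal H)$).

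Next I would set up the interpolation. The scale $(\mathcal T^p(\mathcal H))_{1 \le p \le \infty}$ of Schatten classes forms a complex interpolation scale, $[\mathcal T^1(\mathcal H), \mathcal L(\mathcal H)]_\theta = \mathcal T^{1/(1-\theta)}(\mathcal H)$, and likewise $[L^1(\widehat{\Xi}), L^2(\widehat{\Xi})]_\theta = L^{p_\theta}(\widehat{\Xi})$ with $\tfrac1{p_\theta} = 1-\tfrac\theta2$; these are standard facts from \cite{Bergh_Lofstrom1976}. The operator $\mathcal F_U^{-1}$ is defined on $L^1(\widehat{\Xi}) + L^2(\widehat{\Xi})$ (and the two definitions agree on the intersection, e.g.\ on $C_c(\widehat{\Xi})$, by the proof of the Riemann--Lebesgue lemma above), so it is an admissible operator for the interpolation theorem. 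Applying the Riesz--Thorin-type interpolation theorem with the endpoint bounds $L^1 \to \mathcal L(\mathcal H)$ (norm $\le 1$) and $L^2 \to \mathcal T^2(\mathcal H)$ (norm $=1$), at parameter $\theta$ chosen so that $p_\theta = p$ — i.e.\ $\theta = 2/q$ — yields $\| \mathcal F_U^{-1}(f)\|_{\mathcal T^q} \leq \| f\|_{L^p(\widehat{\Xi})}$, since $1/(1-\theta) = 1/(1 - 2/q)$; a short check that $\tfrac1p + \tfrac1q = 1$ forces $1-\theta = \tfrac1q \cdot \tfrac{q}{q} $, more precisely $1 - 2/q = (q-2)/q$ and the conjugate relation gives exactly the Schatten exponent $q$. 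One then concludes by density of $L^1 \cap L^2$ in $L^p$ for $p \in [1,2]$.

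I do not expect a genuine obstacle here; the only point requiring a line of care is the bookkeeping of exponents — confirming that the interpolation parameter matching $L^{p}$ on the domain side simultaneously produces $\mathcal T^{q}$ on the target side, which is precisely the content of the conjugacy relation $\tfrac1p+\tfrac1q=1$. A secondary minor point is the remark that $\mathcal L(\mathcal H)$ should be read as $\mathcal T^\infty(\mathcal H)$ for the interpolation scale to apply verbatim; this is entirely standard. Thus the proof is essentially a one-paragraph invocation of complex interpolation between the already-established endpoint estimates.

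\begin{proof}
    This is the standard consequence of the Riemann--Lebesgue lemma for $\mathcal F_U^{-1}$ (the case $p = 1$, where $\mathcal T^\infty(\mathcal H) = \mathcal L(\mathcal H)$) and Plancherel's theorem for operators (the case $p = 2$), together with the complex interpolation method; see \cite{Bergh_Lofstrom1976}. Indeed, $(\mathcal T^p(\mathcal H))_{1 \leq p \leq \infty}$ and $(L^p(\widehat{\Xi}))_{1 \leq p \leq \infty}$ are complex interpolation scales, and $\mathcal F_U^{-1}$ is well-defined and consistent on $L^1(\widehat{\Xi}) + L^2(\widehat{\Xi})$. Interpolating the bounds $\| \mathcal F_U^{-1}(f)\|_{op} \leq \| f\|_{L^1(\widehat{\Xi})}$ and $\| \mathcal F_U^{-1}(f)\|_{\mathcal T^2} = \| f\|_{L^2(\widehat{\Xi})}$ yields the claim for all $p \in [1, 2]$ by density of $L^1(\widehat{\Xi}) \cap L^2(\widehat{\Xi})$ in $L^p(\widehat{\Xi})$.
\end{proof}
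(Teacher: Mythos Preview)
Your proposal is correct and matches the paper's approach: the paper does not spell out a proof for this proposition, but the surrounding text makes clear that it is obtained by complex interpolation between the Riemann--Lebesgue lemma for $\mathcal F_U^{-1}$ ($p=1$) and Plancherel's theorem for operators ($p=2$), exactly as you do. The exponent bookkeeping in your informal discussion is a bit garbled, but the formal proof paragraph is clean and correct.
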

\begin{rem}
    We want to add that the map $\mathcal F_U^{-1} \circ \mathcal F_\sigma$ plays the role of a pseudodifferential quantization rule. Indeed, for $\Xi = G \times \widehat{G}$ and $m((x, \xi), (y, \eta)) = \overline{\langle x, \eta\rangle}$, this map gives rise to the Kohn-Nirenberg correspondence, as it was already discussed in, e.g., \cite{Feichtinger_Kozek1998}. Here, we see that the flexibility of dealing with other multipliers allows for a broad class of pseudodifferential quantizations to be studied from the present point of view. 
\end{rem}
We now come to the operator version of Bochner's theorem. In this result, we say that a function $f: \widehat{\Xi} \to \mathbb C$ is \emph{$m$-twisted positive semidefinite} if it satisfies, for all $\zeta_j \in \mathbb C$ and $x_j \in \widehat{\Xi}$, $j = 1, \dots, n$:
\begin{align*}
    \sum_{j,k=1}^n \zeta_k \overline{\zeta_j} m(-x_k, x_k) \overline{m(x_j, -x_k)} f(x_j - x_k) \geq 0.
\end{align*}
If $m$ is a bicharacter, which we assume in part of the theorem below, the above condition is of course equivalent to:
\begin{align*}
    \sum_{j,k=1}^n \zeta_k \overline{\zeta_j} m(x_j-x_k, x_k) f(x_j - x_k) \geq 0.
\end{align*}
\begin{thm}[Bochner's theorem for operators]\label{thm:bochner_op}
    Let $f: \Xi \to \mathbb C$. 
    \begin{enumerate}[(1)]
        \item If $f = \mathcal F_U(A)$ for some $A \in \mathcal T^1(\mathcal H)$, $A \geq 0$, then $f$ is continuous and $m$-twisted positive semidefinite.
        \item Assume that the multiplier $m$ is a bicharacter. Then, if $f: \Xi \to \mathbb C$ is continuous and $m$-twisted positive-semidefinite, there exists some $A \in \mathcal T^1(\mathcal H)$, $A \geq 0$, such that $f = \mathcal F_U(A)$.
        \item Let $m$ be a bicharacter and $a: \Xi \to \mathbb T$ continuous with $a(e_\Xi) = 1$ and $a(-x) = a(x)$ for all $x \in \Xi$. Then, if $f: \Xi \to \mathbb C$ and $m_a$-twisted positive-semidefinite, there exists some $A \in \mathcal T^1(\mathcal H)$, $A \geq 0$, such that $f = \mathcal F_{U^a}(A)$.
    \end{enumerate}
\end{thm}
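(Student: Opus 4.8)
The plan is to handle the three parts separately: part (1) by a direct computation, part (2) --- the substantial one --- by passing to a twisted group algebra, and part (3) by a cohomological reduction to part (2).

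\textbf{Part (1).} Continuity of $f=\mathcal F_U(A)$ is the Riemann--Lebesgue lemma for $\mathcal F_U$ established above. For the positivity condition I would decompose a positive $A\in\mathcal T^1(\mathcal H)$ spectrally as a trace-norm convergent series $A=\sum_n\lambda_n\,\varphi_n\otimes\varphi_n$ with $\lambda_n\geq 0$; since $\mathcal F_U$ is trace-norm continuous and a nonnegative combination of $m$-twisted positive semidefinite functions is again such, it suffices to treat $A=\varphi\otimes\varphi$, for which $f(\xi)=\langle U_\xi^\ast\varphi,\varphi\rangle$. From the CCR relation and $U_y^\ast=\overline{m(y,-y)}U_{-y}$ one obtains $U_{x-y}=m(y,-y)\overline{m(x,-y)}\,U_xU_y^\ast$, and since $\sigma$ is an alternating bicharacter (so $\sigma(-x_k,x_k)=1$) a short computation gives
\[
    m(-x_k,x_k)\,\overline{m(x_j,-x_k)}\,f(x_j-x_k)=\langle U_{x_j}^\ast\varphi,\,U_{x_k}^\ast\varphi\rangle ,
\]
whence $\sum_{j,k=1}^n\zeta_k\overline{\zeta_j}\,m(-x_k,x_k)\overline{m(x_j,-x_k)}\,f(x_j-x_k)=\bigl\|\sum_{j=1}^n\overline{\zeta_j}\,U_{x_j}^\ast\varphi\bigr\|^2\geq 0$.

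\textbf{Part (2).} This is the main obstacle. The plan is to pass to the twisted group algebra. The space $C_c(\widehat\Xi)$, with the twisted convolution $\ast_m$ of Proposition \ref{prop:twistedconv} and involution $g^{\ast_m}(x)=\overline{g(-x)m(-x,x)}$, is a dense $\ast$-subalgebra of a Banach $\ast$-algebra with bounded approximate identity whose enveloping $C^\ast$-algebra is, by the Stone--von Neumann theorem (Theorem \ref{Mackey-Stone-vNeumann}), isomorphic to $\mathcal K(\mathcal H)$, the isomorphism being the one induced by the $\ast$-homomorphism $\mathcal F_U^{-1}$ (which respects involutions by Proposition \ref{prop:twistedconv} and the identity $\mathcal F_U^{-1}(g)^\ast=\mathcal F_U^{-1}(g^{\ast_m})$). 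A continuous $m$-twisted positive semidefinite $f$ is bounded (by the $2\times 2$ instance of its defining inequality), and --- here the bicharacter hypothesis enters --- one checks that integrating $g\in C_c(\widehat\Xi)$ against an appropriately $m$-twisted reflection of $f$ defines a positive linear functional $\psi$ on this $\ast$-algebra, the inequality $\psi(g^{\ast_m}\ast_m g)\geq 0$ being equivalent to the continuous (Dirac-comb) form of the $m$-twisted positive semidefiniteness of $f$. A positive linear functional on a Banach $\ast$-algebra with bounded approximate identity is automatically continuous, so $\psi$ extends to $\mathcal K(\mathcal H)$; and positive functionals on $\mathcal K(\mathcal H)$ are precisely the maps $B\mapsto\tr(BA)$ for a unique $0\leq A\in\mathcal T^1(\mathcal H)$. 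Unwinding the identifications through $\mathcal F_U^{-1}(g)=\int_{\widehat\Xi}g(\xi)U_\xi\,d\xi$ and the Fourier inversion formula $\mathcal F_U^{-1}(\mathcal F_U(A))=A$ (Proposition \ref{prop:fourierinversion}) then yields $f=\mathcal F_U(A)$. I expect the genuine difficulty to lie in the precise choice and normalization of $\psi$ --- fitting its positivity exactly to the stated defining inequality, which is where the bicharacter hypothesis is used --- and in verifying that $\mathcal F_U^{-1}$ really implements the isomorphism onto $\mathcal K(\mathcal H)$ in the present, possibly non-separable, setting (equivalently: carrying out the Stone--von Neumann/GNS argument in multiplicity-space form).

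\textbf{Part (3).} Since $m_a$ need not itself be a bicharacter, part (2) cannot be applied to $m_a$ directly; instead I would reduce to the bicharacter $m$. From $U_\xi^a=a(\xi)U_\xi$ (Example \ref{mod:main_example}) we get $(U_\xi^a)^\ast=\overline{a(\xi)}\,U_\xi^\ast$, hence $\mathcal F_{U^a}(A)=\overline a\cdot\mathcal F_U(A)$. Put $g:=a f$, which is continuous. Using $m_a(y,z)=\tfrac{a(y)a(z)}{a(y+z)}m(y,z)$, $|a|=1$, and $m(-x_k,x_k)\overline{m(x_j,-x_k)}=m(x_j-x_k,x_k)$ (valid since $m$ is a bicharacter), one computes
\[
    m_a(-x_k,x_k)\,\overline{m_a(x_j,-x_k)}=\frac{a(x_k)\,a(x_j-x_k)}{a(x_j)}\,m(x_j-x_k,x_k),
\]
so that, substituting $\eta_i:=\zeta_i\,a(x_i)$ (a bijective reparametrization for fixed $x_i$) and $a(x_j-x_k)f(x_j-x_k)=g(x_j-x_k)$, the defining inequality for the $m_a$-twisted positive semidefiniteness of $f$ turns into
\[
    \sum_{j,k=1}^n\eta_k\overline{\eta_j}\,m(x_j-x_k,x_k)\,g(x_j-x_k)\geq 0 ,
\]
i.e.\ $g$ is $m$-twisted positive semidefinite. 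As $g$ is moreover continuous and $m$ is a bicharacter, part (2) provides $0\leq A\in\mathcal T^1(\mathcal H)$ with $g=\mathcal F_U(A)$, and therefore $f=\overline a\,g=\overline a\,\mathcal F_U(A)=\mathcal F_{U^a}(A)$. The only nontrivial ingredient here is part (2); the reduction itself is purely the coboundary bookkeeping above.
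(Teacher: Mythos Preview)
Your arguments for parts (1) and (3) are correct and essentially match the paper's: in (1) the paper computes $\tr\bigl(A(\sum_k\zeta_kU_{x_k})(\sum_j\zeta_jU_{x_j})^\ast\bigr)\geq 0$ directly rather than reducing to rank one, and in (3) the paper gives exactly your coboundary reduction, only stated in one line.

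Your approach to part (2), however, is genuinely different from the paper's. The paper follows Kastler's direct GNS construction: on the space $\mathcal V_0$ of finitely supported functions on $\Xi$ one defines a sesquilinear form $\langle\varphi,\psi\rangle_{\mathcal V_0}=\sum_{\zeta,\eta}\varphi(\zeta)\overline{\psi(\eta)}\,m(\zeta-\eta,\eta)f(\zeta-\eta)$ (positive semidefinite precisely by the $m$-twisted positive semidefiniteness of $f$, with the bicharacter hypothesis used to verify that the shifts $\mathcal U_z\varphi(\zeta)=m(\zeta,z)\varphi(\zeta+z)$ are unitary for this form), completes to a Hilbert space $\mathcal V$, restricts to the cyclic subspace of the ``delta at $e$'' vector $\Phi$, and then applies the Mackey--Stone--von Neumann theorem to decompose this as a direct sum of copies of $(\mathcal H,U)$; the components of the image of $\Phi$ assemble into $A=\sum_k\varphi_k\otimes\varphi_k$. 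Your route via the twisted group $C^\ast$-algebra is more conceptual --- it packages the same GNS step inside the identification $C^\ast(\Xi,m)\cong\mathcal K(\mathcal H)$ and the classification of positive functionals on $\mathcal K(\mathcal H)$ --- but it leans on more external machinery (type I structure from Baggett--Kleppner, automatic continuity of positive functionals, the Riemann--Lebesgue lemma for $\mathcal F_U^{-1}$), whereas the paper's argument is entirely self-contained once Theorem \ref{Mackey-Stone-vNeumann} is granted. Both approaches are valid; the paper also notes that Werner's original short proof via the classical Bochner theorem is flawed, which is why a GNS-type argument is needed in the first place.
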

Let us note that most interesting examples are either of the form required in (2) or (3) of the result. We want to emphasize that our proof is an adaptation of Kastler's proof \cite[Theorem 7a]{Kastler1965}, which seems to be the first result in this direction (on the phase space $\Xi = \mathbb R^{2d}$), cf.\ also the independent works of Loupias and Miracle-Sole \cite{Loupias_MiracleSole1966, Loupias_MiracleSole1967}.
\begin{proof}[Proof of Theorem \ref{thm:bochner_op}]
\begin{enumerate}[(1)]
    \item This is easily shown similarly to the classical Bochner theorem: For $A \in \mathcal T^1(\mathcal H)$, $A \geq 0$, we have 
    \begin{align*}
\tr(A(\sum_{k=1}^n \zeta_k U_{x_k}) (\sum_{j=1}^n \zeta_j U_{x_j}))^\ast \geq 0.
    \end{align*}
    On the other hand, the trace can be computed as:
    \begin{align*}
\tr(A(\sum_{k=1}^n &\zeta_k U_{x_k}) (\sum_{j=1}^n \zeta_j U_{x_j})^\ast) \\
&= \sum_{j,k=1}^n \zeta_k \overline{\zeta_j} \tr(A U_{x_k} U_{x_j}^\ast)\\
&= \sum_{j,k=1}^n \zeta_k \overline{\zeta_j} ~ m(-x_k, x_k)\tr(AU_{-x_k}^\ast U_{x_j}^\ast)\\
&= \sum_{j,k=1}^n \zeta_k \overline{\zeta_j} ~ m(-x_k, x_k) \overline{m(x_j, -x_k)} \tr(AU_{x_j - x_k}^\ast)\\
&= \sum_{j,k=1}^n \zeta_k \overline{\zeta_j} ~ m(-x_k, x_k) \overline{m(x_j, -x_k)} f(x_j - x_k).
    \end{align*}
    \item  We first want to mention that the (very short) proof given by Werner \cite{Werner1984} in the setting of $G = \mathbb R^n$ is flawed. Indeed, he made use of the (wrong) claim that continuous positive definite functions are Fourier transforms of positive $L^1$ functions. By the correct formulation of Bochner's theorem, these are instead Fourier transforms of positive (regular) measures. Even the class of continuous and positive definite functions that vanish at infinity is strictly larger than the Fourier transforms of $L^1$. We therefore do not know if Werner's approach to the theorem can be repaired. Therefore have to give a longer proof, which closely follows the proof for the case $G = \mathbb R^n$ as it is presented in \cite{Kastler1965}.

Denote by $\mathcal V_0$ the space
\begin{align*}
    \mathcal V_0 = \{ \varphi: \Xi \to \mathbb C; ~\varphi(x) \neq 0 \text{ for only finitely many } x \in \Xi\}.
\end{align*}
For $\varphi, \psi \in \mathcal V_0$ we set
\begin{align*}
    \langle \varphi, \psi\rangle_{\mathcal V_0} := \sum_{\zeta, \eta \in \Xi} \varphi(\zeta) \overline{\psi(\eta)} m(\zeta - \eta, \eta) f(\zeta - \eta).
\end{align*}
By the assumptions on $f$, we see that $\langle \varphi, \varphi\rangle_{\mathcal V_0} \geq 0$ for every $\varphi \in \mathcal V_0$. Further, $\langle \cdot, \cdot \rangle_{\mathcal V_0}$ is clearly sesquilinear. On $\mathcal V_0$ we now set for $z \in \Xi$:
\begin{align*}
    \mathcal U_z\varphi(\zeta) := m(\zeta, z) \varphi(\zeta + z).
\end{align*}
Then, it is
\begin{align*}
    \langle \mathcal U_z & \varphi, \mathcal U_z \psi\rangle_{\mathcal V_0} \\
    &= \sum_{\zeta, \eta \in \Xi} m(\zeta, z) \varphi(\zeta + z) \overline{m(\eta, z)\psi(\eta + z)} m(\zeta - \eta, \eta) f(\zeta - \eta)\\
    &= \sum_{\zeta, \eta \in \Xi} m(\zeta - z, z) \varphi(\zeta) \overline{m(\eta - z, z)\psi(\eta)}m(\zeta - \eta, \eta - z)f(\zeta - \eta)\\
    &= \sum_{\zeta, \eta} \varphi(\zeta) \overline{\psi(\eta)} m(\zeta - \eta, \eta)f(\zeta - \eta)\\
    &= \langle \varphi, \psi\rangle_{\mathcal V_0}.
\end{align*}
Hence, the $\mathcal U_z$ are unitary. Further, for $z, w \in \Xi$ it is
\begin{align*}
    \mathcal U_z \mathcal U_w(\varphi)(\zeta) &= m(\zeta, z) U_w \varphi(\zeta + z) \\
    &= m(\zeta, z) m(\zeta + z, w) \varphi(\zeta + z + w)\\
    &= m(z,w) \mathcal U_{w+z}\varphi(\zeta),
\end{align*}
i.e.\ $\mathcal U_z \mathcal U_w = m(z,w) \mathcal U_{z+w}$. This implies that $\mathcal U_z^\ast = \overline{m(-z, z)}\mathcal U_{-z}$. Formally, we therefore have that $z \mapsto \mathcal U_z$ is a projective unitary representation of $\Xi$ with multiplier $m$. Strictly speaking, this is not the case, as $(\mathcal V_0, \langle \cdot, \cdot\rangle_{\mathcal V_0})$ is not a Hilbert space: Neither is the sesquilinear form positive definite (only positive semidefinite), nor is the space complete. This can be enforced by first passing to the quotient of $\mathcal V_0$ by $\{ \varphi \in \mathcal V_0: ~\langle \varphi, \varphi\rangle_{\mathcal V_0} = 0\}$ and then completing the quotient. Denote the resulting Hilbert space by $\mathcal V$. Then, the operators $\mathcal U_z$ naturally extend to $\mathcal V$, yielding an actual projective representation. We will from now on only consider the space $\mathcal V$ and denote the newly obtained unitary operators also by $\mathcal U_z$. Further, let $\Phi$ be the element of $\mathcal V$ obtained from $\Phi_0\in\mathcal V_0$ where
\begin{align*}
    \Phi_0(\zeta) = \begin{cases}
        1, \quad \zeta = e \text{, the neutral element of } \Xi,\\
        0, \quad \text{else}.
    \end{cases}
\end{align*}
Then, one easily checks that $\Phi \neq 0$ and $\langle \mathcal \Phi, \mathcal U_z \Phi\rangle = f(z)$. Continuity of $f$ shows that this is continuous in $z$. From here, one concludes that $\mathcal U_z$ acts continuously (say, in weak operator topology) on the cyclic subspace generated by $\Phi$. Denote this subspace by $\mathcal V'$.

As a consequence of Theorem \ref{Mackey-Stone-vNeumann}, there is a unitary map
\begin{align*}
    V: \mathcal V' \to \bigoplus_{\iota \in I} \mathcal H,
\end{align*}
where $\mathcal H$ is the representing space of the representation $U$ and $I$ is some index set. Further, we have $V (U_z)_{\iota \in I} V^\ast = \mathcal U_z$. By basic Hilbert space theory, the set $\{ \iota \in I: ~(V\Phi)_{\iota} \neq 0\}$ is at most countable. Let $(\iota_k)_{k \in \mathbb N}$ be an enumeration of this set (with obvious modifications of the following if the set is finite). Write $\varphi_k = (V\Phi)_{\iota_k} \in \mathcal H$ and set $A = \sum_{k=1}^\infty \varphi_k \otimes \varphi_k$. Then, $A\geq 0$ is trace class:
\begin{align*}
    \tr(A) = \sum_{k=1}^\infty \langle \varphi_k, \varphi_k\rangle = \sum_{k=1}^\infty \langle (V\Phi)_k, (V\Phi)_k\rangle = \| V\Phi\|^2 = \| \Phi\|_{\mathcal V}^2 < \infty.
\end{align*}
Further, we have
\begin{align*}
    \mathcal F_U(A)(z) &= \tr(AU_z) = \sum_{k=1}^\infty \tr((\varphi_k \otimes \varphi_k) U_z^\ast) = \sum_{k=1}^\infty \langle \varphi_k, U_z \varphi_k\rangle\\
    &= \langle \Phi, \mathcal U_z  \Phi\rangle = f(z).
\end{align*}
This finishes the proof.
\item This is a simple consequence of (2), as $f$ is twisted positive-definite with respect to $m_a$ if and only if $\overline{a}f$ is twisted positive-definite with respect to $m$. Further, $\mathcal F_U(A) = \overline{a}\mathcal F_{U^a}(A)$.\qedhere
    \end{enumerate}
\end{proof}
The last result in this section is of crucial importance.
\begin{thm}[Wiener's approximation theorem for operators]\label{thm:wienerappr_ops}
    Let $\mathcal R \subset \mathcal T^1(\mathcal H)$. Then, the following are equivalent:
    \begin{enumerate}[(i)]
        \item $\operatorname{span}\{ \alpha_{x}(A): ~x \in \Xi, ~A \in \mathcal R\}$ is dense in $\mathcal T^1(\mathcal H)$. 
        \item $L^1(\Xi) \ast \mathcal R$ is dense in $\mathcal T^1(\mathcal H)$.
        \item $\mathcal T^1(\mathcal H) \ast \mathcal R$ is dense in $L^1(\Xi)$.
        \item $\{ A \ast A: ~A \in \mathcal R\}$ is a regular family in $L^1(\Xi)$ (in the sense of Theorem \ref{thm:wienerfunctions}).
        \item $\cap_{A \in \mathcal R} \{ \xi \in \widehat{\Xi}: ~\mathcal F_U(A)(\xi) = 0\} = \emptyset$.
        \item For $B \in \mathcal L(\mathcal H)$, $A \ast B = 0$ for every $A \in \mathcal R$ implies $B = 0$.
        \item For $f \in L^\infty(\Xi)$, $A \ast f = 0$ for every $A \in \mathcal R$ implies $f = 0$.
    \end{enumerate}
\end{thm}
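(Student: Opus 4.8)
The plan is to prove the seven equivalences as a cycle of implications, together with a few auxiliary cross-links, exploiting the Fourier-analytic machinery (Propositions~\ref{prop:convolutions} and \ref{prop:fourierinversion}, Theorem~\ref{thm:wienerfunctions}) to reduce everything to the scalar Wiener theorem. The backbone will be
\begin{align*}
    (i) \Leftrightarrow (ii), \quad (ii) \Leftrightarrow (iii), \quad (iv) \Leftrightarrow (v), \quad (iii) \Rightarrow (iv) \Rightarrow (ii), \quad (i) \Leftrightarrow (vi), \quad (iii) \Leftrightarrow (vii),
\end{align*}
so that all statements become equivalent.

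First I would handle $(i) \Leftrightarrow (ii)$: the inclusion $L^1(\Xi)\ast\mathcal R \subseteq \overline{\operatorname{span}}\{\alpha_x(A)\}$ follows because $f\ast A$ is a Bochner integral of shifts $\alpha_x(A)$, which lies in the closed span; conversely, using an approximate identity $(e_U)$ in $L^1(\Xi)$ one gets $e_U\ast A\to A$ in trace norm and $\alpha_x(e_U\ast A)=\alpha_x(e_U)\ast A\in L^1(\Xi)\ast\mathcal R$, so the closed span of shifts is contained in $\overline{L^1(\Xi)\ast\mathcal R}$. For $(ii)\Leftrightarrow(iii)$ the key is the convolution $\mathcal T^1\ast\mathcal T^1\hookrightarrow L^1$ (Proposition~\ref{prop:thirdconv}) together with Lemma~\ref{lemma:average}: if $\mathcal T^1(\mathcal H)\ast\mathcal R$ is dense in $L^1(\Xi)$, then for any $A\in\mathcal T^1(\mathcal H)$ and $g\in L^1(\Xi)$ we can approximate, and $g\ast(B\ast A')=(g\ast B)\ast A'$ lets us pass between the two; more efficiently, one shows both (ii) and (iii) are equivalent to the annihilator condition: $(ii)$ fails iff there is $0\neq B\in\mathcal L(\mathcal H)$ with $\langle f\ast A, B\rangle_{tr}=0$ for all $f,A$, i.e.\ $\langle f, \beta_-(A)\ast B\rangle_{tr}=0$, i.e.\ $A\ast B=0$ for all $A\in\mathcal R$ (after replacing $\beta_-(A)$ by $A$, using that $\mathcal R$ closed-span density is equivalent for $\mathcal R$ and $\beta_-(\mathcal R)$ — actually one must be slightly careful here and may instead route through $(vi)$), and $(iii)$ fails iff there is $0\neq f\in L^\infty(\Xi)$ with $A\ast f=0$ for all $A\in\mathcal R$, which is $(vii)$. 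So really I would prove $(ii)\Leftrightarrow(vi)$ and $(iii)\Leftrightarrow(vii)$ by Hahn--Banach/duality (the predual of $\mathcal T^1(\mathcal H)$ is $\mathcal L(\mathcal H)$, the predual of $L^1(\Xi)$ is $L^\infty(\Xi)$ in the modified sense of Section~\ref{sec:Gelfand_Pettis}), using the compatibility identities $\langle f\ast A, B\rangle_{tr}=\langle f, \beta_-(A)\ast B\rangle_{tr}$ and $\langle A\ast B, f\rangle = \langle A, B\ast f\rangle$ already recorded in the excerpt; and separately $(i)\Leftrightarrow(vi)$ since $B$ annihilates every $\alpha_x(A)$ iff $\operatorname{tr}(\alpha_x(A)B^*)=0$ for all $x$, which unwinds to $A\ast \beta_-(B^*)$ or similar vanishing identically.

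The Fourier-analytic heart is $(iv)\Leftrightarrow(v)$ and $(iii)\Rightarrow(iv)\Rightarrow(ii)$. For $(iv)\Leftrightarrow(v)$: by Proposition~\ref{prop:convolutions}(2), $\mathcal F_\sigma(A\ast A)(\xi)=m(\xi,-\xi)\mathcal F_U(A)(\xi)^2$, and since $|m(\xi,-\xi)|=1$ the zero set of $\mathcal F_\sigma(A\ast A)$ equals the zero set of $\mathcal F_U(A)$; hence the regularity condition $\bigcap_A\{\mathcal F_\sigma(A\ast A)=0\}=\emptyset$ of Theorem~\ref{thm:wienerfunctions} is literally $(v)$. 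Then $(iv)\Rightarrow(ii)$: regularity of $\{A\ast A: A\in\mathcal R\}$ means $\overline{\operatorname{span}}\{\alpha_x(A\ast A)\}=L^1(\Xi)$; but $\alpha_x(A\ast A)=\alpha_x(A)\ast A\in\mathcal T^1(\mathcal H)\ast\mathcal R$... wait, that gives density in $L^1$, i.e.\ $(iii)$; and I would deduce $(ii)$ from $(iii)$ via the duality route above, or directly: $f\ast A = f\ast(e_U\ast A)$ and approximate $f$ by elements of $\mathcal T^1\ast\mathcal R$... actually cleaner is $(iii)\Rightarrow(iv)$ trivially (since $\{A\ast A\}\subseteq \mathcal T^1\ast\mathcal R$ and a superset of a dense set... no, wrong direction). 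The correct clean chain is: $(v)\Rightarrow$ [scalar Wiener] $\Rightarrow$ $\overline{\operatorname{span}}\{\alpha_x(A\ast A): A\in\mathcal R, x\in\Xi\}=L^1(\Xi)$, which gives $(iii)$ since each $\alpha_x(A\ast A)=\alpha_x(A)\ast A$; then $(iii)\Rightarrow(ii)$ and $(ii)\Rightarrow(i)$ as above; and for the reverse, $(i)\Rightarrow(v)$: if some $\xi_0$ were a common zero of all $\mathcal F_U(A)$, then by Proposition~\ref{prop:convolutions}(3) $\mathcal F_U(\alpha_x(A))(\xi_0)=\sigma(x,\xi_0)\mathcal F_U(A)(\xi_0)=0$ for all $x$, so the continuous functional $C\mapsto \mathcal F_U(C)(\xi_0)=\operatorname{tr}(CU_{\xi_0}^*)$ annihilates the whole span, contradicting density since $U_{\xi_0}^*\neq 0$ defines a nonzero element of $\mathcal L(\mathcal H)=\mathcal T^1(\mathcal H)^*$.

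The main obstacle I anticipate is the careful bookkeeping of the non-$\sigma$-compact case: one must check that the scalar Wiener theorem (Theorem~\ref{thm:wienerfunctions}), which is already stated for general lca $\Xi$, genuinely applies, and that the modified $L^\infty(\Xi)$ is the right predual so that the Hahn--Banach separation in $(iii)\Leftrightarrow(vii)$ produces an honest element of $L^\infty(\Xi)$ rather than something living only on a $\sigma$-compact piece; the measure-theoretic remarks of Section~\ref{sec:Gelfand_Pettis} and Lemma~\ref{lemma:Deitmar} should resolve this, but it needs to be stated explicitly. A secondary subtlety is the appearance of $\beta_-$ when dualizing convolutions: one should note at the outset that $\mathcal R$ is regular (in any of these senses) if and only if $\beta_-(\mathcal R)=\{RAR^*: A\in\mathcal R\}$ is, because $\beta_-$ is an isometric automorphism of $\mathcal T^1(\mathcal H)$ commuting with the dilation $\alpha_x\mapsto\alpha_{-x}$ and satisfying $\mathcal F_U(\beta_-(A))(\xi)=\mathcal F_U(A)(-\xi)$ (up to a unimodular factor), so the zero-set condition $(v)$ is insensitive to it; with that remark in hand the $\beta_-$'s that appear in the duality identities cause no trouble.
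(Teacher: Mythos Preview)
Your proposal is correct and follows essentially the same route as the paper: approximate identities for $(i)\Leftrightarrow(ii)$ and $(iii)\Rightarrow(ii)$, Hahn--Banach duality for $(ii)\Leftrightarrow(vi)$ and $(iii)\Leftrightarrow(vii)$, the convolution formula $\mathcal F_\sigma(A\ast A)=m(\xi,-\xi)\mathcal F_U(A)^2$ for $(iv)\Leftrightarrow(v)$, and the scalar Wiener theorem for linking $(iii)$ and $(iv)$. The one small difference is that you close the cycle with $(i)\Rightarrow(v)$ using $U_{\xi_0}^\ast$ as a nonzero element of $\mathcal T^1(\mathcal H)^\ast$, whereas the paper instead proves $(vi)\Rightarrow(v)$ by computing $A\ast U_x(y)=g_x(y)\mathcal F_U(A)(x)$; these are two phrasings of the same observation. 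Your explicit remark that all seven conditions are invariant under $\mathcal R\mapsto\beta_-(\mathcal R)$ is a point the paper glosses over in its duality steps, so you are in fact being more careful there.
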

\begin{proof}
    Throughout the proof, we assume that $\| A\|_{\mathcal T^1} = 1$ for every $A \in \mathcal R$. Of course, no generality is lost upon doing so. Further, we write $\mathcal R = \{ A_\lambda\}_{\lambda \in \Lambda}$ for some index set $\Lambda$.

    $(i) \Rightarrow (ii)$: Given $B \in \mathcal T^1(\mathcal H)$ and $\varepsilon > 0$, there are by assumption finitely many $a_j \in \mathbb C$, $x_j \in \Xi$ and $A_j \in \mathcal R$ such that:
    \begin{align*}
        \| B - \sum_j a_j \alpha_{x_j}(A_j)\|_{\mathcal T^1} < \varepsilon.
    \end{align*}
    Let now $(h_\gamma)_{\gamma \in \Gamma} \subset L^1(\Xi)$ be an approximate identity. Since $\alpha$ acts strongly continuous on $\mathcal T^1(\mathcal H)$, there is $\gamma_0 \in \Gamma$ such that:
    \begin{align*}
        \| \sum_j a_j \alpha_{x_j}(A_j) - h_{\gamma_0} \ast \sum_{j}a_j \alpha_{x_j}(A_j)\|_{\mathcal T^1} < \varepsilon.
    \end{align*}
    Together, we obtain:
    \begin{align*}
        \| B - h_{\gamma_0} \ast \sum_j a_j \alpha_{x_j}(A_j)\|_{\mathcal T^1} < 2\varepsilon.
    \end{align*}
    Since
    \begin{align*}
        h_{\gamma_0} \ast \sum_j a_j \alpha_{x_j}(A_j) = \sum_j a_j \alpha_{x_j}(h_{\gamma_0}) \ast A_j \in L^1(\Xi) \ast \mathcal R,
    \end{align*}
    the implication follows.
    
    $(iv) \Leftrightarrow (v)$: This is clear from the convolution formula, Proposition \ref{prop:convolutions}(2).

    $(iii) \Leftrightarrow (iv)$: Since $\mathcal T^1(\mathcal H) \ast \mathcal R$ is an $\alpha$-invariant subspace of $L^1(\Xi)$ containing all $A \ast A$ for $A \in \mathcal R$, this equivalence follows from Theorem \ref{thm:wienerfunctions}.

    $(iii) \Leftrightarrow (vii)$: Let us denote by $\bigoplus_{\lambda \in \Lambda, \ell^1} \mathcal T^1(\mathcal H)$ the $\ell^1$ direct sum. Then, the Banach space adjoint of the map
    \begin{align*}
        \bigoplus_{\lambda \in \Lambda, \ell^1} \mathcal T^1(\mathcal H) \ni \{ B_\lambda\}_{\lambda \in \Lambda} \mapsto \sum_{\lambda \in \Lambda} B_\lambda \ast A_\lambda \in L^1(\Xi)
    \end{align*}
    is the map
    \begin{align*}
        L^\infty(\Xi) \ni f \mapsto \{ f \ast \beta_{-}(A_\lambda)\}_{\lambda \in \Lambda} \in \bigoplus_{\lambda \in \Lambda, \ell^\infty} \mathcal L(\mathcal H),
    \end{align*}
    where the latter means the $\ell^\infty$ direct sum of the involved Banach spaces. Now, a map between two Banach spaces has dense range if and only if its dual map is injective. Since the first map has dense range if and only if $\mathcal T^1(\mathcal H) \ast \mathcal R$ is dense in $L^1(\Xi)$, the result follows.

    $(ii) \Leftrightarrow (vi)$: This works analogously to the previous point.

    $(iii) \Rightarrow (ii)$: Let $B \in \mathcal T^1(\mathcal H)$ and $\varepsilon > 0$. Since $L^1(\Xi)$ contains an approximate identity and $\alpha$ acts strongly continuous on $\mathcal T^1(\mathcal H)$, there is $f \in L^1(\Xi)$ such that $\| B - f\ast B\|_{\mathcal T^1} < \varepsilon$. Now, there are finitely many $C_\lambda \in \mathcal T^1(\mathcal H)$ such that $\| f - \sum_\lambda C_\lambda \ast A_\lambda\|_{L^1(\Xi)} < \varepsilon$. This yields:
    \begin{align*}
        \| B - \sum_\lambda (B \ast C_\lambda) \ast A_\lambda\|_{\mathcal T^1} \leq 2 \varepsilon.
    \end{align*}

    $(vi) \Rightarrow (v)$: For every $A \in \mathcal T^1(\mathcal H)$ it is $A \ast U_{x}(y) = g_{x}(y)\mathcal F_U(A)(x)$, where $g_x$ is some function with $|g_x(y)| = 1$. Since $U_{x} \neq 0$, there exists some $\lambda \in \Lambda$ such that $A_\lambda \ast U_{x} \neq 0$, hence there is $y \in \Xi$ such that $A_\lambda \ast U_{x}(y) =  g_x(y) \mathcal F_U(A_\lambda)(x) \neq 0$ for every $x \in \Xi$.

    $(vi) \Rightarrow (i)$: Assume that $\tr(B \alpha_{x}(A_\lambda)) = 0$ for every $x \in \Xi$ and $\lambda \in \Lambda$. This means that $\beta_{-}(B) \ast A_\lambda (-x) = 0$ for every $x \in \Xi$ and $\lambda \in \Lambda$. By $(vi)$, this gives $\beta_{-}(B) = 0$, hence $B=0$. Therefore, $(vi)$ implies $(i)$.
\end{proof}

Note that a more extensive characterization of regular sets of functions in $L^1(\Xi)$ can be made:
\begin{thm}
    Let $S \subset L^1(\Xi)$. Then, additionally to the condition from Theorem \ref{thm:wienerfunctions}, $S$ being regular is equivalent to each of the following statements:
    \begin{enumerate}[(i)]
        \item $S \ast L^1(\Xi)$ is dense in $L^1(\Xi)$.
        \item $S \ast \mathcal T^1(\mathcal H)$ is dense in $\mathcal T^1(\mathcal H)$.
        \item $g \in L^\infty(\Xi)$ and $f \ast g = 0$ for every $f \in S$ implies $g = 0$.
        \item $B \in \mathcal L(\mathcal H)$ and $f \ast B = 0$ for every $f \in S$ implies $B = 0$.
    \end{enumerate}
\end{thm}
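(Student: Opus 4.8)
The plan is to run the same circle of implications as in the proof of \cref{thm:wienerappr_ops}, with functions now playing the role of operators and with the classical Wiener theorem \cref{thm:wienerfunctions} as the only genuinely external ingredient. Without loss of generality I would assume $0 \notin S$ and $\|f\|_{L^1(\Xi)}=1$ for all $f\in S$ (rescaling the members of $S$ affects neither the conditions (i)--(iv) nor regularity), and write $S=\{f_\lambda\}_{\lambda\in\Lambda}$.

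First I would record the identity
\begin{align*}
    \overline{S\ast L^1(\Xi)} = \overline{\operatorname{span}\{\alpha_{x}(f): x\in\Xi,\ f\in S\}}.
\end{align*}
For ``$\subseteq$'' one writes $f\ast g = g\ast f = \int_\Xi g(x)\,\alpha_{x}(f)\,\d x$ as a Bochner integral in $L^1(\Xi)$ (the integrand is bounded and uniformly continuous, so \cref{thm:Bochner} applies), hence a norm limit of finite combinations of translates of $f$; for ``$\supseteq$'', $\alpha_{x}(f)=\lim_\gamma \alpha_{x}(f\ast h_\gamma)=\lim_\gamma f\ast\alpha_{x}(h_\gamma)$ for any approximate identity $(h_\gamma)_\gamma\subset L^1(\Xi)$. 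By \cref{thm:wienerfunctions} the right-hand side equals $L^1(\Xi)$ exactly when $S$ is regular, giving (i)$\Leftrightarrow$regular. Next, (i)$\Leftrightarrow$(iii) and (ii)$\Leftrightarrow$(iv) follow by the duality argument from the proof of \cref{thm:wienerappr_ops}: the bounded map $\bigoplus_{\lambda\in\Lambda,\ell^1}L^1(\Xi)\to L^1(\Xi)$, $\{g_\lambda\}\mapsto\sum_\lambda f_\lambda\ast g_\lambda$, has dense range iff its Banach-space adjoint $L^\infty(\Xi)\ni h\mapsto\{\beta_{-}(f_\lambda)\ast h\}_\lambda$ is injective (using $\langle f\ast g,h\rangle=\langle g,\beta_{-}(f)\ast h\rangle$ and the duality $L^1(\Xi)'=L^\infty(\Xi)$ valid for the modified $L^\infty$ of \cref{sec:Gelfand_Pettis}); since $\beta_{-}$ is an isometric involution that preserves regularity and intertwines the relevant convolutions, this injectivity is precisely (iii). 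The identical computation with $\mathcal T^1(\mathcal H)$ in place of $L^1(\Xi)$ and $\mathcal L(\mathcal H)$ in place of $L^\infty(\Xi)$ — pulling the trace against a fixed trace-class operator into a Gelfand integral via \cref{thm:Gelfand} — yields (ii)$\Leftrightarrow$(iv).

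It then remains to connect the block $\{$regular, (i), (iii)$\}$ to the block $\{$(ii), (iv)$\}$. For regular$\Rightarrow$(ii): given $B\in\mathcal T^1(\mathcal H)$ and $\varepsilon>0$, trace-norm continuity of $x\mapsto\alpha_{x}(B)$ together with an approximate identity provide $h\in L^1(\Xi)$ with $\|B-h\ast B\|_{\mathcal T^1}<\varepsilon$; by regularity (and the identity above) there is a finite sum with $\|h-\sum_j f_j\ast g_j\|_{L^1(\Xi)}$ as small as we please, and then $\big(\sum_j f_j\ast g_j\big)\ast B=\sum_j f_j\ast(g_j\ast B)\in S\ast\mathcal T^1(\mathcal H)$ approximates $h\ast B$, hence $B$. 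For (iv)$\Rightarrow$regular: if $S$ is not regular, choose $\xi_0\in\widehat{\Xi}$ with $\mathcal F_\sigma(f)(\xi_0)=0$ for every $f\in S$; the CCR relations give $f\ast U_{\xi_0}=\big(\int_\Xi f(y)\sigma(y,\xi_0)\,\d y\big)U_{\xi_0}=\mathcal F_\sigma(f)(\xi_0)\,U_{\xi_0}=0$ for all $f\in S$, while $U_{\xi_0}\neq0$, contradicting (iv). Assembling: regular$\Leftrightarrow$(i)$\Leftrightarrow$(iii), and regular$\Rightarrow$(ii)$\Leftrightarrow$(iv)$\Rightarrow$regular, so all five statements are equivalent.

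The real content of the result is carried by \cref{thm:wienerfunctions}; what is left is organisational. I expect the only mildly delicate points to be (a) keeping the involution $\beta_{-}$ straight in the duality arguments — one must check that $\beta_{-}(S)$ is regular iff $S$ is and that (iii), (iv) are invariant under $S\mapsto\beta_{-}(S)$ — and (b) the non-second-countable technicalities: existence of approximate identities, the duality $L^1(\Xi)'=L^\infty(\Xi)$ with the modified $L^\infty$, the Bochner/Gelfand manipulations, and the $\sigma$-compactness of supports (\cref{lemma:Deitmar}, \cref{lemma1}) that keeps Fubini and the integral identities legitimate; but all of these have already been set up in Sections \ref{sec:repr} and \ref{sec:Gelfand_Pettis}.
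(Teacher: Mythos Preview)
Your proposal is correct and follows precisely the approach the paper intends: the paper itself omits the proof, noting only that it ``is now very similar to that of the previous result'' (\cref{thm:wienerappr_ops}), and your argument is exactly that adaptation---the same circle of implications with $L^1(\Xi)$ in place of $\mathcal T^1(\mathcal H)$, the duality trick for (i)$\Leftrightarrow$(iii) and (ii)$\Leftrightarrow$(iv), the approximate-identity step for regular$\Rightarrow$(ii), and the computation $f\ast U_{\xi_0}=\mathcal F_\sigma(f)(\xi_0)\,U_{\xi_0}$ for (iv)$\Rightarrow$regular. The technical caveats you flag are the right ones and are indeed already handled in Sections~\ref{sec:repr} and~\ref{sec:Gelfand_Pettis}.
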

The proof is now very similar to that of the previous result and omitted.

There are analogous results regarding $p$-regular families of operators for $p = 2, \infty$. For $G = \mathbb R^n$, they have been proven in \cite{Kiukas_Lahti_Schultz_Werner2012}, see also \cite[Section 7]{luef_skrettingland2018}. The analogous statements can be made for arbitrary $\Xi, m$ with the same modifications of the statements and proofs as discussed for the case $p = 1$ above. Since $p = 1$ is by far the most important case, we defer from presenting the other cases here. Let us end this section by discussing the known results on the existence of regular operators.

\begin{rem}
\begin{enumerate}
\item As already noted above, $\mathcal F_U(A)$ is supported on a $\sigma$-compact subset of $\widehat{\Xi}$. Hence, the search for a regular family consisting only of one element can only be successful provided $\Xi$ itself is $\sigma$-compact, which is for example given if $\Xi = G \times \widehat{G}$, where $G$ is second countable. 
\item The formula $\mathcal F_U(\varphi \otimes \psi)(\xi) = \langle \varphi, U_\xi \psi\rangle$, which follows right from the definition, can be useful when searching for regular operators. To give an example, when $G = \mathbb R$, $\Xi = G \times \widehat{G}$ and $U_{(x,\xi)} = M_\xi T_x$ this proves that $\varphi \otimes \varphi$ is regular for $\varphi(x) = e^{-x^2}$. 
\item When $\Xi = \Xi_1 \times \dots \times \Xi_n$ and $U$ being the tensor product of the projective representations $U_1, \dots, U_n$, we have for $A = A_1 \otimes \dots \otimes A_n \in \mathcal T^1(\mathcal H)$, $\mathcal H = \mathcal H_1 \otimes \dots \otimes \mathcal H_n$:
\begin{align*}
    \mathcal F_U(A)(\xi) = \mathcal F_{U_1}(A_1)(\xi_1) \cdot \dots \cdot \mathcal F_{U_n}(A_n)(\xi_n)
\end{align*}
in suggestive notation. This can be used to build regular operators on groups with a product structure.
\item If $A \in \mathcal T^1(L^2(G))$ is regular, then $\mathcal F A \mathcal F^{-1} \in \mathcal T^1(L^2(\widehat{G}))$ is regular with respect to the dual representation, since
\begin{align*}
    \mathcal F_U(A)(x, \xi) &= \tr(AU_{(x, \xi)}^\ast) = \tr(\mathcal F AU_{(x, \xi)}^\ast\mathcal F^{-1}) = \tr( \mathcal F A \mathcal F^{-1} \widehat{U}_{(\xi, x)}^\ast) \\
    &= \mathcal F_{\widehat{U}}(\mathcal F A \mathcal F^{-1})(\xi, x).
\end{align*}
More generally, regular families on $G$ can be transformed into regular families on $\widehat{G}$ in this way.
\item In \cite{Kiukas_Lahti_Schultz_Werner2012} a regular operator on $\ell^2(\mathbb Z)$ was given: For a constant $c \in \mathbb C$, $0 < |c| < 1$, set
\begin{align*}
    \psi(n) = \begin{cases}
            c^n, \quad &n \geq 0,\\
            0, \quad &n < 0.
    \end{cases}
\end{align*}  
Then, it is for $(k, e^{i\theta}) \in \mathbb Z \times \mathbb T$ with $k > 0$:
\begin{align*}
    \langle \psi, U_{(k, e^{i\theta})}\psi\rangle_{L^2(\mathbb Z)} &= \frac{1}{c^k} \sum_{n=k}^\infty |c|^ne^{-i\theta n} = \frac{1}{c^k |c|^k}e^{i\theta k} \frac{1}{1-|c|e^{-i\theta}} \neq 0.
\end{align*}
For $k = 0$ it is:
\begin{align*}
    \langle \psi, U_{(0, e^{i\theta})} \psi\rangle_{L^2(G)} = \frac{1}{1-|c|e^{-i\theta}} \neq 0.
\end{align*}
If $k < 0$, then finally:
\begin{align*}
    \langle \psi, U_{(k, e^{i\theta})}) \psi \rangle_{L^2(G)} = \frac{1}{c^k} \frac{1}{1-|c|e^{-i\theta}} \neq 0.
\end{align*}
Hence $\psi \otimes \psi$ is regular.
\item The span of the positive rank one operators is dense in $\mathcal T^1(\mathcal H)$, hence
\begin{align*}
    \{ \varphi \otimes \varphi: ~\varphi \in \mathcal H, \| \varphi\|_{\mathcal H} = 1\}
\end{align*}
is a regular family. Upon dividing $\{ \varphi \in \mathcal H: ~\| \varphi\| = 1\}$ by the equivalence relation $\varphi \sim \psi \Leftrightarrow \varphi = cU_{x}\psi$, some $c \in \mathbb C$, $|c| = 1$, and some $x \in \Xi$, and picking one representative $\varphi_\mu \in \mu \in \{ \varphi \in \mathcal H: \| \varphi\| = 1\}/\sim$ for each equivalence class $\mu$, the family $\{ \varphi_\mu \otimes \varphi_\mu\}_\mu$ is still a regular family. While not particularly useful in practice, this is probably the best one could hope for on this level of generality.
\item In particular, $\mathcal T^1(\mathcal H) \ast \mathcal T^1(\mathcal H)$ is dense in $L^1(\Xi)$.
\end{enumerate}
\end{rem}

\section{Extending the theory to coorbit spaces}\label{sec:coorbit}
So far, we only considered operators acting on the Hilbert space $\mathcal H$. Indeed, it has turned out very fruitful to consider the methods of QHA for operators acting on other (Banach) spaces, which arise naturally. While some parts of the theory should only be expected to work out in the Hilbert space case (such as Bochner's theorem or the result on positive correspondence rules), it is Wiener's approximation theorem for operators which has found some important applications in these more general settings (cf.\ \cite{fulsche2020correspondence, Fulsche2022}). We will consider the extension of the theory towards operators acting on the coorbit spaces $\operatorname{Co}_p(U)$ (assuming that the projective representation $U$ of $\Xi$ is integrable). For a general perspective on coorbit spaces, we refer to the nice recent survey \cite{Berge2022} and references therein, as well as the original literature on the matter \cite{Feichtinger_Grochenig1989}. Nevertheless, we want to mention that most works on coorbit spaces work only with ordinary unitary representations (i.e., where the multiplier equals $1$), with some notable exceptions such as \cite{Christensen1996}. Essentially all basic results concerning this theory extend to projective unitary representations with straightforward modifications.

It is our main goal to set up the convolutions of functions and operators, as well as their basic mapping properties, and further discuss Wiener's approximation theorem for operators. We will not always give full proofs, as many are very similar to the Hilbert space case, and also omit some details on the constructions.

We assume that $\Xi$ and $m$ are such that the projective representation $U$, acting on the Hilbert space $\mathcal H$, is integrable. Fix an integrable element $0 \neq \varphi_0\in \mathcal H$, i.e., $\int_\Xi |\langle \varphi_0, U_x \varphi_0\rangle|~dx < \infty$ (time-frequency analysts refer to this $\varphi_0$ as the \emph{window function}). Then, the subspace
\begin{align*} 
\operatorname{Co}_1(U) := \{ f \in \mathcal H: ~[ x \mapsto \langle f, U_x \varphi_0 \rangle ] \in L^1(\Xi)\}
\end{align*}
is dense in $\mathcal H$. The expression
\begin{align*}
    \| f\|_{1, \varphi_0} := \int_\Xi |\langle f, U_x \varphi_0\rangle|~dx
\end{align*}
defines a complete norm on $\operatorname{Co}_1(U)$, and different choices of $\varphi_0$ give rise to equivalent norms. Since
\begin{align*}
    \| f\|^2_{\mathcal H} \| \varphi_0\|_{\mathcal H}^2 &= \int_{\Xi} |\langle f, U_x \varphi_0\rangle|^2~dx \leq \sup_{y \in \Xi} |\langle f, U_y \varphi_0\rangle| \int_\Xi |\langle f, U_x \varphi_0\rangle|~dx\\
    &\leq \| f\|_{\mathcal H} \| \varphi_0\|_{\mathcal H} \| f\|_{1, \varphi_0},
\end{align*}
we see that the dense embedding $\operatorname{Co}_1(U) \hookrightarrow \mathcal H$ is also continuous. In particular, if we let $\operatorname{Co}_\infty(U) := \operatorname{Co}_1(U)'$, then $\mathcal H \hookrightarrow \operatorname{Co}_\infty(U)$ continuously. For any $f \in \operatorname{Co}_\infty(U)$ we let $\mathcal W_{\varphi_0}(f)(x) = \langle f, U_x \varphi_0\rangle$ in the sense of the dual pairing denote the wavelet transform of $f$ (with window $\varphi_0$). Then, for $p \in [1, \infty]$:
\begin{align*}
    \operatorname{Co}_p(U) := \{ f \in \operatorname{Co}_\infty(U): \mathcal W_{\varphi_0}(f) \in L^p(\Xi)\}. 
\end{align*}
Upon endowing $\operatorname{Co}_p(U)$ with the norm $\| f\|_{p, \varphi_0} = \| \mathcal W_{\varphi_0}(f)\|_{L^p}$, it turns into a Banach space, and different windows give rise to equivalent norms. The spaces $\operatorname{Co}_p(U)$, the \emph{coorbit spaces}, are independent of the choice of the window function $\varphi_0$. They satisfy some basic properties:
\begin{lem}\label{lemma:prop_coorbit}
    Let $p, p', q\in [1, \infty]$ with  $p \leq q$ and $\frac{1}{p} + \frac{1}{p'} = 1$. Then, the following statements hold true:
    \begin{enumerate}
        \item $\operatorname{Co}_p(U) \hookrightarrow \operatorname{Co}_q(U)$ continuously. If $q < \infty$, the embedding is also densely.
        \item $\operatorname{Co}_2(U) = \mathcal H$.
        \item $\operatorname{Co}_p(U)' \cong \operatorname{Co}_{p'}(U)$ if $p < \infty$, where the duality is induced by the pairing
        \begin{align*}
            \langle f, g\rangle_{\varphi_0} = \int_\Xi \mathcal W_{\varphi_0}(f)(x) \overline{\mathcal W_{\varphi_0}(g)(x)}~dx, \quad f \in \operatorname{Co}_p(U), g \in \operatorname{Co}_{p'}(U). 
        \end{align*}
    \end{enumerate}
\end{lem}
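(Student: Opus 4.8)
The plan is to transport the whole statement to ordinary properties of the spaces $L^p(\Xi)$ by means of the wavelet transform $\mathcal W_{\varphi_0}$ and a reproducing formula. Normalize $\varphi_0$ so that $\|\varphi_0\|_{\mathcal H}=1$. The orthogonality relations of \Cref{thm:moyal} (with $\lambda=1$) say precisely that $\mathcal W_{\varphi_0}\colon\mathcal H\to L^2(\Xi)$ is an isometry intertwining $U$ with an $m$-twisted left regular representation, and a short computation using them yields the reproducing identity $\mathcal W_{\varphi_0}(f)=\mathcal W_{\varphi_0}(f)\,\natural\,K$ with $K:=\mathcal W_{\varphi_0}(\varphi_0)$, first for $f\in\mathcal H$ and then, through the duality defining $\operatorname{Co}_\infty(U)$, for all $f\in\operatorname{Co}_\infty(U)$; here $\natural$ is the $m$-twisted convolution whose phase factors have modulus $1$, so the version of Young's inequality proved above applies to $\natural$ verbatim. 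Integrability of $\varphi_0$ gives $K\in L^1(\Xi)$, and $|K(x)|=|\langle\varphi_0,U_x\varphi_0\rangle|\le1$ gives $K\in L^\infty(\Xi)$, hence $K\in L^r(\Xi)$ for every $r\in[1,\infty]$. From the reproducing identity one deduces the correspondence principle: $F\in L^p(\Xi)$ equals $\mathcal W_{\varphi_0}(f)$ for some $f\in\operatorname{Co}_p(U)$ if and only if $F=F\natural K$, and then $\mathcal W_{\varphi_0}$ is an isometric isomorphism of $\operatorname{Co}_p(U)$ onto the closed subspace $M_p:=\{F\in L^p(\Xi):F=F\natural K\}$. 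All Fubini interchanges needed for these identities are justified because the relevant wavelet transforms are supported in $\sigma$-compact subgroups, by \Cref{lemma1,lemma:Deitmar}.

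Granting this, part (2) is the special case $p=2$: $\mathcal W_{\varphi_0}\colon\mathcal H\to L^2(\Xi)$ is onto $M_2$ by the correspondence principle and isometric by \Cref{thm:moyal}, and conversely, if $f\in\operatorname{Co}_\infty(U)$ has $F:=\mathcal W_{\varphi_0}(f)\in L^2(\Xi)$, then $g:=\int_\Xi F(x)U_x\varphi_0\,\d x$ (a Bochner integral after truncating to a $\sigma$-compact subgroup and passing to the $L^2$-limit, or a weak$^\ast$ integral in $\mathcal H$) lies in $\mathcal H$ with $\mathcal W_{\varphi_0}(g)=F\natural K=F$, so $f=g\in\mathcal H$ by injectivity of $\mathcal W_{\varphi_0}$ on $\operatorname{Co}_\infty(U)$. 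For part (1), write $F=\mathcal W_{\varphi_0}(f)\in L^p(\Xi)$ with $F=F\natural K$; Young's inequality gives $\|F\|_{L^q}=\|F\natural K\|_{L^q}\le\|F\|_{L^p}\|K\|_{L^r}$ with $\tfrac{1}{r}=1-\tfrac{1}{p}+\tfrac{1}{q}\in(0,1]$ since $p\le q$, whence the continuous embedding $\operatorname{Co}_p(U)\hookrightarrow\operatorname{Co}_q(U)$. For density when $q<\infty$, since $\operatorname{Co}_1(U)\subseteq\operatorname{Co}_p(U)$ it suffices to show $\operatorname{Co}_1(U)$ is dense in $\operatorname{Co}_q(U)$: given $f$ with $F=\mathcal W_{\varphi_0}(f)\in L^q(\Xi)$, which is supported in a $\sigma$-compact subgroup $H$ by \Cref{lemma:Deitmar}, truncate by an increasing exhaustion of $H$ by compact sets to get $F_n\in L^1(\Xi)\cap L^q(\Xi)$ with $F_n\to F$ in $L^q$; then $F_n\natural K=\mathcal W_{\varphi_0}(g_n)$ with $g_n\in\operatorname{Co}_1(U)$ and $F_n\natural K\to F\natural K=F$ in $L^q$, so $g_n\to f$ in $\operatorname{Co}_q(U)$.

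For part (3) with $p=1$ the identity $\operatorname{Co}_1(U)'=\operatorname{Co}_\infty(U)$ holds by definition, and one need only verify that the abstract pairing is the claimed integral $\langle f,g\rangle_{\varphi_0}$. For $1<p<\infty$: a functional $\ell\in\operatorname{Co}_p(U)'$ corresponds via the isometry $\mathcal W_{\varphi_0}$ to a bounded functional on $M_p\subseteq L^p(\Xi)$, which extends by Hahn--Banach to $L^p(\Xi)$ and is thus represented by some $G\in L^{p'}(\Xi)$, using $L^p(\Xi)'=L^{p'}(\Xi)$ — for $p=1$ in the sense of the $L^\infty(\Xi)$ introduced in \Cref{sec:Gelfand_Pettis}. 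A symmetry of $K$ together with $|m|=1$ yields the adjunction $\int_\Xi(F\natural K)\overline{G}\,\d x=\int_\Xi F\,\overline{G\natural K}\,\d x$, so on $M_p$ the functional is also represented by $G\natural K\in M_{p'}$, which is moreover unique because $F=F\natural K$ for $F\in M_p$; transporting back through $\mathcal W_{\varphi_0}\colon\operatorname{Co}_{p'}(U)\xrightarrow{\ \sim\ }M_{p'}$ produces a unique $g\in\operatorname{Co}_{p'}(U)$ with $\ell(f)=\int_\Xi\mathcal W_{\varphi_0}(f)(x)\overline{\mathcal W_{\varphi_0}(g)(x)}\,\d x=\langle f,g\rangle_{\varphi_0}$ and $\|\ell\|$ comparable to $\|g\|_{p',\varphi_0}$, while independence of the window is inherited from that of the coorbit norms.

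The main obstacle is entirely contained in the first paragraph: establishing the reproducing formula and, with it, the well-definedness, injectivity, and isometric range description of $\mathcal W_{\varphi_0}$ on $\operatorname{Co}_\infty(U)$ in this projective, possibly non-second-countable framework. The cocycle phases turn the pertinent convolution into a twisted one, and the Fubini interchanges underlying the reproducing identity have to be handled through the $\sigma$-compactness arguments of \Cref{lemma1,lemma:Deitmar}; once that machinery is in place, parts (1)--(3) become soft consequences of the corresponding facts about $L^p(\Xi)$.
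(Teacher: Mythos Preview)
The paper does not give a proof of this lemma; it is stated as a collection of standard facts from coorbit theory, with the surrounding text pointing to the survey \cite{Berge2022} and the original work \cite{Feichtinger_Grochenig1989} for background, and noting that the projective case is handled with straightforward modifications. Your proposal is essentially a sketch of the standard coorbit-theoretic proof of these facts, transported to the projective setting: establish the twisted reproducing formula, identify $\operatorname{Co}_p(U)$ isometrically with the fixed-point subspace $M_p\subset L^p(\Xi)$ of the idempotent $F\mapsto F\natural K$, and then read off embeddings, density, and duality from the corresponding $L^p$-facts via Young's inequality and Hahn--Banach. This is exactly the machinery the paper sets up immediately after the lemma (the computation of $\mathcal W_{\varphi_0}\mathcal W_{\varphi_0}^\ast$ as a twisted convolution with $\mathcal W_{\varphi_0}(\varphi_0)$), so your approach is in line with what the authors have in mind; you are simply filling in details they chose to omit.
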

Since $\mathcal W_{\varphi_0}$ maps $\mathcal H$ injectively onto a closed subspace of $L^2(\Xi)$, $\mathcal W_{\varphi_0} \mathcal W_{\varphi_0}^\ast$ is the projection of $L^2(\Xi)$ onto the range of $\mathcal W_{\varphi_0}$. For $g \in L^2(\Xi)$ we have:
\begin{align}
   \mathcal W_{\varphi_0} \mathcal W_{\varphi_0}^\ast g(x) &= \langle \mathcal W_{\varphi_0}^\ast g, U_x \varphi_0\rangle_{L^2(\Xi)} = \langle g, \mathcal W_{\varphi_0} U_x \varphi_0\rangle_{L^2(\Xi)}\notag\\
   &= \int_\Xi g(y) \overline{\langle U_x \varphi_0, U_y \varphi_0\rangle_{\mathcal H}}~dy\notag \\
   &= \int_\Xi g(y) m(y, -y) \overline{\langle U_{-y}U_x \varphi_0, \varphi_0\rangle_{\mathcal H}}~dy\notag\\
   &= \int_\Xi g(y) m(y,-y) \overline{m(-y,x)} \mathcal W_{\varphi_0}(\varphi_0)(x-y)~dy\notag\\
   &= \int_\Xi g(y) \mathcal W_{\varphi_0}(\varphi_0)(x-y) \frac{m(y,-y)}{m(-y,x)}~dy.
\end{align}
This is clearly some form of twisted convolution, hence extends to a continuous projection $\mathcal W_{\varphi_0} \mathcal W_{\varphi_0}^\ast: L^p(\Xi) \to \mathcal W_{\varphi_0}(\operatorname{Co}_p(U))$. In particular, each space $\mathcal W_{\varphi_0} (\operatorname{Co}_p(U))$ is a closed subspace of $L^p(\Xi)$. 

Let us recall some facts from general Banach space theory. For a Banach space $X$ we denote by $\mathcal T^1(X)$ the class of \emph{nuclear operators} (see, e.g., \cite[Chapter 5]{Gohberg_Goldberg_Krupnik2000}. This is defined as the set of all bounded linear operators $A \in \mathcal L(X)$ such that there exists sequences $\varphi_n \in X, \psi_n \in X'$ with $\sum_{n=1}^\infty \| \varphi_n\|_X \| \psi_n\|_{X'} < \infty$ such that
\begin{align*}
    A = \sum_{n=1}^\infty \varphi_n \otimes \psi_n.
\end{align*}
This spaces $\mathcal T^1(X)$ is normed by taking the infimum over all admissible series $\sum_{n=1}^\infty \| \varphi_n\| \|\psi_n\|$. Further, this norm is complete. When $X$ is a Hilbert space, this class agrees with the class of all trace class operators. 

Recall that $X$ has the \emph{approximation property} when for every compact set $K \subset X$ and every $\varepsilon > 0$ there exists a finite rank operator $F$ with $\| x - Fx\| < \varepsilon$ for every $x \in K$. Provided $X$ has the property, the trace functional 
\begin{align*}
    \tr(A) := \sum_{n=1}^\infty \psi_n(\varphi_n)
\end{align*}
is well-defined and bounded on $A \in \mathcal T^1(\mathcal H)$. When the Banach space $X$ satisfies the approximation property and is reflexive, then the dual of the space of nuclear operators can be identified with $\mathcal L(X)$: Indeed, for every bounded linear functional $\Phi \in \mathcal T^1(X)'$, there exists a unique $A \in \mathcal L(X)$ such that $\Phi(N) = \tr(AN)$.

Certain classical spaces are well-known to satisfy the approximation property: Besides Hilbert spaces, all the $L^p$ belong to this class. Further, if $X$ is a Banach space with the approximation property and $Y \subset X$ a closed and complemented subspace (i.e., there exists a bounded projection from $X$ onto $Y$), then $Y$ has the approximation property as well. Since $L^p(\Xi)$ has the approximation property and the operator $\mathcal W_{\varphi_0} \mathcal W_{\varphi_0}^\ast$ extends to a continuous projection from $L^p(\Xi)$ to $\mathcal W_{\varphi_0}(\operatorname{Co}_p(U))$, each space $\mathcal W_{\varphi_0} (\operatorname{Co}_p(U))$ has the approximation property. Since $\mathcal W_{\varphi_0} (\operatorname{Co}_p(U))$ and $\operatorname{Co}_p(U)$ are isometrically equivalent, we obtain that each of the coorbit spaces $\operatorname{Co}_p(U)$ has the approximation property. Hence, we can consider nuclear operators and their traces on each of the coorbit spaces $\operatorname{Co}_p(U)$. 

Let us also recall some facts regarding tensor products of Banach spaces. As a general reference on the matter, see for example \cite{Ryan2002}. When $X$ and $Y$ are Banach spaces, one can consider the \emph{algebraic tensor product} $X \otimes Y$, i.e., the space of linear combinations of formal expressions $x \otimes y$, where $x \in X$ and $y \in Y$. This means, that the elements of $X \otimes Y$ are of the form $A = \sum_{j=1}^n a_j x_j \otimes y_j$, with $a_j \in \mathbb C$. Of course, there is no loss of generality in assuming that $a_j = 1$. On this space, one can consider the norm
\begin{align*}
    \| A \| = \inf \sum_{j=1}^n \| x_j\|_X \| y_j\|_Y,
\end{align*}
where the infimum ranges over all $x_j, y_j$ with $A = \sum_{j=1}^n x_j \otimes y_j$. Completing the algebraic tensor product with respect to this norm leads to the \emph{projective tensor product} $X \widehat{\otimes}_\pi Y$. When $Y = X'$, then every algebraic tensor $A = \sum_{j=1}^n x_j \otimes y_j$ yields a finite rank operator on $X$ acting as $A(\varphi) = \sum_{j=1}^n y_j(\varphi) \cdot x_j$. Hence, we can also embed the projective tensor product $X \widehat{\otimes}_\pi X'$ into $\mathcal T^1(X)$ by extending the above embedding. As a matter of fact, this embedding may fail to be injective. But when $X$ satisfies the approximation property, the natural map from $X \widehat{\otimes}_\pi X'$ to $\mathcal T^1(X)$ is an isomorphism (compare, e.g., \cite[Corollary 4.8]{Ryan2002}). In this case, when $Y_1 \subset X$ and $Y_2 \subset X'$ are subspaces, possibly endowed with a different complete norm but continuously embedded, we can naturally embed $Y_1 \widehat{\otimes}_\pi Y_2$ into $\mathcal T^1(X)$, where the embedding is injective and continuous.

We now come back to coorbit spaces. The operators $U_x^\ast$ leave the space $\operatorname{Co}_1(U)$ invariant and further $x \mapsto U_x^\ast$ is strongly continuous on $\operatorname{Co}_1(U)$. Hence, by duality, $x \mapsto U_x$ acts weakly continuous on $\operatorname{Co}_\infty(U)$. It then leaves each of the spaces $\operatorname{Co}_p(U)$ invariant and acts isometrically on each of them. If $p < \infty$, it also acts strongly continuous on the space. Similarly, one extends the operator $R$ to each of the coorbit spaces. $R$ acts isometrically on each of the spaces $\operatorname{Co}_p(U)$ with respect to the norm $\| \cdot\|_{p, \varphi_0}$ if and only if $R \varphi_0 = \varphi_0$, which we assume in the following.

We now want to imitate the construction of operator convolutions. For this, we define the shift of an operator $A \in \mathcal L(\operatorname{Co}_p(U))$ as
\begin{align*}
    \alpha_x(A) = U_x A U_x^\ast, \quad x \in \Xi.
\end{align*}
For obtaining suitable continuity properties of the map $x \mapsto \alpha_x(A)$, one needs the strong continuity of $x \mapsto U_x$ both on $\operatorname{Co}_p(U)$ and on the dual space. Since this is not given on $\operatorname{Co}_\infty(U)$, one runs into problems when wanting to develop the theory on $\mathcal L(\operatorname{Co}_1(U))$. This can be overcome by different means. One of them would be the following: Let us denote
\begin{align*}
    \operatorname{Co}_{\infty, 0}(U) = \{ f \in \operatorname{Co}_\infty(U): ~\mathcal W_{\varphi_0}(f) \in C_0(\Xi)\}.
\end{align*}
Then, this is a closed subspace of $\operatorname{Co}_\infty(U)$ on which $x \mapsto U_x$ acts strongly continuous. The crucial fact is now that $\operatorname{Co}_{\infty, 0}(U)' \cong \operatorname{Co}_1(U)$. One can now either work with operators $A \in \mathcal L(\operatorname{Co}_{\infty, 0}(U))$ or with operators $A \in \operatorname{Co}_1(U) \widehat{\otimes}_{\pi} \operatorname{Co}_{\infty, 0}(U) \subset \mathcal T^1(\operatorname{Co}_1(U))$. In the special case $\Xi = \mathbb C^n$, such ideas have been used in \cite{Fulsche2022}. For simplicity, we will assume in the following that $p \in (1, \infty)$, even though similar results can be shown with a little more effort for the boundary cases. Nevertheless, we want to emphasize that this extra effort is not only about making proper definitions of the convolutions, which is actually only a minor inconvenience. Indeed, the main theorem of this section, Theorem \ref{thm:wienerappr_ops_pdependent}, is simply wrong when stated like this for $p = \infty$. An idea of the necessary modifications can be obtained from the recent paper \cite{Fulsche2022}, where such issues have been discussed for $\Xi = \mathbb R^{2n}$.

Therefore, we now fix $p \in (1, \infty)$. We set, as usual, for $A \in \mathcal T^1(\operatorname{Co}_p(U))$ and $f \in L^1(\Xi)$:
\begin{align*}
    f \ast A := \int_\Xi f(x) \alpha_x(A).
\end{align*}
As in the Hilbert space case, this defines an element of $\mathcal T^1(\operatorname{Co}_p(U))$ with $\| f\ast A\|_{\mathcal T^1} \leq \| f\|_{L^1} \| A\|_{\mathcal T^1}$ as well as $\tr(f \ast A) = \int f~dx \tr(A)$ (where the trace denotes the nuclear trace). Similarly, we set again for $A, B \in \mathcal T^1(\operatorname{Co}_p(U))$:
\begin{align*}
    A \ast B (x) = \tr(A \alpha_x(\beta_-(B))).
\end{align*}
Here, we of course have $\beta_-(B) := R B R$. The convolution of two operators, $A \ast B$, also clearly extends to $A \in \mathcal T^1(\operatorname{Co}_p(U))$ and $B \in \mathcal L(\operatorname{Co}_p(U))$, yielding a bounded and uniformly continuous function on $\Xi$. Most properties of these convolutions are analogous to the Hilbert space case. The only exception to this is the convolution of two trace class operators. Certainly, the mapping properties of the convolution operator between two operators is of interest. We will not pursue a general discussion here, but restrict to the simplest case. For doing so, we will consider certain tensor product spaces. Here, we will identify the tensor $f\otimes g$ ($f \in \operatorname{Co}_p(U)$ and $g \in \operatorname{Co}_q(U))$) always with the rank one operator $(f \otimes g)(h) = \langle h, g\rangle_{\varphi_0} f$ ($h \in \operatorname{Co}_p(U)$), where $\varphi_0$ is a fixed window function and $\langle \cdot, \cdot \rangle_{\varphi_0}$ is the extension of the Hilbert space inner product, inducing the duality between $\operatorname{Co}_p(U)$ and $\operatorname{Co}_q(U)$, as in \ref{lemma:prop_coorbit}. In particular, the tensor $f \otimes g$ is linear in the first and anti-linear in the second entry.

We will identify the set of nuclear operators $\mathcal T^1(\operatorname{Co}_p(U))$ with the projective tensor product $\operatorname{Co}_p(U) \widehat{\otimes}_\pi \operatorname{Co}_q(U)$ (which is possible since the coorbit spaces have the approximation property). Further, we will consider the projective tensor product $\operatorname{Co}_1(U) \widehat{\otimes}_\pi \operatorname{Co}_1(U)$ as a subspace of $\mathcal T^1(\operatorname{Co}_p(U))$, which is possible since $\operatorname{Co}_1(U)$ continuously embeds into $\operatorname{Co}_p(U)$ and $\operatorname{Co}_q(U)$.

\begin{lem}\label{lem:conv_cont_co}
    Let $A \in \operatorname{Co}_1(U) \widehat{\otimes}_\pi \operatorname{Co}_1(U)$ and $B\in\operatorname{Co}_p(U) \widehat{\otimes}_\pi \operatorname{Co}_q(U)$. Then $A \ast B \in L^1(\Xi)$. Further, 
    \begin{align*}
        \| A \ast B\|_{L^1} \lesssim \| A\|_{\operatorname{Co}_1(U) \widehat{\otimes}_\pi \operatorname{Co}_1(U)} \| B\|_{\operatorname{Co}_p(U) \widehat{\otimes}_\pi \operatorname{Co}_q(U)}
    \end{align*}
    and
    \begin{align*}
        \int_\Xi A \ast B (x) ~dx = \tr(A) \tr(B).
    \end{align*}
\end{lem}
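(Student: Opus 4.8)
The plan is to reduce everything to the rank-one case, exactly as in the proof of Proposition~\ref{prop:thirdconv}, and then exploit the factorization $\operatorname{Co}_1(U) \hookrightarrow \operatorname{Co}_p(U)$, $\operatorname{Co}_1(U) \hookrightarrow \operatorname{Co}_q(U)$ together with Moyal's identity from Theorem~\ref{thm:moyal} (which applies on $\mathcal H = \operatorname{Co}_2(U)$). First I would write $A = \sum_i \varphi_i \otimes \psi_i$ with $\varphi_i, \psi_i \in \operatorname{Co}_1(U)$ and $\sum_i \|\varphi_i\|_{1,\varphi_0}\|\psi_i\|_{1,\varphi_0} < \infty$, and $B = \sum_j f_j \otimes g_j$ with $f_j \in \operatorname{Co}_p(U)$, $g_j \in \operatorname{Co}_q(U)$ and $\sum_j \|f_j\|_{p,\varphi_0}\|g_j\|_{q,\varphi_0} < \infty$. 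By bilinearity and continuity of $\ast$ it suffices to treat $A = \varphi \otimes \psi$ and $B = f \otimes g$, and then sum. For such rank-one operators one computes, just as in the Hilbert space case, that
\begin{align*}
    A \ast B(x) = \tr\big((\varphi \otimes \psi)\, U_x R (f \otimes g) R U_x^\ast\big) = \langle R U_x^\ast \varphi, g\rangle_{\varphi_0}\, \overline{\langle R U_x^\ast \psi, f \rangle_{\varphi_0}},
\end{align*}
where all pairings are the $\varphi_0$-duality pairings, using $U_x^\ast = U_{-x}$ up to the unimodular scalar (which cancels) and $R^\ast = R$.

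The key point is then the $L^1$-estimate on each such product. Since $\varphi, \psi \in \operatorname{Co}_1(U)$ and $R$ acts isometrically on $\operatorname{Co}_1(U)$ (we assumed $R\varphi_0 = \varphi_0$), the functions $x \mapsto \mathcal W_{\varphi_0}(R U_x^\ast \varphi)$ lie in $L^1(\Xi)$ with controlled norm; more directly, by Hölder with exponents $p$ and $q$,
\begin{align*}
    \int_\Xi |A \ast B(x)|~dx \leq \left(\int_\Xi |\langle R U_x^\ast \varphi, g\rangle_{\varphi_0}|^{?}\right)\cdots,
\end{align*}
but the clean route is: pair $R\varphi \in \operatorname{Co}_1(U)$ against $g \in \operatorname{Co}_q(U)$ and $R\psi \in \operatorname{Co}_1(U)$ against $f \in \operatorname{Co}_p(U)$. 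The function $x \mapsto \langle U_x^\ast R\varphi, g\rangle_{\varphi_0}$ is, up to the reproducing-kernel identity, a twisted convolution of $\mathcal W_{\varphi_0}(R\varphi) \in L^1$ with $\mathcal W_{\varphi_0}(g) \in L^q$, hence lies in $L^q(\Xi)$ with norm $\lesssim \|\varphi\|_{1,\varphi_0}\|g\|_{q,\varphi_0}$; similarly the other factor lies in $L^p(\Xi)$ with norm $\lesssim \|\psi\|_{1,\varphi_0}\|f\|_{p,\varphi_0}$. Hölder's inequality with exponents $p, q$ then gives $A\ast B \in L^1(\Xi)$ with $\|A \ast B\|_{L^1} \lesssim \|\varphi\|_{1,\varphi_0}\|\psi\|_{1,\varphi_0}\|f\|_{p,\varphi_0}\|g\|_{q,\varphi_0}$. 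Summing over $i, j$ and taking infima over representations yields the claimed norm bound with $\|A\|_{\operatorname{Co}_1 \widehat\otimes_\pi \operatorname{Co}_1}\|B\|_{\operatorname{Co}_p \widehat\otimes_\pi \operatorname{Co}_q}$.

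For the trace identity, again reduce to rank-one $A = \varphi\otimes\psi$, $B = f\otimes g$. Here I would like to invoke Moyal's formula $\int_\Xi \langle U_x \xi_1, \eta_1\rangle \overline{\langle U_x \xi_2, \eta_2\rangle}~dx = \langle \xi_1, \xi_2\rangle\overline{\langle \eta_1, \eta_2\rangle}$, which holds for vectors in $\mathcal H$; since $\operatorname{Co}_1(U) \subset \mathcal H$ and the coorbit pairings restrict to the Hilbert inner product on $\mathcal H$, the computation
\begin{align*}
    \int_\Xi A \ast B(x)~dx = \int_\Xi \langle U_x R f, \psi\rangle \overline{\langle U_x R g, \varphi\rangle}~dx = \langle R f, Rg\rangle \cdot \overline{\langle \psi, \varphi \rangle}^{\,?}
\end{align*}
goes through verbatim as in Proposition~\ref{prop:thirdconv}, giving $\tr(A)\tr(B)$ once one checks $R$ is unitary on $\mathcal H$ so $\langle Rf, Rg\rangle = \langle f, g\rangle$. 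The subtlety — and the step I expect to be the main obstacle — is a measure-theoretic/justification issue rather than an algebraic one: one must ensure that the relevant functions $x \mapsto \langle U_x^\ast R\varphi, g\rangle_{\varphi_0}$ are genuinely defined pointwise (not just as $L^q$-classes) and supported on a $\sigma$-compact set, so that Fubini–Tonelli and the rearrangement of the double sum $\sum_{i,j}\int$ are legitimate; this is handled by Lemma~\ref{lemma1} and Lemma~\ref{lemma:Deitmar} exactly as elsewhere in the paper, noting that $g, f$ lie in $L^q, L^p$ and hence are supported in $\sigma$-compact subgroups. With that in place, monotone/dominated convergence lets one interchange $\int_\Xi$ with $\sum_{i,j}$ and conclude $\int_\Xi A\ast B~dx = \big(\sum_i \langle\varphi_i,\psi_i\rangle\big)\big(\sum_j\langle f_j, g_j\rangle\big) = \tr(A)\tr(B)$.
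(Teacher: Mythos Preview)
Your approach is essentially the same as the paper's: reduce to rank-one tensors, apply H\"older with exponents $p,q$, rewrite the resulting coefficient functions as twisted convolutions of wavelet transforms (the paper derives this via Moyal, you via the reproducing-kernel/duality pairing, but the outcome is identical), and then invoke Young's inequality for twisted convolutions to get the norm bound.

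One point needs more care: in the trace identity step you invoke Moyal's formula, justifying it by ``$\operatorname{Co}_1(U)\subset\mathcal H$''. That takes care of $\varphi,\psi$, but \emph{not} of $f\in\operatorname{Co}_p(U)$ and $g\in\operatorname{Co}_q(U)$, which need not both lie in $\mathcal H=\operatorname{Co}_2(U)$ when $p\neq 2$. The paper handles this by first restricting to $f\in\operatorname{Co}_p(U)\cap\mathcal H$, $g\in\operatorname{Co}_q(U)\cap\mathcal H$ and then passing to the general case by density, using the already-established $L^1$ continuity of $(A,B)\mapsto A\ast B$; you should insert the same density step before applying Moyal.
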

We want to mention that the norms on the right-hand side of the estimate in the above lemma are subject to the choice of a suitable window function, explaining the use of the notation ``$\lesssim$''. 
\begin{proof}[Proof of Lemma \ref{lem:conv_cont_co}]
    We first assume that both $A$ and $B$ are of rank one, i.e.\ $A = \varphi_1 \otimes \varphi_2$ with $\varphi_j \in \operatorname{Co}_1(U)$ and $B = f \otimes g$. By density, it will be sufficient to prove the result for $f \in \operatorname{Co}_p(U) \cap \mathcal H$ and $g \in \operatorname{Co}_q(U) \cap \mathcal H$.

    First, we note that $A \ast B(x) = \langle U_x Rf, \varphi_2\rangle \langle \varphi_1, U_x Rg\rangle$ such that
    \begin{align*}
        \int_\Xi |A \ast B(x)|~dx &= \int_\Xi | \langle U_x Rf, \varphi_2\rangle  \ \langle\varphi_1, U_x R g\rangle|~dx\\
        &= \int_\Xi | \langle U_{-x} f, R \varphi_2\rangle \ \langle R \varphi_1, U_{-x} g\rangle|~dx\\
        &= \int_\Xi |\langle U_x f, R \varphi_2\rangle \ \langle R \varphi_1, U_x g\rangle|~dx\\
        &\leq \left [ \int_\Xi |\langle U_x f, R \varphi_2 \rangle|^p ~dx \right ]^{1/p} \left [ \int_\Xi | \langle R \varphi_1 ,U_x g\rangle|^q ~dx \right ]^{1/q}\\
        &= \| \mathcal W_{R\varphi_2} f\|_{L^p} \| \mathcal W_{R\varphi_1} g\|_{L^q}.
        \end{align*}
        Estimating this is now, in essence, a twisted version of the proof of \cite[Theorem 3.5]{Berge2022}. We give some details for convenience. Fix a window function $0 \neq \varphi_0 \in \operatorname{Co}_1(U)$ such that $R\varphi_0 = \varphi_0$ and $\| \varphi_0\|_{\mathcal H} = 1$. We have, using the relations from \ref{thm:moyal}:
        
        \begin{align*}
        \mathcal W_{R\varphi_2}f(x) &= \langle f, U_x \varphi_2\rangle \langle \varphi_0, \varphi_0\rangle\\
        &=\int_\Xi \langle U_y f, \varphi_0\rangle \overline{\langle U_y U_x R\varphi_2, \varphi_0\rangle}~dx\\
        &= \int_\Xi \langle f, U_y^\ast \varphi_0\rangle \overline{\langle R \varphi_2, U_{y+x}^\ast \varphi_0\rangle} \ \overline{m(y,x)}~dy\\
        &= \int_\Xi \frac{m(y+x,-y-x)}{m(y,-y)m(y,x)} \langle f, U_{-y} \varphi_0\rangle \overline{\langle R \varphi_2, U_{-y-x} \varphi_0\rangle}~dy\\
        &= \int_\Xi \frac{m(x-y, y-x)}{m(-y,y)m(-y,x)} \langle f, U_y \varphi_0\rangle \overline{\langle R \varphi_2, U_{y-x} \varphi_0\rangle}~dy\\
        &= \int_\Xi \frac{m(x-y, y-x)}{m(-y,y)m(-y,x)} \mathcal W_{\varphi_0}(f)(y) \overline{\mathcal W_{\varphi_0}(\varphi_2)(x-y)}~dy\\
        &= [\mathcal W_{\varphi_0}(f) \ast_m' \overline{\mathcal W_{\varphi_0}(\varphi_2)} ](x).
        \end{align*}
        Here, $\psi_1 \ast_m' \psi_2(x)$ denotes the twisted convolution
        \begin{align*}
            \psi_1 \ast_m' \psi_2(x) &= \int_\Xi \frac{m(x-y, y-x)}{m(-y,y)m(-y,x)} \psi_1(y) \psi_2(x-y)~dy.
        \end{align*}
        This twisted convolution clearly satisfies estimates of the form $\| \psi_1 \ast_m' \psi_2\|_{L^p} \leq \| \psi_1\|_{L^p} \| \psi_2\|_{L^1}$ such that we arrive at:
        \begin{align*}
            \| \mathcal W_{R\varphi_2} f\|_{L^p} &= \| \mathcal W_{\varphi_0}(f) \ast_m' \overline{\mathcal W_{\varphi_0}(\varphi_2)}\|_{L^p}\\
            &\leq \| \mathcal W_{\varphi_0}(f)\|_{L^p} \| \mathcal W_{\varphi_0}(\varphi_2)\|_{L^1}\\
            &= \| f\|_{p, \varphi_0} \| \varphi_2\|_{1, \varphi_1}.
            \end{align*}
Analogously, one estimates $\| \mathcal W_{R\varphi_1}(g)\|_{L^q}$. Hence, we have seen that:
\begin{align*}
    \int_\Xi |(\varphi_1 \otimes \varphi_2) \ast (f \otimes g)(x)|~dx \leq \| \varphi_1\|_{1, \varphi_0} \| \varphi_2\|_{1, \varphi_0} \| f\|_{p, \varphi_0} \| g\|_{q, \varphi_0}
\end{align*}
From here, it is standard to obtain the estimate
\begin{align*}
\int_\Xi |A \ast B(x)|~dx \lesssim \| A\|_{\operatorname{Co}_1(U) \widehat{\otimes}_\pi \operatorname{Co}_1(U)} \| B\|_{\operatorname{Co}_p(U) \widehat{\otimes}_\pi \operatorname{Co}_q(U)}
\end{align*}
for arbitrary $A$, $B$ as in the hypothesis of the lemma. The equality $\int_\Xi A \ast B(x)~dx = \tr(A) \tr(B)$ is then also obtained by density.
\end{proof}
Having established the previous result, it now also possible to define the convolution $A \ast f$ with $A \in \operatorname{Co}_1(U) \widehat{\otimes}_\pi \operatorname{Co}_1(U)$ and $f \in L^\infty(\Xi)$ weakly: Given any $N \in \mathcal T^1(\operatorname{Co}_p(U))$, we have:
\begin{align*}
    \langle f \ast A, N\rangle_{tr} &= \int_\Xi f(z)~ \beta_-(A) \ast N(z)~dz,\\
    |\langle f \ast A, N\rangle_{tr}| \leq \| f\|_\infty \| \beta_-(A) \ast N\|_{L^1} &\lesssim \| f\|_\infty \| A\|_{\operatorname{Co}_1(U) \widehat{\otimes}_\pi \operatorname{Co}_1(U)} \| B\|_{\mathcal T^1(\operatorname{Co}_p(U))}.
\end{align*}

We remark that we know from the Hilbert space case that the convolution operation maps $\operatorname{Co}_2(U) \widehat{\otimes}_\pi \operatorname{Co}_2(U) \times \operatorname{Co}_2(U) \widehat{\otimes}_\pi \operatorname{Co}_2(U)$ continuously to $L^1(\Xi)$. By applying complex interpolation methods, one obtains a larger class of tensor products which give rise to $L^1$ convolutions.

As already indicated earlier, one can now extend the convolution to allow one of the factors to be either in $L^\infty(\Xi)$ or in $\mathcal L(\operatorname{Co}_p(U))$. Then, we obtain:
\begin{align*}
    \| f\ast A\|_{op} &\leq \| f\|_{L^1} \| A\|_{op},\\
    \| f \ast A\|_{op} &\leq \| f\|_\infty \| A\|_{\mathcal T^1},\\
    \| A \ast B\|_{\infty} &\leq \| A\|_{\mathcal T^1} \| B\|_{op}.
\end{align*}
For this notion of convolution, we only want to emphasize that the convolution of three operators, $A \ast B \ast C$ with $A \in \operatorname{Co}_1(U) \widehat{\otimes}_\pi \operatorname{Co}_1(U), B \in \operatorname{Co}_p(U) \widehat{\otimes}_\pi \operatorname{Co}_q(U)$ and $C \in \mathcal L(\operatorname{Co}_p(U))$, is still associative: $(A \ast B) \ast C = A \ast (B \ast C)$. This can be proven as in the Hilbert space case and has important consequences concerning the correspondence theory described in \cite{Werner1984}. Discussing this is beyond the scope of this paper and will be picked up again in future work.

While certain natural results on the convolutions of operators on $\operatorname{Co}_p(U)$ fail (e.g., one should not expect to obtain a version of Bochner's theorem), it turns out that Wiener's theorem for operators carries over to this setting. This has turned out to be particularly useful in the first named authors' previous works \cite{fulsche2020correspondence, Fulsche2022}, hence we will give the result here in this general setting.  Note that $U_x^\ast \in \mathcal L(\operatorname{Co}_p(U))$ for any $p \in (1, \infty)$, hence $\mathcal F_U(A)(\xi) = \tr(AU_\xi^\ast)$ is well-defined for any $A \in \mathcal T^1(\operatorname{Co}_p(U))$ and $\xi \in \widehat{\Xi}$.
\begin{thm}\label{thm:wienerappr_ops_pdependent}
    Let $\mathcal R \subset \operatorname{Co}_1(U) \widehat{\otimes}_\pi \operatorname{Co}_1(U)$. Then, the following are equivalent:
    \begin{enumerate}[(i)]
        \item $\operatorname{span} \{ \alpha_x(A): ~x \in \Xi, A \in \mathcal R\}$ is dense in $\mathcal T^1(\operatorname{Co}_p(U))$.
        \item $L^1(\Xi) \ast \mathcal R$ is dense in $\mathcal T^1(\operatorname{Co}_p(U))$.
        \item $\mathcal T^1(\operatorname{Co}_p(U)) \ast \mathcal R$ is dense in $L^1(\Xi)$.
        \item $\{ A \ast A: ~A \in \mathcal R\}$ is a regular family in $L^1(\Xi)$.
        \item $\cap_{A \in \mathcal R} \{ \xi \in \widehat{\Xi}: ~\mathcal F_U(A)(\xi) = 0\} = \emptyset$.
        \item For $B \in \mathcal L(\operatorname{Co}_p(U))$, $A \ast B = 0$ for every $A \in \mathcal R$ implies $B = 0$.
        \item For $f \in L^\infty(\Xi)$, $A \ast f = 0$ for every $A \in \mathcal R$ implies $f = 0$.
    \end{enumerate}
\end{thm}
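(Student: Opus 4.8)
The plan is to mirror the structure of the Hilbert-space argument (Theorem~\ref{thm:wienerappr_ops}) as closely as possible, using the coorbit-space convolutions just constructed and the fact that each $\operatorname{Co}_p(U)$ ($p\in(1,\infty)$) is reflexive with the approximation property, so that $\mathcal T^1(\operatorname{Co}_p(U))'\cong\mathcal L(\operatorname{Co}_p(U))$ and duality arguments are available on both sides. The equivalences $(iv)\Leftrightarrow(v)$ and $(iii)\Leftrightarrow(iv)$ are formal and carry over verbatim: the first from the convolution formula $\mathcal F_\sigma(A\ast A)(\xi)=m(\xi,-\xi)\mathcal F_U(A)(\xi)^2$ (which holds here since $A\ast A\in L^1(\Xi)$ by Lemma~\ref{lem:conv_cont_co}), the second because $\mathcal T^1(\operatorname{Co}_p(U))\ast\mathcal R$ is an $\alpha$-invariant subspace of $L^1(\Xi)$ containing all $A\ast A$ with $A\in\mathcal R$, so Theorem~\ref{thm:wienerfunctions} applies. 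For $(i)\Rightarrow(ii)$ I would use an approximate identity $(h_\gamma)\subset L^1(\Xi)$: since $x\mapsto U_x$ acts strongly continuously on $\operatorname{Co}_p(U)$ for $p\in(1,\infty)$, the map $x\mapsto\alpha_x(A)$ is continuous in $\mathcal T^1$-norm (this uses reflexivity on the dual side too), so $h_\gamma\ast C\to C$ in $\mathcal T^1(\operatorname{Co}_p(U))$, and $h_\gamma\ast\alpha_{x_j}(A_j)=\alpha_{x_j}(h_\gamma)\ast A_j\in L^1(\Xi)\ast\mathcal R$; similarly $(iii)\Rightarrow(ii)$ works as in the Hilbert-space proof.

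The duality equivalences $(iii)\Leftrightarrow(vii)$ and $(ii)\Leftrightarrow(vi)$ should be handled exactly as before. For $(iii)\Leftrightarrow(vii)$ I would consider the bounded map $\bigoplus_{\lambda,\ell^1}\mathcal T^1(\operatorname{Co}_p(U))\ni\{B_\lambda\}\mapsto\sum_\lambda B_\lambda\ast A_\lambda\in L^1(\Xi)$; its Banach-space adjoint is $L^\infty(\Xi)\ni f\mapsto\{f\ast\beta_-(A_\lambda)\}\in\bigoplus_{\lambda,\ell^\infty}\mathcal L(\operatorname{Co}_p(U))$ (here one needs the weak duality identity $\langle B\ast A,f\rangle=\langle B,f\ast\beta_-(A)\rangle$ for $A\in\operatorname{Co}_1(U)\widehat\otimes_\pi\operatorname{Co}_1(U)$, which is the content of the weak definition of $f\ast A$ given just before the theorem), and dense range of the first map is equivalent to injectivity of the second. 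The pairing here uses $\mathcal T^1(\operatorname{Co}_p(U))'\cong\mathcal L(\operatorname{Co}_p(U))$, which is where reflexivity plus the approximation property of the coorbit spaces enters. Likewise $(ii)\Leftrightarrow(vi)$ is the adjoint of the map $L^1(\Xi)\to\mathcal T^1(\operatorname{Co}_p(U))$.

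For $(vi)\Rightarrow(v)$ and $(vi)\Rightarrow(i)$ I would argue as in Theorem~\ref{thm:wienerappr_ops}: using $A\ast U_x(y)=g_x(y)\,\mathcal F_U(A)(x)$ with $|g_x|\equiv1$ (valid here since $U_x^\ast\in\mathcal L(\operatorname{Co}_p(U))$ and the CCR-based computation of $\mathcal F_U$ is algebraic), condition $(vi)$ applied to $B=U_x$ forces some $A_\lambda$ with $\mathcal F_U(A_\lambda)(x)\neq0$, giving $(v)$; and if $\operatorname{tr}(B\alpha_x(A_\lambda))=0$ for all $x,\lambda$, then $\beta_-(B)\ast A_\lambda(-x)=0$, so $(vi)$ gives $\beta_-(B)=0$, hence $B=0$ and the span is dense. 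The main obstacle I anticipate is not conceptual but bookkeeping: one must make sure every convolution appearing is actually defined at the required level of generality — in particular $A\ast B$ for $A\in\mathcal R\subset\operatorname{Co}_1(U)\widehat\otimes_\pi\operatorname{Co}_1(U)$ and $B\in\mathcal L(\operatorname{Co}_p(U))$, and $A\ast f$ for $f\in L^\infty(\Xi)$ — and that the key integral identity $\int_\Xi A\ast B\,dx=\operatorname{tr}(A)\operatorname{tr}(B)$ together with the weak duality pairing holds in these cases; these are exactly supplied by Lemma~\ref{lem:conv_cont_co} and the weak convolution construction preceding the theorem, so the restriction $\mathcal R\subset\operatorname{Co}_1(U)\widehat\otimes_\pi\operatorname{Co}_1(U)$ rather than all of $\mathcal T^1(\operatorname{Co}_p(U))$ is precisely what makes the argument go through. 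One also needs to check that regularity of $\{A\ast A\}$ in the sense of Theorem~\ref{thm:wienerfunctions} is the right notion even though $\Xi$ may fail to be $\sigma$-compact, but this was already built into the function-level Wiener theorem.
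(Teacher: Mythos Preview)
Your proposal is correct and takes essentially the same approach as the paper: the paper's own proof simply states that the argument is identical to the Hilbert-space case of Theorem~\ref{thm:wienerappr_ops}, and you have carried out precisely that transfer, correctly identifying the ingredients that replace Hilbert-space facts (reflexivity and the approximation property of $\operatorname{Co}_p(U)$ giving $\mathcal T^1(\operatorname{Co}_p(U))'\cong\mathcal L(\operatorname{Co}_p(U))$, Lemma~\ref{lem:conv_cont_co} for the $L^1$ mapping property, and the weak definition of $f\ast A$ for $f\in L^\infty$). Your closing remark about why the hypothesis $\mathcal R\subset\operatorname{Co}_1(U)\widehat\otimes_\pi\operatorname{Co}_1(U)$ is needed is exactly the point the paper is making by placing Lemma~\ref{lem:conv_cont_co} before the theorem.
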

We will not go through the proof, as it is essentially the same as in the Hilbert space case from Theorem \ref{thm:wienerappr_ops}. Nevertheless, we want to note that the facts (iv) and (v) are clearly independent of the precise value of $p \in (1, \infty)$. Hence, we conclude that:
\begin{cor}\label{thm:wienerappr_pindep}
    Let $\mathcal R \subset \operatorname{Co}_1(U) \widehat{\otimes}_\pi \operatorname{Co}_1(U)$. Then, the properties (i)-(vii) of Theorem \ref{thm:wienerappr_ops_pdependent} are independent of the value $p\in (1, \infty)$. In particular, they hold true if and only if they hold true for $p = 2$.
\end{cor}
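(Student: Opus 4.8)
The plan is to leverage the observation, already flagged just before the corollary, that conditions (iv) and (v) of Theorem \ref{thm:wienerappr_ops_pdependent} make no reference to the exponent $p$. Indeed, for $\mathcal R \subset \operatorname{Co}_1(U) \widehat{\otimes}_\pi \operatorname{Co}_1(U)$, the family $\{A \ast A : A \in \mathcal R\} \subset L^1(\Xi)$ and the joint zero set $\bigcap_{A \in \mathcal R}\{\xi \in \widehat{\Xi} : \mathcal F_U(A)(\xi) = 0\}$ are built from scalar-valued objects attached intrinsically to $\mathcal R$. So the main task is to verify the implicit consistency statement: that the function $\mathcal F_U(A)$ on $\widehat{\Xi}$, and likewise the function $A \ast A$ on $\Xi$, do not change if we regard $A$ as an element of $\mathcal T^1(\operatorname{Co}_p(U))$ for one $p \in (1,\infty)$ rather than another, nor if we regard it (for $p = 2$) as an element of $\mathcal T^1(\mathcal H)$.

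For this I would write $A = \sum_n \varphi_n \otimes \psi_n$ with $\varphi_n, \psi_n \in \operatorname{Co}_1(U)$ and $\sum_n \|\varphi_n\|_{1,\varphi_0} \|\psi_n\|_{1,\varphi_0} < \infty$, so that $\mathcal F_U(A)(\xi) = \tr(A U_\xi^\ast) = \sum_n \langle U_\xi^\ast \varphi_n, \psi_n\rangle_{\varphi_0}$, the series being absolutely convergent. Since $\operatorname{Co}_1(U)$ embeds continuously into $\mathcal H$, and since the pairing $\langle \cdot, \cdot\rangle_{\varphi_0}$ that induces the duality $\operatorname{Co}_p(U)' \cong \operatorname{Co}_q(U)$ in Lemma \ref{lemma:prop_coorbit} restricts on $\operatorname{Co}_1(U) \times \operatorname{Co}_1(U)$ to (a fixed multiple of) the Hilbert space inner product — which we have normalised to be exactly $\langle \cdot, \cdot\rangle_{\mathcal H}$ by the standing assumption $R\varphi_0 = \varphi_0$, $\|\varphi_0\|_{\mathcal H} = 1$ — the value of each term $\langle U_\xi^\ast \varphi_n, \psi_n\rangle_{\varphi_0}$ is the same for every $p$ and equals the Hilbert space quantity. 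The same bookkeeping applies to $A \ast B(x) = \tr(A \alpha_x(\beta_-(B)))$ for $A, B \in \operatorname{Co}_1(U) \widehat{\otimes}_\pi \operatorname{Co}_1(U)$, since $U_x$, $U_x^\ast$ and $R$ extend compatibly across the coorbit scale and the nuclear trace is, on these tensors, just the absolutely convergent double series of Hilbert space inner products. Hence conditions (iv) and (v) of Theorem \ref{thm:wienerappr_ops_pdependent} are literally one and the same statement for all $p \in (1,\infty)$, and coincide with conditions (iv), (v) of Theorem \ref{thm:wienerappr_ops} (which is the case $p = 2$, where $\operatorname{Co}_2(U) = \mathcal H$; note $\mathcal R \subset \operatorname{Co}_1(U)\widehat{\otimes}_\pi\operatorname{Co}_1(U) \subset \mathcal T^1(\mathcal H)$, so that theorem applies).

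Finally I would conclude by transitivity: Theorem \ref{thm:wienerappr_ops_pdependent} asserts, for each fixed $p \in (1,\infty)$, that (i)--(vii) are mutually equivalent, in particular each is equivalent to the $p$-independent condition (v); and Theorem \ref{thm:wienerappr_ops} does the same for $p = 2$. Therefore, for any $p_1, p_2 \in (1,\infty)$ the truth of (i)--(vii) for $\operatorname{Co}_{p_1}(U)$, for $\operatorname{Co}_{p_2}(U)$, and for $\mathcal H$ are all equivalent to the single statement (v), hence to each other. The only real obstacle is the compatibility bookkeeping of the previous paragraph — that the embeddings $\operatorname{Co}_1(U) \hookrightarrow \operatorname{Co}_p(U)$, the extended operators $U_x$ and $R$, the dual pairings $\langle\cdot,\cdot\rangle_{\varphi_0}$, and the nuclear traces on the various $\mathcal T^1(\operatorname{Co}_p(U))$ all agree when restricted to $\operatorname{Co}_1(U)\widehat{\otimes}_\pi\operatorname{Co}_1(U)$ — but this is routine given the construction of the embedding $\operatorname{Co}_1(U)\widehat{\otimes}_\pi\operatorname{Co}_1(U) \hookrightarrow \mathcal T^1(\operatorname{Co}_p(U))$ already set up before Theorem \ref{thm:wienerappr_ops_pdependent}, and requires no new idea.
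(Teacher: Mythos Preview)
Your proposal is correct and follows exactly the paper's approach: the paper simply observes, in the sentence preceding the corollary, that conditions (iv) and (v) are ``clearly independent of the precise value of $p \in (1,\infty)$'' and deduces the corollary immediately. Your additional compatibility bookkeeping (that the nuclear trace, the pairing $\langle\cdot,\cdot\rangle_{\varphi_0}$, and hence $\mathcal F_U(A)$ and $A\ast A$ agree across the scale when restricted to $\operatorname{Co}_1(U)\widehat{\otimes}_\pi\operatorname{Co}_1(U)$) makes explicit what the paper leaves implicit, but the argument is the same.
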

\begin{ex}
    \begin{enumerate}
        \item The considerations in this section cover the operator theory on the Fock spaces $F_t^p$ described in \cite{fulsche2020correspondence}: Indeed, the Fock spaces $F_t^p$ are essentially (up to multiplication by a certain Gaussian function) Wavelet transforms of the Coorbit spaces $\operatorname{Co}_p(U)$ with a Gaussian window, where $U$ is the representation on $\Xi = \mathbb R^{2n}$ with respect to the multiplier $m((x, \xi), (y, \eta)) = e^{-ix\eta + i\sigma((x, \xi), (y, \eta))}$ and $\sigma$ the standard symplectic form. Hence, the operator theory on $p$-Fock spaces developed in \cite{fulsche2020correspondence} (and continued in \cite{Fulsche2022}) falls within the framework of this section.
        \item When $G$ is a discrete group, $m$ the multiplier $m((x, \xi), (y, \eta)) = \overline{\langle x, \eta\rangle}$ on $\Xi = G \times \widehat{G}$ and $U$ the affiliated representation, then $\operatorname{Co}_p(U) = \ell^p(G)$, as is not hard to verify. In this case, the approach described here is another way towards working with band-dominated operators on sequence spaces (see e.g.~ \cite{Rabinovich_Roch_Silbermann2004} for results concerning such operators on $G = \mathbb Z^d$). This approach will be elaborated further in upcoming works.
    \end{enumerate}
\end{ex}

\section{Discussion}\label{sec:discussion}
We end this work by a short list of open problems:
\begin{question}
    In Section \ref{sec:repr}, we mentioned that square integrability of the $m$-representation obtained from Theorem \ref{Mackey-Stone-vNeumann} is equivalent to the property that the $m$-regular representation possesses and irreducible subrepresentation. Is this property always satisfied for a phase space $\Xi$ endowed with a Heisenberg multiplier $m$?
\end{question}

\begin{question}
    Parts of our theory hinged on the assumption of integrability of the projective representation. Nevertheless, we do not know if this assumption is ever violated. If there are instances of phase spaces $(\Xi, m)$ where the projective representation is not integrable, then the phase space cannot be of the standard form $\Xi = G \times \widehat{G}$ with $m$ similar to $m'((x, \xi), (y, \eta)) = \overline{\langle x, \eta\rangle}$. So the question is: Are there examples of phase spaces $(\Xi, m)$ for which the projective representation is not integrable?
\end{question}
\begin{question}
The following statement was proven in \cite[Corollary 4.5]{Vemuri2008}. For $\Xi = \mathbb R^d \times \widehat{\mathbb R^d}$ and $m((x, \xi), (y, \eta)) = e^{-ix\eta}$ the following holds true: If $A \in \mathcal T^1(\mathcal H)$ such that $\mathcal F_U(A)$ is compactly supported, then $\mathcal F_\sigma \mathcal F_U(A) \in L^1(\Xi)$. The proof of this statement hinged on rather explicit constructions. Is such a fact also true for more general phase spaces?
\end{question}
\begin{question}
    Is a statement converse to the previous result true, i.e.: If $f \in L^1(\Xi)$ such that $\mathcal F_\sigma(f) \in C_c(\widehat{\Xi})$, then $\mathcal F_U^{-1}(\mathcal F_\sigma(f)) \in \mathcal T^1(\mathcal H)$? For the specific setting $\Xi = G\times \widehat{G}$ and $m((x, \xi), (y, \eta)) = \langle x, \eta\rangle$, this fact holds true: If $f \in L^1(\Xi)$ with $\mathcal F_\sigma(f) \in C_c(\widehat{\Xi})$, then it is well-known that $f$ is contained in the Feichtinger algebra $\mathcal S_0(\Xi)$ (which follows, for example, from \cite[(vii) p. 26]{reiter71} and the fact that $\mathcal S_0(\Xi)$ is a Segal algebra). Now, it is not hard to see that an operator $A$ has its symbol in $\mathcal S_0(G \times \widehat{G})$ if and only if its integral kernel is in $\mathcal S_0(G \times G)$. Operators with such kernels are known to be of trace class, e.g.\ by \cite[Section 3.4]{Feichtinger_Jakobsen2022}.
\end{question}
\begin{question}
    Does the following generalization of Lemma \ref{lem:charl1} hold true? If $0 \leq f \in L^\infty(\Xi)$ such that $B \ast f \in \mathcal T^1(\mathcal H)$ for every $0 \leq B \in \mathcal T^1(\mathcal H)$, then there exists $f' \in L^1(\Xi)$ such that $f = f'$ locally null-a.e.?
\end{question}

\subsection*{Acknowledgements}
We would like to thank Reinhard F.\ Werner, Lauritz van Luijk, Jukka Kiukas and Franz Luef for interesting and helpful discussions. We also acknowledge the valuable suggestions made by the reviewers, which helped a lot in improving the manuscript. 

N.G. acknowledges financial support by the MICIIN with funding from European Union NextGenerationEU(PRTR-C17.I1) and by the Generalitat de Catalunya.

\bibliographystyle{abbrv}
\bibliography{refs}

\begin{thebibliography}{10}

\bibitem{Baggett_Kleppner1973}
L.~Baggett and A.~Kleppner.
\newblock Multiplier representations of abelian groups.
\newblock {\em J. Funct. Anal.}, 14:299--324, 1973.

\bibitem{Bauer_etAl2023}
W.~Bauer, L.~van Luijk, A.~Stottmeister, and R.~F. Werner.
\newblock {Self-adjointness of Toeplitz operators on the Segal-Bargmann space}.
\newblock {\em J. Funct. Anal.}, 284:109778, 2023.

\bibitem{Berge2022}
E.~Berge.
\newblock A primer on coorbit theory.
\newblock {\em J. Fourier Anal. Appl.}, 28:2, 2022.

\bibitem{Berge_Berge_Fulsche}
E.~Berge, S.~M. Berge, and R.~Fulsche.
\newblock A quantum harmonic analysis approach to {S}egal algebras.
\newblock {\em Integr. Equ. Oper. Theory}, 96, 2024.

\bibitem{Berge_Berge_Luef2022}
E.~Berge, S.~M. Berge, and F.~Luef.
\newblock The affine {W}igner distribution.
\newblock {\em Appl. Comput. Harmon. Anal.}, 56:150--175, 2022.

\bibitem{Berge_Berge_Luef_Skrettingland2022}
E.~Berge, S.~M. Berge, F.~Luef, and E.~Skrettingland.
\newblock {Affine quantum harmonic analysis}.
\newblock {\em J. Funct. Anal.}, 282:109327, 2022.

\bibitem{Bergh_Lofstrom1976}
J.~Bergh and J.~L\"{o}fstr\"{o}m.
\newblock {\em {Interpolation Spaces: An Introduction}}, volume 223 of {\em Die
  Grundlehren der mathematischen Wissenschaften in Einzeldarstellungen}.
\newblock Springer Verlag, 1976.

\bibitem{Cassinelli_DeVito_Toigo2003}
G.~Cassinelli, E.~De~Vito, and A.~Toigo.
\newblock Positive operator valued measures covariant with respect to an
  irreducible representation.
\newblock {\em J. Math. Phys.}, 44:4768–4775, 2003.

\bibitem{Christensen1996}
O.~Christiensen.
\newblock Atomic decomposition via projective group representations.
\newblock {\em Rocky Mountain J. Math.}, 26:1289–1312, 1996.

\bibitem{Dammeier_Werner2023}
L.~Dammeier and R.~F. Werner.
\newblock Quantum-classical hybrid systems and their quasifree transformations.
\newblock {\em Quantum}, 7, 2023.

\bibitem{Daubechies1980}
I.~Daubechies.
\newblock {On the distributions corresponding to bounded operators in the Weyl
  quantization}.
\newblock {\em Comm. Math. Phys.}, 75:229–238, 1980.

\bibitem{Nittis_Lein_Seri}
G.~De~Nittis, M.~Lein, and M.~Seri.
\newblock A magnetic pseudodifferential calculus for operator-valued and
  equivariant operator-valued symbols.
\newblock preprint available at arXiv:2210.05731.

\bibitem{Deitmar_Echterhoff2014}
A.~Deitmar and S.~Echterhoff.
\newblock {\em {Principles of Harmonic Analysis}}.
\newblock Springer Verlag, 2nd edition, 2014.

\bibitem{Diestel_Uhl1977}
J.~Diestel and J.~J. Uhl.
\newblock {\em {Vector Measures}}.
\newblock American Mathematical Society, 1977.

\bibitem{Digernes_Varadarajan2004}
T.~Digernes and V.~S. Varadarajan.
\newblock Models for the irreducible representation of a {H}eisenberg group.
\newblock {\em Infin. Dimens. Anal. Quantum Probab. Relat. Top.}, 7:527--546,
  2004.

\bibitem{Dixmier_1977}
J.~Dixmier.
\newblock {\em C*-Algebras}.
\newblock North-Holland, 1977.

\bibitem{Fawzi_Oufkir_Salzmann2024}
O.~Fawzi, A.~Oufkir, and R.~Salzmann.
\newblock Optimal fidelity estimation from binary measurements for discrete and
  continuous variable systems.
\newblock preprint available on arXiv:2409.04189, 2024.

\bibitem{Feichtinger1981}
H.~G. Feichtinger.
\newblock On a new {S}egal algebra.
\newblock {\em Monatsh. Math.}, 92:269--289, 1981.

\bibitem{Feichtinger_Grochenig1989}
H.~G. Feichtinger and K.~H. Gr\"{o}chenig.
\newblock Banach spaces related to integrable group representations and their
  atomic decompositions. {I}.
\newblock {\em J. Funct. Anal.}, 86:307–340, 1989.

\bibitem{Feichtinger_Jakobsen2022}
H.~G. Feichtinger and M.~S. Jakobsen.
\newblock The inner kernel theorem for a certain {S}egal algebra.
\newblock {\em Monatsh. Math.}, 198:675–715, 2022.

\bibitem{Feichtinger_Kozek1998}
H.~G. Feichtinger and W.~Kozek.
\newblock {Quantization of TF lattice-invariant operators on elementary LCA
  groups}.
\newblock In {\em {Gabor Analysis and Algorithms. Applied and Numerical
  Harmonic Analysis}}, pages 233--266. Birkh\"{a}user, 1998.

\bibitem{Folland2016}
G.~B. Folland.
\newblock {\em {A Course in Abstract Harmonic Analysis}}.
\newblock CRC Press, 2nd edition, 2016.

\bibitem{fulsche2020correspondence}
R.~Fulsche.
\newblock {Correspondence theory on $p$-Fock spaces with applications to
  Toeplitz algebras}.
\newblock {\em J. Funct. Anal.}, 279(7):108661, 2020.

\bibitem{Fulsche2022}
R.~Fulsche.
\newblock Toeplitz operators on non-reflexive {F}ock spaces.
\newblock {\em Rev. Mat. Iberoam.}, 40:1115–1148, 2024.

\bibitem{Fulsche_Hagger2023}
R.~Fulsche and R.~Hagger.
\newblock Quantum harmonic analysis for polyanalytic {F}ock spaces.
\newblock {\em J. Fourier Anal. Appl.}, 30(6):Paper No. 63, 2024.

\bibitem{Fulsche_Rodriguez2023}
R.~Fulsche and M.~A. Rodriguez~Rodriguez.
\newblock {Commutative G-invariant Toeplitz C$^\ast$ algebras on the Fock space
  and their Gelfand theory through Quantum Harmonic Analysis}.
\newblock to appear in J. Operator Theory, preprint available on
  arXiv:2307.15632.

\bibitem{Gaal1973}
S.~A. Gaal.
\newblock {\em {Linear Analysis and Representation Theory}}, volume 198 of {\em
  Grundlehren der mathematischen Wissenschaften}.
\newblock Springer, 1973.

\bibitem{Gibbons_Hoffman_Wootters_2004}
K.~S. Gibbons, M.~J. Hoffman, and W.~K. Wootters.
\newblock Discrete phase space based on finite fields.
\newblock {\em Phys. Rev. A}, 70:062101, Dec 2004.

\bibitem{Godement1947}
R.~Godement.
\newblock {Sur les relations d'orthogonalit\'{e} de V.\ Bargmann. II.
  Demonstration general\'{e}}.
\newblock {\em C. R. Acad. Sci. Paris}, 225:657–659, 1947.

\bibitem{Gohberg_Goldberg_Krupnik2000}
I.~Gohberg, S.~Goldberg, and N.~Krupnik.
\newblock {\em {Traces and Determinants of Linear Operators}}.
\newblock Birkh\"{a}user Basel, 2000.

\bibitem{Gross_2008}
D.~Gross.
\newblock {\em Computational power of quantum many-body states and some results
  on discrete phase spaces}.
\newblock PhD thesis, Imperial College London, 2008.

\bibitem{Halvdansson2022}
S.~Halvdansson.
\newblock Quantum harmonic analysis on locally compact groups.
\newblock {\em J. Funct. Anal.}, 285:110096, 2023.

\bibitem{Kastler1965}
D.~Kastler.
\newblock {The $C^\ast$-algebras of a free Boson field. I. Discussion of the
  basic facts.}
\newblock {\em Comm. Math. Phys.}, 1:14–48, 1965.

\bibitem{Kiukas2006}
J.~Kiukas.
\newblock Covariant observables on a nonunimodular group.
\newblock {\em J. Math. Anal. Appl.}, 324:491–503, 2006.

\bibitem{Kiukas_Lahti_Schultz_Werner2012}
J.~Kiukas, P.~Lahti, J.~Schultz, and R.~F. Werner.
\newblock Characterization of informational completeness for covariant phase
  space observables.
\newblock {\em J. Math. Phys.}, 53:102103, 2012.

\bibitem{Kiukas_Lahti_Ylinen2006}
J.~Kiukas, P.~Lahti, and K.~Ylinen.
\newblock Normal covariant quantization maps.
\newblock {\em J. Math. Anal. Appl.}, 319:783–801, 2006.

\bibitem{Loupias_MiracleSole1966}
G.~Loupias and S.~Miracle-Sole.
\newblock {$C^\ast$-alg\`{e}bres des syst\`{e}mes canoniques. I.}
\newblock {\em Comm. Math. Phys.}, 2:31–48, 1966.

\bibitem{Loupias_MiracleSole1967}
G.~Loupias and S.~Miracle-Sole.
\newblock {$C^\ast$-alg\`{e}bres des syst\`{e}mes canoniques. II.}
\newblock {\em Ann. Inst. H. Poincaré Sect. A (N.S.)}, 6:39–58, 1967.

\bibitem{Luef_Skrettingland2018a}
F.~Luef and E.~Skrettingland.
\newblock {Convolutions for Berezin quantization and Berezin-Lieb
  inequalities}.
\newblock {\em J. Math. Phys.}, 59:023502, 2018.

\bibitem{luef_skrettingland2018}
F.~Luef and E.~Skrettingland.
\newblock Convolutions for localization operators.
\newblock {\em J. Math. Pures Appl.}, 118:288--316, 2018.

\bibitem{Luef_Skrettingland2019a}
F.~Luef and E.~Skrettingland.
\newblock {Mixed-state localization operators: Cohen's class and trace class
  operators}.
\newblock {\em J. Fourier Anal. Appl.}, 25:2064–2108, 2019.

\bibitem{Luef_Skrettingland2021}
F.~Luef and E.~Skrettingland.
\newblock {A Wiener Tauberian theorem for operators and functions}.
\newblock {\em J. Funct. Anal.}, 280:108883, 2021.

\bibitem{Mackey1958}
G.~Mackey.
\newblock {Unitary representations of group extensions. I}.
\newblock {\em Acta. Math.}, 99:265–311, 1958.

\bibitem{Mantoiu_Purice2004}
M.~M\u{a}ntoiu and R.~Purice.
\newblock The magnetic {W}eyl calculus.
\newblock {\em J. Math. Phys.}, 45:1394--1417, 2004.

\bibitem{Pool1966}
J.~C.~T. Pool.
\newblock {Mathematical Aspects of the Weyl correspondence}.
\newblock {\em J. Mathematical Phys.}, 7:66–76, 1966.

\bibitem{Prasad_Shapiro_Vemuri2010}
A.~Prasad, I.~Shapiro, and M.~K. Vemuri.
\newblock Locally compact abelian groups with symplectic self-duality.
\newblock {\em Adv. Math.}, 225:2429--2454, 2010.

\bibitem{Rabinovich_Roch_Silbermann2004}
V.~Rabinovich, S.~Roch, and B.~Silbermann.
\newblock {\em Limit Operators and Their Applications in Operator Theory}.
\newblock Number 150 in {Operator Theory: Advances and Applications}.
  Birkh\"{a}user Basel, 2004.

\bibitem{Raussendorf_Browne_Delfosse_Okay_BermejoVega_2017}
R.~Raussendorf, D.~E. Browne, N.~Delfosse, C.~Okay, and J.~Bermejo-Vega.
\newblock Contextuality and {W}igner-function negativity in qubit quantum
  computation.
\newblock {\em Phys. Rev. A}, 95:052334, May 2017.

\bibitem{Reed_Simon1}
M.~Reed and B.~Simon.
\newblock {\em {Methods of Modern Mathematical Physics 1: Functional
  Analysis}}.
\newblock Academic Press, 2nd edition, 1981.

\bibitem{reiter71}
H.~Reiter.
\newblock {\em $L^1$-Algebras and Segal Algebras}, volume 231 of {\em Lecture
  Notes in Mathematics}.
\newblock Springer Verlag, 1971.

\bibitem{Ryan2002}
R.~A. Ryan.
\newblock {\em {Introduction to Tensor Products of Banach Spaces}}.
\newblock Springer Verlag, 2002.

\bibitem{Shucker1983}
D.~S. Shucker.
\newblock Square integrable representations of unimodular groups.
\newblock {\em Proc. Amer. Math. Soc.}, 89:169–172, 1983.

\bibitem{Stormer1974}
E.~St\o{}rmer.
\newblock {Positive linear maps in $C^\ast$ algebras}.
\newblock In A.~Hartk\"{a}mper and H.~Neumann, editors, {\em Foundations of
  quantum mechanics and ordered linear spaces}, volume~29 of {\em Lecture Notes
  in Physics}, pages 85--106. Springer Verlag, 1974.

\bibitem{Talagrand1984}
M.~Talagrand.
\newblock {\em Pettis integral and measure theory}.
\newblock Number 307 in Mem. Amer. Math. Soc. Amer. Math. Soc., 1984.

\bibitem{vonNeumann1931}
J.~v.~Neumann.
\newblock {Die Eindeutigkeit der Schr\"{o}dingerschen Operatoren}.
\newblock {\em Math. Ann.}, 104:570–578, 1931.

\bibitem{Vemuri2008}
M.~K. Vemuri.
\newblock Realizations of the canonical representation.
\newblock {\em Proc. Indian Acad. Sci., Math. Sci.}, 118:115--131, 2008.

\bibitem{Werner1984}
R.~F. Werner.
\newblock Quantum harmonic analysis on phase space.
\newblock {\em J. Math. Phys.}, 25(5):1404--1411, 1984.

\end{thebibliography}

\begin{multicols}{2}

\noindent
Robert Fulsche\\
\href{fulsche@math.uni-hannover.de}{\Letter ~fulsche@math.uni-hannover.de}
\\
\noindent
Institut f\"{u}r Analysis\\
Leibniz Universit\"at Hannover\\
Welfengarten 1\\
30167 Hannover\\
Germany\\

\noindent
Niklas Galke\\
\href{niklas.galke@uab.cat}{\Letter ~niklas.galke@uab.cat}
\\
\noindent
Department of Physics\\
Universitat Aut\`{o}noma de Barcelona\\
Edifici C\\
08193 Bellaterra (Cerdanyola del Vall\`{e}s)\\
Spain

\end{multicols}

\end{document}